\documentclass[12pt, reqno]{amsart}
\usepackage{amsmath,amsthm,amsfonts,amscd,amssymb,eucal,latexsym,mathrsfs, appendix, textcomp}
\usepackage{mathtools,mathdots}
\usepackage[numbers,sort&compress]{natbib}
\usepackage{enumitem}

\makeatletter
\@namedef{subjclassname@2020}{%
  \textup{2020} Mathematics Subject Classification}
\makeatother

\setlength{\textwidth}{15cm}
\setlength{\oddsidemargin}{4mm}
\setlength{\evensidemargin}{4mm}

\newtheorem{theorem}{Theorem}[section]
\newtheorem{corollary}[theorem]{Corollary}
\newtheorem{lemma}[theorem]{Lemma}
\newtheorem{proposition}[theorem]{Proposition}

\theoremstyle{definition}
\newtheorem{definition}[theorem]{Definition}
\newtheorem{remark}[theorem]{Remark}
\newtheorem{notation}[theorem]{Notation}
\newtheorem{example}[theorem]{Example}

\newcommand{\mdim}{{\rm mdim}}

\newcommand{\cA}{{\mathcal A}}
\newcommand{\cB}{{\mathcal B}}

\newcommand{\cF}{{\mathcal F}}

\newcommand{\cK}{{\mathcal K}}

\newcommand{\cM}{{\mathcal M}}

\newcommand{\cP}{{\mathcal P}}

\newcommand{\cU}{{\mathcal U}}
\newcommand{\cV}{{\mathcal V}}

\newcommand{\Nb}{{\mathbb N}}

\newcommand{\Rb}{{\mathbb R}}

\newcommand{\Zb}{{\mathbb Z}}

\newcommand{\sB}{{\mathscr B}}

\newcommand{\sR}{{\mathscr R}}

\newcommand{\sU}{{\mathscr U}}

\newcommand{\oA}{{\boldsymbol{A}}}

\newcommand{\oI}{{\boldsymbol{I}}}

\newcommand{\oU}{{\boldsymbol{U}}}

\newcommand{\of}{{\boldsymbol{f}}}
\newcommand{\ot}{{\boldsymbol{t}}}
\newcommand{\ov}{{\boldsymbol{v}}}
\newcommand{\ow}{{\boldsymbol{w}}}
\newcommand{\ox}{{\boldsymbol{x}}}
\newcommand{\oy}{{\boldsymbol{y}}}

\newcommand{\supp}{{\rm supp}}

\newcommand{\IE}{{\rm IE}}

\newcommand{\IN}{{\rm IN}}
\newcommand{\IT}{{\rm IT}}

\newcommand{\omu}{{\boldsymbol{\mu}}}
\newcommand{\onu}{{\boldsymbol{\nu}}}
\newcommand{\olambda}{{\boldsymbol{\lambda}}}
\newcommand{\orP}{{\boldsymbol{\mathrm{P}}}}

\newcommand{\upind}{{\overline{\rm I}}}

\newcommand{\Sym}{{\rm Sym}}

\newcommand{\rmm}{{\rm m}}

\newcommand{\ext}{{\rm ext}}

\allowdisplaybreaks

\begin{document}

\title[Local entropy theory for induced actions]{Local entropy theory, combinatorics, and local theory of Banach spaces}

\author{Hanfeng Li}
\address{Hanfeng Li,
College of Mathematics and Statistics, Center of Mathematics, Chongqing University, Chongqing 401331, P.R. China
}
\email{hfli@cqu.edu.cn}

\author{Kairan Liu}
\address{Kairan Liu,
College of Mathematics and Statistics, Chongqing University, Chongqing 401331, P.R. China}
\email{lkr111@cqu.edu.cn}

\date{July 4, 2025}

\subjclass[2020]{Primary 37B40; Secondary 37A15, 05D05, 46B07}
\keywords{Local entropy theory, IE-tuple, IN-tuple, IT-tuple, measure IE-tuple, measure IN-tuple, local theory of Banach spaces}

\begin{abstract}
Each continuous action of a countably infinite discrete group $\Gamma$ on a compact metrizable space $X$ induces a continuous  action of $\Gamma$ on the space $\cM(X)$ of Borel probability measures on $X$. We compare the local entropy theory for these two actions, and describe the relation between  their IE-tuples.  Several other types of tuples are also studied. Our main tool is a new combinatorial lemma. We also give an application of the combinatorial lemma to the local theory of Banach spaces. 
\end{abstract}

\maketitle

\tableofcontents

\section{Introduction} \label{S-introduction}

Let $\Gamma$ be a countably infinite discrete group and let $\Gamma$ act on a compact metrizable space $X$ continuously. The space of all Borel probability measures on $X$, denoted by $\cM(X)$,   is convex and compact metrizable under the weak$^*$-topology. Then we have the induced continuous affine action of $\Gamma$ on $\cM(X)$. The relation between the dynamical properties of the two actions $\Gamma\curvearrowright X$ and $\Gamma\curvearrowright \cM(X)$ was studied first by Bauer and Sigmund \cite{BS}, and has attracted much attention since then \cite{BDV22, BS22, GW95, KL05, LYY, LW, SZ23, Vermersch}. 

When $\Gamma$ is amenable, one has the topological entropy $h_{\rm top}(X)$ and $h_{\rm top}(\cM(X))$ \cite{AKM} and the mean topological  dimension $\mdim(X)$ and $\mdim(\cM(X))$ \cite{Gromov99, LW00} in $[0, +\infty]$ defined for the actions $\Gamma\curvearrowright X$ and $\Gamma\curvearrowright \cM(X)$ respectively. 
It is trivially true that $h_{\rm top}(X)\le h_{\rm top}(\cM(X))$.
Bauer and Sigmund showed that, when $\Gamma=\Zb$, if $h_{\rm top}(X)>0$, then $h_{\rm top}(\cM(X))=+\infty$ \cite{BS}. Recently Burguet and Shi showed that, when $\Gamma=\Zb$, if $h_{\rm top}(X)>0$, then $\mdim(\cM(X))=+\infty$ \cite{BS22}, which strengthens the Bauer-Sigmund result since actions with positive mean topological dimension always have infinite topological entropy \cite{LW00}. Shi and Zhang further extended the result to all amenable groups $\Gamma$ \cite{SZ23}.

The above results indicate that $h_{\rm top}(\cM(X))$ tends to be much larger than $h_{\rm top}(X)$, which intuitively means that the induced action $\Gamma\curvearrowright \cM(X)$ is much more complicated than the original action $\Gamma\curvearrowright X$. Despite these results, Glasner and Weiss showed the striking result that, for all amenable groups $\Gamma$, if $h_{\rm top}(X)=0$, then $h_{\rm top}(\cM(X))=0$ \cite{GW95}. 

In this work we study the relation between the dynamical properties of $\Gamma\curvearrowright X$ and $\Gamma\curvearrowright \cM(X)$ from the viewpoint of the local entropy theory. The local entropy theory was initiated by Blanchard in \cite{Blanchard92, Blanchard93}, when he sought an analogue of the completely positive entropy systems for topological dynamical systems. It has drawn much attention and found various applications \cite{BDV22, BGH, BHMMR, BS22, Glasner97, Glasner03, HLSY03, HMRY, HMY, HY06, HY09, HYZ, KL09, KL16, LW, Romagnoli, SZ23, Vermersch}, notably in the proofs for that positive entropy systems have plenty of proper one-sided asymptotic pairs \cite{BHR, CL, BGL} and are Li-Yorke chaotic \cite{BGKM, KL07, KL13b, LR}, and in the work of Burguet-Shi and Shi-Zhang mentioned above regarding $\mdim(\cM(X))$ \cite{BS22, SZ23}. See  \cite{GY09, GL} for surveys and references therein. 

One of the key notions in the local entropy theory is that of entropy pairs, introduced by Blanchard \cite{Blanchard93}. A pair of distinct points $(x_1, x_2)$ in $X$ is called an {\it entropy pair} if for any disjoint closed neighborhoods $U_1$ and $U_2$ of $x_1$ and $x_2$ respectively, the open cover $\{X\setminus U_1, X\setminus U_2\}$ of $X$ has positive topological entropy. It is a result of Blanchard that $h_{\rm top}(X)>0$ if and only if there exist entropy pairs \cite{Blanchard93}. The notion of entropy pairs has natural extension to entropy tuples of length $k$ for all integers $k\ge 2$ \cite{GW952}. Though the definition of entropy tuples involves entropy itself, it turns out that they admit an equivalent combinatorial description: one has the notion of IE-tuples of length $k$ defined using combinatorial independence for every integer $k\ge 1$ (see Section~\ref{SS-IE} below),  and for $k\ge 2$ the entropy tuples of length $k$ are exactly the non-diagonal IE-tuples of length $k$, proved first by Huang and Ye using ergodic theoretic method for the case $\Gamma=\Zb$ \cite{HY06} and later by Kerr and Li using combinatorial method for all amenable groups $\Gamma$ \cite{KL07}. We denote the set of all IE-tuples of $X$ with length $k$ by $\IE_k(X)$, which is a closed subset of $X^k$. It behaves well under taking factors or extensions (see Theorem~\ref{T-IE basic}). 

Concerning the actions $\Gamma\curvearrowright X$ and $\Gamma\curvearrowright \cM(X)$, it is tempting to inquire about the relation between the IE-tuples of these two actions. 
Note that $\cM(X)^k$ is naturally a compact convex subset of the locally convex topological vector space $(C(X)^*)^{\oplus k}$, where $C(X)$ is the space of all continuous $\Rb$-valued functions on $X$ and $C(X)^*$ is the dual of $C(X)$ equipped with the weak$^*$-topology. 
We treat $X$ as a closed $\Gamma$-invariant subset of $\cM(X)$ via identifying $x\in X$ with the Dirac measure $\delta_x$ at $x$.
Then $\IE_k(X)\subseteq X^k\subseteq \cM(X)^k$ and $\IE_k(\cM(X))\subseteq \cM(X)^k$. 
For each $\mu\in \cM(X^k)$ and $1\le j\le k$, we denote by $\mu^{(j)}$ the push-forward of $\mu$ in $\cM(X)$ under the $j$-th coordinate map $X^k\rightarrow X$.
Our first main result describes the relation between $\IE_k(X)$ and $\IE_k(\cM(X))$ completely. 

\begin{theorem} \label{T-IE main}
Assume that $\Gamma$ is amenable. For each  $k\in \Nb$, the following hold:
\begin{enumerate}
\item $\IE_k(\cM(X))$ is the closed convex hull of $\IE_k(X)$ in $\cM(X)^k$. 
\item $\IE_k(X)=\IE_k(\cM(X))\cap X^k$ is the set of extreme points of $\IE_k(\cM(X))$. 
\item $\IE_k(\cM(X))=\{(\mu^{(1)}, \dots, \mu^{(k)}): \mu\in \cM(\IE_k(X))\}$.
\end{enumerate}
\end{theorem}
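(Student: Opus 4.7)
The plan is to treat statement~(1) as primary and derive (2) and (3) from it by general convex-analytic arguments. For~(3), the map $\mu\mapsto(\mu^{(1)},\dots,\mu^{(k)})$ is continuous and affine from $\cM(\IE_k(X))$ to $\cM(X)^k$, so its image $S$ is compact and convex; it contains every tuple $(\delta_{x_1},\dots,\delta_{x_k})$ with $(x_1,\dots,x_k)\in\IE_k(X)$, and because finitely supported measures are weak-$*$ dense in $\cM(\IE_k(X))$ every point of $S$ is a weak-$*$ limit of finite convex combinations of such Dirac tuples. Hence $S$ coincides with the closed convex hull of $\IE_k(X)$ inside $\cM(X)^k$, and together with~(1) this gives~(3). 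For~(2), Milman's theorem forces the extreme points of $\overline{\mathrm{conv}}\,\IE_k(X)=\IE_k(\cM(X))$ to lie in the closed set $\IE_k(X)$, while each Dirac tuple is already extreme in $\cM(X)^k$ since each Dirac is extreme in $\cM(X)$; thus the extreme points of $\IE_k(\cM(X))$ are precisely $\IE_k(X)$. The identification $\IE_k(\cM(X))\cap X^k=\IE_k(X)$ then follows: a Dirac tuple in $\IE_k(\cM(X))$ is extreme and hence belongs to $\IE_k(X)$, and the reverse inclusion is automatic.

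For the easier half of~(1), namely $\overline{\mathrm{conv}}\,\IE_k(X)\subseteq\IE_k(\cM(X))$, first note that the dynamical embedding $X\hookrightarrow\cM(X)$ makes IE-tuples pass upward: given $(x_1,\dots,x_k)\in\IE_k(X)$ and weak-$*$ neighborhoods $U_j\subseteq\cM(X)$ of $\delta_{x_j}$, the intersections $U_j\cap X$ are neighborhoods of $x_j$ in $X$ and any independence set for $(U_j\cap X)_j$ in $X$ serves as one for $(U_j)_j$ in $\cM(X)$, yielding $\IE_k(X)\subseteq\IE_k(\cM(X))$. Since $\IE_k(\cM(X))$ is closed, it remains to show that any finite convex combination $\bigl(\sum_{i=1}^N\lambda_i\delta_{x_j^{(i)}}\bigr)_{j=1}^k$ of Dirac tuples from $\IE_k(X)$ is again IE in $\cM(X)$. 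Given neighborhoods $V_j$ of these combinations carved out by finitely many test functions, equicontinuity of weak-$*$ evaluation shrinks them to neighborhoods $W_j^{(i)}$ of $x_j^{(i)}$ in $X$ with the property that $y_j^{(i)}\in W_j^{(i)}$ for all $i$ implies $\sum_i\lambda_i\delta_{y_j^{(i)}}\in V_j$. The new combinatorial lemma should now enter, converting the $N$ individual positive-density independence sets for the tuples $(W_1^{(i)},\dots,W_k^{(i)})$, $i=1,\dots,N$, into a single common independence set $J$ of positive density that works for all $i$ simultaneously; convex-combining the corresponding witness points in $X$ across $i$ then yields a witness in $\cM(X)$ certifying independence of $(V_j)_j$ along $J$.

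The hard half $\IE_k(\cM(X))\subseteq\overline{\mathrm{conv}}\,\IE_k(X)$ is the technical heart of the theorem. Given $(\mu_1,\dots,\mu_k)\in\IE_k(\cM(X))$, the goal is to construct some $\mu\in\cM(\IE_k(X))$ with marginals $\mu_j$; by~(3) this places $(\mu_1,\dots,\mu_k)$ in $\overline{\mathrm{conv}}\,\IE_k(X)$. I would obtain this measure as a weak-$*$ cluster point of finitely supported approximants $\mu^{\varepsilon}\in\cM(\IE_k(X))$ whose marginals approximate $\mu_j$ on any chosen finite family of test functions to within $\varepsilon$. To build $\mu^{\varepsilon}$, take a small neighborhood $\vec V=(V_1,\dots,V_k)$ of $(\mu_j)_j$ shaped by those test functions; the IE hypothesis produces an independence set $J\subseteq F$ of density at least $\delta$ in some large F\o{}lner set $F$, with witness measures $\rho_\omega\in\bigcap_{s\in J}s^{-1}V_{\omega(s)}$ for every $\omega\in\{1,\dots,k\}^J$; averaging the $s$-translates of the $\rho_\omega$ over $s\in J$ and over $\omega$ produces a probability measure on $X^k$ whose marginals are close to $\mu_j$ on the chosen functions. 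The main obstacle, and the point at which the advertised combinatorial lemma must be invoked, is forcing this averaged measure to be supported on the point-level IE locus $\IE_k(X)\subseteq X^k$ rather than merely on $X^k$; equivalently, one must transfer positive-density combinatorial independence from the measure-valued witnesses $\rho_\omega$ to atoms appearing in their supports, which is the Ramsey-type extraction the new lemma is presumably designed to supply.
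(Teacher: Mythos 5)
Your reductions of (2) and (3) to (1) are essentially the paper's own argument (Milman's theorem plus the barycenter/marginal map), and your treatment of the easy inclusion $\overline{\rm co}(\IE_k(X))\subseteq \IE_k(\cM(X))$ is workable, except for one misattribution: the step where you need a single positive-density independence set working simultaneously for the $N$ tuples $(W_1^{(i)},\dots,W_k^{(i)})$ is not what the paper's new combinatorial lemma provides; it is exactly the product formula $\IE_k(X\times\cdots\times X)=\IE_k(X)\times\cdots\times\IE_k(X)$ (Theorem~\ref{T-IE basic}.\eqref{i-IE product}), applied to the point of $\IE_k(X^N)$ whose $j$-th entry is $(x_j^{(1)},\dots,x_j^{(N)})$. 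The paper's Lemma~\ref{L-IE convex} runs precisely this argument (for $\cM(X)\times\cM(X)$) to get convexity of $\IE_k(\cM(X))$ directly, which is cleaner than closing up finite combinations of Dirac tuples.

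The genuine gap is in the hard inclusion $\IE_k(\cM(X))\subseteq\overline{\rm co}(\IE_k(X))$, which is the entire content of the theorem. Your plan is to build directly a coupling $\mu\in\cM(\IE_k(X))$ with marginals $\mu_1,\dots,\mu_k$, but the sketch never explains (a) how to manufacture a measure on $X^k$ at all from the measure-valued witnesses $\rho_\omega\in\bigcap_{s\in J}s^{-1}V_{\omega(s)}$ --- averaging translates of the $\rho_\omega$ only produces measures on $X$, and obtaining a joint distribution with the correct $k$ marginals is not automatic --- and (b) how to force the support of the resulting measure into $\IE_k(X)$, which you defer wholesale to ``the Ramsey-type extraction the new lemma is presumably designed to supply.'' This does not match what Lemma~\ref{L-func to indep} actually does, and the missing idea is the reduction that makes that lemma applicable: by Hahn--Banach separation (Lemma~\ref{L-Hahn Banach IE}), it suffices to show that whenever $(\mu_1,\dots,\mu_k)\in\IE_k(\cM(X))$ and $(f_1,\dots,f_k)\in C(X)^{\oplus k}$ satisfy $\sum_{j}\mu_j(f_j)>0$, there is an IE-tuple $(x_1,\dots,x_k)\in\IE_k(X)$ with $\sum_j f_j(x_j)>0$. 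After this linearization one chooses, for each equidistributed $\psi$ on an independence set $F$ for a suitable neighborhood tuple, a point $x_\psi$ with large average $\frac1{|F|}\sum_{s\in F}f_{\psi(s)}(sx_\psi)$ (Lemma~\ref{L-func to func}), feeds the functions $f_\psi(s)=f_{\psi(s)}(sx_\psi)$ into Lemma~\ref{L-func to indep} to extract a positive-density independence set for the tuple of superlevel sets $f_j^{-1}([t_j,\infty))$ with $\sum_j t_j>0$, and then invokes Theorem~\ref{T-IE basic}.\eqref{i-IE tuple}; a F{\o}lner-limit argument is also needed to pass from density inside one $F$ to positive independence density. None of this (the separation step, the choice of the $f_\psi$, the density bookkeeping) is present or replaceable by your averaging construction as described, so the core of the proof is missing.
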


The result of Glasner and Weiss mentioned above is equivalent to saying that there exist non-diagonal IE-pairs for $\Gamma\curvearrowright \cM(X)$ only if there are such pairs for $\Gamma\curvearrowright X$, thus is a direct consequence of the case $k=2$ of Theorem~\ref{T-IE main}. 

Part (3) of Theorem~\ref{T-IE main} gives us an explicit description of elements of $\IE_k(\cM(X))$ in terms of $\IE_k(X)$, and has some interesting applications. For each positive integer $N$, we denote by $\cM_N(X)$ the set of $\mu\in \cM(X)$ whose support has at most $N$ points. This is a closed $\Gamma$-invariant subset of $\cM(X)$, and one has $\IE_k(\cM_N(X))\subseteq \IE_k(\cM(X))$ trivially for every integer $k\ge 1$. Using Theorem~\ref{T-IE main} we can describe more precisely the relation between IE-tuples of $\cM_N(X)$ and $\cM(X)$. A naive guess is that $\cM_N(X)^k\cap \IE_k(\cM(X))=\IE_k(\cM_N(X))$. But this is not true in general, as we give a counterexample in Example~\ref{E-not IE} below. The subtle issue here is that for a tuple $(\mu_1, \dots, \mu_k)$ in $\cM_N(X)^k\cap \IE_k(\cM(X))$, when one finds a $\nu\in \cM(X)$ witnessing the condition that $(\mu_1, \dots, \mu_k)\in \IE_k(\cM(X))$, though one can perturb $\nu$ to have finite support, there is a priori no control on the cardinality of the support of $\nu$. Nevertheless, using the explicit description in part (3) of Theorem~\ref{T-IE main} we are able to show $\cM_N(X)^k\cap\IE_k(\cM(X))\subseteq \IE_k(\cM_{N^k}(X))$. More generally, we have the following consequence. 

\begin{corollary} \label{C-IE finite support}
Assume that $\Gamma$ is amenable. Let $k\in \Nb$. For any $N_1, \dots, N_k\in \Nb$, setting $N=\prod_{j=1}^k N_j$, one has
$$ (\cM_{N_1}(X)\times \cdots \times \cM_{N_k}(X))\cap \IE_k(\cM(X))\subseteq \IE_k(\cM_N(X)).$$
\end{corollary}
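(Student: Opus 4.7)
The plan is to combine Theorem~\ref{T-IE main}(3) with a factor-map argument that transports IE-tuples in $X$ to $\cM_N(X)$. Given $(\mu_1,\dots,\mu_k)\in (\cM_{N_1}(X)\times\cdots\times \cM_{N_k}(X))\cap \IE_k(\cM(X))$, Theorem~\ref{T-IE main}(3) supplies $\mu\in \cM(\IE_k(X))$ with $\mu^{(j)}=\mu_j$ for each $j$. Since each marginal $\mu_j$ is supported on $S_j:=\supp(\mu_j)$, a set of cardinality at most $N_j$, the measure $\mu$ itself is supported on $S_1\times\cdots\times S_k\subseteq X^k$, so $\supp(\mu)\subseteq\IE_k(X)$ is a finite set $T=\{y^{(1)},\dots,y^{(m)}\}$ with $m\le N$. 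Write $\mu=\sum_{a=1}^m q_a\delta_{y^{(a)}}$ with $q_a>0$ and each $y^{(a)}=(y_1^{(a)},\dots,y_k^{(a)})\in\IE_k(X)$.

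I then introduce the continuous $\Gamma$-equivariant map
\[
\phi\colon X^m\to \cM_m(X)\subseteq \cM_N(X),\qquad \phi(z^{(1)},\dots,z^{(m)})=\sum_{a=1}^m q_a\,\delta_{z^{(a)}},
\]
where $X^m$ carries the diagonal $\Gamma$-action. Setting $\vec Y_j:=(y_j^{(1)},\dots,y_j^{(m)})\in X^m$, one has $\phi(\vec Y_j)=\sum_a q_a\delta_{y_j^{(a)}}=\mu_j$. Consequently, once $(\vec Y_1,\dots,\vec Y_k)\in\IE_k(X^m)$ is established, the functoriality of IE-tuples under factor maps (Theorem~\ref{T-IE basic}) immediately yields $(\mu_1,\dots,\mu_k)\in\IE_k(\phi(X^m))\subseteq\IE_k(\cM_N(X))$, which is the desired conclusion.

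To verify $(\vec Y_1,\dots,\vec Y_k)\in\IE_k(X^m)$, take any basic neighborhood $U_j=\prod_{a=1}^m O_j^{(a)}$ of $\vec Y_j$ in $X^m$, where each $O_j^{(a)}$ is an open neighborhood of $y_j^{(a)}$ in $X$. Because $\Gamma$ acts on $X^m$ diagonally, for every $J\subseteq\Gamma$ and every $\omega\colon J\to\{1,\dots,k\}$ one has
\[
\bigcap_{\gamma\in J}\gamma^{-1}U_{\omega(\gamma)}=\prod_{a=1}^{m}\Bigl(\bigcap_{\gamma\in J}\gamma^{-1}O_{\omega(\gamma)}^{(a)}\Bigr),
\]
so $J$ is an independence set for $(U_1,\dots,U_k)$ in $X^m$ iff $J$ is simultaneously an independence set for each tuple $(O_1^{(a)},\dots,O_k^{(a)})$ in $X$, $a=1,\dots,m$.

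The main step, and the only genuinely nontrivial point, is producing a single $J$ of positive density that witnesses independence for all $m$ tuples at once. For each $a$, since $y^{(a)}\in\IE_k(X)$, there is $c_a>0$ such that every finite $F\subseteq\Gamma$ contains an independence set for $(O_1^{(a)},\dots,O_k^{(a)})$ of size at least $c_a|F|$. Using the elementary but crucial fact that subsets of independence sets are again independence sets, I iterate: choose $J_1\subseteq F$ with $|J_1|\ge c_1|F|$ independent for tuple $1$; then $J_2\subseteq J_1$ with $|J_2|\ge c_2|J_1|$ independent for tuple $2$, which by heredity remains independent for tuple $1$; and so on. After $m$ steps this produces $J_m\subseteq F$ of size at least $(\prod_a c_a)|F|$ that is a common independence set, supplying the required uniform IE-density. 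The bound $N=\prod_j N_j$ in the statement emerges precisely from the at-most-$N$ point support of $\mu$ forced by Theorem~\ref{T-IE main}(3) together with the dimension of $X^m$ used for the factor map $\phi$.
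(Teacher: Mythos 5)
Your proof is correct, but its second half takes a genuinely different route from the paper's. The first step coincides: both you and the paper apply Theorem~\ref{T-IE main}.(3) to get $\mu\in \cM(\IE_k(X))$ with marginals $\mu_j$, and observe that $\supp(\mu)\subseteq (\supp(\mu_1)\times\cdots\times\supp(\mu_k))\cap \IE_k(X)$ has at most $N$ points (this is the paper's Lemma~\ref{L-finite support}). The paper then finishes by induction on the number of support points (Lemma~\ref{L-support to support}), using the two-term convexity statement Lemma~\ref{L-IE convex}.(1), whose proof rests on the product formula Theorem~\ref{T-IE basic}.\eqref{i-IE product} applied to $\cM_{N_1}(X)\times \cM_{N_2}(X)$. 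You instead realize $(\mu_1,\dots,\mu_k)$ as the image of $(\vec Y_1,\dots,\vec Y_k)\in \IE_k(X^m)$ under the factor map $X^m\to \cM_\olambda(X)\subseteq \cM_N(X)$ and invoke Theorem~\ref{T-IE basic}.\eqref{i-IE factor} and \eqref{i-IE subset}; this is exactly the mechanism the paper uses for IN- and IT-tuples (Lemmas~\ref{L-IN convex} and \ref{L-IT convex}), so your argument is a legitimate alternative and in fact more uniform across the three settings. Note that your ``main step'' is precisely the inclusion $\IE_k(X)^m\subseteq \IE_k(X^m)$ under the natural identification, i.e.\ an instance of Theorem~\ref{T-IE basic}.\eqref{i-IE product}, which you could simply have cited. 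Your direct iteration does work here because the paper defines $\oI(\oA)$ as an infimum over \emph{all} nonempty finite subsets of $\Gamma$, so positive independence density literally yields, inside every finite set (in particular inside the previously chosen $J_{a-1}$), an independence subset of proportional size, and subsets of independence sets are again independence sets; be aware, though, that under other common formulations (e.g.\ upper density along a F{\o}lner sequence) this ``large independence subsets inside arbitrary finite windows'' property is itself a nontrivial point, so citing the product formula is the more robust way to phrase the step. Within this paper's definitions, your proof is complete.
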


Several other types of tuples are studied in the local entropy theory, related to various dynamical properties. When $\Gamma$ is amenable, given a $\Gamma$-invariant Borel probability measure $\mu$ on $X$, the notion of $\mu$-entropy pairs was introduced by Blanchard, Host, Maass, Martinez and Rudolph \cite{BHMMR}. A pair of distinct points $(x_1, x_2)$ in $X$ is called a {\it $\mu$-entropy pair} if for any Borel partition $\{A_1, A_2\}$ of $X$ such that $A_j$ is a neighborhood of $x_j$ for $j=1, 2$, the measure entropy of $\{A_1, A_2\}$ with respect to $\mu$ is positive. A result of Blanchard et al. says that the measure entropy $h_\mu(X)$ of the action $\Gamma\curvearrowright (X, \mu)$ is positive if and only if there exist $\mu$-entropy pairs \cite{BHMMR}. The notion of $\mu$-entropy pairs also has natural extension to $\mu$-entropy tuples of length $k$ for all integers $k\ge 2$ \cite{HY06}. The measure entropy tuples and the entropy tuples are linked as one expects from the variational principle: the set of entropy tuples is the union of $\mu$-entropy tuples for $\mu$ ranging over all $\Gamma$-invariant Borel probability measures on $X$ \cite{BGH, BHMMR}. It turns out that $\mu$-entropy tuples also admit a combinatorial description: Kerr and Li introduced the notion of $\mu$-IE-tuples of length $k$, defined using combinatorial independence like that for IE-tuples but in a much more sophisticated way, for all integers $k\ge 1$ (see Section~\ref{SS-measure IE} below), and showed that for each $k\ge 2$ the $\mu$-entropy tuples of length $k$ are exactly the non-diagonal $\mu$-IE-tuples of length $k$ \cite{KL09}. We denote by $\IE_k^\mu(X)$ the set of all $\mu$-IE-tuples of $X$ with length $k$. 

In the context of the actions $\Gamma\curvearrowright X$ and $\Gamma\curvearrowright \cM(X)$, one may start with a $\Gamma$-invariant Borel probability measure $\rmm$ on $\cM(X)$, and consider its barycenter $\mu$ (see Section~\ref{SS-barycenter} below), which is a $\Gamma$-invariant Borel probability measure on $X$. In the literature the measure-preserving action $\Gamma\curvearrowright (\cM(X), \rmm)$ is called a {\it quasi-factor} of the measure-preserving action $\Gamma\curvearrowright (X, \mu)$ \cite{Glasner83, Glasner03, GTW, GW95, GW03}, since every factor of $\Gamma\curvearrowright (X, \mu)$ gives rise to one $\rmm$ through measure disintegration. In \cite{GW95} Glasner and Weiss gave two proofs for their result above regarding $h_{\rm top}(X)$ and $h_{\rm top}(\cM(X))$. The first proof is ergodic theoretic, and at the heart of this proof Glasner and Weiss actually showed that, when $\Gamma=\Zb$, if $h_\mu(X)=0$, then $h_{\rmm}(\cM(X))=0$.
This was later extended to all amenable groups $\Gamma$ by Glasner, Thouvenot, and Weiss in \cite{GTW} (it also follows from the explicit description of all quasi-factors in \cite{GW03}). 

Given the Glasner-Thouvenot-Weiss result, it is natural to explore an analogue of Theorem~\ref{T-IE main} for measure IE-tuples by investigating the relationship between $\IE_k^{\rmm}(\cM(X))$ and $\IE_k^{\mu}(X)$. However, it is possible that $h_\mu(X)>0$ while $h_{\rmm}(\cM(X))=0$,
because if one starts with any $\Gamma$-invariant Borel probability measure $\mu$ on $X$ and take $\rmm$ to the Dirac measure $\delta_\mu$ on $\cM(X)$ at $\mu$, which corresponds to the trivial factor of $\Gamma\curvearrowright (X,\mu)$, then $h_{\rmm}(\cM(X))=0$.
Thus
one cannot expect the full analogue of Theorem~\ref{T-IE main} to hold for measure IE-tuples. 
Treating $X$ as a closed subset of $\cM(X)$ as earlier, we still have 
$\IE_k^\mu(X)\subseteq X^k\subseteq \cM(X)^k$ and $\IE_k^\rmm(\cM(X))\subseteq \cM(X)^k$. 
We denote by $\cM_\Gamma(X)$ the set of all $\Gamma$-invariant Borel probability measures on $X$. 
Our main result about measure IE-tuples is the following.

\begin{theorem} \label{T-measure IE main}
Assume that $\Gamma$ is amenable. Let $\rmm\in \cM_\Gamma(\cM(X))$ and let $\mu\in \cM_\Gamma(X)$ be the barycenter of $\rmm$. For each $k\in \Nb$, the following hold:
\begin{enumerate}
\item $\IE^\rmm_k(\cM(X))$ is contained in the closed convex hull of $\IE^\mu_k(X)$ in $\cM(X)^k$. 
\item $\IE^\rmm_k(\cM(X))\cap X^k\subseteq \IE^\mu_k(X)$.
\end{enumerate}
\end{theorem}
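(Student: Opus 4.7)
The plan is to prove (1) by a Hahn--Banach separation argument and to deduce (2) from (1) by a convexity argument that exploits the extremality of Dirac measures. Suppose for contradiction that $(\mu_1,\dots,\mu_k) \in \IE^\rmm_k(\cM(X))$ does not lie in the closed convex hull $K$ of $\IE^\mu_k(X)$ in $\cM(X)^k$. Since $\cM(X)^k$ is weak-$*$ compact and convex inside $(C(X)^*)^{\oplus k}$, Hahn--Banach yields $f_1,\dots,f_k \in C(X)$ and $c \in \Rb$ with
$$\sum_{j=1}^{k} \int f_j \, d\mu_j \;>\; c \;\geq\; \sum_{j=1}^{k} f_j(x_j) \qquad \text{for all } (x_1,\dots,x_k) \in \IE^\mu_k(X),$$
so the objective reduces to manufacturing $(x_1,\dots,x_k) \in \IE^\mu_k(X)$ with $\sum_j f_j(x_j) > c$, contradicting this bound.

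Fix a small $\epsilon > 0$ and set $\tilde B_j := \{\nu \in \cM(X) : \int f_j\,d\nu > \int f_j\,d\mu_j - \epsilon\}$, a weak-$*$ open neighborhood of $\mu_j$. By the hypothesis that $(\mu_1,\dots,\mu_k)$ is a $\rmm$-IE-tuple, along some F{\o}lner sequence there are $\rmm$-$\delta$-independence sets $J$ of positive density for $(\tilde B_1,\dots,\tilde B_k)$, namely $\rmm(\bigcap_{s \in J} s^{-1}\tilde B_{\omega(s)}) \geq \delta^{|J|}$ for every $\omega : J \to \{1,\dots,k\}$. A Markov-type estimate applied to any $\nu$ in that intersection gives $\nu(s^{-1} A_{\omega(s)}) \geq \alpha$ for all $s \in J$, where $A_j := \{x : f_j(x) > \int f_j\,d\mu_j - 2\epsilon\}$ and $\alpha > 0$ depends only on $\epsilon$ and $\max_j \|f_j\|_\infty$. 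At this point I invoke the paper's combinatorial lemma to upgrade this information into a subset $J' \subseteq J$ of positive density for which $\mu(\bigcap_{s \in J'} s^{-1} A_{\omega(s)}) \geq (\delta')^{|J'|}$ for every $\omega : J' \to \{1,\dots,k\}$, i.e.\ $J'$ is a $\mu$-$\delta'$-independence set for the fixed tuple $(A_1,\dots,A_k)$. Positive upper $\mu$-independence density of $(A_1,\dots,A_k)$ then furnishes, via a standard compactness argument over shrinking product open neighborhoods, a point $(x_1,\dots,x_k) \in \IE^\mu_k(X) \cap \overline{A_1} \times \cdots \times \overline{A_k}$. Continuity of the $f_j$ forces $f_j(x_j) \geq \int f_j\,d\mu_j - 2\epsilon$ for each $j$, so $\sum_j f_j(x_j) \geq \sum_j \int f_j\,d\mu_j - 2k\epsilon > c$ once $\epsilon$ is small enough, completing (1).

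For (2), if $(x_1,\dots,x_k) \in X^k \cap \IE^\rmm_k(\cM(X))$, then by (1) the tuple $(\delta_{x_1},\dots,\delta_{x_k})$ lies in the closed convex hull of $\IE^\mu_k(X)$ in $\cM(X)^k$; since the affine continuous map $\eta \mapsto (\eta^{(1)},\dots,\eta^{(k)})$ carries $\cM(\IE^\mu_k(X))$ onto this closed convex hull, there exists $\eta \in \cM(\IE^\mu_k(X))$ with $\eta^{(j)} = \delta_{x_j}$ for each $j$. Each Dirac marginal collapses the support of $\eta$ onto $\{(x_1,\dots,x_k)\}$, forcing $\eta = \delta_{(x_1,\dots,x_k)}$, and the support condition $\supp\eta \subseteq \IE^\mu_k(X)$ then places $(x_1,\dots,x_k)$ in $\IE^\mu_k(X)$. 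The principal obstacle is the combinatorial conversion in the middle paragraph: the $\rmm$-bound on $\bigcap_s s^{-1}\tilde B_{\omega(s)}$ controls only individual $\nu$-masses of $s^{-1}A_{\omega(s)}$ and not their joint intersection, so extracting a $\mu$-$\delta'$-independence set at the correct exponential scale $(\delta')^{|J'|}$, rather than the naive $\alpha^{|J|}$ that one gets only when the fibre measures behave like Bernoulli products, will require the full strength of the paper's new combinatorial lemma.
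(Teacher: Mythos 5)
Your Hahn--Banach reduction is the same as the paper's (Lemma~\ref{L-Hahn Banach measure IE}), and your deduction of (2) from (1) is sound: pushing forward a measure supported on $\IE^\mu_k(X)$ and noting that Dirac marginals force a Dirac joint law is a legitimate alternative to the paper's use of Milman's partial converse (Lemma~\ref{L-ext}). The trouble is in the core of (1). You replace the hypothesis ``$(\mu_1,\dots,\mu_k)\in\IE^\rmm_k(\cM(X))$'' by the condition $\rmm\big(\bigcap_{s\in J}s^{-1}\tilde{B}_{\omega(s)}\big)\ge\delta^{|J|}$ for all $\omega$, on positive-density sets $J$. That is not the definition used here (following Kerr--Li): positive upper independence density is phrased via independence sets \emph{relative to} Borel sets $D'$ with $\rmm(D')\ge 1-\eta$, i.e.\ via non-emptiness of $D'\cap\bigcap_{s\in J}s^{-1}\tilde{B}_{\omega(s)}$, with the quantifier running over all such $D'$; no exponential lower bound on the $\rmm$-measure of the intersections is available, and you neither prove nor cite an equivalence. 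The same mismatch reappears at the output end: to invoke Theorem~\ref{T-measure IE basic}(2) you need positive upper $\mu$-independence density of $(\overline{A_1},\dots,\overline{A_k})$ in the ``relative to every $D\in\sB(\mu,\delta)$'' sense, whereas the statement you aim to extract is again a $\mu$-measure lower bound of the form $(\delta')^{|J'|}$, which is not the required notion.

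More seriously, the step you yourself identify as the principal obstacle --- upgrading the separate bounds $\nu(s^{-1}A_{\omega(s)})\ge\alpha$ into joint independence for $\mu$ --- is precisely the content of the proof, and it is not obtained by invoking Lemma~\ref{L-func to indep} as a black box: that lemma does not convert single-set measure bounds into intersection bounds. Its input must be, for each almost equidistributed $\psi\in\sR(F,k)$, a function $f_\psi$ with controlled norm and large average, and in this setting one must first produce, for each $\psi$, a point $x_\psi$ lying in a \emph{prescribed} set $D$ of $\mu$-measure close to $1$ with $\frac1k\sum_{j\in[k]}\frac{1}{|\psi^{-1}(j)|}\sum_{s\in\psi^{-1}(j)}f_j(sx_\psi)$ large; one then sets $f_\psi(s)=f_{\psi(s)}(sx_\psi)$, obtains a positive-density $J$ and a threshold vector $\ot\in T$ making $J$ an independence set for $\oA_{\of,\ot}$ \emph{relative to} $D$, and finally pigeonholes over the finite set $T$ to handle the universal quantifier over $D$. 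Producing such $x_\psi\in D$ from the $\rmm$-data is where the barycenter identity $\mu(B)=\int_{\cM(X)}\nu(B)\,d\rmm(\nu)$ enters (Lemmas~\ref{L-barycenter} and \ref{L-measure func to func}): one passes from $D\in\sB(\mu,\eta\delta)$ to a set $D'\in\sB(\rmm,\eta)$ of measures giving $D$ mass at least $1-\delta$, picks $\omega\in D'\cap\bigcap_{s}s^{-1}U_{\psi(s)}$, and restricts the relevant integral to $D$. None of this mechanism appears in your sketch, so as written the argument for (1) --- and hence for (2), which depends on it --- has a genuine gap at its central step.
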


Again, the Glasner-Thouvenot-Weiss result mentioned above regarding $h_\mu(X)$ and $h_{\rmm}(\cM(X))$
is a direct consequence of the case $k=2$ of Theorem~\ref{T-measure IE main}. 

In the rest of this introduction, we shall let $\Gamma$ be an arbitrary countably infinite discrete group, not assuming amenability of $\Gamma$. For any sequence $\mathfrak{s}$ in $\Gamma$, one has the sequence topological entropy $h_{\rm top}(X; \mathfrak{s})$ of $\Gamma\curvearrowright X$ with respect to $\mathfrak{s}$ defined \cite{Goodman}, 
much like the definition of the topological entropy for actions of $\Zb$. We say the action $\Gamma\curvearrowright X$ is {\it null} if $h_{\rm top}(X; \mathfrak{s})=0$
for every sequence $\mathfrak{s}$ in $\Gamma$. Huang, Li, Shao and Ye introduced the notion of sequence entropy pairs \cite{HLSY03}, similar to the definition of entropy pairs. A pair of distinct points $(x_1, x_2)$ in $X$ is called a {\it sequence entropy pair} if  for any disjoint closed neighborhoods $U_1$ and $U_2$ of $x_1$ and $x_2$ respectively, the open cover $\{X\setminus U_1, X\setminus U_2\}$ of $X$ has positive sequence topological entropy for some sequence $\mathfrak{s}$ in $\Gamma$. Huang et al. showed that the action $\Gamma\curvearrowright X$ is nonnull if and only if there exist sequence entropy pairs \cite{HLSY03}. The notion of sequence entropy pairs also has natural extension to sequence entropy tuples of length $k$ for all integers $k\ge 2$ \cite{HMY}. It turns out that sequence entropy tuples also admit a combinatorial description: Kerr and Li introduced the notion of IN-tuples of length $k$ for all integers $k\ge 1$ using combinatorial independence (see Section~\ref{SS-IN} below), and showed that for each $k\ge 2$ the sequence entropy tuples of length $k$ are exactly the non-diagonal IN-tuples of length $k$ \cite{KL07}. Furthermore, Huang and Ye showed that the so-called maximal pattern entropy of $\Gamma\curvearrowright X$ can be calculated using the maximal length of sequence entropy tuples or IN-tuples with distinct coordinates \cite{HY09}. We denote by $\IN_k(X)$ the set of all IN-tuples of $X$ with length $k$.

It is natural to inquire about the analogue of Theorem~\ref{T-IE main} for IN-tuples. 
Treating $X$ as a closed subset of $\cM(X)$, we still have 
$\IN_k(X)\subseteq X^k\subseteq \cM(X)^k$ and $\IN_k(\cM(X))\subseteq \cM(X)^k$.
However, $\IN_k(\cM(X))$ could fail to be convex in general, even for minimal subshifts of $\Gamma=\Zb$ (see Section~\ref{S-example} for an example). Thus one cannot expect the direct analogue of Theorem~\ref{T-IE main} to hold for IN-tuples. In fact, 
the map $\cM(X^k)\rightarrow \cM(X)^k$ sending $\mu$ to $(\mu^{(1)}, \dots, \mu^{(k)})$ is affine, whence the set of $(\mu^{(1)}, \dots, \mu^{(k)})\in \cM(X)^k$ for all $\mu\in \cM(\IN_k(X))$ is always convex. Thus $\IN_k(\cM(X))$ cannot be equal to this set unless $\IN_k(\cM(X))$ is convex. This leaves little hope for an analogue of part (3) of Theorem~\ref{T-IE main} for IN-tuples. 
Nevertheless, it is somehow surprising that one can still obtain an explicit description of elements of $\IN_k(\cM(X))$ like in part (3) of Theorem~\ref{T-IE main}, though we have to replace $\cM(\IN_k(X))$ by $\cM_\IN(X^k)$, which is a closed subset of $\cM(\IN_k(X))$ and whose definition involves not only $\IN_k(X)$ but $\IN_k(X^n)$ for all $n\in \Nb$ (see Definition~\ref{D-IN measure}). Our main result about IN-tuples is the following. 

\begin{theorem} \label{T-IN main}
For each $k\in \Nb$, the following hold:
\begin{enumerate}
\item $\IN_k(\cM(X))$ is contained in the closed convex hull of $\IN_k(X)$ in $\cM(X)^k$. 
\item $\IN_k(\cM(X))\cap X^k=\IN_k(X)$.
\item $\IN_k(\cM(X))=\{(\mu^{(1)}, \dots, \mu^{(k)}):\mu\in \cM_\IN(X^k)\}$.
\end{enumerate}
\end{theorem}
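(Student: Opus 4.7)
The plan is to prove the three parts in the order (2), (3), (1); part (1) will follow from (3) together with the inclusion $\cM_\IN(X^k)\subseteq \cM(\IN_k(X))$ (which is part of the definition of $\cM_\IN(X^k)$). For part (2), the map $\delta\colon X\to \cM(X)$, $x\mapsto \delta_x$, is a $\Gamma$-equivariant topological embedding onto a closed subset, and the weak$^*$ topology on the image agrees with the original topology. For the inclusion $\IN_k(X)\subseteq \IN_k(\cM(X))\cap X^k$, any weak$^*$ neighborhood $V_i$ of $\delta_{x_i}$ pulls back under $\delta$ to a neighborhood $U_i$ of $x_i$, and every independence set for $(U_1,\ldots,U_k)$ is, by equivariance of $\delta$, an independence set for $(V_1,\ldots,V_k)$. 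For the reverse inclusion, given any open $U_i\ni x_i$, I would use continuous cut-off functions to manufacture a weak$^*$-open $V_i\ni\delta_{x_i}$ with $V_i\cap\delta(X)\subseteq\delta(U_i)$; witnesses $\rho_\sigma\in\cM(X)$ for $(V_i)$ then yield witnesses in $X$ for $(U_i)$ by picking points from appropriate supports.

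For part (3), the direction ($\supseteq$) is the easier one: a basic weak$^*$ neighborhood of $\mu^{(i)}$ in $\cM(X)$ is prescribed by finitely many $f\in C(X)$ and $\varepsilon>0$; pulling these back along the coordinate projections $X^k\to X$ shows that the required independence sets for $(V_1,\ldots,V_k)$ in $\cM(X)$ are computed by integrating against $\mu$, so the compatibility condition built into Definition~\ref{D-IN measure} (which relates $\mu$ to $\IN_k(X^n)$ for all $n\in\Nb$) directly supplies them. The direction ($\subseteq$) is the main technical step: given $(\nu_1,\ldots,\nu_k)\in\IN_k(\cM(X))$, one must construct $\mu\in\cM_\IN(X^k)$ with $\mu^{(i)}=\nu_i$. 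Here the new combinatorial lemma advertised in the abstract is brought to bear; its role is to convert the raw combinatorial independence of $(\nu_1,\ldots,\nu_k)$ in $\cM(X)$ (which only constrains integrals of continuous functions against shifts of witnessing measures) into combinatorial independence at the level of points of $X^n$, simultaneously for all $n$. One then assembles $\mu$ as a weak$^*$ cluster point of empirical measures extracted from these refined witnesses along a suitably diagonalized sequence of neighborhoods and sample sizes.

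Part (1) now falls out: by (3), any $(\nu_1,\ldots,\nu_k)\in\IN_k(\cM(X))$ equals $(\mu^{(1)},\ldots,\mu^{(k)})$ for some $\mu\in\cM_\IN(X^k)\subseteq\cM(\IN_k(X))$; since the pushforward map $\cM(X^k)\to\cM(X)^k$, $\mu\mapsto(\mu^{(1)},\ldots,\mu^{(k)})$, is continuous and affine, $(\mu^{(1)},\ldots,\mu^{(k)})$ is the barycenter of a Borel probability measure concentrated on $\IN_k(X)\subseteq\cM(X)^k$, and hence lies in the closed convex hull of $\IN_k(X)$.

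The main obstacle is the ($\subseteq$) direction of part (3). In the IE setting one has a positive-density hypothesis on the independence sets which underwrites a Rosenthal-type $\ell_1$ argument to upgrade weak$^*$ information to pointwise information; for IN no such density is available, and a fundamentally new combinatorial input is required. Correctly setting up the combinatorial lemma so that the pointwise witnesses it produces in $X$ assemble into a measure on $X^k$ meeting the $\cM_\IN$ condition at every level $X^n$ simultaneously is the crux of the whole proof.
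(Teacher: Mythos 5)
Your overall architecture (prove (3), deduce (1) and (2)) is the same as the paper's, and the easy directions are handled correctly in outline: $\IN_k(X)\subseteq \IN_k(\cM(X))\cap X^k$, the deduction of (1) from (3) via affinity/continuity of the pushforward map, and the inclusion $\{(\mu^{(1)},\dots,\mu^{(k)}):\mu\in \cM_\IN(X^k)\}\subseteq \IN_k(\cM(X))$ (which in the paper is made precise by approximating $\mu$ by finitely supported IN-measures, pushing $\IN_k(X^n)$ forward under the factor maps $\pi_\olambda: X^n\to \cM_\olambda(X)$, and using closedness of $\IN_k(\cM(X))$; your ``integrate against $\mu$'' sketch needs this approximation step). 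The genuine gap is the hard inclusion $\IN_k(\cM(X))\subseteq\{(\mu^{(1)},\dots,\mu^{(k)}):\mu\in \cM_\IN(X^k)\}$, where your proposal is only a statement of intent: you say the combinatorial lemma ``converts'' weak$^*$ independence into pointwise independence ``simultaneously for all $n$'' and that setting this up is the crux, but you do not supply the mechanism, and the mechanism is not the combinatorial lemma alone. A neighborhood of $(\nu_1,\dots,\nu_k)$ is cut out by finitely many affine functionals $\of^{(1)},\dots,\of^{(m)}$, and a single IN-tuple of $X$ produced by the combinatorial lemma only satisfies one averaged constraint, never all $m$ at once — indeed $\IN_k(\cM(X))$ need not be convex, which is exactly why $\cM(\IN_k(X))$ must be replaced by $\cM_\IN(X^k)$. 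The paper's proof (Lemma~\ref{L-IN dense}) needs two further ideas absent from your plan: (a) a finite-dimensional separation lemma (Lemma~\ref{L-convex intersection}) applied to a finite net $\Lambda\subseteq\orP_m$ of averaging weights, which guarantees that some convex combination of the IN-tuples obtained for the averaged functionals $\of^{(\olambda)}$, $\olambda\in\Lambda$, satisfies all $m$ constraints simultaneously; and (b) a simultaneous version of the combinatorial step (Lemma~\ref{L-IN simul}), producing one common independence set for all the level-set tuples at once, so that the whole family $(x_j^{(\olambda)})$ forms a single IN-tuple of $X^\Lambda$ — this joint independence across $\Lambda$ is precisely what makes the resulting finitely supported measure an IN-measure (an element of $\cA_{k,|\Lambda|}$) rather than merely a measure carried by $\IN_k(X)$.

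Two further points. First, your plan to ``assemble $\mu$ as a weak$^*$ cluster point of empirical measures'' only yields, neighborhood by neighborhood, that $(\nu_1,\dots,\nu_k)$ lies in the closure of the pushforwards of finitely supported IN-measures; upgrading this to an exact representation $\nu_j=\mu^{(j)}$ requires knowing that $\cM_\IN(X^k)$ is closed (Proposition~\ref{P-IN measure basic}) and that its pushforward is therefore compact, hence already closed (Lemma~\ref{L-IN closure vs push}); this step is missing from your outline, though it is not the hard part. Second, in your direct argument for (2), ``picking points from appropriate supports'' hides a union bound: knowing $s\rho_\sigma\in V_{\sigma(s)}$ for each $s\in J$ does not by itself give one point $x$ with $sx\in U_{\sigma(s)}$ for all $s\in J$; you must let the cutoff neighborhoods shrink with the target cardinality $n$ (say $\nu(f_i)>1-\tfrac{1}{2n}$ with $f_i$ supported in $U_i$) so that $\rho_\sigma\bigl(\bigcap_{s\in J}s^{-1}U_{\sigma(s)}\bigr)>0$. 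With that fix the direct proof of (2) works and is a mild shortcut compared with the paper, which instead deduces (2) from (3).
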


Though $\cM_\IN(X^k)$ is more complicated than $\cM(\IN_k(X))$, part (3) of Theorem~\ref{T-IN main} still enables us to deduce the following analogue of Corollary~\ref{C-IE finite support}.

\begin{corollary} \label{C-IN finite support}
Let $k\in \Nb$.
For any $N_1, \dots, N_k\in \Nb$, setting $N=\prod_{j=1}^k N_j$, one has
$$ (\cM_{N_1}(X)\times \cdots \times \cM_{N_k}(X))\cap \IN_k(\cM(X))\subseteq \IN_k(\cM_N(X)).$$
\end{corollary}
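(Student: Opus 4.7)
The plan is to mirror the proof of Corollary~\ref{C-IE finite support}, invoking part~(3) of Theorem~\ref{T-IN main} in place of part~(3) of Theorem~\ref{T-IE main} and making essential use of the stronger definition of $\cM_\IN$. Given $(\mu_1, \dots, \mu_k) \in (\cM_{N_1}(X) \times \cdots \times \cM_{N_k}(X)) \cap \IN_k(\cM(X))$, part~(3) of Theorem~\ref{T-IN main} yields $\mu \in \cM_\IN(X^k)$ with $\mu^{(j)} = \mu_j$ for every $j$. The support of $\mu$ is contained in $\supp(\mu_1) \times \cdots \times \supp(\mu_k)$, whose cardinality is at most $N$, so one may write $\mu = \sum_{i=1}^r a_i \delta_{y^i}$ with distinct $y^i = (y^i_1, \dots, y^i_k) \in X^k$, positive weights $a_i$, and $r \le N$. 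The key claim I would extract from the definition of $\cM_\IN(X^k)$ (which involves $\IN_k(X^n)$ for all $n \in \Nb$) is that the $k$-tuple $(\hat y_1, \dots, \hat y_k) \in (X^r)^k$ defined by $\hat y_j = (y^1_j, \dots, y^r_j)$ is an IN-tuple of $X^r$ under the diagonal $\Gamma$-action.

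To verify $(\mu_1, \dots, \mu_k) \in \IN_k(\cM_N(X))$, fix arbitrary open neighborhoods $V_j$ of $\mu_j$ in $\cM_N(X)$. Since $\mu_j = \sum_i a_i \delta_{y^i_j}$ is a finite combination of Dirac masses, a standard weak-$*$ continuity argument with the finitely many test functions defining $V_j$ produces open neighborhoods $U_{i,j} \subseteq X$ of $y^i_j$ such that any measure of the form $\sum_i a_i \delta_{x^*_i}$ with $x^*_i \in U_{i,j}$ lies in $V_j$. The IN-tuple property of $(\hat y_1, \dots, \hat y_k)\in \IN_k(X^r)$, applied to the product neighborhoods $W_j = U_{1,j} \times \cdots \times U_{r,j} \subseteq X^r$, then supplies arbitrarily large finite independence sets $J \subseteq \Gamma$: for every $\sigma \colon J \to \{1, \dots, k\}$ there exists $(x^{*,1}, \dots, x^{*,r}) \in X^r$ with $g x^{*,i} \in U_{i, \sigma(g)}$ for all $g \in J$ and all $i$. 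Setting $\nu^* = \sum_{i=1}^r a_i \delta_{x^{*,i}} \in \cM_N(X)$ (its support has at most $r \le N$ points), the choice of the $U_{i,j}$ forces $g\nu^* \in V_{\sigma(g)}$ for every $g \in J$, so $J$ is an independence set for $(V_1, \dots, V_k)$ inside $\cM_N(X)$; since $J$ is arbitrarily large, this gives $(\mu_1, \dots, \mu_k) \in \IN_k(\cM_N(X))$.

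The main obstacle is the extraction of the joint IN-property $(\hat y_1, \dots, \hat y_k) \in \IN_k(X^r)$ from $\mu \in \cM_\IN(X^k)$. This is precisely the extra strength that $\cM_\IN(X^k)$ carries over $\cM(\IN_k(X))$, and is exactly the reason the definition involves $\IN_k(X^n)$ for all $n$ rather than just $n=1$; once this consequence of Definition~\ref{D-IN measure} is available, the rest of the argument is the routine weak-$*$ neighborhood computation above.
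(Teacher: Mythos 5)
Your proposal is correct and takes essentially the paper's route: Theorem~\ref{T-IN main}.(3) produces $\mu\in\cM_\IN(X^k)$ with the given marginals, the inclusion $\supp(\mu)\subseteq\supp(\mu_1)\times\cdots\times\supp(\mu_k)$ bounds $|\supp(\mu)|$ by $N$, and the rest is the passage from this finitely supported IN-measure to an IN-tuple of $\cM_N(X)$. The ``main obstacle'' you flag is in fact immediate — applying Definition~\ref{D-IN measure} with $n=r$ to the $r$ support points gives $(\hat y_1,\dots,\hat y_k)\in\IN_k(X^r)$ verbatim — and your concluding weak$^*$ neighborhood computation is just an unpacked version of what the paper gets by citing Lemmas~\ref{L-IN finite support} and \ref{L-IN convex}, i.e.\ the factor and subset properties of IN-tuples (Theorem~\ref{T-IN basic}.(4),(5)) applied to the map $\pi_\olambda\colon X^N\to\cM_\olambda(X)\subseteq\cM_N(X)$.
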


Given a $\Gamma$-invariant Borel probability measure $\mu$ on $X$, one can also define the sequence measure entropy $h_\mu(X; \mathfrak{s})$ of $\Gamma\curvearrowright (X, \mu)$ with respect to any sequence $\mathfrak{s}$ in $\Gamma$ \cite{Kushnirenko}, much as one defines the measure entropy for measure-preserving actions of $\Zb$. We say the action $\Gamma\curvearrowright (X, \mu)$ is {\it null} if $h_\mu(X; \mathfrak{s})=0$
for every sequence $\mathfrak{s}$ in $\Gamma$. The action $\Gamma\curvearrowright (X, \mu)$ is null 
if and only if it 
has discrete spectrum in the sense that every $f\in L^2(X, \mu)$ has compact orbit closure \cite{Kushnirenko, KL09}. Huang, Maass and Ye introduced the notion of $\mu$-sequence entropy tuples of length $k$ for integers $k\ge 2$ \cite{HMY}, much like the definition of measure entropy tuples. In the case $k=2$, a pair of distinct points $(x_1, x_2)$ in $X$ is called a {\it $\mu$-sequence entropy pair} if for any Borel partition $\{A_1, A_2\}$ of $X$ such that $A_j$ is a neighborhood of $x_j$ for $j=1, 2$, the sequence entropy of $\{A_1, A_2\}$ with respect to $\mu$ and some sequence $\mathfrak{s}$ in $\Gamma$ is positive. Huang et al. showed that the 
action $\Gamma\curvearrowright (X, \mu)$ is nonnull 
if and only if there exist $\mu$-sequence entropy pairs \cite{HMY}. The measure sequence entropy tuples and the sequence entropy tuples satisfy half of the variational principle: the measure sequence entropy tuples are automatically sequence entropy tuples \cite{HMY}, but the union of the measure sequence entropy tuples for all invariant Borel probability measures may be a proper subset of the set of sequence entropy tuples. It turns out that the $\mu$-sequence entropy tuples also admit a combinatorial description: Kerr and Li introduced the notion of $\mu$-IN-tuples of length $k$ for every integer $k\ge 1$ using combinatorial independence (see Section~\ref{SS-measure IN} below), and showed that for each $k\ge 2$ the $\mu$-sequence entropy tuples of length $k$ are exactly the non-diagonal $\mu$-IN-tuples of length $k$ \cite{KL09}. We denote by $\IN^\mu_k(X)$ the set of all $\mu$-IN-tuples of $X$ with length $k$. Then we have the following analogue of Theorem~\ref{T-measure IE main} for $\mu$-IN-tuples. 

\begin{theorem} \label{T-measure IN main}
Let $\rmm\in \cM_\Gamma(\cM(X))$ and let $\mu\in \cM_\Gamma(X)$ be the barycenter of $\rmm$. For each $k\in \Nb$, the following hold:
\begin{enumerate}
\item $\IN^\rmm_k(\cM(X))$ is contained in the closed convex hull of $\IN^\mu_k(X)$ in $\cM(X)^k$.
\item $\IN^\rmm_k(\cM(X))\cap X^k\subseteq \IN^\mu_k(X)$.
\end{enumerate}
\end{theorem}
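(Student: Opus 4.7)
The plan is to parallel the strategy used for Theorem~\ref{T-measure IE main}, adapted to the sequence-entropy regime in which one requires arbitrarily large independence sets rather than independence sets of positive density.

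I would first dispose of part (2), which rests only on the barycenter relation. Assume that $(x_1,\dots,x_k)\in X^k\cap\IN^\rmm_k(\cM(X))$, and fix Borel neighborhoods $A_j$ of $x_j$ in $X$ together with an integer $N\ge 1$. Setting $\epsilon=1/(2N)$, the weak$^*$-open sets
\[
\tilde{A}_j:=\{\nu\in\cM(X):\nu(A_j)>1-\epsilon\}
\]
are neighborhoods of $\delta_{x_j}$ in $\cM(X)$. The $\rmm$-IN-tuple hypothesis produces an independence set $J\subseteq\Gamma$ with $|J|\ge N$ such that $\rmm\bigl(\bigcap_{g\in J}g^{-1}\tilde{A}_{\omega(g)}\bigr)>0$ for every $\omega\colon J\to\{1,\dots,k\}$; after passing to a subset of cardinality exactly $N$ (the independence property is inherited by subsets), for any $\nu$ in this intersection a union bound yields $\nu\bigl(\bigcap_{g\in J}g^{-1}A_{\omega(g)}\bigr)>1-N\epsilon=\tfrac{1}{2}$. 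Integrating this inequality against $\rmm$ over the intersection via the barycenter identity $\mu(B)=\int\nu(B)\,d\rmm(\nu)$ produces $\mu\bigl(\bigcap_{g\in J}g^{-1}A_{\omega(g)}\bigr)>0$ for every $\omega$, which is exactly the definition of $(x_1,\dots,x_k)\in\IN^\mu_k(X)$.

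For part (1), I would fix $(\nu_1,\dots,\nu_k)\in\IN^\rmm_k(\cM(X))$ together with a weak$^*$-open convex neighborhood $V_1\times\cdots\times V_k$ of this tuple in $\cM(X)^k$, and aim to exhibit a convex combination of elements of $\IN^\mu_k(X)$ inside $V_1\times\cdots\times V_k$. After refining, each $V_j$ may be taken to be cut out by a finite collection of continuous test functions on $X$. The $\rmm$-IN-tuple property then supplies, for every $N$, an independence set $J\subseteq\Gamma$ with $|J|\ge N$ and positive-$\rmm$-measure sets in $\cM(X)$ witnessing the independence pattern for suitable sub-neighborhoods of the $\nu_j$. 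I would then invoke the new combinatorial lemma of the paper to transform this $\cM(X)$-level independence-with-$\rmm$-control into $X$-level independence-with-$\mu$-control: for each coloring $\omega$, the lemma should produce a family of points $\{x_\omega(g):g\in J\}\subseteq X$ realizing the required positive-$\mu$-measure independence pattern, so that these points constitute $\mu$-IN-tuples in $X^k$ and their empirical averages along $J$ approximate the components $\nu_j$ in the prescribed weak$^*$-neighborhoods. Passing to a limit as the $V_j$ shrink and $N\to\infty$, closedness of $\IN^\mu_k(X)$ in $X^k$ together with weak$^*$-compactness of $\cM(X)^k$ then place $(\nu_1,\dots,\nu_k)$ inside the closed convex hull of $\IN^\mu_k(X)$.

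The main obstacle lies in applying the combinatorial lemma in this qualitative IN regime. In the IE setting the lemma is fed quantitative positive-density estimates; here only positivity of $\rmm$-measure on single intersections is available, and the lemma must be calibrated so as to still output strictly positive $\mu$-measure on the resulting $X$-level intersections. A secondary issue is the matching of the empirical averages with the prescribed weak$^*$-convex neighborhoods; this requires a Fubini-type reorganization between the $\mu$-integration provided by the barycenter and the combinatorial averaging over $J$, a step which in the IE setting was automatic once a quantitative density was in hand but which here must be done at the level of individual positive-measure witnesses.
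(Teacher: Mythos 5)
Your argument for part (2) proves the wrong statement, because it substitutes a weaker notion of $\mu$-IN-tuple for the one the theorem uses. In this paper $(x_1,\dots,x_k)\in\IN^\mu_k(X)$ means that every product neighborhood $(A_1,\dots,A_k)$ has \emph{positive sequential $\mu$-independence density}: there are $\delta,c>0$ such that for every $M$ there is $F\in\cF(\Gamma)$ with $|F|\ge M$ so that for \emph{every} $D\colon\Gamma\to\sB(X)$ with $\inf_s\mu(D_s)\ge 1-\delta$ there is $J\subseteq F$ with $|J|\ge c|F|$ and $\bigcap_{s\in J}(D_s\cap s^{-1}A_{\sigma(s)})\neq\emptyset$ for all $\sigma\colon J\to[k]$. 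What you derive is only that for arbitrarily large finite $J$ every pattern intersection $\bigcap_{g\in J}g^{-1}A_{\omega(g)}$ has positive $\mu$-measure; this is not the definition, and it does not imply it: those measures can be arbitrarily small, so they give no independence relative to sets $D_s$ of measure $1-\delta$, and there is no density constant $c$ relative to an ambient $F$. The same issue occurs at the first step: the $\rmm$-IN hypothesis does not directly give you positive $\rmm$-measure of $\bigcap_g g^{-1}\tilde A_{\omega(g)}$ (one can extract this by applying the relative-$D$ definition with $D_s$ equal to the complement of the union of the finitely many null pattern intersections inside $F$), but in passing to bare positivity you throw away exactly the quantitative $D$- and density-structure that the conclusion demands. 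In the paper, part (2) is not proved directly at all: it follows from part (1) together with Milman's theorem (Lemma~\ref{L-ext}) and $X=\ext\,\cM(X)$, exactly as in Theorem~\ref{T-measure IE main}.

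For part (1) you only outline a strategy and explicitly leave the decisive step unresolved (how the combinatorial lemma is to be "calibrated" in the qualitative regime, and how empirical averages are matched to the neighborhoods), so there is no proof there either; moreover the points your sketch would produce are never shown to be $\mu$-IN-tuples, which again requires the relative-$D$/density condition rather than positive-measure intersections. The paper's actual route: by Hahn--Banach separation (Lemma~\ref{L-Hahn Banach measure IN}) it suffices to show that $\onu\in\IN^\rmm_k(\cM(X))$ and $\of$ with $\sum_j\nu_j(f_j)>0$ force some $(x_1,\dots,x_k)\in\IN^\mu_k(X)$ with $\sum_jf_j(x_j)>0$. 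Lemma~\ref{L-measure IN func to func} turns the barycenter relation into lower bounds, on independence sets of $(U_1,\dots,U_k)$ relative to a well-chosen $D'\in\sB'(\rmm,\eta)$, for averages of the penalized functions $f_{\psi(s)}(sx_\psi){\bf 1}_{D_s}(x_\psi)-(k-1)C{\bf 1}_{X\setminus D_s}(x_\psi)$; the key combinatorial Lemma~\ref{L-func to indep} (with $p=\infty$) then yields $J$ of proportional size and thresholds $\ot\in T$ making $J$ an independence set for $\oA_{\of,\ot}$ \emph{relative to} the given $D$ (the $-(k-1)C$ penalty forces the witnessing point into $\bigcap_{s\in J}D_s$); a pigeonhole over the finite set $T$, intersecting the corresponding $D_\ot$, gives positive sequential $\mu$-independence density for some $\oA_{\of,\ot}$, and Theorem~\ref{T-measure IN basic}(2) supplies the desired tuple. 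Your "approximate $\nu_j$ by empirical averages" idea is closer in spirit to the topological argument via Lemmas~\ref{L-convex intersection} and \ref{L-IN dense}, but transplanting it to the measure setting is precisely what your sketch does not accomplish.
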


Rosenthal's characterization of Banach spaces containing $\ell_1$ isomorphically \cite{Rosenthal74, Rosenthal78} initiated a line of research based on Ramsey methods that led to the work of Bourgain, Fremlin and Talagrand on pointwise compact sets of Baire class one functions \cite{BFT} (see \cite{Gowers, Todorcevic} for general references). Transferring these ideas to dynamical systems, 
Köhler introduced the notion of tameness \cite{Kohler}. The action $\Gamma\curvearrowright X$ is said to be {\it untame} if there are an $f\in C(X)$
and an infinite subset $F$ of $\Gamma$ such that the map sending $s\in F$ to $sf$ extends to a Banach space isomorphism from $\ell_1(F)$ to its image in $C(X)$. 
The tameness turns out to be a highly interesting dynamical property and is closely related to the Ellis semigroup \cite{FGJO, FKY, Glasner06, Glasner07, Glasner18, GM06, GM18, GM22, Huang06, KL07}. For instance, the action $\Gamma\curvearrowright X$ is tame if and only if its Ellis semigroup has cardinality at most that of the real numbers \cite{Glasner06}. Inspired by the combinatorial independence in Rosenthal's work, Kerr and Li introduced the notion of IT-tuples of length $k$ for integers $k\ge 1$ (see Section~\ref{SS-IT} below), and showed that $\Gamma\curvearrowright X$ is untame if and only if there exist non-diagonal IT-pairs \cite{KL07}. We denote the set of all IT-tuples of $X$ with length $k$ by $\IT_k(X)$. By definition IT-tuples are IN-tuples. Very recently, Liu, Xu and Zhang showed the surprising result that measure sequence entropy tuples are IT-tuples \cite{LXZmeasure}. 

As the case of IN-tuples, in general the set $\IT_k(\cM(X))$ may fail to be convex (see Section~\ref{S-example} for an example). Though the combinatorics behind IN-tuples and IT-tuples are quite different, it turns out that the analogue of Theorem~\ref{T-IN main} still holds for IT-tuples. In the theorem below $\cM_\IT(X^k)$ is certain closed subset of $\cM(\IT_k(X))$ defined using $\IT_k(X^n)$ for all $n\in \Nb$ (see Definition~\ref{D-IT measure}).

\begin{theorem} \label{T-IT main}
For each $k\in \Nb$, the following hold:
\begin{enumerate}
\item $\IT_k(\cM(X))$ is contained in the closed convex hull of $\IT_k(X)$ in $\cM(X)^k$. 
\item $\IT_k(\cM(X))\cap X^k=\IT_k(X)$.
\item $\IT_k(\cM(X))=\{(\mu^{(1)}, \dots, \mu^{(k)}):\mu\in \cM_\IT(X^k)\}$.
\end{enumerate}
\end{theorem}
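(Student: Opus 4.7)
The plan is to follow the same architecture as the proof of Theorem~\ref{T-IN main}, with the IN-combinatorial lemma replaced by its IT-analogue, in which arbitrarily large finite independence sets are upgraded to a single infinite one through a Rosenthal/Bourgain-Fremlin-Talagrand style dichotomy (consistent with the origin of IT-tuples in Rosenthal's $\ell_1$-theorem). I will first establish (3), deduce (1) as a one-line corollary, and handle (2) by a more direct argument.

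For (2), the inclusion $\IT_k(X) \subseteq \IT_k(\cM(X))\cap X^k$ is handled by noting that for any open neighborhoods $U_i$ of $\delta_{x_i}$ in $\cM(X)$, the sets $U_i\cap X$ under the Dirac embedding are open neighborhoods of $x_i$ in $X$, so any infinite independence set for $(U_1\cap X,\ldots,U_k\cap X)$ in $X$ is automatically one for $(U_1,\ldots,U_k)$ in $\cM(X)$. For the reverse inclusion, I inflate each open neighborhood $V_i$ of $x_i$ in $X$ to the open set $\widetilde{V}_i=\{\nu\in\cM(X):\nu(V_i)>1-\varepsilon\}$ in $\cM(X)$ (openness uses the lower semicontinuity of $\nu\mapsto\nu(V_i)$), obtain an infinite independence set $F$ for $(\widetilde{V}_1,\ldots,\widetilde{V}_k)$ from the IT-hypothesis in $\cM(X)$, and for each $\omega:F\to\{1,\ldots,k\}$ extract an actual point of $X$ in $\bigcap_{s\in F'}s^{-1}V_{\omega(s)}$ for some cofinite $F'\subseteq F$ by a pigeonhole-type argument on the mass of the witnessing measures.

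For (3), the direction $\supseteq$ will follow almost tautologically from Definition~\ref{D-IT measure}, since $\cM_\IT(X^k)$ is engineered via the hypothesis involving $\IT_k(X^n)$ for all $n$ precisely so that the marginal map $\mu\mapsto(\mu^{(1)},\ldots,\mu^{(k)})$ carries it into $\IT_k(\cM(X))$. The direction $\subseteq$ is the substantive step: starting from $(\mu_1,\ldots,\mu_k)\in\IT_k(\cM(X))$, I will apply the combinatorial lemma at each finite level $n$ to build a measure $\nu_n$ supported on $\IT_k(X^n)$ whose appropriate marginal projections weak$^*$-approximate $(\mu_1,\ldots,\mu_k)$, and then take a weak$^*$-cluster point $\mu\in\cM_\IT(X^k)$; the definition of $\cM_\IT(X^k)$ is designed to be stable under such cluster points. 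Part (1) is then immediate from (3): for $\mu\in\cM_\IT(X^k)\subseteq\cM(\IT_k(X))$,
$$(\mu^{(1)},\ldots,\mu^{(k)})=\int_{\IT_k(X)}(\delta_{x_1},\ldots,\delta_{x_k})\,d\mu(x_1,\ldots,x_k),$$
exhibiting the tuple as a barycenter of $\IT_k(X)$ inside the locally convex space $\cM(X)^k$.

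The main obstacle is the $\subseteq$ direction of (3), specifically extracting a \emph{single} infinite independence set simultaneously compatible with all finite approximations of $(\mu_1,\ldots,\mu_k)$. Unlike the IN-case, where arbitrarily large finite independence sets suffice and a clean compactness/diagonal argument closes the proof, the IT-case is not captured by finite combinatorics alone: a naive cluster-point procedure yields only IN. The authors' new combinatorial lemma should supply the precise mechanism for this upgrade, presumably by an $\ell_1$-dichotomy applied to indicator sequences arising from the approximating measures on $X^n$, thereby producing an infinite independence set in $\Gamma$ from the measure-theoretic independence in $\cM(X)$.
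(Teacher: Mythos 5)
Your overall plan for parts (1) and (3) is essentially the paper's: the substantive step is a density statement (every neighborhood of an IT-tuple of $\cM(X)^k$ meets the set of marginal tuples of finitely supported IT-measures, i.e.\ the sets $\cB_{k,n}$), proved by upgrading finite data to a single infinite independence set and then closing up with a compactness argument using that $\cM_\IT(X^k)$ is closed; part (1) then follows as you say. One correction of attribution: the authors' new combinatorial Lemma~\ref{L-func to indep} is \emph{not} used in the IT case; the upgrade is done purely by the $k$-tuple Rosenthal dichotomy (Lemma~\ref{L-IT combinatorial}) combined with Bergelson's intersection lemma (Lemma~\ref{L-finite intersection}), packaged as Lemma~\ref{L-func to infinite}, together with the same Hahn--Banach/convexity reduction (Lemma~\ref{L-convex intersection}) as in the IN case. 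Also, the $\supseteq$ direction of (3) is easy but not tautological: one needs the factor and subset properties of IT-tuples (Theorem~\ref{T-IT basic}) to see that $(\pi_\olambda\times\cdots\times\pi_\olambda)(\IT_k(X^n))\subseteq\IT_k(\cM(X))$ for finitely supported IT-measures, and then closedness of $\cM_\IT(X^k)$ and of $\IT_k(\cM(X))$ to pass to general ones.

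The genuine gap is your direct argument for $\IT_k(\cM(X))\cap X^k\subseteq\IT_k(X)$ in part (2). From an infinite independence set $F$ for $(\widetilde V_1,\dots,\widetilde V_k)$ you get, for each $\omega\colon F\to[k]$, a measure $\nu_\omega$ with $\nu_\omega(s^{-1}V_{\omega(s)})>1-\varepsilon$ for all $s\in F$; but no pigeonhole on these masses yields a point of $\bigcap_{s\in F'}s^{-1}V_{\omega(s)}$ for a cofinite $F'$. The total defect $\sum_{s\in F}\nu_\omega\bigl(X\setminus s^{-1}V_{\omega(s)}\bigr)$ is unbounded, and the most one can extract (say by reverse Fatou) is a point lying in $s^{-1}V_{\omega(s)}$ for infinitely many $s$, where that infinite subset depends on both the point and $\omega$. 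Since an independence set must serve all (uncountably many) maps simultaneously, these $\omega$-dependent subsets do not assemble into one infinite independence set for $(V_1,\dots,V_k)$; converting measure-level independence into point-level infinite independence is precisely the hard step, and it again requires the Rosenthal--Bergelson mechanism (for instance, apply Lemma~\ref{L-func to infinite} to functions $f_j$ with $0\le f_j\le 1$, $f_j(x_j)=1$, $f_j=0$ off $V_j$, taking $r$ close to $1$ so that every threshold $t_j$ is forced positive and $f_j^{-1}([t_j,\infty))\subseteq V_j$). Alternatively --- and this is what the paper does --- part (2) is immediate from part (3): if $(\delta_{x_1},\dots,\delta_{x_k})$ is the marginal tuple of some $\mu\in\cM_\IT(X^k)$, then $\mu=\delta_{(x_1,\dots,x_k)}$, and the case $n=1$ of Definition~\ref{D-IT measure} gives $(x_1,\dots,x_k)\in\IT_k(X)$, while the other inclusion is Theorem~\ref{T-IT basic}.(5) as you note.
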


The analogue of Corollaries~\ref{C-IE finite support} and \ref{C-IN finite support} for IT-tuples also holds, see Corollary~\ref{C-IT finite support}.

The proof of Theorem~\ref{T-IE main} is mainly combinatorial, besides using the standard results in functional analysis. The case $k=1$ is very easy, as typical for results about IE-tuples. The case $k=2$ requires some combinatorial fact, Lemma~\ref{L-half to indep}, which is non-trivial but not too difficult to prove. Typically, proofs for results about IE-tuples in the case $k=2$ also work for the case $k\ge 3$. It is surprising that the case $k\ge 3$ of Theorem~\ref{T-IE main} is significantly more difficult to prove than the case $k=2$. The issue is that though Lemma~\ref{L-half to indep} holds for all integers $k\ge 1$, when $k\ge 3$, the dynamical condition does not imply the combinatorial condition in  Lemma~\ref{L-half to indep} (see Example~\ref{E-sum3}). The combinatorial results employed in the study of IE-tuples so far all originate from the Sauer-Perles-Shelah lemma \cite{Sauer, Shelah} and its generalization by Karpovsky and Milman \cite{KM} (see \cite{Bollobas, FT} for general references and  Lemma~\ref{L-KM} below for the asymptotic form used in the local entropy theory). Both Lemma~\ref{L-half to indep}  and the key combinatorial lemma in \cite{KL07} (see Lemma~\ref{L-07} below) depend on this generalization. In order to prove the case $k\ge 3$ for Theorem~\ref{T-IE main}, we establish a new combinatorial result. For a finite set $Z$ and $k\in \Nb$, we denote by $\sR(Z, k)$ the set of maps $\psi: Z\rightarrow [k]=\{1, \dots, k\}$ which are as equidistributed as possible (see Notation~\ref{N-regular}). For a function $f: Z\rightarrow \Rb$, the notation $\|f\|_p$ denotes the standard $\ell_p$-norm of $f$ (see \eqref{E-p norm1} and \eqref{E-p norm2} below). Our key combinatorial lemma is the following. 

\begin{lemma} \label{L-func to indep}
Let $k\in \Nb$ and $0<r<R\le C$. Also let $1<p\le \infty$ and $1\le q<\infty$ such that $1/p+1/q=1$. Then there exist $c, N>0$ and a finite set $T\subseteq \Rb^k$ depending only on $k, r, R, C$ and $p$ such that the following hold:
\begin{enumerate}
\item $\frac{1}{k}\sum_{j\in [k]} t_j\ge r$ for every $(t_1, \dots, t_k)\in T$, and
\item for any finite set $Z$ with $|Z|\ge N$, if for each $\psi\in \sR(Z, k)$  we take an $f_\psi: Z\rightarrow \Rb$  with $\|f_\psi\|_p\le C$ and $\frac{1}{|Z|^{1/q}}\sum_{z\in Z}f_\psi(z)\ge R$, then there are some $J\subseteq Z$ with $|J|\ge c|Z|$ and $(t_1, \dots, t_k)\in T$  such that every map $\sigma: J\rightarrow [k]$ extends to a $\psi\in \sR(Z, k)$ so that $f_\psi(z)\ge t_j|Z|^{-1/p}$ for all $j\in [k]$ and $z\in \sigma^{-1}(j)$.
\end{enumerate}
\end{lemma}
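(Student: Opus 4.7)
My plan is to prove the lemma by reducing it, after a discretization of the values of the $f_\psi$, to a shattering argument of Karpovsky--Milman type. I first establish the $k=1$ case by a H\"older computation that will also be used inside the general case: setting $J := \{z : f(z) \ge r|Z|^{-1/p}\}$, one has $\sum_{Z \setminus J} f(z)_+ \le r|Z|^{1/q}$ trivially and $\sum_J f(z) \le C |J|^{1/q}$ by H\"older, so the hypothesis $\sum_z f(z) \ge R|Z|^{1/q}$ forces $|J|/|Z| \ge ((R-r)/C)^q$. Applied fibrewise to $f_\psi$ restricted to each $\psi^{-1}(j)$, this supplies the density of ``high-value'' points inside the fibres of $\psi$ that the shattering step will require.

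For $k \ge 2$, I associate to each $\psi \in \sR(Z,k)$ the profile vector $\vec v_\psi \in \Rb^k$ with coordinates $v_\psi^j := \frac{|Z|^{1/p}}{|\psi^{-1}(j)|} \sum_{z \in \psi^{-1}(j)} f_\psi(z)$. The hypothesis together with $|\psi^{-1}(j)| = |Z|/k + O(1)$ gives $\frac{1}{k}\sum_j v_\psi^j \ge R - o(1)$, while H\"older on each fibre yields $|v_\psi^j| \le Ck^{1/p}(1+o(1))$, so all profiles lie in a fixed bounded region of $\Rb^k$. I choose a finite $\epsilon$-net $T_0$ of this region, depending only on $k, r, R, C, p$, and use pigeonhole to extract a subfamily $\sR' \subseteq \sR(Z,k)$ of cardinality $\ge |\sR(Z,k)|/|T_0|$ sharing a common profile $\vec v^* \in T_0$ with $\frac{1}{k}\sum_j v_j^* \ge R - \epsilon$. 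From $\vec v^*$ I then build the threshold $\vec t$ coordinatewise: $t_j = v_j^* - \delta$ when $v_j^* > 0$, and $t_j = -M$ when $v_j^* \le 0$, with $\delta$ small and $M$ large. Markov's inequality $|\{z : f_\psi(z) < -M|Z|^{-1/p}\}| \le (C/M)^p |Z|$ combined with a budget computation arranges $\frac{1}{k}\sum_j t_j \ge r$, and the set $T$ of all such $\vec t$ is finite and depends only on the parameters. Combining the fibrewise $k=1$ argument on positive coordinates with the Markov tail on non-positive ones, every $\psi \in \sR'$ satisfies $|B_\psi^j| \ge c_1 |\psi^{-1}(j)|$ for every $j$, where $B_\psi^j := \{z \in \psi^{-1}(j) : f_\psi(z) \ge t_j |Z|^{-1/p}\}$ and $c_1 > 0$ is universal.

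The remaining step is to find the shattered set $J$. Encoding each $\psi \in \sR'$ as $\chi_\psi : Z \to [k] \cup \{*\}$ with $\chi_\psi(z) = \psi(z)$ when $z \in B_\psi^{\psi(z)}$ and $\chi_\psi(z) = *$ otherwise, I seek $J \subseteq Z$ with $|J| \ge c|Z|$ such that $\{\chi_\psi|_J : \psi \in \sR'\} \supseteq [k]^J$; note this is strictly stronger than the $(k{+}1)$-ary shattering directly produced by the Karpovsky--Milman lemma (Lemma~\ref{L-KM}) applied to $\{\chi_\psi\}$, because every $\sigma \in [k]^J$ must be realised \emph{without} a single occurrence of $*$. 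Closing this gap is the main obstacle. I would do so by an incremental construction of $J$: at step $i$ maintain a sub-family $\sR_i \subseteq \sR'$ together with a partial $J_i \subseteq Z$ such that every pattern in $[k]^{J_i}$ is realised as $\chi_\psi|_{J_i}$ for some $\psi \in \sR_i$, and use the uniform fibre-density $c_1$ combined with a Karpovsky--Milman counting bound on the surviving sub-family to select the next coordinate $z_{i+1}$. Because $|\sR'|$ is exponentially large in $|Z|$, the sub-family loses only a constant factor per step and the procedure can be continued for $\Theta(|Z|)$ steps, yielding the desired $J$ on which every $\sigma : J \to [k]$ extends to some $\psi \in \sR(Z,k)$ with $f_\psi(z) \ge t_{\sigma(z)}|Z|^{-1/p}$ for all $z \in J$.
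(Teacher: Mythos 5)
There is a genuine gap, and it is structural rather than a matter of missing details. Your plan commits, after the pigeonholing on profiles, to a \emph{single} threshold vector ($t_j=v_j^*-\delta$ on positive coordinates, $t_j=-M$ on the rest), and then tries to prove the purely combinatorial statement: if $\sR'\subseteq \sR(Z,k)$ has size at least a constant fraction of $|\sR(Z,k)|$ and each $\psi\in\sR'$ has good sets $B_\psi^j\subseteq\psi^{-1}(j)$ of density at least $c_1$ in every fibre, then some $J$ with $|J|\ge c|Z|$ is cleanly $[k]$-shattered inside the good sets. That statement is false. Take $p=\infty$, $k=3$, $C=1$, $R=0.9$, $r=0.5$, and for every $\psi\in\sR(Z,k)$ let $f_\psi$ take the value $1$ on the set $Z_\psi$ of Example~\ref{E-half to indep} and the value $0.7$ elsewhere; then $\|f_\psi\|_\infty\le C$, the average is $0.9\ge R$, all profiles coincide (so $\sR'=\sR(Z,k)$), your thresholds land strictly between the two levels, so $B_\psi^j=Z_\psi\cap\psi^{-1}(j)$ has density exactly $(k-1)/k\ge c_1$ in each fibre --- and Example~\ref{E-half to indep} shows no $J$ with $|J|\ge k\cdot k!$ can be cleanly shattered. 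The lemma itself survives in this instance only because one may instead choose the threshold vector $(0.7-\varepsilon,0.7-\varepsilon,0.7-\varepsilon)$, whose mean still exceeds $r$, i.e.\ a vector \emph{not} of the form your construction produces. This is exactly why the paper never fixes the thresholds from a profile: it keeps the whole discretized family $T$ (indexed by $I$) in play, encodes each pair $(\psi,\boldsymbol{i})$ as a map $\psi_{\boldsymbol i}\in\{0,1,\dots,k\}^Z$, proves via the equidistribution Lemma~\ref{L-indep to partition} that the covering number $N_S$ is exponentially large, and only \emph{afterwards} selects the index $\boldsymbol i$ for which Lemma~\ref{L-07} yields the shattered set.

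Two further points, subordinate to the one above. First, your closing ``incremental construction'' is precisely the nontrivial combinatorial content you would need (it is essentially Lemma~\ref{L-07}, i.e.\ Lemma 3.3 of \cite{KL07}): losing a constant factor of $\sR'$ per step does not let you maintain realization of all $k^{|J_i|}$ patterns on $J_i$ while forbidding the symbol $*$, since the number of patterns you must keep alive grows exponentially in $|J_i|$; asserting that the process runs for $\Theta(|Z|)$ steps is not a proof, and in view of the counterexample no proof along these lines can exist. Second, there are quantitative slips even before that step: for finite $p$ the fibrewise H\"older argument compares the fibre sum (normalized by $|\psi^{-1}(j)|^{1/q}\approx (|Z|/k)^{1/q}$) with a threshold stated in units of $|Z|^{-1/p}$, so the cutoff must sit below roughly $k^{-2/p}v_j^*$ rather than just below $v_j^*$; and on coordinates with $v_j^*\le 0$ the budget $\frac1k\sum_j t_j\ge r$ caps how negative $-M$ may be, while Markov then gives a bad-point proportion $(C/M)^p$ that need not be small, so positive fibre density is not guaranteed there either. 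These could perhaps be patched by adjusting constants, but the early commitment to a profile-determined threshold vector cannot.
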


The proofs of Theorems~\ref{T-measure IE main}, \ref{T-IN main} and \ref{T-measure IN main} also depend on Lemma~\ref{L-func to indep}. One may also prove Theorem~\ref{T-IN main} using Ramsey's theorem in \cite{Ramsey}, but we choose to prove it using Lemma~\ref{L-func to indep} to keep the proofs streamlined. The proof of Theorem~\ref{T-IT main} uses a dichotomy of Rosenthal in \cite{Rosenthal74}, and does not require any new combinatorial facts. 

The formulation of Lemma~\ref{L-func to indep} is quite different and much more complicated than that of the Sauer-Perles-Shelah lemma and its generalization by Karpovsky and Milman, and the key combinatorial lemma in \cite{KL07}, as in application one needs to choose the function $f_\psi$ suitably. This complication also yields extra power to the lemma. In fact the proofs of  Theorems~\ref{T-IE main}, \ref{T-measure IE main}, \ref{T-IN main} and \ref{T-measure IN main} only need the case $p=\infty$ of Lemma~\ref{L-func to indep}. The full range $1<p\le \infty$ enables us to obtain certain application to the Banach space geometry to which we now turn. 

The local theory of Banach spaces, also called asymptotic geometric analysis, studies Banach spaces via their finite-dimensional subspaces or quotients \cite{AGM15, AGM21, MS, Pisier, TJ}. For two finite-dimensional 
Banach spaces $V$ and $W$ of the same dimension, 
we denote by $d_{\rm BM}(V, W)$ the (multiplicative) {\it Banach-Mazur distance} between them, defined as the infimum of $\|\Phi\|\cdot \|\Phi^{-1}\|$ for $\Phi$ ranging over linear isomorphisms between them. 
For a constant $C\ge 1$, we say $V$ and $W$ are {\it $C$-isomorphic} if $d_{\rm BM}(V, W)\le C$.
One of the important questions is, given $C>1$ and $1\le q, q'\le \infty$,  to find the asymptotic bounds for $m$ and $n$ such that 
$\ell_q^n$ is $C$-isomorphic to a linear subspace of $\ell_{q'}^m$, where $\ell_q^n$ denotes $\Rb^n$ equipped with the $\ell_q$-norm $\|\cdot \|_q$. 
This was solved by Figiel, Lindenstrauss and Milman for the case $q=2$ in their monumental work \cite{FLM} using the concentration of measure phenomenon on the Euclidean spheres. They showed that, when $q=2$, the optimal value of $n$ is $c\log m$ when $q'=\infty$, $cm^{2/q'}$ when $2\le q'<\infty$, and $cm$ when $1\le q'\le 2$.  Using Lemma~\ref{L-func to indep} we are able to establish the bounds in the case $q'=\infty$, much like the way the Sauer-Perles-Shelah lemma is used in the proof of the Elton-Pajor theorem \cite{Elton, Pajor83, Pajor85} \cite[Section 6.5]{AGM21}. 

\begin{theorem} \label{T-FLM}
Let $C>1$ and $1\le q<\infty$. Then there is some $c>0$ depending only on $C$ and $q$ such that if $\ell_q^n$ is $C$-isomorphic to a linear subspace of $\ell_\infty^m$ for some $m\ge 2$, 
then $n\le c\log m$. 
\end{theorem}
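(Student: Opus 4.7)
The plan is to mimic Pajor's approach to the Elton-Pajor theorem, feeding the embedding data into Lemma~\ref{L-func to indep} with $k=2$ and $p$ equal to the conjugate exponent $q'$ of $q$. After rescaling I may assume the embedding $T:\ell_q^n\to\ell_\infty^m$ satisfies $\|T\|\le 1$ and $\|T^{-1}\|\le C$. Writing $v_i:=Te_i\in\Rb^m$ and letting $\phi_j\in\Rb^n$ be defined by $\phi_j(i)=v_i(j)$, the identity $(Tx)(j)=\langle x,\phi_j\rangle$ together with Hölder duality translates these conditions into: $\|\phi_j\|_{q'}\le 1$ for every $j\in [m]$, and for every $x\in\Rb^n$ with $\|x\|_q=1$ some $\tilde\phi\in\{\pm\phi_1,\dots,\pm\phi_m\}$ satisfies $\langle x,\tilde\phi\rangle\ge 1/C$.

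For each balanced partition $\psi\in\sR([n],2)$, set $\epsilon_\psi(i):=+1$ if $\psi(i)=1$ and $\epsilon_\psi(i):=-1$ if $\psi(i)=2$, and define the test vector $x^\psi:=n^{-1/q}\epsilon_\psi$, which has $\|x^\psi\|_q=1$. Pick $\tilde\phi^\psi\in\{\pm\phi_1,\dots,\pm\phi_m\}$ witnessing $\langle x^\psi,\tilde\phi^\psi\rangle\ge 1/C$, and let $f_\psi:[n]\to\Rb$ be $f_\psi(i):=\epsilon_\psi(i)\,\tilde\phi^\psi(i)$. Since sign-flipping preserves the $\ell_{q'}$-norm, $\|f_\psi\|_{q'}\le 1$, while $\frac{1}{n^{1/q}}\sum_i f_\psi(i)=\langle x^\psi,\tilde\phi^\psi\rangle\ge 1/C$. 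Lemma~\ref{L-func to indep} then applies with $Z=[n]$, $k=2$, $p=q'$, $R=1/C$, and $r=1/(2C)$, yielding $c>0$, $N\in\Nb$, and a finite $T\subseteq\Rb^2$ (all depending only on $C$ and $q$) so that once $n\ge N$ we obtain $J\subseteq[n]$ with $|J|\ge cn$ and $(t_1,t_2)\in T$ satisfying $t_1+t_2\ge 2r>0$, such that every coloring $\sigma:J\to[2]$ extends to some $\psi_\sigma\in\sR([n],2)$ with $f_{\psi_\sigma}(z)\ge t_\ell n^{-1/q'}$ for all $z\in\sigma^{-1}(\ell)$. Unwinding $f_{\psi_\sigma}$, this gives $\tilde\phi^{\psi_\sigma}(z)\ge t_1 n^{-1/q'}$ on $\sigma^{-1}(1)$ and $\tilde\phi^{\psi_\sigma}(z)\le -t_2 n^{-1/q'}$ on $\sigma^{-1}(2)$.

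The conclusion follows by pigeonhole. I claim $\sigma\mapsto\tilde\phi^{\psi_\sigma}$ is injective from $[2]^J$ into the $2m$-element set $\{\pm\phi_j\}_{j=1}^m$: if two distinct colorings $\sigma_1,\sigma_2$ produced the same $\tilde\phi$, then at any $z_0\in J$ with (say) $\sigma_1(z_0)=1$ and $\sigma_2(z_0)=2$ one would simultaneously have $\tilde\phi(z_0)\ge t_1 n^{-1/q'}$ and $\tilde\phi(z_0)\le -t_2 n^{-1/q'}$, forcing $t_1+t_2\le 0$, contradicting $t_1+t_2\ge 2r>0$. Hence $2m\ge 2^{|J|}\ge 2^{cn}$, which gives $n\le c^{-1}(1+\log_2 m)$ whenever $n\ge N$; the finitely many cases $n<N$ are absorbed by enlarging the final constant. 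I expect the only place where care is genuinely needed is the design of $f_\psi$: folding $\epsilon_\psi$ into $f_\psi$ rather than into $\tilde\phi^\psi$ is what converts the embedding's single-$x$ lower bound into the averaged bound $\frac{1}{n^{1/q}}\sum f_\psi\ge R$ without inflating the $\ell_{q'}$-norm, and it is exactly this packaging that makes the positivity $t_1+t_2>0$ delivered by the lemma sufficient to render the opposite-sign constraints in the two halves of $\sigma$ mutually inconsistent.
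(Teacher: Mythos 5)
Your proof is correct and follows essentially the same route as the paper: the same application of Lemma~\ref{L-func to indep} with $k=2$ and $p$ the conjugate exponent of $q$, the same trick of folding the signs $\epsilon_\psi$ into $f_\psi$ so that $\|f_\psi\|_{q'}\le 1$ and the average is at least $1/C$, and the same pigeonhole count $2^{|J|}\le 2m$ forced by $t_1+t_2>0$. The only difference is presentational: you work directly with the coordinate functionals $\phi_j$ of the embedding, whereas the paper makes the same duality explicit by passing to the adjoint $\Phi^*$ and an intermediate statement (Lemma~\ref{L-FLM}) about sets whose convex hull with $0$ contains a $\delta$-ball, which also handles the case $n<N$ that you absorb into the final constant.
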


The case $1\le q\le 2$ follows from the above bounds of Figiel, Lindenstrauss and Milman. In fact, the second proof of Glasner and Weiss in \cite{GW95} for their result regarding $h_{\rm top}(X)$ and $h_{\rm top}(\cM(X))$ is geometric, and uses both the Sauer-Perles-Shelah lemma and the case $q=1$ of Theorem~\ref{T-FLM}. For $2<q<\infty$, the above bounds of Figiel, Lindenstrauss and Milman also yield a bound $n\le c (\log m)^{q/2}$, which is weaker than that in Theorem~\ref{T-FLM}. 

The bound in Theorem~\ref{T-FLM} is also optimal (see Lemma~\ref{L-embedding exist}). 

This paper is organized as follows. In Section~\ref{S-prelim} we set up some notations and recall some preliminary facts about barycenters, amenable groups, IE-tuples, IN-tuples, IT-tuples and their measure versions. We study the IE-tuples of $\cM(X)$ and prove Theorem~\ref{T-IE main} and Corollary~\ref{C-IE finite support} in Section~\ref{S-IE for prob}. Our key combinatorial tool Lemma~\ref{L-func to indep} is also proved there. Section~\ref{S-measure IE for prob} is devoted to the study of measure IE-tuples of $\cM(X)$ and the proof of Theorem~\ref{T-measure IE main}. We study IN-tuples of $\cM(X)$ and prove Theorem~\ref{T-IN main} and Corollary~\ref{C-IN finite support} in Section~\ref{S-IN for prob}. The case of measure IN-tuples is studied in Section~\ref{S-measure IN for prob},  where Theorem~\ref{T-measure IN main} is proved. We handle the IT-tuples and prove Theorem~\ref{T-IT main} in Section~\ref{S-IT for prob}. We construct a Toeplitz subshift with non-convex $\IN_2(\cM(X))$ and $\IT_2(\cM(X))$ in Section~\ref{S-example}. Theorem~\ref{T-FLM} is proved in Section~\ref{S-Banach}. 

\medskip

\noindent{\it Acknowledgements.}
K.~L. was partially supported by the China Postdoctoral Science Foundation (Grant No. 2022M710527) and the National Natural Science Foundation of China (Grant No. 12301224). We thank Sebastián Barbieri, Felipe García-Ramos and  Xiangdong Ye for helpful comments. 

\section{Preliminaries} \label{S-prelim}

Throughout this article, all logarithms are taken with base $e$ and all vector spaces are over $\Rb$.  For $k\in \Nb$, we write $[k]$ for $\{1, \dots, k\}$. For a subset $E$ of an ambient set, we write ${\bf 1}_E$ for the characteristic function of $E$.

Let a countably infinite discrete group $\Gamma$ act on a compact metrizable space $X$ continuously.
We denote by $\cF(\Gamma)$ the set of all nonempty finite subsets of $\Gamma$. 
We write $\Delta_2(X)$ for the diagonal $\{(x, x): x\in X\}$ in $X^2$.

We write $C(X)$ for the space of all continuous functions $f: X\rightarrow \Rb$, equipped with the supremum norm $\|\cdot \|$. For any $k\in \Nb$, we write $C(X)^{\oplus k}$ for the direct sum of $k$ copies of $C(X)$, equipped with the norm $\|\cdot \|$ given by
$$ \|\of\|=\max_{j\in [k]}\|f_j\|$$
for $\of=(f_1, \dots, f_k)\in C(X)^{\oplus k}$. We write $(C(X)^{\oplus k})^*$ for   the dual space   of $C(X)^{\oplus k}$, i.e., all continuous linear functionals $\varphi: C(X)^{\oplus k}\rightarrow \Rb$, and equip it with the weak$^*$-topology. For any $W\subseteq (C(X)^{\oplus k})^*$, we denote by $\overline{\rm co}(W)$ the closed convex hull of $W$. We denote by $\cM(X)$ the space of all Borel probability measures on $X$, and by $\cM_\Gamma(X)$ the space of all $\Gamma$-invariant Borel probability measures on $X$. Via the Riesz representation theorem, we may identity $\cM(X)$ with a compact metrizable convex subset of $C(X)^*$ naturally. In particular, for $\mu\in \cM(X)$ and $f\in C(X)$ we shall write $\mu(f)$ for the integral $\int_Xf(x)\, d\mu(x)$.
We treat $X$ as a closed $\Gamma$-invariant subset of $\cM(X)$ via identifying $x\in X$ with the Dirac measure $\delta_x$ on $X$ at $x$. 

For $k\in \Nb$, $\mu\in \cM(X^k)$ and $j\in [k]$, we write $\mu^{(j)}$ for the push-forward of $\mu$ in $\cM(X)$ under the $j$-th coordinate map $X^k\rightarrow X$.

Let $k\in \Nb$, and let $\oA=(A_1, \dots, A_k)$ be a tuple of subsets of $X$. We say a set $F\subseteq \Gamma$ is an {\it independence set} for $\oA$ if 
$$\bigcap_{s\in J}s^{-1}A_{\sigma(s)}\neq \emptyset$$ 
for every nonempty finite set $J\subseteq F$ and every map $\sigma: J\rightarrow [k]$.

For another continuous action $\Gamma\curvearrowright Y$ of $\Gamma$ on a compact metrizable space $Y$, a continuous surjective $\Gamma$-equivariant map $\pi: X\rightarrow Y$ is called a {\it factor map}. 

For finite covers $\cU$ and $\cV$ of $X$, we denote by $\cU\vee \cV$ the cover of $X$ consisting of $U\cap V$ for $U\in \cU$ and $V\in \cV$. For a finite open cover $\cU$ of $X$, we write $N(\cU)$ for the minimal number of elements of $\cU$ needed to cover $X$. For $\mu\in \cM(X)$ and a finite Borel partition $\cP$ of $X$, the {\it Shannon entropy} of $\cP$ with respect to $\mu$ is defined as 
$$H_\mu(\cP)=\sum_{P\in \cP}g(\mu(P)),$$
where $g: [0, 1]\rightarrow \Rb$ is the continuous function defined by
\begin{align} \label{E-Shannon}
g(t)=\begin{cases}
-t\log t & \quad \text{if } 0<t\le 1\\
0 & \quad \text{if } t=0.\\
\end{cases}
\end{align}

\subsection{Barycenter} \label{SS-barycenter}

For a compact convex set $\cK$ in a Hausdorff locally convex topological vector space $V$, we denote by $\ext\, \cK$ the set of extreme points of $\cK$. When $\cK$ is metrizable, for every $\mu\in \cM(\cK)$, there is a unique $x\in \cK$ such that
$$\int_{\cK} \phi(v)\, d\mu(v)=\phi(x)$$
for every continuous linear functional $\phi: V\rightarrow \Rb$ \cite[Proposition 1.1]{Phelps}. This $x$ is called the {\it barycenter} of $\mu$. The map $\Psi: \cM(\cK)\rightarrow \cK$ sending $\mu$ to its barycenter is continuous and affine in the sense that $\Psi(\lambda \mu+(1-\lambda)\nu)=\lambda \Psi(\mu)+(1-\lambda)\Psi(\nu)$ for all $\lambda\in [0, 1]$ and $\mu, \nu\in \cM(\cK)$.

\subsection{Amenable groups} \label{SS-amenable}

The group $\Gamma$ is said to be {\it amenable} if for any $K\in \cF(\Gamma)$ 
and any $\varepsilon>0$ there is an $F\in \cF(\Gamma)$ 
such that
\begin{align} \label{E-Folner}
 |KF\Delta F|<\varepsilon |F|,
\end{align}
where $A\Delta B:=(A\setminus B)\cup (B\setminus A)$ for subsets $A, B$ of $\Gamma$.

Assume that $\Gamma$ is amenable. For a function $\varphi: \cF(\Gamma)\rightarrow \Rb$, we say that {\it $\varphi(F)$ converges to $L\in \Rb$ when $F\in \cF(\Gamma)$ becomes more and more left invariant} if for any $\delta>0$, there are some $K\in \cF(\Gamma)$ and $\varepsilon>0$ such that
$$|\varphi(F)-L|<\delta$$
for every $F\in \cF(\Gamma)$ satisfying \eqref{E-Folner}. In general, we say that {\it the limit supremum of $\varphi(F)$ as $F\in \cF(\Gamma)$ becomes more and more left invariant is $L\in \Rb$} if $L$ is the smallest $t\in \Rb$ satisfying that for any $\delta>0$, there are some $K\in \cF(\Gamma)$ and $\varepsilon>0$ such that
$$\varphi(F)<t+\delta$$
for every $F\in \cF(\Gamma)$ satisfying \eqref{E-Folner}.

\subsection{IE-tuples} \label{SS-IE}

Assume that $\Gamma$ is amenable.
Let $k\in \Nb$, and let $\oA=(A_1, \dots, A_k)$ be a tuple of subsets of $X$. For each $F\in \cF(\Gamma)$, we set
$$ \varphi_{\oA}(F)=\max\{|J|: J\subseteq F \mbox{ is an independence set for } \oA\}.$$
The {\it independence density} of $\oA$ is defined as
$$ \oI(\oA)=\inf_{F\in \cF(\Gamma)}\frac{\varphi_\oA(F)}{|F|}.$$
When $F\in \cF(\Gamma)$ becomes more and more left invariant, $\frac{\varphi_\oA(F)}{|F|}$ converges to $\oI(\oA)$ \cite[page 287]{KL16}.
We call $\ox\in X^k$ an {\it IE-tuple} if for every product neighborhood $U_1\times \cdots \times U_k$ of $\ox$ in $X^k$, the tuple $\oU=(U_1, \dots, U_k)$ has positive independence density, i.e., $\oI(\oU)>0$. We denote by $\IE_k(X)$ the set of all IE-tuples of length $k$.

We refer the reader to \cite{MO, KL16} for general information about topological entropy. 
For each finite open cover $\cU$ of $X$, when $\cF\in \cF(\Gamma)$ becomes more and more left invariant, $\frac{1}{|F|}\log N\left(\bigvee_{s\in F}s^{-1}\cU\right)$ converges to a limit \cite[page 220]{KL16}, denoted by $h_{\rm top}(X, \cU)$. The {\it topological entropy} of $\Gamma\curvearrowright X$ is defined as $\sup_\cU  h_{\rm top}(X, \cU)$ for $\cU$ ranging over finite open covers of $X$, and is denoted by $h_{\rm top}(X)$. 

The following summarizes the basic properties of IE-tuples we need \cite[Theorem 12.19 and page 289]{KL16}.

\begin{theorem} \label{T-IE basic}
Let $k\in \Nb$. The following hold:
\begin{enumerate}
\item \label{i-IE invariant} $\IE_k(X)$ is a closed $\Gamma$-invariant subset of $X^k$.
\item \label{i-IE tuple} Let $(A_1, \dots, A_k)$ be a tuple of closed subsets of $X$ with positive independence density. Then there is an IE-tuple $\ox\in A_1\times \cdots \times A_k$.
\item \label{i-IE pair} $\IE_2(X)\setminus \Delta_2(X)$ is nonempty if and only if $h_{\rm top}(X)>0$.
\item \label{i-IE factor} Let $\Gamma\curvearrowright Y$ be another continuous action of $\Gamma$ on a compact metrizable space $Y$, and  let $\pi:X\rightarrow Y$ be a factor map. Then $(\pi\times \cdots \times \pi)(\IE_k(X))=\IE_k(Y)$.
\item \label{i-IE subset} Let $Z$ be a closed $\Gamma$-invariant subset of $X$. Then $\IE_k(Z)\subseteq \IE_k(X)$.
\item \label{i-IE product} If $\Gamma\curvearrowright Y$ is another continuous action of $\Gamma$ on a compact metrizable space $Y$, then
$$ \IE_k(X\times Y)=\IE_k(X)\times \IE_k(Y)$$
under the natural identification of $(X\times Y)^k$ and $X^k\times Y^k$.
\item \label{i-IE 1} $\IE_1(X)$ is the closure of $\bigcup_{\mu\in \cM_\Gamma(X)}\supp(\mu)$.
\end{enumerate}
\end{theorem}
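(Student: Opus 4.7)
The plan is to dispatch parts (1), (5), and (6) directly from the definitions of independence and independence density, handle (2) and (4) via a Zorn-style minimality argument, and reduce (3) and (7) to the combinatorial-to-dynamical conversion at the heart of local entropy theory.

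For part (1), closedness follows because $\ox\notin\IE_k(X)$ is witnessed by a single product neighborhood $U_1\times\cdots\times U_k$ with $\oI(\oU)=0$, and this same neighborhood still contains every nearby tuple, so the complement is open; $\Gamma$-invariance comes from the fact that an independence set for $(s^{-1}U_1,\dots,s^{-1}U_k)$ is precisely a right translate of one for $(U_1,\dots,U_k)$, so the densities agree. Part (5) is immediate since any independence set for $(A_1,\dots,A_k)$ with $A_j\subseteq Z$ computed in the subsystem remains an independence set in $X$. For (6), the inclusion $\IE_k(X\times Y)\subseteq\IE_k(X)\times\IE_k(Y)$ follows from part (4) applied to the two coordinate projections; the reverse inclusion reduces to showing that positive densities of $(U_j)$ in $X$ and $(V_j)$ in $Y$ yield a positive density for $(U_j\times V_j)$ in $X\times Y$, which I would obtain by observing that $F$ is independent for $(U_j\times V_j)$ iff it is simultaneously independent for $(U_j)$ and $(V_j)$, then extracting a common large shattered set via a combinatorial product argument.

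For (2), I would apply Zorn's lemma to the collection of closed subtuples $\oB=(B_1,\dots,B_k)$ of $\oA$ with $\oI(\oB)>0$, ordered by componentwise reverse inclusion. Chains admit lower bounds by compactness together with the observation that if an intersection tuple had zero density then some finite stage already would. Any minimal element must consist of singletons, since a proper closed decomposition $B_j=E\cup E'$ would let a pigeonhole on the color $j$ choices in the definition of independence force one of $E$ or $E'$ to retain positive density of every large independence set, contradicting minimality. Part (4) combines this with the fact that factor maps respect independence densities: the inclusion $(\pi\times\cdots\times\pi)(\IE_k(X))\subseteq\IE_k(Y)$ is direct because preimages of product neighborhoods are product neighborhoods of at least as large density, while the reverse inclusion lifts a shrinking sequence of product neighborhoods of $\oy\in\IE_k(Y)$ to $\pi$-preimages, slightly shrinks each to a closed set still of positive density, applies (2), and takes a limit using (1).

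Parts (3) and (7) are the core links to dynamics. For (3), the forward direction is the direct count that a non-diagonal IE-pair $(x_1,x_2)$ with disjoint closed neighborhoods $U_1,U_2$ forces the two-set cover $\cU=\{X\setminus U_1,X\setminus U_2\}$ to have exponentially many distinct traces on large independence sets, whence $h_{\rm top}(X,\cU)>0$. The reverse direction is the substantive step: starting from an open cover $\cU$ with $h_{\rm top}(X,\cU)>0$, the Karpovsky--Milman combinatorial lemma (Lemma~\ref{L-KM}) extracts from the exponentially many labelings a linearly-sized shattered subset, producing a pair of cover elements of positive independence density, and part (2) then yields the non-diagonal IE-pair. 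For (7), if $x\in\supp\mu$ with $\mu\in\cM_\Gamma(X)$ ergodic, the pointwise ergodic theorem shows the return-time set $\{s\in F_n:sy\in U\}$ has density at least $\mu(U)>0$ along any F{\o}lner sequence for $\mu$-generic $y$, giving an independence set of $(U)$ of density $\mu(U)$; conversely, if $x\in\IE_1(X)$, large independence sets $F$ come with points $y_F$ satisfying $Fy_F\subseteq U$, and the empirical measures $|F|^{-1}\sum_{s\in F}\delta_{sy_F}$ accumulate along increasingly invariant $F$ to a $\Gamma$-invariant measure charging $\overline{U}$; shrinking $U$ to $x$ produces invariant measures whose supports converge to $x$. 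The hardest step will be the reverse direction of (3), which requires the full Karpovsky--Milman machinery; the minimality/pigeonhole reduction for (2) is the secondary obstacle.
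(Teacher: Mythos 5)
The paper itself does not prove Theorem~\ref{T-IE basic}; it quotes it from \cite[Theorem 12.19 and page 289]{KL16}, so your sketch has to be measured against the standard Kerr--Li arguments. Against those, your treatment of part (2) has a genuine gap, in two places. First, the Zorn step fails: for a decreasing chain of closed tuples $\oB^{(\alpha)}$, compactness gives, for each fixed finite $F$, that $\varphi_{\bigcap_\alpha \oB^{(\alpha)}}(F)=\inf_\alpha \varphi_{\oB^{(\alpha)}}(F)$, hence $\oI(\bigcap_\alpha \oB^{(\alpha)})=\inf_\alpha \oI(\oB^{(\alpha)})$, which can be $0$ even though every stage is positive; zero density is an infimum over all $F$ and is not "witnessed at some finite stage", so the poset of positive-density closed subtuples need not have lower bounds for chains. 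The standard proof avoids this entirely: one iteratively covers each $A_j$ by closed pieces of small diameter, selects one piece per coordinate keeping positive density, and intersects the nested sequence; the limiting tuple of points is an IE-tuple because each of its product neighborhoods contains one of the positive-density stages -- one never needs the limit tuple itself to carry positive density. Second, the selection step is not a pigeonhole: for different $\sigma$ the witnesses of independence are different points, so choosing for each $s$ whether $E$ or $E'$ "works" does not produce a single large subset that is an independence set for $(E,A_2,\dots,A_k)$ or for $(E',A_2,\dots,A_k)$ uniformly over all $\sigma$. This is exactly where the Karpovsky--Milman-type machinery enters (\cite[Lemma 12.15]{KL16}, proved from the paper's Lemma~\ref{L-07}); so the obstacle you call secondary is in fact the main one, and the tool you propose for it would not suffice.

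In part (3), the reverse direction as you describe it does not yield non-diagonality: if you only extract a pair of cover elements $U_i,U_j$ with $(X\setminus U_i, X\setminus U_j)$ of positive independence density, these complements may intersect and the IE-pair from (2) could lie on the diagonal. The standard fix is to run the combinatorial lemma on the whole tuple $(X\setminus U_1,\dots,X\setminus U_k)$ (and note the relevant lemma is the covering-number version, Lemma~\ref{L-07}, rather than Lemma~\ref{L-KM}, since positive entropy of the cover controls $N_S$ for itinerary sets), obtain an IE $k$-tuple $\ox$ with $x_i\in X\setminus U_i$, and observe that since $\cU$ covers $X$ the coordinates cannot all coincide; projecting to two distinct coordinates gives the non-diagonal IE-pair. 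Two smaller points: in (6) your biconditional is right, but with the infimum definition of $\oI$ the conclusion is easier than a "combinatorial product argument" -- take $J_1\subseteq F$ independent for the $X$-tuple with $|J_1|\ge c_1|F|$, then $J_2\subseteq J_1$ independent for the $Y$-tuple with $|J_2|\ge c_2|J_1|$; since subsets of independence sets are independence sets, $J_2$ works for the product tuple. In (7) the pointwise ergodic theorem (which for general amenable $\Gamma$ requires tempered F{\o}lner sequences) is unnecessary: for any $\mu\in\cM_\Gamma(X)$, open $U\ni x$ with $\mu(U)>0$, and any finite $F$, averaging $\frac{1}{|F|}\sum_{s\in F}{\bf 1}_U(sy)$ against $\mu$ gives a point $y$ whose visit set to $U$ inside $F$ has density at least $\mu(U)$, so $\oI((U))\ge\mu(U)$ directly from the infimum definition; your converse direction via empirical measures is correct. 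Parts (1), (4), (5) and the forward directions of (3) and (7) are fine as sketched.
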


\subsection{Measure IE-tuples} \label{SS-measure IE}

Assume that $\Gamma$ is amenable.
Let $k\in \Nb$, let $\oA=(A_1, \dots, A_k)$ be a tuple of subsets of $X$, and let $D\subseteq X$. We say $F\in \cF(\Gamma)$ is an {\it independence set for $\oA$ relative to $D$} if 
$$D\cap \bigcap_{s\in F}s^{-1}A_{\sigma(s)}\neq \emptyset$$ 
for every map $\sigma: F\rightarrow [k]$ \cite[Definition 1.1]{KL09}.

Let $\mu\in \cM_\Gamma(X)$. For $\delta>0$, we denote by $\sB(\mu, \delta)$ the set of Borel subsets $D$ of $X$ satisfying $\mu(D)\ge 1-\delta$. For each $F\in \cF(\Gamma)$, we set
$$ \varphi_{\oA, \mu, \delta}(F)=\min_{D\in \sB(\mu, \delta)}\max\{|J|: J\subseteq F \mbox{ is an independence set for } \oA \mbox{ relative to } D\},$$
and define $\upind_\mu(\oA, \delta)$ as the limit supremum of $\frac{1}{|F|}\varphi_{\oA, \mu, \delta}(F)$ as $F\in \cF(\Gamma)$ becomes more and more left invariant.
Then we define the {\it upper $\mu$-independence density of $\oA$} \cite[Definition 2.1]{KL09} as
$$ \upind_\mu(\oA)=\sup_{\delta>0}\upind_\mu(\oA, \delta).$$
We call $\ox=(x_1, \dots, x_k)\in X^k$ a {\it $\mu$-IE-tuple} if for every product neighborhood $U_1\times \cdots \times U_k$ of $\ox$ in $X^k$, the tuple $(U_1, \dots, U_k)$ has positive upper $\mu$-independence density \cite[Definition 2.5]{KL09}. We denote by $\IE^\mu_k(X)$ the set of all $\mu$-IE-tuples of length $k$.

We refer the reader to \cite{MO, OW, KL16} for general information about measure entropy. 
For  each finite Borel partition $\cP$ of $X$, the entropy of $\Gamma\curvearrowright (X, \mu)$ with respect to $\cP$ is defined as
$$ h_\mu(X, \cP)=\inf_{F\in \cF(\Gamma)}\frac{1}{|F|}H_\mu \left(\bigvee_{s\in F}s^{-1}\cP\right).$$
When $\cF\in \cF(\Gamma)$ becomes more and more left invariant, $\frac{1}{|F|}H_\mu \left(\bigvee_{s\in F}s^{-1}\cP\right)$ converges to $h_\mu(X, \cP)$ \cite[page 198]{KL16}. 
The {\it measure entropy} of $\Gamma\curvearrowright (X, \mu)$ is defined as $\sup_\cP  h_\mu(X, \cP)$ for $\cP$ ranging over finite Borel partitions of $X$, and is denoted by $h_\mu(X)$. 

The following summarizes the basic properties of measure IE-tuples we need \cite[Proposition 2.16 and Theorem 2.30]{KL09}.

\begin{theorem} \label{T-measure IE basic}
Let $\mu\in \cM_\Gamma(X)$ and $k\in \Nb$. The following hold:
\begin{enumerate}
\item \label{i-measure IE invariant} $\IE^\mu_k(X)$ is a closed $\Gamma$-invariant subset of $X^k$.
\item \label{i-measure IE tuple} Let $(A_1, \dots, A_k)$ be a tuple of closed subsets of $X$ with positive upper $\mu$-independence density. Then there is a $\mu$-IE-tuple $\ox\in A_1\times \cdots \times A_k$.
\item \label{i-measure IE pair} $\IE^\mu_2(X)\setminus \Delta_2(X)$ is nonempty if and only if $h_\mu(X)>0$.
\item \label{i-measure IE factor} Let $\Gamma\curvearrowright Y$ be another continuous action of $\Gamma$ on a compact metrizable space $Y$, and  let $\pi:X\rightarrow Y$ be a factor map. Then $(\pi\times \cdots \times \pi)(\IE^\mu_k(X))=\IE^{\pi_*\mu}_k(Y)$, where $\pi_*\mu\in \cM_\Gamma(Y)$ is the push-forward of $\mu$ under $\pi$.
\item \label{i-measure IE product} If $\Gamma\curvearrowright Y$ is another continuous action of $\Gamma$ on a compact metrizable space $Y$ and $\nu\in \cM_\Gamma(Y)$, then
$$ \IE^{\mu\times \nu}_k(X\times Y)=\IE^\mu_k(X)\times \IE^\nu_k(Y)$$
under the natural identification of $(X\times Y)^k$ and $X^k\times Y^k$.
\end{enumerate}
\end{theorem}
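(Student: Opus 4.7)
The plan is to follow the structural template of Theorem~\ref{T-IE basic} with $\upind_\mu$ replacing ordinary independence density. Parts (1), (4), and (5) reduce largely to definition-chasing combined with an application of (2), so the genuine combinatorial labor concentrates in part (2), from which (3) then follows by Shannon-entropy bookkeeping.

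For part (1), closedness uses monotonicity of $\upind_\mu$ in its set arguments: if $\ox_n\to\ox$ in $X^k$ and $\oU=U_1\times\cdots\times U_k$ is a product neighborhood of $\ox$, then eventually $\ox_n\in\oU$ and a smaller product neighborhood of $\ox_n$ sits inside $\oU$, forcing $\upind_\mu(\oU)$ to dominate a positive quantity. $\Gamma$-invariance uses the bijection $F\leftrightarrow sF$ between independence sets for $(s^{-1}U_1,\dots,s^{-1}U_k)$ relative to $D$ and for $(U_1,\dots,U_k)$ relative to $sD$, together with $\mu(sD)=\mu(D)$; the more-and-more-left-invariance condition on $F$ transforms under left multiplication by $s$ into a more-and-more-left-invariance condition with the conjugated parameter $s^{-1}Ks$, so the limit supremum defining $\upind_\mu$ is unaffected.

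The crux is part (2). I would apply Zorn's lemma to the family of tuples $(B_1,\dots,B_k)$ of closed sets with $B_j\subseteq A_j$ and $\upind_\mu(B_1,\dots,B_k)>0$, ordered by componentwise reverse inclusion. A finite-intersection argument along a descending chain, using compactness of each $A_j$ and the fact that the collection $\sB(\mu,\delta)$ witnessing independence for a shrinking tuple can be extracted by diagonalization, supplies chain bounds and hence a minimal element $(B_1,\dots,B_k)$. The key combinatorial input is a splitting lemma: if $B_j=C\cup C'$ with $C,C'$ closed, then $\upind_\mu$ stays positive when $B_j$ is replaced by at least one of $C,C'$. This is where the asymptotic Karpovsky-Milman lemma enters: given many independence sets for $(B_1,\dots,B_k)$ relative to a fixed $D\in\sB(\mu,\delta)$, the induced two-coloring of $B_j$ by $\{C,C'\}$ yields, via Karpovsky-Milman, a definite-density sub-family of independence sets each using only one side of the split, and a pigeonhole then fixes the side. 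Minimality of $(B_1,\dots,B_k)$ now forces every $\ox\in B_1\times\cdots\times B_k$ to be a $\mu$-IE-tuple: otherwise a product neighborhood of $\ox$ whose tuple of closed complements in $B_1\times\cdots\times B_k$ still has positive density could be used to strictly shrink some $B_j$, contradicting minimality.

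The remaining parts are consequences. For (3), the nontrivial direction starts from a positive-entropy finite Borel partition, invokes inner regularity together with a Shannon-entropy-to-independence-count estimate (again powered by Karpovsky-Milman) to produce two disjoint closed sets with positive upper $\mu$-independence density, and finishes with (2); the converse is a direct Shannon-entropy upper bound for two-set independence configurations. For (4), the inclusion $(\pi\times\cdots\times\pi)(\IE^\mu_k(X))\subseteq\IE^{\pi_*\mu}_k(Y)$ is immediate by pulling back independence sets along $\pi$; for the reverse inclusion, given $\oy\in\IE^{\pi_*\mu}_k(Y)$, the closed tuple $(\pi^{-1}\{y_1\},\dots,\pi^{-1}\{y_k\})$ has positive upper $\mu$-independence density by an intersection-of-shrinking-neighborhoods compactness argument applied to product neighborhoods of $\oy$ in $Y^k$, and (2) yields a $\mu$-IE-lift. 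Part (5) is analogous: $\supseteq$ is a direct product construction of independence sets, and $\subseteq$ follows from (2) applied to the appropriate fiber tuples of the coordinate projections. The principal obstacle throughout is the splitting lemma underlying (2); once it is established at the level of $\upind_\mu(\cdot,\delta)$ for every $\delta$ with a controlled loss, everything else is the topological template transposed to the measure setting.
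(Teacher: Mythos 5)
This statement is not proved in the paper at all: it is quoted verbatim from Kerr--Li \cite[Proposition 2.16 and Theorem 2.30]{KL09}, so the comparison has to be with the arguments there. Measured against those, your sketch has a genuine gap at its central step, part (2). Positivity of $\upind_\mu$ does \emph{not} pass to intersections of descending chains, so Zorn's lemma has no chain bounds: for a decreasing chain of tuples of closed sets, compactness shows that a finite $J$ is an independence set (relative to a given $D$) for the intersection tuple exactly when it is one for every member of the chain, so the density of the intersection is the \emph{infimum} along the chain, which can be $0$ even though every member has positive density. Concretely, already for $k=1$, closed balls $B^n$ shrinking to a point $x$ of $\supp(\mu)$ whose orbit is $\mu$-null all have positive upper $\mu$-independence density, while the intersection $\{x\}$ has density $0$ (take $D=X\setminus \Gamma x$). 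The correct argument (as in \cite{KL09}, and as in the topological case) is the nested-refinement one: cover each $A_j$ by finitely many closed sets of small diameter, use the splitting lemma (your Karpovsky--Milman step, which is indeed the right combinatorial input) to select pieces keeping positive density, iterate, and intersect; the limit tuple is a $\mu$-IE-tuple because every product neighborhood of it \emph{contains} some finite-stage tuple of positive density --- one never needs the limit tuple itself to have positive density, which is exactly what your minimal-element formulation would require. The same error recurs in your part (4): the fibre tuple $(\pi^{-1}(y_1),\dots,\pi^{-1}(y_k))$ in general has \emph{zero} upper $\mu$-independence density (e.g.\ $X=Y\times Z$, $\mu=\nu\times\rho$, with $\nu(\Gamma y_1)=0$ and $D=(Y\setminus\Gamma y_1)\times Z$); the standard fix applies (2) to the preimages of shrinking closed neighborhoods of $\oy$ and then intersects the resulting nonempty closed subsets of $\IE^\mu_k(X)$.

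Parts (3) and (5) are also underestimated. For (5), the inclusion $\IE^\mu_k(X)\times\IE^\nu_k(Y)\subseteq\IE^{\mu\times\nu}_k(X\times Y)$ is the deep direction and cannot be obtained by ``a direct product construction of independence sets'': the sets $D\subseteq X\times Y$ with $(\mu\times\nu)(D)\ge 1-\delta$ over which one must take a minimum are not product sets, and a positive-density independence set for one factor need not contain a positive-density independence set for the other, so the two witnesses cannot be merged. In \cite{KL09} the product formula rests on identifying $\IE^\mu_k(X)$ with the support of the $k$-fold relatively independent self-joining of $\mu$ over the Pinsker factor, together with the classical fact that the Pinsker factor of a product is the product of the Pinsker factors --- genuinely ergodic-theoretic input absent from your outline. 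Likewise the forward direction of (3) (that $h_\mu(X)>0$ forces a nondiagonal $\mu$-IE pair) is one of the main theorems of \cite{KL09}; it needs considerably more than ``inner regularity plus a Shannon-entropy-to-independence-count estimate,'' because one must produce independence sets of positive density \emph{relative to every} Borel set of almost full measure simultaneously. Your instinct that the splitting lemma is the combinatorial heart of (1), (2) and (4) is right, but as written the Zorn step, the fibre-tuple step, and the treatment of (3) and (5) would all fail.
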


\subsection{IN-tuples} \label{SS-IN}

Let $k\in \Nb$.
We call $\ox\in X^k$ an {\it IN-tuple} if for every product neighborhood $U_1\times \cdots \times U_k$ of $\ox$ in $X^k$, the tuple $\oU=(U_1, \dots, U_k)$ has arbitrarily large finite independence sets. We denote by $\IN_k(X)$ the set of all IN-tuples of length $k$.

For a sequence $\mathfrak{s}=\{s_n\}_{n\in \Nb}$ in $\Gamma$, the {\it sequence topological entropy} of $\Gamma\curvearrowright X$ with respect to $\mathfrak{s}$ is defined as 
$$h_{\rm top}(X; \mathfrak{s})=\sup_\cU \limsup_{n\to \infty}\frac{1}{n}\log N\left(\bigvee_{i=1}^ns_i^{-1}\cU\right)$$
for $\cU$ ranging over finite open covers of $X$ \cite{Goodman}. We say the action $\Gamma\curvearrowright X$ is {\it null} if $ h_{\rm top}(X; \mathfrak{s})=0$ for every sequence $\mathfrak{s}$ in $\Gamma$. 

The following summarizes the basic properties of IN-tuples we need \cite[Proposition 5.4]{KL07}.

\begin{theorem} \label{T-IN basic}
Let $k\in \Nb$. The following hold:
\begin{enumerate}
\item \label{i-IN invariant} $\IN_k(X)$ is a closed $\Gamma$-invariant subset of $X^k$.
\item \label{i-IN tuple} Let $(A_1, \dots, A_k)$ be a tuple of closed subsets of $X$ with arbitrarily large finite independence sets. Then there is an IN-tuple $\ox\in A_1\times \cdots \times A_k$.
\item \label{i-IN pair} $\IN_2(X)\setminus \Delta_2(X)$ is nonempty if and only if $\Gamma\curvearrowright X$ is nonnull.
\item \label{i-IN factor} Let $\Gamma\curvearrowright Y$ be another continuous action of $\Gamma$ on a compact metrizable space $Y$, and  let $\pi:X\rightarrow Y$ be a factor map. Then $(\pi\times \cdots \times \pi)(\IN_k(X))=\IN_k(Y)$.
\item \label{i-IN subset} Let $Z$ be a closed $\Gamma$-invariant subset of $X$. Then $\IN_k(Z)\subseteq \IN_k(X)$.
\end{enumerate}
\end{theorem}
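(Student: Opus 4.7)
The plan is to follow the template used for IE-tuples in Theorem~\ref{T-IE basic}, with the positive-density hypothesis weakened to the existence of arbitrarily large finite independence sets, as in the approach of \cite[Proposition 5.4]{KL07}. Parts (1), (4), and (5) are essentially formal. For (1), if $\ox$ admits a product neighborhood $U_1 \times \cdots \times U_k$ imposing a uniform bound on the sizes of independence sets for $(U_1, \dots, U_k)$, the same bound governs every $\oy$ in this neighborhood by monotonicity of independence under shrinking, so $X^k \setminus \IN_k(X)$ is open; $\Gamma$-invariance is immediate since $F \mapsto sF$ carries independence sets for $(U_1, \dots, U_k)$ to ones for $(sU_1, \dots, sU_k)$. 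For (5), any $X$-product neighborhood $U_1 \times \cdots \times U_k$ of $\ox \in Z^k$ restricts to the $Z$-product neighborhood $(U_1 \cap Z) \times \cdots \times (U_k \cap Z)$, whose independence sets remain independence sets for the original $X$-neighborhood. For (4), equivariance of $\pi$ lets independence sets pass through $\pi$-preimages of product neighborhoods of $(\pi \times \cdots \times \pi)(\ox)$; the reverse inclusion applies (2) to the tuple $(\overline{\pi^{-1}(V_1)}, \dots, \overline{\pi^{-1}(V_k)})$ for each product open neighborhood $\oV$ of $\oy \in \IN_k(Y)$, and a compactness extraction along a countable neighborhood base, combined with closedness from (1), yields an IN-tuple in $X$ projecting onto $\oy$.

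Part (2) is the substantive content, and I would argue by iterative refinement. Given $(A_1, \dots, A_k)$ with arbitrarily large finite independence sets, cover $X$ by finitely many closed sets $D_1, \dots, D_m$ of diameter at most $\varepsilon$, and form the refined subtuples $E^{(l_1, \dots, l_k)} = (A_1 \cap D_{l_1}, \dots, A_k \cap D_{l_k})$ indexed by $(l_1, \dots, l_k) \in [m]^k$. The key refinement lemma states that some such $E^{(l_1^*, \dots, l_k^*)}$ again has arbitrarily large finite independence sets. Iterating with $\varepsilon = 2^{-n}$ produces a nested sequence $(A_1^{(n)}, \dots, A_k^{(n)})$ of closed subtuples with $\diam(A_j^{(n)}) \to 0$, each sharing the property, and by compactness $\bigcap_n A_j^{(n)} = \{x_j\}$; any product neighborhood of $\ox = (x_1, \dots, x_k) \in A_1 \times \cdots \times A_k$ contains $A_1^{(n)} \times \cdots \times A_k^{(n)}$ for large $n$, thereby inheriting arbitrarily large finite independence sets, so $\ox$ is the desired IN-tuple.

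The refinement lemma is the principal combinatorial obstacle. Given a size-$n$ independence set $F$ and witnesses $x_\sigma$ for each $\sigma \in [k]^F$, one attaches a refined color map $\tilde\sigma : F \to [k] \times [m]$ by $\tilde\sigma(s) = (\sigma(s), l(s))$, where $l(s) \in [m]$ is chosen with $s x_\sigma \in D_{l(s)}$, yielding a family $\cS \subseteq ([k] \times [m])^F$ of size $k^n$. A Sauer--Shelah / Karpovsky--Milman-type shattering argument (the key combinatorial lemma of \cite{KL07}, related to Lemma~\ref{L-KM} of the present paper) extracts a subset $J \subseteq F$ of size tending to infinity with $n$ on which $\cS$ realizes every prescribed pattern, making $J$ a simultaneous independence set for every refined subtuple $E^{(l_1, \dots, l_k)}$; pigeonhole over the finite index set $[m]^k$ as $n$ varies then selects a single $(l_1^*, \dots, l_k^*)$ whose refinement retains arbitrarily large finite independence sets. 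The technical delicacy is that the relevant density $|\cS|/(km)^n = m^{-n}$ is exponentially small, so the shattering inequality must be applied in its asymptotic form rather than as a direct density statement.

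Finally, part (3) is the analogue of Blanchard's theorem on entropy pairs. If $\Gamma \curvearrowright X$ is nonnull, some finite open cover $\cU$ and sequence $\fs$ in $\Gamma$ realize $h_{\rm top}(X, \cU; \fs) > 0$, and Blanchard's reduction produces disjoint closed sets $A_1, A_2 \subseteq X$ such that $(A_1, A_2)$ has arbitrarily large finite independence sets; (2) then furnishes an IN-pair in $A_1 \times A_2 \subseteq X^2 \setminus \Delta_2(X)$. Conversely, given a non-diagonal IN-pair with disjoint closed neighborhoods $\overline U_1, \overline U_2$ and independence sets $F_n \subseteq \Gamma$ of size $n$ for $(U_1, U_2)$, the $2^n$ witness points $x_\sigma$ force the two-element open cover $\cU = \{X \setminus \overline U_1, X \setminus \overline U_2\}$ to satisfy $N\bigl(\bigvee_{s \in F_n} s^{-1}\cU\bigr) \geq 2^n$, since each $x_\sigma$ distinguishes a unique cell of the join by disjointness of $\overline U_1, \overline U_2$; an enumeration $\fs$ in which the $F_n$ dominate the prefix lengths then yields $h_{\rm top}(X, \cU; \fs) > 0$, whence the action is nonnull.
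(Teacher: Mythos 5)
First, note that the paper does not prove this statement at all: it is quoted verbatim from Kerr--Li \cite[Proposition 5.4]{KL07}, and your outline is essentially a reconstruction of that proof. Parts (1), (4), (5) and the converse half of (3) are correct as sketched, and the forward half of (3) (reduce to a two-element standard cover with positive sequence entropy, extract disjoint closed sets with arbitrarily large independence sets, then invoke (2)) is the right route --- though the passage from exponential growth of $N\bigl(\bigvee_{i=1}^n s_i^{-1}\cU\bigr)$ to large independence sets for the pair of closed sets is an application of the Karpovsky--Milman lemma (Lemma~\ref{L-KM}), not of ``Blanchard's reduction'', which only converts general covers into standard two-set covers.

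The genuine gap is the refinement lemma at the heart of your part (2). As stated it is false: you claim a single $J\subseteq F$ on which $\cS$ ``realizes every prescribed pattern'', so that $J$ is a simultaneous independence set for \emph{every} refined subtuple $E^{(l_1,\dots,l_k)}$. Some of the sets $A_j\cap D_l$ will in general be empty (or dynamically negligible), and a tuple containing an empty set has no nonempty independence set, so no $J$ can serve all $m^k$ refined tuples at once; moreover, if this strong conclusion did hold, your subsequent pigeonhole over $[m]^k$ would be redundant, which signals the internal inconsistency. The proposed mechanism also cannot deliver a corrected version: $\cS\subseteq([k]\times[m])^F$ has $|\cS|\ge k^{|F|}$, i.e.\ relative density $m^{-|F|}$ in the product alphabet, and this borderline regime is exactly where the Sauer--Perles--Shelah/Karpovsky--Milman bound gives nothing, however ``asymptotically'' it is applied; acknowledging the delicacy does not resolve it. What is actually needed --- and what Kerr--Li prove --- is the one-coordinate splitting dichotomy: if $F$ is an independence set for $(A_1,\dots,A_k)$ and $A_1=A_{1,1}\cup A_{1,2}$, then some $J\subseteq F$ with $|J|\ge c|F|$ is an independence set for $(A_{1,1},A_2,\dots,A_k)$ or for $(A_{1,2},A_2,\dots,A_k)$, the choice depending on $F$; its proof runs through the covering-number Lemma~\ref{L-07} (with the symbol $0$ marking the discarded piece), not through full-density shattering. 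With that lemma, a pigeonhole over the two pieces and finitely many iterations (splitting each $A_j$ along $D_1,\dots,D_m$ one piece at a time) yield precisely the refinement statement your nested-intersection and compactness argument requires; as written, however, the combinatorial core of your part (2) does not stand.
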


\subsection{Measure IN-tuples} \label{SS-measure IN}

Let $k\in \Nb$.
Let $\oA=(A_1, \dots, A_k)$ be a tuple of subsets of $X$, and let $D: \Gamma\rightarrow 2^X$. We say $F\in \cF(\Gamma)$ is an {\it independence set for $\oA$ relative to $D$} if 
$$\bigcap_{s\in F}(D_s\cap s^{-1}A_{\sigma(s)})\neq \emptyset$$ 
for every map $\sigma: F\rightarrow [k]$ \cite[Definition 1.1]{KL09}. 

Let $\mu\in \cM_\Gamma(X)$.
We denote by $\sB(X)$ the set of Borel subsets of $X$. For $\delta>0$, we denote by $\sB'(\mu, \delta)$ the set of $D: \Gamma\rightarrow \sB(X)$ satisfying 
$$\inf_{s\in \Gamma}\mu(D_s)\ge 1-\delta.$$
We say $\oA$ has {\it positive sequential $\mu$-independence density}  if there are $\delta, c>0$ such that for every $M>0$ there is an $F\in \cF(\Gamma)$ with $|F|\ge M$ so that for every $D\in \sB'(\mu, \delta)$ there is an independence set $J\subseteq F$ for $\oA$ relative to $D$ with $|J|\ge c|F|$.
We call $\ox=(x_1, \dots, x_k)\in X^k$ a {\it $\mu$-IN-tuple} if for every product neighborhood $U_1\times \cdots \times U_k$ of $\ox$ in $X^k$, the tuple $(U_1, \dots, U_k)$ has positive sequential $\mu$-independence density \cite[Definition 4.2]{KL09}. We denote by $\IN^\mu_k(X)$ the set of all $\mu$-IN-tuples of length $k$.

For a sequence $\mathfrak{s}=\{s_n\}_{n\in \Nb}$ in $\Gamma$, the {\it sequence measure entropy} of $\Gamma\curvearrowright (X, \mu)$ with respect to $\mathfrak{s}$ is defined as 
$$h_\mu(X; \mathfrak{s})=\sup_\cP \limsup_{n\to \infty}\frac{1}{n}\log H_\mu\left(\bigvee_{i=1}^ns_i^{-1}\cP\right)$$
for $\cP$ ranging over finite Borel partitions of $X$ \cite{Kushnirenko}. We say the action $\Gamma\curvearrowright (X, \mu)$ is {\it null} if $h_\mu(X; \mathfrak{s})=0$ for every sequence $\mathfrak{s}$ in $\Gamma$.

The following summarizes the basic properties of measure IN-tuples we need \cite[Proposition 4.7 and Theorem 4.12]{KL09}.

\begin{theorem} \label{T-measure IN basic}
Let $\mu\in \cM_\Gamma(X)$ and $k\in \Nb$. The following hold:
\begin{enumerate}
\item \label{i-measure IN invariant} $\IN^\mu_k(X)$ is a closed $\Gamma$-invariant subset of $X^k$.
\item \label{i-measure IN tuple} Let $(A_1, \dots, A_k)$ be a tuple of closed subsets of $X$ with positive sequential $\mu$-independence density. Then there is a $\mu$-IN-tuple $\ox\in A_1\times \cdots \times A_k$.
\item \label{i-measure IN pair} $\IN^\mu_2(X)\setminus \Delta_2(X)$ is nonempty if and only if $\Gamma\curvearrowright (X,\mu)$ is nonnull.
\item \label{i-measure IN factor} Let $\Gamma\curvearrowright Y$ be another continuous action of $\Gamma$ on a compact metrizable space $Y$, and  let $\pi:X\rightarrow Y$ be a factor map. Then $(\pi\times \cdots \times \pi)(\IN^\mu_k(X))=\IN^{\pi_*\mu}_k(Y)$, where $\pi_*\mu\in \cM_\Gamma(Y)$ is the push-forward of $\mu$ under $\pi$. 
\item \label{i-measure IN product} If $\Gamma\curvearrowright Y$ is another continuous action of $\Gamma$ on a compact metrizable space $Y$ and $\nu\in \cM_\Gamma(Y)$, then
$$ \IN^{\mu\times \nu}_k(X\times Y)=\IN^\mu_k(X)\times \IN^\nu_k(Y)$$
under the natural identification of $(X\times Y)^k$ and $X^k\times Y^k$.
\end{enumerate}
\end{theorem}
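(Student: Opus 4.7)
The plan is to follow the template of Theorem~\ref{T-measure IE basic}, with the main new ingredient being that the measure condition on independence sets is now the ``sequential'' one involving $D:\Gamma\to\sB(X)$ with $\inf_s\mu(D_s)\ge 1-\delta$, in place of a single $D\in\sB(\mu,\delta)$. Parts (1), (2), (4) and (5) are variations on standard local-entropy-theory arguments, while (3) is the real crux as it requires transferring the measure-theoretic nonnullness hypothesis into combinatorial independence via the new combinatorial lemma.

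For (1), closedness is immediate: if $\ox^{(n)}\to\ox$ with $\ox^{(n)}\in\IN^\mu_k(X)$, then any product neighborhood of $\ox$ is eventually a product neighborhood of $\ox^{(n)}$, and the same witness to positive sequential $\mu$-independence density works. $\Gamma$-invariance uses that for $t\in\Gamma$, the translate $Ft^{-1}$ is an independence set for $(tU_1,\dots,tU_k)$ relative to the shifted assignment $s\mapsto D_{st}$, and $\sB'(\mu,\delta)$ is stable under this shift because $\mu$ is $\Gamma$-invariant. For (2), I would run a Zorn's-lemma shrinking argument: consider the poset, ordered by reverse coordinatewise inclusion, of tuples $(A_1',\dots,A_k')$ of closed subsets with each $A_j'\subseteq A_j$ retaining positive sequential $\mu$-independence density. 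Chains have lower bounds because positivity survives decreasing intersections of closed sets by a compactness argument for fixed $(\delta,c,M)$. A minimal element must be a tuple of singletons, since otherwise splitting one coordinate $A_j'=B\cup B'$ and pigeonholing over $[k]$-valued colorings of each large independence set would leave one of the two halves still witnessing positive sequential density, contradicting minimality.

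The crux is (3). The ``only if'' direction is routine: given a non-diagonal $\mu$-IN-pair with disjoint closed neighborhoods $U_1,U_2$ witnessing density $c$ at tolerance $\delta$, enumerate the independence sets of unbounded length into a sequence $\mathfrak{s}$, and check that the partition separating $U_1$ from $U_2$ has sequence entropy at least $c\log 2$ along $\mathfrak{s}$, the $D$-condition guaranteeing that the joint atoms along independence sets carry nontrivial mass. The ``if'' direction is the hard part: nonnullness produces a sequence $\mathfrak{s}$ and a finite Borel partition $\cP$ whose joins along $\mathfrak{s}$ have unbounded Shannon entropy. To extract combinatorial independence, I would apply Lemma~\ref{L-func to indep} with $p=\infty$ to functions $f_\psi:Z\to\Rb$ encoding, for a segment $Z$ of $\mathfrak{s}$ and a $[k]$-coloring $\psi$ of $Z$, the lower-bound mass of atoms witnessing the entropy growth; an entropy-via-averages argument turns the unbounded growth into a uniform lower bound on $\frac{1}{|Z|}\sum_{z\in Z} f_\psi(z)$. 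The lemma then yields a large ``free'' set $J\subseteq Z$ on which every coloring is realizable, which, after fixing atoms of strictly positive lower mass, translates into positive sequential $\mu$-independence density for a tuple of closed sets separating the chosen atoms; applying (2) yields a non-diagonal $\mu$-IN-pair.

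For (4), the inclusion $(\pi\times\cdots\times\pi)(\IN^\mu_k(X))\subseteq\IN^{\pi_*\mu}_k(Y)$ is direct: translate $D'\in\sB'(\pi_*\mu,\delta)$ to $D_s:=\pi^{-1}(D'_s)\in\sB'(\mu,\delta)$, and note that an independence set for $(\pi^{-1}V_1,\dots,\pi^{-1}V_k)$ relative to $D$ is also one for $(V_1,\dots,V_k)$ relative to $D'$. For the reverse inclusion, given $\oy\in\IN^{\pi_*\mu}_k(Y)$, take a decreasing product neighborhood basis $V_j^{(n)}\downarrow\{y_j\}$; for each $n$ the pullback $(\pi^{-1}\overline{V_1^{(n)}},\dots,\pi^{-1}\overline{V_k^{(n)}})$ has positive sequential $\mu$-independence density, so by (2) contains a $\mu$-IN-tuple $\ox^{(n)}$, and any cluster point lies in $\prod_j\pi^{-1}(y_j)\cap\IN^\mu_k(X)$ by (1). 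Part (5) is a short combinatorial verification: for $\supseteq$, combine independence sets relative to $D:\Gamma\to\sB(X)$ and $D':\Gamma\to\sB(Y)$ into one relative to $s\mapsto D_s\times D'_s\in\sB'(\mu\times\nu,2\delta)$; for $\subseteq$, apply (4) to the two coordinate projections. The principal obstacle throughout is the combinatorial translation in (3), where the sequential nature of the $D$-condition interacts delicately with the Shannon-entropy estimates and forces the use of the full strength of Lemma~\ref{L-func to indep}.
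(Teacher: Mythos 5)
Note first that the paper does not prove Theorem~\ref{T-measure IN basic} at all: it is imported verbatim from Kerr--Li \cite[Proposition 4.7 and Theorem 4.12]{KL09}, so the only possible comparison is with the proofs there. Your treatment of (1) is essentially fine (though with the paper's left-action conventions the correct translate is $tF$ together with the reassignment $D_s:=D'_{ts}$, not $Ft^{-1}$ and $s\mapsto D_{st}$, and no invariance of $\mu$ is needed for that reindexing), and your ``only if'' direction of (3) is in the right spirit: one chooses $D$ adversarially to be the union of the large atoms of $\bigvee_{s\in F}s^{-1}\cP$ and counts patterns. But the remaining items have genuine gaps exactly where the real work sits. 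In (2), the Zorn argument does not close: the witnessing constants $(\delta,c)$ vary along a chain, and even for fixed constants the sets $D_s$ are arbitrary Borel sets, so $\bigcap_{s\in J}(D_s\cap s^{-1}A_{\sigma(s)})$ is not closed and no compactness argument shows positivity survives a decreasing intersection. Worse, the step ``split $A_j'=B\cup B'$ and pigeonhole'' is precisely the nontrivial hereditary lemma: pigeonholing does not extract a proportional-size independence set for one of the halves; one needs a covering-number (Karpovsky--Milman type) lemma such as Lemma~\ref{L-07}, combined with an intersection-of-$D$'s trick to beat the ``for every $D\in\sB'(\mu,\delta)$'' quantifier (compare the end of the proof of Theorem~\ref{T-measure IN main}). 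The standard route is iterated subdivision using that hereditary lemma, not Zorn.

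The deeper gaps are in (3) ``if'', (4) ``$\supseteq$'' and (5) ``$\supseteq$''. For (3), Lemma~\ref{L-func to indep} requires, for each equidistributed colouring $\psi$ of $Z$, a function $f_\psi$ with uniformly large average; you do not explain where such a colouring-indexed family comes from when the only hypothesis is entropy growth of a single two-set partition along a sequence, nor how the construction yields independence sets relative to \emph{every} $D\in\sB'(\mu,\delta)$ with a uniform proportion $c$, which is what the definition demands. This is the heart of \cite[Theorem 4.12]{KL09} and your sketch does not engage with it. For (4), the unproved claim is that pulling back a tuple with positive sequential $\pi_*\mu$-independence density gives positive sequential $\mu$-independence density on $X$: given $D$ upstairs, a downstairs witness $y$ only controls each fibre measure $\mu_y(D_s)$ separately, and a preimage of $y$ need not lie in $\bigcap_{s\in J}D_s$ once $|J|$ is large, so the reduction via closed neighbourhood bases collapses at its first step. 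For (5), the product tuple must be tested against arbitrary Borel assignments $E_s\subseteq X\times Y$, not product sets $D_s\times D_s'$, and in addition the definition does not allow you to prescribe where the finite sets $F$ for the second factor are located, so independence sets from the two factors cannot simply be ``combined''; only the ``$\subseteq$'' inclusion, which you correctly derive from (4), is easy. These hard directions are genuine theorems in \cite{KL09}, obtained there by quite different means (ultimately via the identification of non-diagonal $\mu$-IN-tuples with $\mu$-sequence entropy tuples), so as it stands your proposal proves the routine parts and leaves the substantive ones open.
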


\subsection{IT-tuples} \label{SS-IT}

Let $k\in \Nb$.
We call $\ox\in X^k$ an {\it IT-tuple} if for every product neighborhood $U_1\times \cdots \times U_k$ of $\ox$ in $X^k$, the tuple $\oU=(U_1, \dots, U_k)$ has an infinite independence set. We denote by $\IT_k(X)$ the set of all IT-tuples of length $k$.

The action $\Gamma\curvearrowright X$ is said to be {\it untame} if there are an $f\in C(X)$, an infinite subset $F$ of $\Gamma$ and a constant $C>0$ such that 
$$\left\|\sum_{s\in F}\lambda_s sf\right\|\ge C\sum_{s\in F}|\lambda_s|$$
for all $\lambda \in \ell_1(F)$. 

The following summarizes the basic properties of IN-tuples we need \cite[Proposition 8.14]{KL16}.

\begin{theorem} \label{T-IT basic}
Let $k\in \Nb$. The following hold:
\begin{enumerate}
\item \label{i-IT invariant} $\IT_k(X)$ is a closed $\Gamma$-invariant subset of $X^k$.
\item \label{i-IT tuple} Let $(A_1, \dots, A_k)$ be a tuple of closed subsets of $X$ with an infinite independence set. Then there is an IT-tuple $\ox\in A_1\times \cdots \times A_k$.
\item \label{i-IT pair} $\IT_2(X)\setminus \Delta_2(X)$ is nonempty if and only if $\Gamma\curvearrowright X$ is untame.
\item \label{i-IT factor} Let $\Gamma\curvearrowright Y$ be another continuous action of $\Gamma$ on a compact metrizable space $Y$, and  let $\pi:X\rightarrow Y$ be a factor map. Then $(\pi\times \cdots \times \pi)(\IT_k(X))=\IT_k(Y)$.
\item \label{i-IT subset} Let $Z$ be a closed $\Gamma$-invariant subset of $X$. Then $\IT_k(Z)\subseteq \IT_k(X)$.
\end{enumerate}
\end{theorem}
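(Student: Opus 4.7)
The plan is to treat Theorem~\ref{T-IT basic} as the IT-analogue of Theorems~\ref{T-IE basic} and~\ref{T-IN basic}. Items (1), (4), and (5) reduce to formal manipulations of independence sets: closedness of $\IT_k(X)$ in (1) follows because if $\ox^{(n)}\to\ox$ are IT-tuples, then any product open neighborhood of $\ox$ eventually contains some $\ox^{(n)}$ and hence contains a product open neighborhood of $\ox^{(n)}$ with an infinite independence set; $\Gamma$-invariance amounts to translating independence sets by $F\mapsto gF$. Part (5) is immediate since product open neighborhoods in $Z$ are traces of product open neighborhoods in $X$, and any independence set in $Z$ serves in $X$ as well. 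For the nontrivial inclusion in (4), given $\oy\in\IT_k(Y)$ I would fix a descending sequence of closed product neighborhoods of $\oy$ in $Y^k$ shrinking to $\oy$, apply (2) to the closed preimage tuples in $X^k$ to obtain IT-tuples $\ox^{(n)}$ mapping into these neighborhoods, and extract a convergent subsequence whose limit lies in $\IT_k(X)$ by (1) and projects onto $\oy$.

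For (2) I plan a Ramsey-type diagonal argument: enumerate the given infinite independence set $F=\{s_1,s_2,\ldots\}$ for $(A_1,\ldots,A_k)$; for each $n$ and each $\sigma\colon\{s_1,\ldots,s_n\}\to[k]$ pick a witness in $\bigcap_{i\le n}s_i^{-1}A_{\sigma(s_i)}$; then diagonalize using the compactness of $X^k$ to extract an infinite subset $F'\subseteq F$ and a candidate $\ox\in A_1\times\cdots\times A_k$ such that every product neighborhood of $\ox$ still admits an infinite subset of $F'$ as an independence set. This mirrors the proof of Theorem~\ref{T-IN basic}(2) with ``arbitrarily large finite'' replaced by ``infinite'' at every step.

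The substantive content lies in (3). For IT-pair implies untame, given a non-diagonal IT-pair $(x_1,x_2)$ I would choose $f\in C(X)$ with $f(x_1)=1$ and $f(x_2)=0$, disjoint closed neighborhoods $U_1$ of $x_1$ and $U_2$ of $x_2$ with $f|_{U_1}\ge 3/4$ and $f|_{U_2}\le 1/4$, and an infinite independence set $F$ for $(U_1,U_2)$; for any finitely supported $\lambda\in\ell_1(F)$, evaluating $\sum_s\lambda_s sf$ at a witness of $\bigcap_s s^{-1}U_{\sigma(s)}$ with $\sigma$ chosen by the signs of $\lambda_s$, and separately at a witness for the opposite assignment, yields $\|\sum_s\lambda_s sf\|\ge (1/4)\sum_s|\lambda_s|$. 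The main obstacle, and the hardest step, is the converse: starting from an $\ell_1$-embedding of $\ell_1(F)$ into $C(X)$ via $s\mapsto sf$, one must manufacture an infinite independence set for a pair of disjoint closed sets. Here I would invoke Rosenthal's $\ell_1$-theorem in the Ramsey-theoretic form used in~\cite{KL07,KL16}: pass to an infinite subset $F'\subseteq F$ and reals $a>b$ such that $(\{f\ge a\},\{f\le b\})$ admits $F'$ as an independence set, and then apply (2) to extract a non-diagonal IT-pair in their product. The reliance on Rosenthal's dichotomy is the deepest ingredient, replacing the elementary combinatorics that power the other items.
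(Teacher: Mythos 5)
The paper never proves Theorem~\ref{T-IT basic}; it imports it verbatim from \cite[Proposition 8.14]{KL16}, so your reconstruction has to stand on its own. Parts (1), (4) and (5) are routine and your arguments for them are fine, and your outline of (3) (the $3/4$--$1/4$ separation for the forward direction, Rosenthal's dichotomy to produce levels $a>b$ and an infinite independence set for $(\{f\ge a\},\{f\le b\})$ for the converse, then (2)) is essentially the Kerr--Li argument.

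The genuine gap is in (2). Picking witnesses $x_{J,\sigma}\in\bigcap_{i\le n}s_i^{-1}A_{\sigma(s_i)}$ and ``diagonalizing by compactness'' does not produce a point $\ox$ every product neighborhood of which retains an \emph{infinite} independence set: the witnesses certify membership in the fixed sets $A_j$, not in small neighborhoods of the coordinates of a limit point, and no limit procedure by itself converts the former into the latter. The step that actually drives the proof is a localization (splitting) lemma: if $A_{1}=A_{1,1}\cup A_{1,2}$ with $A_{1,1},A_{1,2}$ closed and $(A_1,A_2,\dots,A_k)$ has an infinite independence set, then so does $(A_{1,i},A_2,\dots,A_k)$ for some $i$; one then iterates this over finer and finer finite closed covers and takes the nested intersection to get the IT-tuple. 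That splitting statement is exactly where the Rosenthal-type dichotomy (Lemma~\ref{L-IT combinatorial}, i.e.\ \cite[Theorem 8.4]{KL16}; see also Section 6 of \cite{KL07}) is needed---so the ``deepest ingredient'' you reserve for (3) is already indispensable in (2). Your analogy with Theorem~\ref{T-IN basic}(2) also misfires: that case is not a bare diagonalization either, but rests on the Karpovsky--Milman-based fact that a definite fraction of a finite independence set survives a splitting, and that finite-fraction argument does not survive the replacement of ``arbitrarily large finite'' by ``infinite''. Since your proofs of the nontrivial inclusion in (4) and of the converse in (3) are routed through (2), the gap propagates; once the splitting lemma is inserted, the rest of your outline goes through.
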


\section{$\IE$-tuples of $\cM(X)$} \label{S-IE for prob}

Throughout this section, we consider a continuous action of a countably infinite amenable group $\Gamma$ on a compact metrizable space $X$.

\subsection{Convexity of $\IE_k(\cM(X))$} \label{SS-convex}

In this subsection we establish the convexity of $\IE_k(\cM(X))$ and show that parts (2) and (3) of Theorem~\ref{T-IE main} follow from part (1).

\begin{notation} \label{N-finite support}
For $N\in \Nb$ we denote by $\cM_N(X)$ the set of $\mu\in \cM(X)$ with $|\supp(\mu)|\le N$.
\end{notation}

Clearly $\cM_N(X)$ is a  $\Gamma$-invariant closed subset of $\cM(X)$.

Let $k\in \Nb$. Then $\cM(X)^k$ is naturally a compact convex subset of $(C(X)^{\oplus k})^*=(C(X)^*)^{\oplus k}$. We show first that $\IE_k(\cM(X))$ is convex. As a contrast, later we shall see in Section~\ref{S-example} that $\IN_k(\cM(X))$ and $\IT_k(\cM(X))$ may fail to be convex. 
Part (1) of Lemma~\ref{L-IE convex} is not needed in this subsection, but will be used later in the proof of Lemma~\ref{L-support to support}. 

\begin{lemma} \label{L-IE convex}
Let $k, N_1, N_2\in \Nb$.
The following hold:
\begin{enumerate}
\item For any $\omu\in \IE_k(\cM_{N_1}(X))$, $\onu\in \IE_k(\cM_{N_2}(X))$ and $0\le \lambda \le 1$, one has
$$\lambda \omu+(1-\lambda)\onu\in \IE_k(\cM_{N_1+N_2}(X)).$$
\item $\IE_k(\cM(X))$ is a compact convex subset of $\cM(X)^k$.
\end{enumerate}
\end{lemma}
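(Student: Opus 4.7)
The plan is to deduce both parts from the existing toolbox for IE-tuples (Theorem~\ref{T-IE basic}) by introducing the continuous affine $\Gamma$-equivariant map
\[
\Phi_\lambda \colon \cM(X)\times \cM(X)\longrightarrow \cM(X),\qquad \Phi_\lambda(\alpha,\beta)=\lambda\alpha+(1-\lambda)\beta,
\]
which is $\Gamma$-equivariant because the induced $\Gamma$-action on $\cM(X)$ is affine. Everything then reduces to tracking IE-tuples along $\Phi_\lambda$ and its $k$-fold coordinate-wise companion.

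For part (1), I would first apply Theorem~\ref{T-IE basic}\eqref{i-IE product} to the action $\Gamma \curvearrowright \cM_{N_1}(X)\times \cM_{N_2}(X)$ to conclude that the tuple
\[
\bigl((\mu_1,\nu_1),\dots,(\mu_k,\nu_k)\bigr)\in \bigl(\cM_{N_1}(X)\times \cM_{N_2}(X)\bigr)^k
\]
is an IE-tuple, where $\omu=(\mu_1,\dots,\mu_k)$ and $\onu=(\nu_1,\dots,\nu_k)$. Next observe that $\Phi_\lambda$ sends $\cM_{N_1}(X)\times \cM_{N_2}(X)$ into $\cM_{N_1+N_2}(X)$, so its image $Z$ is a closed $\Gamma$-invariant subset of $\cM_{N_1+N_2}(X)$ and $\Phi_\lambda\colon \cM_{N_1}(X)\times \cM_{N_2}(X)\twoheadrightarrow Z$ is a factor map. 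Applying Theorem~\ref{T-IE basic}\eqref{i-IE factor} with the map $\Phi_\lambda\times\cdots\times\Phi_\lambda$ yields
\[
\lambda\omu+(1-\lambda)\onu=\bigl(\Phi_\lambda(\mu_1,\nu_1),\dots,\Phi_\lambda(\mu_k,\nu_k)\bigr)\in \IE_k(Z),
\]
and Theorem~\ref{T-IE basic}\eqref{i-IE subset} gives $\IE_k(Z)\subseteq \IE_k(\cM_{N_1+N_2}(X))$, finishing part (1).

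For part (2), the same argument works with $\cM(X)$ in place of $\cM_{N_j}(X)$: the map $\Phi_\lambda\colon \cM(X)\times \cM(X)\to \cM(X)$ is surjective (take $\alpha=\beta=\mu$), hence a factor map, and the identical chain of implications using Theorem~\ref{T-IE basic}\eqref{i-IE product} and \eqref{i-IE factor} gives $\lambda\omu+(1-\lambda)\onu\in \IE_k(\cM(X))$ whenever $\omu,\onu\in \IE_k(\cM(X))$. Closedness of $\IE_k(\cM(X))$ inside the compact space $\cM(X)^k$ is part \eqref{i-IE invariant} of the same theorem, giving compactness.

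I do not foresee a real obstacle: the structural miracle is that the $\Gamma$-action on $\cM(X)$ is affine, so convex combination commutes with the dynamics and the factor-map machinery does all the work. The only mild care needed is that in part (1) the map $\Phi_\lambda$ on $\cM_{N_1}(X)\times \cM_{N_2}(X)$ is \emph{not} surjective onto $\cM_{N_1+N_2}(X)$, which is why I pass through its image $Z$ and then invoke Theorem~\ref{T-IE basic}\eqref{i-IE subset} to land back in $\IE_k(\cM_{N_1+N_2}(X))$.
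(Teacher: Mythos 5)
Your argument is correct. The key facts you use — $\Gamma$-equivariance of $\Phi_\lambda$ (affineness of the induced action), the product formula of Theorem~\ref{T-IE basic}.\eqref{i-IE product}, and the forward-image and restriction properties \eqref{i-IE factor} and \eqref{i-IE subset} — are exactly what is needed, and you correctly handle the only delicate point in part (1) by passing to the image $Z=\Phi_\lambda(\cM_{N_1}(X)\times\cM_{N_2}(X))$, which is compact, $\Gamma$-invariant and contained in $\cM_{N_1+N_2}(X)$, so that $\Phi_\lambda$ becomes a genuine (surjective) factor map before you invoke \eqref{i-IE factor}.

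The paper's proof starts the same way, with Theorem~\ref{T-IE basic}.\eqref{i-IE product} applied to $((\mu_1,\nu_1),\dots,(\mu_k,\nu_k))$, but then, instead of citing the factor-map property, it verifies the conclusion by hand: given a product neighborhood $U_1\times\cdots\times U_k$ of $\lambda\omu+(1-\lambda)\onu$, it chooses neighborhoods $V_j$ of $\mu_j$ and $W_j$ of $\nu_j$ with $\lambda V_j+(1-\lambda)W_j\subseteq U_j$, and shows directly that any independence set for $(V_1\times W_1,\dots,V_k\times W_k)$ is an independence set for $(U_1,\dots,U_k)$, which gives $\oI(\oU)>0$. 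This explicit transfer of independence sets is, in effect, the special case of the factor-map property for the map $\Phi_\lambda$, so the two proofs rest on the same mechanism. What your packaging buys is brevity and no neighborhood bookkeeping, at the mild cost of introducing the intermediate system $Z$ and stacking three black-box citations; what the paper's version buys is a self-contained computation at the level of independence sets whose quantitative form (the same constant $c$ works for $\oU$) is the style of argument reused elsewhere in the paper, e.g.\ in Lemma~\ref{L-not IE}. Part (2) is treated identically in both: run the same argument with $\cM(X)$ in both factors and use \eqref{i-IE invariant} plus compactness of $\cM(X)^k$ for compactness.
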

\begin{proof}
(1). Since $\omu\in \cM_{N_1}(X)^k$ and $\onu\in \cM_{N_2}(X)^k$,
we have $\lambda \omu+(1-\lambda)\onu\in \cM_{N_1+N_2}(X)^k$.
Say, $\omu=(\mu_1, \dots, \mu_k)$ and $\onu=(\nu_1, \dots, \nu_k)$.
Let $U_1\times \cdots \times U_k$ be a product neighborhood of
$$\lambda \omu+(1-\lambda)\onu=(\lambda \mu_1+(1-\lambda)\nu_1, \dots, \lambda \mu_k+(1-\lambda)\nu_k)$$
in $\cM_{N_1+N_2}(X)^k$.
Then we can find a product neighborhood $V_1\times \cdots \times V_k$ of $\omu$ in $\cM_{N_1}(X)^k$ and a product neighborhood $W_1\times \cdots \times W_k$ of $\onu$ in $\cM_{N_2}(X)^k$ such that 
$$\lambda \mu+(1-\lambda)\nu\in U_j$$ 
for all $j\in [k]$, $\mu\in V_j$ and $\nu\in W_j$. 

From Theorem~\ref{T-IE basic}.\eqref{i-IE product}  we know that $((\mu_1, \nu_1), \dots, (\mu_k, \nu_k))$ lies in $\IE_k(\cM_{N_1}(X)\times \cM_{N_2}(X))$. Since $(V_1\times W_1)\times \cdots \times (V_k\times W_k)$ is a product neighborhood of $((\mu_1, \nu_1), \dots, (\mu_k, \nu_k))$  in $(\cM_{N_1}(X)\times \cM_{N_2}(X))^k$, there is some $c>0$ such that for any $F\in \cF(\Gamma)$ one can find an independence set $J\subseteq F$ for $(V_1\times W_1, \dots,  V_k\times W_k)$ such that $|J|\ge c|F|$. For each map $\sigma: J\rightarrow [k]$, we can find some $(\mu, \nu)\in \cM_{N_1}(X)\times \cM_{N_2}(X)$ so that $s(\mu, \nu)\in V_{\sigma(s)}\times W_{\sigma(s)}$ for each $s\in J$, whence $\lambda\mu+(1-\lambda)\nu\in \cM_{N_1+N_2}(X)$ and
$$s(\lambda \mu+(1-\lambda)\nu)=\lambda s\mu+(1-\lambda) s\nu\in U_{\sigma(s)}$$
for each $s\in J$.  Thus $J$ is also an independence set for $\oU=(U_1, \dots, U_k)$. This shows that $\oI(\oU)\ge c>0$,
and hence $\lambda \omu+(1-\lambda)\onu\in \IE_k(\cM_{N_1+N_2}(X))$.

(2). The proof for the convexity of $\IE_k(\cM(X))$ is similar to that of part (1), replacing $\cM_{N_1}(X), \cM_{N_2}(X)$ and $\cM_{N_1+N_2}(X)$ everywhere by $\cM(X)$. From Theorem~\ref{T-IE basic}.\eqref{i-IE invariant} we know that $\IE_k(\cM(X))$ is closed in $\cM(X)^k$. Since $\cM(X)$ is compact, it follows that $\IE_k(\cM(X))$ is compact.
\end{proof}

Next we show that parts (2) and (3) of Theorem~\ref{T-IE main} follow from part (1). For this we need the following theorem of Milman \cite[Theorem V.7.8]{Conway}.

\begin{lemma} \label{L-ext}
For any compact convex set $\cK$ in a Hausdorff locally convex topological vector space and any closed $Y\subseteq \cK$ such that $\cK$ is the closed convex hull of $Y$, one has $\ext\, \cK\subseteq Y$.
\end{lemma}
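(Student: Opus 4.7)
The plan is to prove Milman's converse to Krein--Milman by a covering argument rather than by separation. A na\"ive Hahn--Banach attempt fails, since $Y$ is not assumed convex, so a strict separation of $x\in\ext\,\cK$ from $Y$ cannot succeed anyway: any such continuous linear functional would also separate $x$ from $\cK=\overline{\rm co}(Y)$ (by linearity plus continuity one has $\inf_Y\phi=\inf_\cK\phi$ and similarly for $\sup$), contradicting $x\in\cK$ rather than the extremality of $x$.

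I would argue by contradiction: suppose $x\in\ext\,\cK$ but $x\notin Y$. Because the ambient space is Hausdorff locally convex and $Y$ is closed, for every $y\in Y$ I can choose a closed convex neighborhood $V_y$ of $y$ with $x\notin V_y$. Since $Y$ is closed in the compact set $\cK$, it is itself compact, so the interiors of finitely many such $V_y$'s already cover $Y$; relabel the corresponding closed convex sets as $V_1,\dots,V_n$, each avoiding $x$.

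Next I set $C_i:=\overline{\rm co}(Y\cap V_i)$ for $i=1,\dots,n$. Each $C_i$ is a compact convex subset of $\cK$ contained in $V_i$, because $V_i$ is closed and convex. Since $Y=\bigcup_{i=1}^n(Y\cap V_i)$ and $\cK=\overline{\rm co}(Y)$, this gives $\cK=\overline{\rm co}(C_1\cup\cdots\cup C_n)$. Invoking the standard fact that the convex hull of finitely many compact convex sets in a Hausdorff topological vector space is already compact, I obtain $\cK={\rm co}(C_1\cup\cdots\cup C_n)$. Hence $x=\sum_{i=1}^n\lambda_i c_i$ for some $\lambda_i\ge 0$ with $\sum_i\lambda_i=1$ and $c_i\in C_i$; extremality of $x$ forces $x=c_i$ for some $i$ with $\lambda_i>0$, so $x\in C_i\subseteq V_i$, which contradicts $x\notin V_i$.

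The one point demanding care is the closure assertion for ${\rm co}(C_1\cup\cdots\cup C_n)$. This follows because the continuous map $(\lambda_1,\dots,\lambda_n,c_1,\dots,c_n)\mapsto\sum_{i=1}^n\lambda_i c_i$ from the compact product of the standard simplex with $C_1\times\cdots\times C_n$ into the TVS has compact, hence closed, image; that image is the convex hull of $\bigcup_i C_i$. Granted this ingredient, the rest is a short compactness plus extremality exercise and I do not anticipate any serious obstacle.
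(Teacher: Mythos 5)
Your proof is correct and complete: the covering of the compact set $Y$ by finitely many closed convex neighborhoods avoiding the extreme point, the passage to $C_i=\overline{\rm co}(Y\cap V_i)\subseteq V_i$, the compactness (hence closedness) of ${\rm co}(C_1\cup\cdots\cup C_n)$ via the continuous image of the simplex times $C_1\times\cdots\times C_n$, and the final extremality step are all sound, with the only micro-gap being the one-line induction showing that an extreme point written as a convex combination of points of $\cK$ must equal one of the terms with positive weight. The paper does not prove this lemma at all — it quotes Milman's theorem from Conway (Theorem V.7.8) — and your argument is exactly the standard proof found there, so there is nothing substantive to compare or correct.
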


\begin{lemma} \label{L-barycenter identity k}
Let $k\in \Nb$ and $\mu\in \cM(X^k)$. If we treat $\mu$ as a Borel probability measure $\rmm_\mu$ on $\cM(X)^k$ via the embedding $X^k\hookrightarrow \cM(X)^k$, then the barycenter of $\rmm_\mu$ in $\cM(X)^k$ is $(\mu^{(1)}, \dots, \mu^{(k)})$. 
\end{lemma}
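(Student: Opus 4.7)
The plan is to verify the defining property of the barycenter by direct computation, using the Riesz representation to identify the continuous linear functionals on the ambient space.

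First, I would recall the setup: the embedding $X^k \hookrightarrow \cM(X)^k$ sends $\ox = (x_1, \dots, x_k)$ to $(\delta_{x_1}, \dots, \delta_{x_k})$, and $\rmm_\mu$ is defined as the push-forward of $\mu$ under this embedding. The ambient Hausdorff locally convex space is $(C(X)^{\oplus k})^* = (C(X)^*)^{\oplus k}$ equipped with the weak$^*$-topology, and by the uniqueness statement in \cite[Proposition 1.1]{Phelps}, the barycenter of $\rmm_\mu$ is the unique element $\onu \in \cM(X)^k$ satisfying
\[
\int_{\cM(X)^k} \phi(\olambda) \, d\rmm_\mu(\olambda) = \phi(\onu)
\]
for every continuous linear functional $\phi$ on $(C(X)^{\oplus k})^*$.

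Next I would invoke the fact that every such continuous linear functional is of the form $\phi_{\of}(\olambda_1, \dots, \olambda_k) = \sum_{j=1}^k \olambda_j(f_j)$ for some $\of = (f_1, \dots, f_k) \in C(X)^{\oplus k}$, since we are using the weak$^*$-topology. Then I would substitute and compute both sides. On one hand,
\[
\phi_{\of}(\mu^{(1)}, \dots, \mu^{(k)}) = \sum_{j=1}^k \mu^{(j)}(f_j) = \sum_{j=1}^k \int_{X^k} f_j(x_j) \, d\mu(\ox),
\]
using the definition of $\mu^{(j)}$ as the push-forward under the $j$-th coordinate projection. On the other hand, changing variables along the embedding $X^k \hookrightarrow \cM(X)^k$,
\[
\int_{\cM(X)^k} \phi_{\of}(\olambda) \, d\rmm_\mu(\olambda) = \int_{X^k} \phi_{\of}(\delta_{x_1}, \dots, \delta_{x_k}) \, d\mu(\ox) = \int_{X^k} \sum_{j=1}^k f_j(x_j) \, d\mu(\ox).
\]
The two expressions agree, so uniqueness forces the barycenter to be $(\mu^{(1)}, \dots, \mu^{(k)})$.

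There is no serious obstacle here; the statement is essentially a restatement of the definition of push-forward combined with the description of weak$^*$-continuous linear functionals. The only mild point to be careful about is confirming that the embedding $X^k \hookrightarrow \cM(X)^k$ is a homeomorphism onto its image so that $\rmm_\mu$ is a well-defined Borel probability measure on $\cM(X)^k$, but this is immediate from continuity of $x \mapsto \delta_x$ and compactness of $X$.
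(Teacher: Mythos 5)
Your proof is correct. The only point worth flagging is that it takes a slightly different route from the paper: you verify the defining barycenter identity directly, using the standard duality that every weak$^*$-continuous linear functional on $(C(X)^{\oplus k})^*$ has the form $(\lambda_1,\dots,\lambda_k)\mapsto \sum_{j\in[k]}\lambda_j(f_j)$ for some $\of\in C(X)^{\oplus k}$ (the same fact the paper invokes later, via \cite[Theorem V.1.3]{Conway}, in the proof of Lemma~\ref{L-Hahn Banach IE}), and then a change of variables along the push-forward $\rmm_\mu$ finishes the computation. The paper instead avoids writing down the functionals at this stage: it introduces the two maps $\Phi_1(\mu)=(\mu^{(1)},\dots,\mu^{(k)})$ and $\Phi_2(\mu)=\Psi(\rmm_\mu)$, observes both are affine and continuous, checks they agree on Dirac measures $\delta_{(x_1,\dots,x_k)}$, and concludes equality since $\cM(X^k)$ is the closed convex hull of the Dirac measures. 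Your argument is more computational and self-contained here, while the paper's argument packages the functional-analytic input into the general properties of the barycenter map and the density of Diracs; both are valid, and your appeal to uniqueness in \cite[Proposition 1.1]{Phelps} together with the continuity of $x\mapsto\delta_x$ (so that $\rmm_\mu$ is a genuine Borel measure on $\cM(X)^k$) covers all the needed details.
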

\begin{proof} We denote by $\Psi$ the barycenter map $\cM(\cM(X)^k)\rightarrow \cM(X)^k$. We consider the map $\Phi_1: \cM(X^k)\rightarrow \cM(X)^k$ sending $\mu$ to $(\mu^{(1)}, \dots, \mu^{(k)})$, and the map $\Phi_2: \cM(X^k)\rightarrow \cM(X)^k$ sending $\mu$ to $\Psi(\rmm_\mu)$. Both $\Phi_1$ and $\Phi_2$ are affine and continuous. 

Let $(x_1, \dots, x_k)\in X^k$. We have $\Phi_1(\delta_{(x_1, \dots, x_k)})=(\delta_{x_1}, \dots, \delta_{x_k})$. Since $\rmm_{\delta_{(x_1, 
\dots, x_k)}}=\delta_{(\delta_{x_1}, \dots, \delta_{x_k})}$, we also have $\Phi_2(\delta_{(x_1, \dots, x_k)})=(\delta_{x_1}, \dots, \delta_{x_k})$. 
Thus 
$$\Phi_1(\delta_{(x_1, \dots, x_k)})=\Phi_2(\delta_{(x_1, \dots, x_k)}).$$

As $\cM(X^k)$ is the closed convex hull of the set of Dirac measures $\delta_{(x_1, \dots, x_k)}$ for $(x_1, \dots, x_k)\in X^k$, we conclude that 
$\Phi_1=\Phi_2$.
\end{proof}

Since we treat $X$ as a closed $\Gamma$-invariant subset of $\cM(X)$ via identifying $x\in X$ with the Dirac measure $\delta_x$ at $x$, by Theorem~\ref{T-IE basic}.\eqref{i-IE subset} we have $\IE_k(X)\subseteq \IE_k(\cM(X))$ for every $k\in \Nb$. 

\begin{lemma} \label{L-conditions for extreme of prob}
Let $k\in \Nb$. Assume that part (1) of Theorem~\ref{T-IE main} holds. Then parts (2) and (3) also hold.
\end{lemma}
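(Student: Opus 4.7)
The plan is to deduce (2) and (3) from (1) using two standard convex-analytic tools: Milman's theorem (Lemma~\ref{L-ext}) and the barycenter representation of closed convex hulls provided by Lemma~\ref{L-barycenter identity k}.

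For part (2), first I would verify the two inclusions in $\ext\,\IE_k(\cM(X)) = \IE_k(X)$. Since each Dirac measure is an extreme point of $\cM(X)$, the tuple $(\delta_{x_1},\dots,\delta_{x_k})$ is extreme in $\cM(X)^k$; because $\IE_k(\cM(X))$ is a convex subset of $\cM(X)^k$ (by (1), combined with closedness from Theorem~\ref{T-IE basic}\eqref{i-IE invariant}), any tuple in $\IE_k(X)$ that lies in $\IE_k(\cM(X))$ is automatically extreme there. The inclusion $\IE_k(X)\subseteq \IE_k(\cM(X))$ itself comes from Theorem~\ref{T-IE basic}\eqref{i-IE subset} applied to the embedding $X\hookrightarrow\cM(X)$. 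Conversely, (1) together with Milman's theorem forces every extreme point of $\IE_k(\cM(X))$ to lie in $\IE_k(X)$. This gives $\ext\,\IE_k(\cM(X))=\IE_k(X)$. The equality $\IE_k(\cM(X))\cap X^k=\IE_k(X)$ then follows: $\supseteq$ is again Theorem~\ref{T-IE basic}\eqref{i-IE subset}, while for $\subseteq$ any tuple of Dirac measures in $\IE_k(\cM(X))$ is extreme in $\cM(X)^k$, hence extreme in $\IE_k(\cM(X))$, and hence in $\IE_k(X)$ by what we just proved.

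For part (3), denote the right-hand side by $E$. For $E\subseteq\IE_k(\cM(X))$, given $\mu\in\cM(\IE_k(X))$ I view $\mu$ as a Borel probability measure on $X^k$ and pass to the associated $\rmm_\mu\in\cM(\cM(X)^k)$ as in Lemma~\ref{L-barycenter identity k}. Then $\rmm_\mu$ is supported on the image of $\IE_k(X)$ inside $\cM(X)^k$, so its barycenter lies in the closed convex hull of $\IE_k(X)$ in $\cM(X)^k$, which equals $\IE_k(\cM(X))$ by (1); and Lemma~\ref{L-barycenter identity k} identifies this barycenter with $(\mu^{(1)},\dots,\mu^{(k)})$. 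For the reverse inclusion $\IE_k(\cM(X))\subseteq E$, I would invoke the standard representation of the closed convex hull of a compact (metrizable) set $K$ in a Hausdorff locally convex space as the image of $\cM(K)$ under the barycenter map; applied to $K=\IE_k(X)\hookrightarrow\cM(X)^k$, every $(\nu_1,\dots,\nu_k)\in\IE_k(\cM(X))$ is the barycenter of some $\rmm\in\cM(K)$. Transferring $\rmm$ back through the homeomorphism $X^k\cong X^k\hookrightarrow\cM(X)^k$ produces $\mu\in\cM(\IE_k(X))$ with $\rmm_\mu=\rmm$, and Lemma~\ref{L-barycenter identity k} then yields $(\nu_1,\dots,\nu_k)=(\mu^{(1)},\dots,\mu^{(k)})$, so $(\nu_1,\dots,\nu_k)\in E$.

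The main obstacle is purely bookkeeping: one must keep scrupulously separate the two roles of $\IE_k(X)$ (as a subset of $X^k$ and as a subset of $\cM(X)^k$ via the Dirac embedding) and the two avatars of a measure $\mu\in\cM(X^k)$ (as itself and as the pushed-forward $\rmm_\mu\in\cM(\cM(X)^k)$). Once these identifications are handled via Lemma~\ref{L-barycenter identity k}, both statements reduce to Milman's theorem and the barycenter description of the closed convex hull, with no further combinatorial input needed.
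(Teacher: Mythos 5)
Your argument is correct and essentially the same as the paper's: part (2) is obtained from Milman's theorem (Lemma~\ref{L-ext}) together with $\ext(\cM(X)^k)=X^k$ and the fact that tuples of Dirac measures lying in the convex set $\IE_k(\cM(X))$ are extreme there, and part (3) from Lemma~\ref{L-barycenter identity k} via the barycenter map. The only cosmetic difference is in the inclusion $\IE_k(\cM(X))\subseteq E$: you invoke the standard Choquet-type representation of the closed convex hull of a compact set as the set of barycenters of measures on it, whereas the paper gets both inclusions at once by observing that the continuous affine barycenter map carries $\cM(\IE_k(X))$ (the closed convex hull of the Dirac measures on $\IE_k(X)$) onto the closed convex hull of $\IE_k(X)$, which is $\IE_k(\cM(X))$ by part (1).
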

\begin{proof}
Since $\IE_k(\cM(X))$ is compact convex by Lemma~\ref{L-IE convex}.(2), we can talk about $\ext\, \IE_k(\cM(X))$. Note that $\ext\, \cM(X)=X$, whence $\ext\, (\cM(X)^k)=(\ext\, \cM(X))^k=X^k$. Thus 
$$\IE_k(X)\subseteq \IE_k(\cM(X))\cap X^k=\IE_k(\cM(X))\cap \ext\, (\cM(X)^k)\subseteq \ext\, \IE_k(\cM(X)).$$
From part (1) of Theorem~\ref{T-IE main} and Lemma~\ref{L-ext} we have $\ext\, \IE_k(\cM(X))\subseteq \IE_k(X)$. Thus part (2) of Theorem~\ref{T-IE main} holds.

We denote by $\Psi$ the barycenter map $\cM(\cM(X)^k)\rightarrow \cM(X)^k$ sending $\rmm$ to its barycenter.

For each $\mu\in \cM(\IE_k(X))$,
we may treat $\mu$ as a Borel probability measure $\rmm$ on $\cM(X)^k$ via the embedding $\IE_k(X)\hookrightarrow X^k\hookrightarrow \cM(X)^k$.
From Lemma~\ref{L-barycenter identity k} we have
$$ \Psi(\cM(\IE_k(X)))=\{(\mu^{(1)}, \dots, \mu^{(k)}): \mu\in \cM(\IE_k(X))\}.$$

Since $\cM(\IE_k(X))$ is the closed convex hull of $\IE_k(X)$ in $\cM(\cM(X)^k)$ and $\Psi$ is affine and continuous, $\Psi(\cM(\IE_k(X)))$ is the closed convex hull  of $\Psi(\IE_k(X))=\IE_k(X)$ in $\cM(X)^k$. Thus part (3) of Theorem~\ref{T-IE main} follows from part (1).
\end{proof}

The case $k=1$ of Theorem~\ref{T-IE main}.(1) is easy to prove. It is mainly based on the following simple fact.

\begin{lemma} \label{L-supp for barycenter}
Let $\rmm\in \cM(\cM(X))$ and let $\mu\in \cM(X)$ be the barycenter of $\rmm$. Then
\begin{align} \label{E-supp for barycenter}
 \bigcup_{\nu\in \supp(\rmm)}\supp(\nu)\subseteq \supp(\mu).
 \end{align}
\end{lemma}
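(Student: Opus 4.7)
The plan is to prove the inclusion directly by unpacking what it means for a point $x$ to lie in $\supp(\mu)$: we must show that $\mu(U)>0$ for every open neighborhood $U$ of $x$. So fix $\nu\in\supp(\rmm)$ and $x\in\supp(\nu)$, and fix an arbitrary open neighborhood $U$ of $x$. The goal is to produce a continuous test function $f\in C(X)$ with $0\le f\le {\bf 1}_U$ and $f(x)>0$, and then bootstrap positivity of $\nu(f)$ to positivity of $\mu(f)$ using the defining property of the barycenter.

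First, by Urysohn's lemma I would choose such an $f$ with support contained in $U$ and $f(x)=1$. Since $f$ is continuous, there is an open neighborhood $V\subseteq U$ of $x$ on which $f\ge 1/2$; because $x\in\supp(\nu)$ we have $\nu(V)>0$, and hence
\[
\nu(f)\ge \tfrac{1}{2}\nu(V)>0.
\]
Next, the evaluation map $F\colon \cM(X)\to\Rb$, $F(\eta)=\eta(f)$, is weak$^*$-continuous, so $W:=F^{-1}((0,\infty))$ is open in $\cM(X)$ and contains $\nu$. Since $\nu\in\supp(\rmm)$, it follows that $\rmm(W)>0$.

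To conclude, I would write $W=\bigcup_{n\in\Nb} F^{-1}([1/n,\infty))$; countable additivity gives some $n$ with $\rmm(F^{-1}([1/n,\infty)))>0$. Because $F\ge 0$ everywhere (as $f\ge 0$), the defining property of the barycenter yields
\[
\mu(f)=\int_{\cM(X)} F(\eta)\,d\rmm(\eta)\ge \tfrac{1}{n}\,\rmm\!\left(F^{-1}([1/n,\infty))\right)>0.
\]
Since $f\le {\bf 1}_U$, this forces $\mu(U)\ge\mu(f)>0$, showing $x\in\supp(\mu)$ and proving \eqref{E-supp for barycenter}. There is no substantial obstacle here; the only subtlety is the passage from ``$F>0$ on a set of positive measure'' to ``$\int F\,d\rmm>0$'', which is handled by the standard level-set decomposition above.
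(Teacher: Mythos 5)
Your proof is correct and is essentially the argument the paper gives: both rest on choosing a continuous test function $f\ge 0$ with $\nu(f)>0$ and then invoking the barycenter identity $\mu(f)=\int_{\cM(X)}\omega(f)\,d\rmm(\omega)$ together with the fact that a nonnegative weak$^*$-continuous function positive at a point of $\supp(\rmm)$ has positive integral. The only differences are cosmetic — you argue directly with a Urysohn function attached to a neighborhood $U$ of $x$, while the paper argues by contradiction with a Tietze extension vanishing on $\supp(\mu)$, and you make explicit (via the level-set decomposition) the positivity-of-the-integral step that the paper leaves implicit.
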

\begin{proof} We argue by contradiction. Assume that \eqref{E-supp for barycenter} fails. Then we can find some $\nu\in \supp(\rmm)$ and some $x\in \supp(\nu)$ such that $x\not\in \supp(\mu)$. By the Tietze extension theorem we can find an $f\in C(X)$ such that $f\ge 0$ on $X$, $f=0$ on $\supp(\mu)$, and $f(x)>0$. Then $\nu(f)>0$ and $\omega(f)\ge 0$ for every $\omega\in \cM(X)$, and hence
$$ \mu(f)=\int_{\cM(X)}\omega(f)\, d\rmm(\omega)>0,$$
which contradicts that $f=0$ on $\supp(\mu)$. Thus  \eqref{E-supp for barycenter} holds.
\end{proof}

We are ready to prove the case $k=1$ of Theorem~\ref{T-IE main}. 

\begin{proof}[Proof of the case $k=1$ of Theorem~\ref{T-IE main}] For any $k\in \Nb$, since $\IE_k(\cM(X))$ is a compact convex subset of $\cM(X)^k$ by Lemma~\ref{L-IE convex}.(2) and $\IE_k(X)\subseteq \IE_k(\cM(X))$ we immediately get $\overline{\rm co}(\IE_k(X))\subseteq \IE_k(\cM(X))$.

Let $\rmm\in \cM_\Gamma(\cM(X))$  and
let $\mu\in \cM_\Gamma(X)$ be the barycenter of $\rmm$. By Theorem~\ref{T-IE basic}.\eqref{i-IE 1} we have $\supp(\mu)\subseteq \IE_1(X)$. From Lemma~\ref{L-supp for barycenter} we have $\supp(\nu)\subseteq \supp(\mu)$ for every $\nu\in \supp(\rmm)$. Thus
$$ \nu\in \overline{\rm co}(\supp(\nu))\subseteq \overline{\rm co}(\supp(\mu))\subseteq \overline{\rm co}(\IE_1(X))$$
for every $\nu\in \supp(\rmm)$, whence
$$ \supp(\rmm)\subseteq \overline{\rm co}(\IE_1(X)).$$
Applying Theorem~\ref{T-IE basic}.\eqref{i-IE 1} again we get $\IE_1(\cM(X))\subseteq \overline{\rm co}(\IE_1(X))$. This proves part (1) of Theorem~\ref{T-IE main} in the case $k=1$. From Lemma~\ref{L-conditions for extreme of prob} we conclude that the case $k=1$ of Theorem~\ref{T-IE main} holds. 
\end{proof}

To prove the  $k\ge 2$ case of Theorem~\ref{T-IE main}, we shall use the following lemma and apply some combinatorial lemmas to show that the phenomenon described there cannot happen.

\begin{lemma} \label{L-Hahn Banach IE}
If Theorem~\ref{T-IE main} fails for some $k\in \Nb$, then there are some $(\mu_1, \dots, \mu_k)\in \IE_k(\cM(X))$ and some $(f_1, \dots, f_k)\in C(X)^{\oplus k}$ such that $\sum_{j\in [k]}f_j(x_j)\le 0$
for all $(x_1, \dots, x_k)\in \IE_k(X)$ and
$\sum_{j\in [k]}\mu_j(f_j)>0$.
\end{lemma}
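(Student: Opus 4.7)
The approach is a standard Hahn--Banach separation argument, preceded by a reduction to the failure of part (1) of Theorem~\ref{T-IE main}.

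First, I would observe that one can always assume part (1) of Theorem~\ref{T-IE main} fails. Indeed, by Lemma~\ref{L-IE convex}.(2) the set $\IE_k(\cM(X))$ is a compact convex subset of $\cM(X)^k$, while by Theorem~\ref{T-IE basic}.\eqref{i-IE subset} (via the embedding $X\hookrightarrow \cM(X)$) it contains $\IE_k(X)$; hence the inclusion $\overline{\rm co}(\IE_k(X))\subseteq \IE_k(\cM(X))$ holds unconditionally, and part (1) reduces to the reverse inclusion. By Lemma~\ref{L-conditions for extreme of prob}, if part (1) holds then parts (2) and (3) also hold, so if Theorem~\ref{T-IE main} fails then part (1) itself fails, producing some $(\mu_1,\dots,\mu_k)\in \IE_k(\cM(X))\setminus \overline{\rm co}(\IE_k(X))$.

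Next, I would apply the Hahn--Banach separation theorem in the Hausdorff locally convex topological vector space $(C(X)^{\oplus k})^*$ equipped with the weak$^*$-topology. The set $\overline{\rm co}(\IE_k(X))$ is closed and convex in $\cM(X)^k\subseteq (C(X)^{\oplus k})^*$ and does not contain the point $(\mu_1,\dots,\mu_k)$, so there exist a weak$^*$-continuous linear functional $\varphi$ on $(C(X)^{\oplus k})^*$ and a constant $\alpha\in \Rb$ strictly separating them, i.e., $\varphi(\nu)\le \alpha <\varphi(\mu_1,\dots,\mu_k)$ for every $\nu\in \overline{\rm co}(\IE_k(X))$. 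By the standard duality, every such $\varphi$ has the form $\varphi(\nu_1,\dots,\nu_k)=\sum_{j\in[k]}\nu_j(f_j)$ for some $(f_1,\dots,f_k)\in C(X)^{\oplus k}$. Evaluating on the points $(\delta_{x_1},\dots,\delta_{x_k})$ for $(x_1,\dots,x_k)\in \IE_k(X)$ gives $\sum_{j\in[k]}f_j(x_j)\le \alpha$ on $\IE_k(X)$.

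Finally, to absorb the constant $\alpha$, I would replace $f_1$ by $f_1-\alpha\,{\bf 1}_X$; this preserves membership in $C(X)^{\oplus k}$ and transforms both inequalities into exactly the conclusions of the lemma. There is no serious obstacle here, since the argument is a direct separation. The only mild point that merits care is the identification of weak$^*$-continuous linear functionals on $(C(X)^{\oplus k})^*$ with elements of $C(X)^{\oplus k}$, which is the standard duality for the weak$^*$-topology on the dual of a Banach space, together with the observation that since one of the two convex sets is a singleton and the other is closed, Hahn--Banach delivers strict separation and not merely separation.
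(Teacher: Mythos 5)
Your proof is correct and follows essentially the same route as the paper: reduce via Lemma~\ref{L-conditions for extreme of prob} to the failure of part (1), note $\overline{\rm co}(\IE_k(X))\subseteq \IE_k(\cM(X))$, and separate the point from the closed convex hull by Hahn--Banach in $(C(X)^{\oplus k})^*$ with the weak$^*$-topology, identifying the functional with an element of $C(X)^{\oplus k}$. Your explicit normalization step (replacing $f_1$ by $f_1-\alpha\,{\bf 1}_X$, valid because all components are probability measures) is a point the paper leaves implicit, and it is handled correctly.
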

\begin{proof}
In the proof of the case $k=1$ of Theorem~\ref{T-IE main}, we have shown $\overline{\rm co}(\IE_k(X))\subseteq \IE_k(\cM(X))$ for all $k\in \Nb$.

Assume that the case $k$ of Theorem~\ref{T-IE main} fails. Then by Lemma~\ref{L-conditions for extreme of prob} part (1) of  Theorem~\ref{T-IE main} fails for this $k$. Thus $\overline{\rm co}(\IE_k(X))\subsetneq \IE_k(\cM(X))$. Then we can find a $\omu=(\mu_1, \dots, \mu_k)\in \IE_k(\cM(X))\setminus \overline{\rm co}(\IE_k(X))$. Note that the dual of $(C(X)^{\oplus k})^*$ equipped with the weak$^*$-topology
is $C(X)^{\oplus k}$ \cite[Theorem V.1.3]{Conway}.
Since $\overline{\rm co}(\IE_k(X))$ and $\{\omu\}$ are disjoint compact convex sets in $(C(X)^{\oplus k})^*$, by the geometric consequence of the Hahn-Banach theorem \cite[Proposition IV.3.6.(b) and Theorem IV.3.9]{Conway}, there is some $\of=(f_1, \dots, f_k)\in C(X)^{\oplus k}$  such  that $\sum_{j\in [k]}\nu_j(f_j)\le 0$ for all $(\nu_1, \dots, \nu_k)\in \overline{\rm co}(\IE_k(X))$ and $\sum_{j\in [k]}\mu_j(f_j)>0$. Equivalently, $\sum_{j\in [k]}f_j(x_j)\le 0$ for all $(x_1, \dots, x_k)\in \IE_k(X)$ and $\sum_{j\in [k]}\mu_j(f_j)>0$.
\end{proof}

\subsection{Case $k=2$ of Theorem~\ref{T-IE main}} \label{SS-k=2}

In this subsection we establish the combinatorial Lemma~\ref{L-half to indep} and use it to prove the case $k=2$ of Theorem~\ref{T-IE main}.

We show first that, when $k=2$, the condition $\sum_{j\in [k]}\mu_j(f_j)>0$ in Lemma~\ref{L-Hahn Banach IE} leads to a combinatorial condition in Lemma~\ref{L-large density}. 

\begin{lemma} \label{L-sum}
Let $\mu_1, \mu_2\in \cM(X)$  and $f_1, f_2\in C(X)$ such that $\sum_{j\in [2]} \mu_j(f_j)>0$. Then there are open sets $U_1, U_2\subseteq X$ such that 
$$\sum_{j\in [2]} \min_{x\in \overline{U_j}}f_j(x)>0 \quad \text{ and } \quad \sum_{j\in [2]}\mu_j(U_j)>1.$$
\end{lemma}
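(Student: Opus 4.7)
The plan is to construct the open sets $U_j$ as strict super-level sets of the functions $f_j$: if $U_j=\{x\in X: f_j(x)>t_j\}$ for some $t_j\in \Rb$, then $\overline{U_j}\subseteq \{x\in X: f_j(x)\ge t_j\}$, so $\min_{x\in \overline{U_j}}f_j(x)\ge t_j$. Thus it is enough to produce $t_1,t_2\in \Rb$ with $t_1+t_2>0$ and $\mu_1(\{f_1>t_1\})+\mu_2(\{f_2>t_2\})>1$. Because the function $s\mapsto \mu_j(\{f_j>s\})$ is right-continuous (as a countable decreasing limit of measures of nested open sets), a small rightward perturbation preserves the strict measure inequality, so it suffices to locate a single $t_0\in \Rb$ with
$$\mu_1(\{f_1>t_0\})+\mu_2(\{f_2>-t_0\})>1,$$
after which one can take $U_1=\{f_1>t_0\}$ and $U_2=\{f_2>-t_0+\epsilon\}$ for sufficiently small $\epsilon>0$; this gives $\min_{\overline{U_1}}f_1+\min_{\overline{U_2}}f_2\ge \epsilon>0$.

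To find $t_0$, I would introduce the cumulative distribution functions $G_j(t):=\mu_j(\{f_j\le t\})$ and note that the target inequality is equivalent to $G_1(t_0)+G_2(-t_0)<1$. Arguing by contradiction, suppose $G_1(t)+G_2(-t)\ge 1$ for every $t\in \Rb$. Pick $M>\max(\|f_1\|,\|f_2\|)$, so that $G_j(-M)=0$ and $G_j(M)=1$. Stieltjes integration by parts gives
$$\mu_j(f_j)=\int_{-M}^{M}t\,dG_j(t)=M-\int_{-M}^{M}G_j(t)\,dt,$$
and the substitution $t\mapsto -t$ in the $j=2$ integral turns $\int_{-M}^{M}G_2(t)\,dt$ into $\int_{-M}^{M}G_2(-t)\,dt$. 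Summing over $j$ yields
$$\mu_1(f_1)+\mu_2(f_2)=2M-\int_{-M}^{M}\bigl[G_1(t)+G_2(-t)\bigr]\,dt\le 2M-2M=0,$$
which contradicts the hypothesis $\mu_1(f_1)+\mu_2(f_2)>0$. Hence the required $t_0$ exists, and the construction above finishes the proof.

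I do not anticipate any serious obstacle. The only delicate point is the interplay between strict and non-strict inequalities at the boundaries of the super-level sets, but this is resolved cleanly by choosing $\epsilon>0$ after $t_0$ is fixed and invoking right-continuity of $s\mapsto \mu_j(\{f_j>s\})$. Everything else reduces to a short one-variable calculation with distribution functions.
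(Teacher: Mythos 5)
Your argument is correct and is essentially the paper's proof in different clothing: the paper derives the identity $\mu_j(f_j)=-C+\int_{[-C,C]}\mu_j\bigl(f_j^{-1}((t,C])\bigr)\,dt$ via Fubini (the layer-cake formula), which is exactly your Stieltjes-integration-by-parts identity $\mu_j(f_j)=M-\int_{-M}^{M}G_j(t)\,dt$, and both proofs then couple the thresholds by $t\mapsto -t$, average to find one $t$ where the two super-level measures sum to more than $1$ (you phrase this as a contradiction, the paper directly), and finish with a small $\varepsilon$-shift so that the closures of the open super-level sets still have positive summed minimum. The only cosmetic difference is that the paper shifts both sets by $\varepsilon$ while you shift only one, which works just as well.
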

\begin{proof} Let $C=\max(\|f_1||, \|f_2\|)>0$. We denote by $\nu$ the Lebesgue measure on $\Rb$. For $j\in [2]$, setting
$$A_j=\{(x, t)\in X\times [-C, C]: t<f_j(x)\}$$
and denoting by ${\bf 1}_{A_j}$ the characteristic function of $A_j$,
by Fubini's theorem  we have
\begin{align*}
\mu_j(f_j)
&=-C+\int_X (f_j(x)+C)\, d\mu_j(x)=-C+\int_X \int_{[-C, C]}{\bf 1}_{A_j}(x, t)\, d\nu(t)\, d\mu_j(x)\\
&=-C+\int_{[-C, C]}\int_X {\bf 1}_{A_j}(x, t)\, d\mu_j(x)\, d\nu(t)=-C+\int_{[-C, C]}\mu_j(f_j^{-1}((t, C]))\, d\nu(t).
\end{align*}
Thus
\begin{align*}
0&<\sum_{j\in [2]}\mu_j(f_j)\\
&=-2C+\int_{[-C, C]}\mu_1(f_1^{-1}((t, C]))\, d\nu(t)+\int_{[-C, C]}\mu_2(f_2^{-1}((t, C]))\, d\nu(t)\\
&=-2C+\int_{[-C, C]}\mu_1(f_1^{-1}((t, C]))\, d\nu(t)+\int_{[-C, C]}\mu_2(f_2^{-1}((-t, C]))\, d\nu(t)\\
&=-2C+\int_{[-C, C]}\left(\mu_1(f_1^{-1}((t, C]))+\mu_2(f_2^{-1}((-t, C]))\right)\, d\nu(t).
\end{align*}
Then there is some $t\in [-C, C]$ such that
$$ \mu_1(f_1^{-1}((t, C]))+\mu_2(f_2^{-1}((-t, C]))>1.$$
Since $\mu_1(f_1^{-1}((t+\varepsilon, C]))+\mu_2(f_2^{-1}((-t+\varepsilon, C]))$ converges to $\mu_1(f_1^{-1}((t, C]))+\mu_2(f_2^{-1}((-t, C]))$ as $\varepsilon\to 0^+$, we can find an $\varepsilon>0$ such that
$$\mu_1(f_1^{-1}((t+\varepsilon, C]))+\mu_2(f_2^{-1}((-t+\varepsilon, C]))>1.$$
We set $U_1=f_1^{-1}((t+\varepsilon, C])$ and $U_2=f_2^{-1}((-t+\varepsilon, C])$. Then $U_1$ and $U_2$ are open, and $\mu_1(U_1)+\mu_2(U_2)>1$. Clearly
$\sum_{j\in [2]} \min_{x\in \overline{U_j}}f_j(x)\ge 2\varepsilon>0$.
\end{proof}

One might hope to extend Lemma~\ref{L-sum} to the case $k\ge 3$ to yield 
$$\sum_{j\in [k]} \min_{x\in \overline{U_j}}f_j(x)>0 \quad \text{ and } \quad \sum_{j\in [k]}\mu_j(U_j)>k-1.$$ 
The following example shows that such an extension does not hold in general.

\begin{example} \label{E-sum3}
For each $j\in [3]$ let $\mu_j=\frac{1}{2}(\delta_{x_j}+\delta_{y_j})\in \cM(X)$ with distinct $x_j, y_j\in X$. By the Tietze extension theorem we can find an $f_j\in C(X)$ for each $j\in [3]$ such that the following hold:
\begin{enumerate}
\item $f_j(x_j)=-f_j(y_j)\in (1, 2)$ for $j=1, 2$, and
\item $f_3(x_3)\in (1, 2), f_3(y_3)\in (-2, -1)$ and $f_3(x_3)+f_3(y_3)>0$.
\end{enumerate}
Then $\sum_{j\in [3]}\mu_j(f_j)>0$. 

We claim that there are no closed sets $A_j\subseteq X$ for $j\in [3]$ satisfying 
$$\sum_{j\in [3]}\min_{x\in A_j}f_j(x)>0 \quad \text{ and } \quad \sum_{j\in [3]}\mu_j(A_j)>2.$$ 
Indeed, assume that such $A_j$ for $j\in [3]$ exist.
Note that $\mu_j(A_j)\in \{0, 1/2, 1\}$ for all $j\in [3]$. Thus there is some $F\subseteq [3]$ with $|F|=2$ such that $\mu_j(A_j)=1$ for all $j\in F$. Then $y_j\in A_j$ for all $j\in F$, whence $f_j(y_j)\ge \min_{x\in A_j}f_j(x)$ for all $j\in F$. Say, $\{i\}=[3]\setminus F$. Then $\mu_i(A_i)>0$. Thus $A_i\cap \{x_i, y_i\}\neq \emptyset$. Since $f_i(x_i)>0>f_i(y_i)$, we have 
$$ f_i(x_i)=\max\{f_i(x_i), f_i(y_i)\}\ge \min_{x\in A_i}f_i(x).$$
It follows that
$$f_i(x_i)+\sum_{j\in F}f_j(y_j)\ge \sum_{j\in [3]}\min_{x\in A_j}f_j(x)>0.$$
But $f_i(x_i)+\sum_{j\in F}f_j(y_j)<2+(-1)+(-1)=0$, a contradiction. This proves our claim. 
\end{example}

\begin{lemma} \label{L-large density}
Let $\mu_1, \mu_2\in \cM(X)$ and $f_1, f_2\in C(X)$ such that $\sum_{j\in [2]}\mu_j(f_j)>0$. Then there are $0<\delta<1$, nonempty closed sets $A_1, A_2\subseteq X$ and a product neighborhood $V_1\times V_2$ of $(\mu_1, \mu_2)$ in $\cM(X)^2$ such that the following hold:
\begin{enumerate}
\item $\sum_{j\in [2]}\min_{x\in A_j}f_j(x)>0$, and
\item for any independence set $F\in \cF(\Gamma)$ of $(V_1, V_2)$ and any surjective map $\psi: F\rightarrow [2]$, there is an $F_\psi\subseteq F$ so that 
    $$\sum_{j\in [2]} \frac{|F_\psi\cap \psi^{-1}(j)|}{|\psi^{-1}(j)|}>1+\delta \quad \text{ and } \quad \bigcap_{s\in F_\psi}s^{-1}A_{\psi(s)}\neq \emptyset.$$
\end{enumerate}
\end{lemma}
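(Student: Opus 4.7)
The plan is to combine Lemma~\ref{L-sum} with an averaging argument against a carefully chosen witness measure. First I would apply Lemma~\ref{L-sum} to obtain open sets $U_1, U_2 \subseteq X$ together with an $\eta > 0$ small enough that $\sum_{j\in[2]} \min_{x\in\overline{U_j}} f_j(x) > 0$ and $\sum_{j\in[2]} \mu_j(U_j) > 1+8\eta$; set $A_j := \overline{U_j}$. Both $A_j$ are automatically nonempty since $\mu_j(U_j)>0$, and property~(1) is already met. One cannot just pick $V_j$ to be a small weak$^*$-neighborhood of $\mu_j$ and directly read off a lower bound on $\nu(A_j)$, because $\nu\mapsto \nu(A_j)$ is only upper-semicontinuous on $\cM(X)$; sidestepping this asymmetry is the one subtlety of the proof.

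The standard remedy is to insert a continuous bump: by inner regularity pick compact $K_j \subseteq U_j$ with $\mu_j(K_j) > \mu_j(U_j)-\eta$, and by Urysohn choose $g_j \in C(X)$ satisfying ${\bf 1}_{K_j} \le g_j \le {\bf 1}_{U_j} \le {\bf 1}_{A_j}$. Now define the weak$^*$-open neighborhoods
$$V_j := \{\nu\in\cM(X) : \nu(g_j) > \mu_j(g_j)-\eta\},$$
which contain $\mu_j$ and enforce $\nu(A_j) \ge \nu(g_j) > \mu_j(g_j)-\eta \ge \mu_j(K_j)-\eta > \mu_j(U_j)-2\eta$ for every $\nu \in V_j$.

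Next fix an independence set $F\in\cF(\Gamma)$ for $(V_1,V_2)$ and a surjective $\psi:F\to[2]$. Applying the definition of independence with $J=F$ and $\sigma=\psi$ produces $\nu_\psi\in\cM(X)$ with $s\nu_\psi\in V_{\psi(s)}$ for every $s\in F$, so that $\nu_\psi(s^{-1}A_{\psi(s)}) = (s\nu_\psi)(A_{\psi(s)}) > \mu_{\psi(s)}(U_{\psi(s)})-2\eta$. Writing $F_j:=\psi^{-1}(j)$ and integrating the bounded weighted counting function
$$h(x) := \sum_{s\in F}\frac{1}{|F_{\psi(s)}|}{\bf 1}_{s^{-1}A_{\psi(s)}}(x), \qquad 0 \le h \le 2,$$
against $\nu_\psi$ gives
$$\int_X h\,d\nu_\psi = \sum_{j\in[2]}\frac{1}{|F_j|}\sum_{s\in F_j}\nu_\psi(s^{-1}A_j) > \sum_{j\in[2]}(\mu_j(U_j)-2\eta) > 1+4\eta.$$
Since $h \le 2$ but its $\nu_\psi$-average exceeds $1+4\eta$, there must exist $x\in X$ with $h(x)>1+2\eta$. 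Taking $F_\psi := \{s\in F : sx\in A_{\psi(s)}\}$ gives $x\in\bigcap_{s\in F_\psi}s^{-1}A_{\psi(s)}$ and $\sum_{j\in[2]}|F_\psi\cap\psi^{-1}(j)|/|\psi^{-1}(j)| = h(x) > 1+2\eta$, so $\delta := 2\eta$ (further shrunk below $1$ if necessary) closes the proof.

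The genuinely interesting move is the averaging in the last paragraph: a pointwise lower bound on $\nu_\psi(s^{-1}A_{\psi(s)})$ for each $s$ is converted, via a single weighted integration, into the uniform $(1+\delta)$-excess density of \emph{one} common subset $F_\psi$ whose $\psi$-translates share a point. Everything else amounts to careful bookkeeping between the open sets $U_j$ (where weak$^*$ measure control is available) and their closures $A_j$ (in which property~(1) lives), mediated by the Urysohn bumps $g_j$.
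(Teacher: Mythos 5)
Your proof is correct and follows essentially the same route as the paper's: apply Lemma~\ref{L-sum}, set $A_j=\overline{U_j}$, choose a weak$^*$-neighborhood of $(\mu_1,\mu_2)$ forcing $\nu(A_j)>\mu_j(U_j)-O(\eta)$ along translates, and then run the same averaging of the weighted indicator sum against the witness measure to produce $x$ and $F_\psi$. The only (harmless) difference is your inner-regularity/Urysohn detour: the paper simply takes $V_j=\{\nu:\nu(U_j)>\mu_j(U_j)-\delta\}$, which is already weak$^*$-open because $\nu\mapsto\nu(U_j)$ is lower semicontinuous for the open set $U_j$, and then uses $\nu(A_j)\ge\nu(U_j)$.
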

\begin{proof} Let $U_1, U_2$ be given by Lemma~\ref{L-sum}. We set $A_j=\overline{U_j}$ for $j\in [2]$. Then
$$\sum_{j\in [2]}\min_{x\in A_j}f_j(x)=\sum_{j\in [2]} \min_{x\in \overline{U_j}}f_j(x)>0. $$

By our choice of $U_1$ and $U_2$ we have $\sum_{j\in [2]}\mu_j(U_j)>1$.
Thus we can take a  $0<\delta<\big(\sum_{j\in [2]}\mu_j(U_j)-1\big)/3$. Then
$$V_j:=\{\nu\in \cM(X): \nu(U_j)>\mu_j(U_j)-\delta\}$$
is an open neighborhood of $\mu_j$ in $\cM(X)$ for $j\in [2]$.

Let $F\in \cF(\Gamma)$ be an independence set for $(V_1, V_2)$ and let $\psi: F\rightarrow [2]$ be surjective. Then we can find some $\nu\in \bigcap_{s\in F}s^{-1}V_{\psi(s)}$. For each $s\in F$, we have $s\nu\in V_{\psi(s)}$ and hence
$$\nu(s^{-1}A_{\psi(s)})=(s\nu)(A_{\psi(s)})\ge (s\nu)(U_{\psi(s)})>\mu_{\psi(s)}(U_{\psi(s)})-\delta.$$
Therefore
\begin{align*}
\int_X \sum_{j\in [2]}\frac{1}{|\psi^{-1}(j)|}\sum_{s\in \psi^{-1}(j)}{\bf 1}_{s^{-1}A_j}(x)\, d\nu(x)&=\sum_{j\in [2]}\frac{1}{|\psi^{-1}(j)|}\sum_{s\in \psi^{-1}(j)}\nu(s^{-1}A_j)\\
&\ge \sum_{j\in [2]}\frac{1}{|\psi^{-1}(j)|}\sum_{s\in \psi^{-1}(j)}(\mu_j(U_j)-\delta)\\
&=\sum_{j\in [2]}(\mu_j(U_j)-\delta)>1+\delta.
\end{align*}
Thus there is some $x\in X$ such that
$$\sum_{j\in [2]}\frac{1}{|\psi^{-1}(j)|}\sum_{s\in \psi^{-1}(j)}{\bf 1}_{s^{-1}A_j}(x)>1+\delta.$$
We denote by $F_\psi$ the set of $s\in F$ satisfying $sx\in A_{\psi(s)}$. Then
$$  \sum_{j\in [2]} \frac{|F_\psi\cap \psi^{-1}(j)|}{|\psi^{-1}(j)|}=\sum_{j\in [2]}\frac{1}{|\psi^{-1}(j)|}\sum_{s\in \psi^{-1}(j)}{\bf 1}_{s^{-1}A_j}(x)>1+\delta,$$
and $x\in \bigcap_{s\in F_\psi}s^{-1}A_{\psi(s)}$.
\end{proof}

We need the following weak version of Stirling's approximation \cite[Lemma 10.1]{KL16}.

\begin{lemma} \label{L-Stirling}
For every $m\in \Nb$ we have
\begin{align} \label{E-Stirling}
e(\frac{m}{e})^m\le m!\le em(\frac{m}{e})^m.
\end{align}
\end{lemma}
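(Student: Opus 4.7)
The plan is to prove both inequalities simultaneously by induction on $m$, using the two classical monotonicity facts that the sequence $(1+1/m)^m$ is strictly increasing with limit $e$ and that $(1+1/m)^{m+1}$ is strictly decreasing with limit $e$; equivalently,
$$\left(1+\tfrac{1}{m}\right)^m<e<\left(1+\tfrac{1}{m}\right)^{m+1}\qquad\text{for every }m\in\Nb.$$

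For the base case $m=1$, both bounds $e(m/e)^m$ and $em(m/e)^m$ equal $1$, and $1!=1$, so \eqref{E-Stirling} holds trivially as an equality.

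For the inductive step, assume $e(m/e)^m\le m!\le em(m/e)^m$ and multiply through by $m+1$. The lower bound $(m+1)!\ge e((m+1)/e)^{m+1}$ will follow once $(m+1)\cdot e(m/e)^m\ge e((m+1)/e)^{m+1}$, and a routine simplification shows this is equivalent to $(1+1/m)^m\le e$. Similarly, the upper bound $(m+1)!\le e(m+1)((m+1)/e)^{m+1}$ follows from $(m+1)\cdot em(m/e)^m\le e(m+1)((m+1)/e)^{m+1}$, which reduces to $(1+1/m)^{m+1}\ge e$. Both reductions are supplied by the two displayed inequalities above.

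There is no real obstacle here: the only content is the standard monotone bounds on $(1+1/m)^m$ and $(1+1/m)^{m+1}$, which are entirely elementary (for instance, by AM--GM or by term-by-term comparison in the binomial expansion). The bookkeeping in the inductive step is a one-line cancellation of $e^m$ factors, so the whole argument fits comfortably in a short paragraph.
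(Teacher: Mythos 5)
Your proof is correct: the base case $m=1$ gives equality on both sides, and in the inductive step the lower bound indeed reduces (after cancelling the powers of $e$) to $e\,m^m\ge (m+1)^m$, i.e.\ $(1+1/m)^m\le e$, while the upper bound reduces to $e\,m^{m+1}\le (m+1)^{m+1}$, i.e.\ $(1+1/m)^{m+1}\ge e$; both are supplied by the standard bounds $(1+1/m)^m<e<(1+1/m)^{m+1}$. Note that the paper itself does not prove this lemma --- it quotes it from \cite[Lemma 10.1]{KL16} --- so there is no in-paper argument to compare with. The usual proof (as in that reference) bounds $\log m!=\sum_{j=1}^m\log j$ above and below by the integral $\int_1^m\log x\,dx=m\log m-m+1$, using monotonicity of $\log$; your induction via the monotone sequences $(1+1/m)^m$ and $(1+1/m)^{m+1}$ is an equally elementary alternative that trades the integral comparison for a one-line cancellation, and either route fully establishes \eqref{E-Stirling}.
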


We also need the following consequence of Karpovsky and Milman's generalization of the Sauer-Perles-Shelah lemma \cite{KM, Sauer, Shelah} \cite[Lemma 12.14]{KL16}.

\begin{lemma} \label{L-KM}
Let $k\ge 2$ be an integer and $\lambda>1$. Then there is a $c>0$ depending only on $k$ and $\lambda$ such that for any nonempty finite set $Z$ and any $S\subseteq [k]^Z$ with $|S|\ge ((k-1)\lambda)^{|Z|}$ there exists a $J\subseteq Z$ such that $|J|\ge c|Z|$ and $S|_J=[k]^J$.
\end{lemma}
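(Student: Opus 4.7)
The plan is to derive the lemma from the non-asymptotic Karpovsky--Milman shattering inequality, and then to do a Chernoff-style estimate to turn it into the claimed linear-size shattered set.

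Set $n = |Z|$ and write $\Phi(d,n) = \sum_{i=0}^{d-1}\binom{n}{i}(k-1)^i$. The starting point (the actual Karpovsky--Milman bound proved in \cite{KM} by a compression/shifting argument generalizing Sauer--Perles--Shelah) asserts: if no subset $J\subseteq Z$ with $|J|=d$ satisfies $S|_J = [k]^J$, then $|S|\le \Phi(d,n)$. Taking contrapositives, it suffices to choose $c>0$ depending only on $k$ and $\lambda$ so that, with $d = \lfloor cn\rfloor + 1$,
\[
\Phi(d,n) \;=\; \sum_{i=0}^{\lfloor cn\rfloor}\binom{n}{i}(k-1)^i \;<\; \bigl((k-1)\lambda\bigr)^n
\]
for all sufficiently large $n$; small $n$ can be absorbed afterwards by shrinking $c$ (since the conclusion $|J|\ge c|Z|$ is trivial for $|Z|$ bounded once a single coordinate is shattered, which holds automatically once $|S|\ge k$).

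The main estimate is a standard moment-generating-function trick. For any $t\in(0,1]$ and every $i\le \lfloor cn\rfloor$ we have $t^{-i}\le t^{-cn}$, hence
\[
\sum_{i=0}^{\lfloor cn\rfloor}\binom{n}{i}(k-1)^i \;\le\; t^{-cn}\sum_{i=0}^{n}\binom{n}{i}\bigl((k-1)t\bigr)^i \;=\; t^{-cn}\bigl(1+(k-1)t\bigr)^n.
\]
So it is enough to arrange
\[
\frac{1+(k-1)t}{t^{c}} \;<\; (k-1)\lambda
\]
for some $t\in(0,1]$. Minimizing $\log(1+(k-1)t) - c\log t$ over $t\in(0,1]$ (a one-variable calculus problem, optimum at $t = c/((k-1)(1-c))$, which lies in $(0,1)$ provided $c<(k-1)/k$) reduces the inequality to
\[
H(c) + c\log(k-1) \;<\; \log(k-1) + \log\lambda, \qquad H(c) := -c\log c - (1-c)\log(1-c).
\]
Equivalently $H(c) < (1-c)\log(k-1) + \log\lambda$. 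Since the left-hand side tends to $0$ as $c\to 0^+$, while the right-hand side stays $\ge \log\lambda > 0$, any sufficiently small $c>0$ (depending only on $k$ and $\lambda$) works. Lemma~\ref{L-Stirling} provides a quick ``weak Stirling'' way to run the same calculation directly on $\binom{n}{i}$ if one prefers to avoid the generating-function packaging, giving essentially the same entropy bound.

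The principal obstacle is the Karpovsky--Milman inequality itself, the proof of which is the $k$-ary generalization of Sauer--Perles--Shelah. The strategy there is to replace $S$ by a ``down-shifted'' family $S'$ with the same cardinality and shattering dimension no larger than that of $S$, and for which one can argue coordinate-by-coordinate that only the ``shattered'' patterns contribute, leading to $|S'|\le \Phi(d,n)$. The careful bookkeeping of the factor $(k-1)^i$ (each non-maximal coordinate value contributes one of $k-1$ options) is the only new wrinkle compared with the classical $k=2$ case; once this non-asymptotic statement is in hand, the asymptotic form of the lemma follows immediately from the concentration computation above.
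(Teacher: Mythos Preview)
The paper does not prove this lemma; it is quoted from \cite{KM, Sauer, Shelah} and \cite[Lemma~12.14]{KL16} without argument. Your strategy---derive the asymptotic statement from the non-asymptotic Karpovsky--Milman inequality via a Chernoff-type tail bound---is the standard one and is correct in outline.

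However, your formula for the non-asymptotic bound is wrong. The Karpovsky--Milman inequality says: if no $d$-element subset of $Z$ is shattered, then
\[
|S|\ \le\ \sum_{i=0}^{d-1}\binom{n}{i}(k-1)^{\,n-i},
\]
not $\sum_{i=0}^{d-1}\binom{n}{i}(k-1)^{i}$. (In the shifting proof, after stabilization each $s\in S$ has its ``$k$-support'' shattered; counting elements with $k$-support of size $i$ gives $\binom{n}{i}(k-1)^{n-i}$, not $(k-1)^i$.) Your version is already false at $d=1$: the family $S=[k-1]^Z$ shatters no singleton yet has $(k-1)^n$ elements, not $\le 1$. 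The two formulas coincide only when $k=2$, which is presumably the source of the slip. Your side remark that ``$|S|\ge k$ forces a shattered singleton'' is likewise false for $k\ge 3$ (e.g.\ $S=\{(1,1),(1,2),(2,1)\}\subseteq[3]^2$).

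The repair is painless. Running your Chernoff trick with the correct weight $(k-1)^{n-i}$ gives the upper bound $t^{-cn}(k-1+t)^n$, and optimizing over $t\in(0,1]$ yields the condition $H(c)<c\log(k-1)+\log\lambda$ in place of your $H(c)<(1-c)\log(k-1)+\log\lambda$; this still holds for all sufficiently small $c>0$. For bounded $n$, the correct bound at $d=1$ already gives a shattered singleton from $|S|>(k-1)^n$, which suffices after shrinking $c$.
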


For a finite set $Z$ and $k\in \Nb$, we shall consider the set of maps $\psi: Z\rightarrow [k]$ which are as equidistributed as possible. When $|Z|$ is divisible by $k$, this means $|\psi^{-1}(j)|=|Z|/k$ for all $j\in [k]$. In general case, the precise meaning is given in the following. 

\begin{notation} \label{N-regular}
Let $k\in \Nb$ and let $Z$ be a finite set. We denote by $\sR(Z, k)$ the set of maps $\psi: Z\rightarrow [k]$ satisfying $\left||\psi^{-1}(j)|-|Z|/k\right|< 1$ for all $j\in [k]$.
\end{notation}

For $t\in \Rb$, we write $\lceil t\rceil$ for the smallest integer no less than  $t$. 
In the proof of the case $k=2$ of Theorem~\ref{T-IE main}, we shall obtain $F_\psi$ from Lemma~\ref{L-large density} and apply Lemma~\ref{L-half to indep} with $Z=F$ and $Z_\psi=F_\psi$. In order for the condition $\sum_{j\in [2]} \frac{|F_\psi\cap \psi^{-1}(j)|}{|\psi^{-1}(j)|}>1+\delta$ in Lemma~\ref{L-large density} to imply the condition $|Z_\psi|\ge \tau|Z|$ in Lemma~\ref{L-half to indep}, we need $|\psi^{-1}(1)|$  and $|\psi^{-1}(2)|$ to be almost equal, whence in Lemma~\ref{L-half to indep} we restrict our attention to $\psi\in \sR(Z, k)$.  

\begin{lemma} \label{L-half to indep}
Let $k\in \Nb$ and $\tau\in ((k-1)/k, 1]$. Then there exist $c, N>0$ depending only on $k$ and $\tau$ such that the following holds. For any finite set $Z$ with $|Z|\ge N$, if for each $\psi\in \sR(Z, k)$  we take a $Z_\psi\subseteq Z$  with $|Z_\psi|\ge \tau |Z|$, then there is some $J\subseteq Z$ with $|J|\ge c|Z|$ such that every map $\sigma: J\rightarrow [k]$ extends to a $\psi\in \sR(Z, k)$ so that $J\subseteq Z_\psi$.
\end{lemma}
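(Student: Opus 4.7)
The plan is to combine an averaging argument with a single application of the Karpovsky--Milman bound (Lemma~\ref{L-KM}).

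The case $k=1$ is trivial: $\sR(Z,1)=\{\psi\equiv 1\}$ has one element, so take $J:=Z_\psi$, of size at least $\tau|Z|$, and every (trivial) coloring $\sigma\colon J\to[1]$ extends to $\psi$.

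For $k\ge 2$, I first note that Stirling's formula (Lemma~\ref{L-Stirling}) applied to the multinomial coefficient giving $|\sR(Z,k)|$ yields $|\sR(Z,k)|\ge k^{|Z|}/P(|Z|)$ for some polynomial $P=P_k$. I then fix a constant $c$ with $0<c<\tau-(k-1)/k$ (possible because $\tau>(k-1)/k$), set $m=\lfloor c|Z|\rfloor$, and pick $J_0\subseteq Z$ of size $m$ uniformly at random. Writing $\Psi_{J_0}:=\{\psi\in\sR(Z,k):J_0\subseteq Z_\psi\}$, a hypergeometric estimate using $|Z_\psi|\ge\tau|Z|$ and $c<\tau$ gives $\Pr[J_0\subseteq Z_\psi]\ge(\tau-c)^m$ for each fixed $\psi\in\sR(Z,k)$, so
\[
\mathbb{E}\,|\Psi_{J_0}|\ \ge\ |\sR(Z,k)|(\tau-c)^m\ \ge\ \frac{k^{|Z|}(\tau-c)^m}{P(|Z|)},
\]
and I fix a particular $J_0$ realizing this bound.

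Now comes the projection step: each fiber of the restriction map $\Psi_{J_0}\to[k]^{J_0}$, $\psi\mapsto\psi|_{J_0}$, has cardinality at most $k^{|Z|-m}$, so
\[
\bigl|\Psi_{J_0}|_{J_0}\bigr|\ \ge\ \frac{(k(\tau-c))^m}{P(|Z|)}.
\]
The choice $c<\tau-(k-1)/k$ gives $k(\tau-c)>k-1$; picking $\lambda>1$ with $(k-1)\lambda<k(\tau-c)$, for all $|Z|\ge N$ large enough (depending only on $k$ and $\tau$) the right-hand side exceeds $((k-1)\lambda)^m$. Applying Lemma~\ref{L-KM} to $\Psi_{J_0}|_{J_0}\subseteq[k]^{J_0}$ (with $J_0$ playing the role of $Z$) then produces $J\subseteq J_0$ with $|J|\ge c_0 m$ and $\Psi_{J_0}|_J=[k]^J$, where $c_0=c_0(k,\lambda)$. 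Any $\sigma\colon J\to[k]$ is realized by some $\psi\in\Psi_{J_0}$, which by definition of $\Psi_{J_0}$ satisfies $J\subseteq J_0\subseteq Z_\psi$, yielding the lemma with final constant proportional to $c_0 c$.

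The one delicate point to watch is the projection step: the apparent loss of a factor $k^{|Z|-m}$ in passing from $|\Psi_{J_0}|$ to $\bigl|\Psi_{J_0}|_{J_0}\bigr|$ must be compensated by $|\sR(Z,k)|$ being within a polynomial factor of the full $k^{|Z|}$. This is precisely where the equidistribution hypothesis $\psi\in\sR(Z,k)$ is used, and it also pins down the exact threshold $\tau>(k-1)/k$, since only then does $k(\tau-c)>k-1$ for some admissible $c>0$.
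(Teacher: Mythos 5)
Your proof is correct, and its skeleton is the same as the paper's: find one subset that lies inside $Z_\psi$ for exponentially many $\psi\in\sR(Z,k)$, restrict those $\psi$ to that subset at a cost of $k^{|Z|-m}$ (absorbed because $|\sR(Z,k)|$ is within a polynomial factor of $k^{|Z|}$), and feed the resulting trace set into the Karpovsky--Milman lemma; your random choice of $J_0$ and the expectation bound are exactly the paper's double count of pairs $(\psi,W)$ followed by pigeonholing over $W$, and your fiber-size bound replaces its pigeonhole over $\psi|_{Z\setminus W}$. Where you genuinely diverge is in the quantitative execution: the paper takes the common subset of small size $\lceil\delta|Z|\rceil$ and must run a Stirling/entropy computation, choosing $\delta$ via the limit $\lim_{\delta\to 0^+}\bigl(-g(\tau)-g(1-\delta)+g(\tau-\delta)+\delta\log k\bigr)/\delta=\log(k\tau)>\log(k-1)$, whereas you take a set of definite proportion $m=\lfloor c|Z|\rfloor$ with $c<\tau-\frac{k-1}{k}$ and get by with the one-line hypergeometric product bound $\binom{|Z_\psi|}{m}/\binom{|Z|}{m}\ge(\tau-c)^m$, so that the KM threshold is beaten exactly when $k(\tau-c)>k-1$. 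This buys a shorter and more transparent proof with no entropy calculus (only the polynomial-factor lower bound on $|\sR(Z,k)|$, which is the paper's Lemma~\ref{L-regular}), and it makes the role of the hypothesis $\tau>(k-1)/k$ visible at a glance; the paper's finer bookkeeping is what generalizes to the weighted setting of its key Lemma~\ref{L-func to indep}, where a crude constant-fraction choice would not suffice. Two cosmetic points to tidy up: for $|Z|$ not divisible by $k$ you should pass to a subset $Z'\subseteq Z$ of size divisible by $k$ (as in Lemma~\ref{L-regular}) before invoking Stirling for $|\sR(Z,k)|\ge k^{|Z|}/P(|Z|)$, and you should note $m\ge 1$ for $|Z|\ge N$, neither of which affects the argument.
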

\begin{proof} The case $k=1$ is obvious. Thus we may assume $k\ge 2$. 
Recall the function $g: [0, 1]\rightarrow \Rb$ defined by \eqref{E-Shannon}.
Note that
\begin{align*}
\lim_{\delta \to 0^+}&\frac{-g(\tau)-g(1-\delta)+g(\tau-\delta)+\delta \log k}{\delta}\\
& =-g'(\tau)+g'(1)+\log k=\log (k\tau)>\log (k-1).
\end{align*}
We take $\theta_1>\theta_2$ in $(\log (k-1), \log (k\tau))$. 
Then we can find a $\delta\in (0, \tau)$ small enough so that
\begin{align} \label{E-half to indep1}
 \frac{-g(\tau)-g(1-\delta)+g(\tau-\delta)+\delta \log k}{\delta}>\theta_1.
 \end{align}
Let $\lambda=e^{\theta_2}/(k-1)>1$. Then we have $c>0$ given by Lemma~\ref{L-KM} for $k$ and $\lambda$.
We take a large integer $N\ge k$ such that $(\tau-\delta)N>1$ and
\begin{align} \label{E-half to indep2}
\frac{ (1-\delta)^{-1}k^k}{e^{k+1}t^{2k} (\tau t-\delta t+1)^2} e^{\delta t \theta_1}\ge e^{\theta_2+\delta t\theta_2}
\end{align}
for all $t\ge N$. We shall show that $c\delta$ and $N$ have the desired property.

Let $Z$ be a finite set with $|Z|\ge N$, and let $Z_\psi\subseteq Z$ with $|Z_\psi|\ge \tau|Z|$ for each $\psi\in \sR(Z, k)$. We can find a $Z'\subseteq Z$ such that $|Z'|$ is divisible by $k$ and $|Z\setminus Z'|<k$.
We denote by $\Xi$ the set of $(\psi, W)$ such that $\psi\in \sR(Z, k)$ and $W\subseteq Z_\psi$ with $|W|=\lceil\delta |Z|\rceil$.

For each $\psi\in \sR(Z, k)$, the number of $W\subseteq Z_\psi$ with $|W|=\lceil\delta |Z|\rceil$ is
$$ \binom{|Z_\psi|}{\lceil\delta |Z|\rceil}\ge \binom{\lceil \tau |Z|\rceil}{\lceil\delta |Z|\rceil}.$$
Thus
\begin{align*}
 |\Xi|&\ge |\sR(Z, k)|\cdot \binom{\lceil\tau |Z|\rceil}{\lceil\delta |Z|\rceil}\ge |\sR(Z', k)|\cdot \binom{\lceil\tau |Z|\rceil}{\lceil\delta |Z|\rceil}\\
 &=\frac{|Z'|!}{\left(\left(\frac{1}{k}|Z'|\right)!\right)^k}\cdot \binom{\lceil \tau |Z|\rceil}{\lceil\delta |Z|\rceil} \ge \frac{|Z|!}{|Z|^{k-1}\left(\left(\frac{1}{k}|Z'|\right)!\right)^k}\cdot \binom{\lceil \tau |Z|\rceil}{\lceil\delta |Z|\rceil}.
 \end{align*}
The number of $W\subseteq Z$ with $|W|=\lceil\delta |Z|\rceil$ is $\binom{|Z|}{\lceil\delta |Z|\rceil}$. Thus we can find some $W\subseteq Z$ with $|W|=\lceil\delta |Z|\rceil$ so that
$$|\Xi_W|\ge |\Xi|/\binom{|Z|}{\lceil\delta |Z|\rceil}\ge \frac{|Z|!}{|Z|^{k-1}\left(\left(\frac{1}{k}|Z'|\right)!\right)^k}\cdot \binom{\lceil \tau |Z|\rceil}{\lceil\delta |Z|\rceil}\Big/\binom{|Z|}{\lceil\delta |Z|\rceil},$$
where $\Xi_W$ is the set of $(\psi, W)\in \Xi$.

The number of maps $\phi: Z\setminus W\rightarrow [k]$ is $k^{|Z\setminus W|}$. Thus we can find some $\phi: Z\setminus W\rightarrow [k]$
such that
$$|\Xi_{W, \phi}|\ge |\Xi_W|/k^{|Z\setminus W|},$$
where $\Xi_{W, \phi}$ is the set of $(\psi, W)\in \Xi_W$ satisfying $\psi|_{Z\setminus W}=\phi$.

We denote by $S$ the set of $\psi|_W$ for $(\psi, W)\in \Xi_{W, \phi}$. The map $\Xi_{W, \phi}\rightarrow S$ sending $(\psi, W)$ to $\psi|_W$ is a bijection. Thus
$$ |S|= |\Xi_{W, \phi}|\ge |\Xi_W|/k^{|Z\setminus W|}\ge \frac{|Z|!}{|Z|^{k-1}\left(\left(\frac{1}{k}|Z'|\right)!\right)^k}\cdot \binom{\lceil \tau |Z|\rceil}{\lceil\delta |Z|\rceil}\Big/\left(\binom{|Z|}{\lceil\delta |Z|\rceil}k^{|Z\setminus W|}\right).$$

Since $(\tau-\delta)|Z|\ge (\tau-\delta)N>1$, we have $\lceil \tau |Z|\rceil>\lceil\delta|Z|\rceil$. We can estimate the last term of the above inequalities as follows:
\begin{align*}
 & \frac{|Z|!}{|Z|^{k-1}\left(\left(\frac{1}{k}|Z'|\right)!\right)^k}\cdot \binom{\lceil \tau |Z|\rceil}{\lceil\delta |Z|\rceil}\Big/\left(\binom{|Z|}{\lceil\delta |Z|\rceil}k^{|Z\setminus W|}\right)\\
&=\frac{\left(\lceil \tau |Z|\rceil\right)!\cdot \left(|Z|-\lceil\delta|Z|\rceil\right)!}{|Z|^{k-1}\left(\left(\frac{1}{k}|Z'|\right)!\right)^k\cdot \left(\lceil \tau |Z|\rceil-\lceil\delta|Z|\rceil\right)!\cdot k^{|Z|-\lceil\delta |Z|\rceil}}\\
&\overset{\eqref{E-Stirling}}\ge \frac{e \left(\frac{\lceil \tau |Z|\rceil}{e}\right)^{\lceil \tau |Z|\rceil}e\left(\frac{|Z|-\lceil\delta |Z|\rceil}{e}\right)^{|Z|-\lceil\delta |Z|\rceil}}{|Z|^{k-1}\left(\frac{e}{k}|Z'|\left(\frac{1}{ek}|Z'|\right)^{\frac{1}{k}|Z'|}\right)^k\cdot e\left(\lceil \tau |Z|\rceil-\lceil\delta|Z|\rceil\right)\left(\frac{\lceil \tau |Z|\rceil-\lceil\delta|Z|\rceil}{e}\right)^{\lceil \tau |Z|\rceil-\lceil\delta|Z|\rceil}\cdot k^{|Z|-\lceil\delta |Z|\rceil}}\\
&\ge \frac{e \left(\frac{\lceil \tau |Z|\rceil}{e}\right)^{\lceil \tau |Z|\rceil}e\left(\frac{|Z|-\lceil\delta |Z|\rceil}{e}\right)^{|Z|-\lceil\delta |Z|\rceil}}{|Z|^{k-1}\left(\frac{e}{k}|Z|\left(\frac{1}{ek}|Z|\right)^{\frac{1}{k}|Z|}\right)^k\cdot e\left(\lceil \tau |Z|\rceil-\lceil\delta|Z|\rceil\right)\left(\frac{\lceil \tau |Z|\rceil-\lceil\delta|Z|\rceil}{e}\right)^{\lceil \tau |Z|\rceil-\lceil\delta|Z|\rceil}\cdot k^{|Z|-\lceil\delta |Z|\rceil}}\\
&= \frac{ k^{k+\lceil\delta|Z|\rceil}\lceil \tau |Z|\rceil^{\lceil \tau |Z|\rceil}\left(|Z|-\lceil\delta |Z|\rceil\right)^{|Z|-\lceil\delta |Z|\rceil}}{e^{k-1}|Z|^{2k-1+|Z|}\cdot \left(\lceil \tau |Z|\rceil-\lceil\delta|Z|\rceil\right)^{\lceil \tau |Z|\rceil-\lceil\delta|Z|\rceil+1}}\\
&\ge \frac{ k^{k+\delta|Z|}(\tau |Z|)^{ \tau |Z|}(|Z|-\delta |Z|-1)^{|Z|-\delta |Z|-1}}{e^{k-1}|Z|^{2k-1+|Z|}\cdot (\tau |Z|-\delta|Z|+1)^{\tau |Z|-\delta|Z|+2}}.
\end{align*}
Note that $(1+\frac{1}{t})^t\le e$ for all $t> 0$. Since  $|Z|-\delta |Z|\ge (1-\delta)N>1$, we have
$$\left(|Z|-\delta|Z|-1\right)^{|Z|-\delta|Z|-1}\ge \left(|Z|-\delta|Z|\right)^{|Z|-\delta|Z|-1}/e$$
and
 $$\left(\tau|Z|-\delta|Z|+1\right)^{\tau|Z|-\delta|Z|}\le e\left(\tau|Z|-\delta|Z|\right)^{\tau|Z|-\delta|Z|},$$
 and hence
\begin{align*}
|S|&\ge \frac{ k^{k+\delta|Z|}(\tau |Z|)^{ \tau |Z|}(|Z|-\delta |Z|-1)^{|Z|-\delta |Z|-1}}{e^{k-1}|Z|^{2k-1+|Z|}\cdot (\tau |Z|-\delta|Z|+1)^{\tau |Z|-\delta|Z|+2}}\\
&\ge \frac{ k^{k+\delta|Z|}\left(\tau |Z|\right)^{ \tau |Z|}\left(|Z|-\delta |Z|\right)^{|Z|-\delta |Z|-1}}{e^{k+1}|Z|^{2k-1+|Z|}\cdot \left(\tau |Z|-\delta|Z|+1\right)^2\left(\tau |Z|-\delta|Z|\right)^{\tau |Z|-\delta|Z|}}\\
&= \frac{ k^{k+\delta|Z|}\tau^{ \tau |Z|}(1-\delta)^{|Z|-\delta |Z|-1}}{e^{k+1}|Z|^{2k}\cdot \left(\tau |Z|-\delta|Z|+1\right)^2(\tau-\delta)^{\tau |Z|-\delta|Z|}}\\
&=\frac{ (1-\delta)^{-1}k^k}{e^{k+1}|Z|^{2k} \left(\tau|Z|-\delta|Z|+1\right)^2} e^{|Z|(-g(\tau)-g(1-\delta)+g(\tau-\delta)+\delta \log k)}\\
&\overset{\eqref{E-half to indep1}}\ge \frac{ (1-\delta)^{-1}k^k}{e^{k+1}|Z|^{2k} \left(\tau|Z|-\delta|Z|+1\right)^2} e^{|Z|\delta \theta_1}\\
&\overset{\eqref{E-half to indep2}}\ge e^{\theta_2+|Z|\delta \theta_2}\ge e^{\lceil \delta |Z|\rceil \theta_2}=e^{|W|\theta_2}=((k-1)\lambda)^{|W|}.
\end{align*}
By our choice of $c$, we can find some $J\subseteq W$ such that $|J|\ge c|W|$ and $S|_J=[k]^J$. Then $|J|\ge c|W|\ge c\delta |Z|$, and every map $\sigma: J\rightarrow [k]$ extends to a $\psi\in \sR(Z, k)$ such that $J\subseteq W\subseteq Z_\psi$.
\end{proof}

The condition $\tau>(k-1)/k$ in Lemma~\ref{L-half to indep} cannot be weakened to $\tau\ge (k-1)/k$, as the following example shows.

\begin{example} \label{E-half to indep}
Let $k\ge 2$ be an integer. Let $N\in k^2\Nb$ and let $Z=[N]$. Let $\psi\in \sR(Z, k)$.  We choose a $Z_\psi\subseteq Z$ with $|Z_\psi|=\frac{k-1}{k}|Z|$ as follows. For each $j\in [k]$, noting that $|\psi^{-1}(j)|=|Z|/k$, we may list the element of $\psi^{-1}(j)$ as $$a_{j, 1}<a_{j, 2}<\dots< a_{j, N/k},$$
and set
$$b_{j, i}=a_{j, iN/k^2}$$
for $i\in [k]$. We take the $j_1\in [k]$ such that $b_{j_1, 1}$ is the largest among $b_{j, 1}$ for $j\in [k]$. Next we take the $j_2\in [k]\setminus \{j_1\}$ such that $b_{j_2, 2}$ is the largest among $b_{j, 2}$ for $j\in [k]\setminus \{j_1\}$. Inductively, we take the $j_l\in [k]\setminus \{j_1, \dots, j_{l-1}\}$ such that $b_{j_l, l}$ is the largest among $b_{j, l}$ for $j\in [k]\setminus \{j_1, \dots, j_{l-1}\}$. Then $\{j_1, \dots, j_k\}=[k]$. We set
\begin{align*}
Z_{\psi, j_l}&=\psi^{-1}(j_l)\setminus \{a_{j_l, i}: (l-1)N/k^2<i\le lN/k^2\}\\
&=\{z\in \psi^{-1}(j_l): z\le b_{j_l, l-1} \mbox{ or } z>b_{j_l, l}\}
\end{align*}
for each $l\in [k]$, where we set $b_{j_1, 0}=0$, and
$$Z_\psi=\bigcup_{j\in [k]}Z_{\psi, j}.$$
Then
$$|Z_{\psi, j}|=\frac{k-1}{k}|\psi^{-1}(j)|=\frac{k-1}{k}\frac{|Z|}{k}$$
for each $j\in [k]$, and
$$|Z_\psi|=\frac{k-1}{k}|Z|.$$

We claim that there is no strictly increasing map $\xi: [k]\rightarrow Z_\psi$ such that 
$$\psi \circ \xi(l)=j_l$$ 
for all $l\in [k]$. Assume that there is one such $\xi$. Then $\xi(l)\in Z_\psi\cap \psi^{-1}(j_l)=Z_{\psi, j_l}$ for each $l\in [k]$. Note that $\xi(2)>\xi(1)>b_{j_1, 1}>b_{j_2, 1}$, whence $\xi(2)>b_{j_2, 2}$. Then $\xi(3)>\xi(2)>b_{j_2, 2}>b_{j_3, 2}$, whence $\xi(3)>b_{j_3, 3}$. Going on this way, we get $\xi(l)>b_{j_l, l}$ for all $l\in [k]$. In particular, $\xi(k)>b_{j_k, k}$, which is impossible.  This proves our claim.

Now we claim that there is no $J\subseteq Z$ with $|J|\ge k\cdot k!$ such that every map $\sigma: J\rightarrow [k]$ extends to a $\psi\in \sR(Z, k)$ so that $J\subseteq Z_\psi$. Assume that there is one such  $J$. 
We denote by $\Sym(k)$ the permutation group of $[k]$. Then the map $\phi': [k]\rightarrow [k]$ sending $l$ to $j_l$ lies in $\Sym(k)$. Since $|J|\ge k\cdot k!$, we can find pairwise disjoint sets $J_{\phi}\subseteq J$ for $\phi\in \Sym(k)$ such that $|J_{\phi}|=k$ for all $\phi\in \Sym(k)$. We list the elements of $J_{\phi}$ as $a_{\phi, 1}<a_{\phi, 2}<\dots<a_{\phi, k}$ for each $\phi\in \Sym(k)$.  Then we can find  a map $\sigma: J\rightarrow [k]$ such that
$$\sigma(a_{\phi, l})=\phi(l)$$
for all $\phi\in \Sym(k)$ and $l\in [k]$. By assumption $\sigma$ extends to some $\psi\in \sR(Z, k)$ such that $J\subseteq Z_\psi$. We define  a map $\xi: [k]\rightarrow J_{\phi'}\subseteq J\subseteq Z_\psi$ by $\xi(l)=a_{\phi', l}$ for all $l\in [k]$. Then $\xi: [k]\rightarrow Z_\psi$ is strictly increasing and 
$$\psi \circ \xi(l)=\sigma(\xi(l))=\sigma(a_{\phi', l})=\phi'(l)=j_l$$ 
for all $l\in [k]$, which is impossible as we showed in the above paragraph. This proves our claim. 

For any $c>0$, when $|Z|\in k^2\Nb$ is large enough, one has $c|Z|\ge k\cdot k!$. This shows that the condition $\tau>(k-1)/k$ in Lemma~\ref{L-half to indep} is optimal, even if we require that $|Z_\psi\cap \psi^{-1}(j)|$ is the same for all $j\in [k]$.
\end{example}

We are ready to prove the case $k=2$ of Theorem~\ref{T-IE main}.

\begin{proof}[Proof of the case $k=2$ of Theorem~\ref{T-IE main}] In view of Lemma~\ref{L-Hahn Banach IE}, it suffices to show that given any $(\mu_1, \mu_2)\in \IE_2(\cM(X))$ and $f_1, f_2\in C(X)$ satisfying $\sum_{j\in [2]}\mu_j(f_j)>0$, there is a pair $(x_1, x_2)\in \IE_2(X)$ such that $\sum_{j\in [2]}f_j(x_j)>0$.

We have $\delta, A_1, A_2$ and $V_1\times V_2$ given by Lemma~\ref{L-large density}. We set $k=2$ and $\tau=\frac{1}{2}(1+\frac{\delta}{2})$. Then we have $c$ and $N$ given by Lemma~\ref{L-half to indep}. We take an $N_1\ge \max(N, 2)$ such that $\frac{t-1}{2}\cdot (1+\delta)>t\tau$ for all $t\ge N_1$.

Let $F\in \cF(\Gamma)$ be an independence set for $(V_1, V_2)$ such that $|F|\ge N_1$. Let $\psi\in \sR(F, 2)$. Then the map $\psi: F\rightarrow [2]$ is surjective, whence by our choice of $\delta, A_1, A_2$ and $V_1\times V_2$ there is an $F_\psi\subseteq F$ such that 
$$\sum_{j\in [2]}\frac{|F_\psi\cap \psi^{-1}(j)|}{|\psi^{-1}(j)|}>1+\delta \quad \text{ and } \quad \bigcap_{s\in F_\psi}s^{-1}A_{\psi(s)}\neq \emptyset.$$ 
Note that
\begin{align*}
|F_\psi|&=\sum_{j\in [2]}\left(|\psi^{-1}(j)|\cdot \frac{|F_\psi\cap \psi^{-1}(j)|}{|\psi^{-1}(j)|}\right)\ge \sum_{j\in [2]}\left(\frac{|F|-1}{2}\cdot \frac{|F_\psi\cap \psi^{-1}(j)|}{|\psi^{-1}(j)|}\right)\\
&\ge \frac{|F|-1}{2}\cdot(1+\delta)\ge \tau |F|.
\end{align*}
By our choice of $c$ and $N$, we can find a $J\subseteq F$ with $|J|\ge c|F|$ such that every map $\sigma: J\rightarrow [2]$ extends to a $\psi\in \sR(F, 2)$ so that $J\subseteq F_\psi$. Then
$$\bigcap_{s\in J}s^{-1}A_{\sigma(s)}\supseteq \bigcap_{s\in F_\psi}s^{-1}A_{\psi(s)}$$
is nonempty. This means that $J$ is an independence set for $(A_1, A_2)$.

Since $(\mu_1, \mu_2)\in \IE_2(\cM(X))$, the pair $(V_1, V_2)$ has positive independence density. It follows that  $(A_1, A_2)$ has positive independence density. By Theorem~\ref{T-IE basic}.\eqref{i-IE tuple} there is some $(x_1, x_2)\in \IE_2(X)$ such that $x_j\in A_j$ for each $j\in [2]$. Then 
$$\sum_{j\in [2]}f_j(x_j)\ge \sum_{j\in [2]}\min_{x\in A_j}f_j(x)>0$$ 
by our choice of $A_1$ and $A_2$.
\end{proof}

\subsection{Proof of Lemma~\ref{L-func to indep}} \label{SS-combinatorial}

In this subsection we prove our key combinatorial Lemma~\ref{L-func to indep}.

The following lemma says roughly that given a large enough finite set $Z$, for most $\psi\in [k]^Z$, the partition of $Z$ given by $\psi^{-1}(j)$ for $j\in [k]$ is almost even distributed and furthermore is almost independent to any given partition $\cP$ of $Z$.

\begin{lemma} \label{L-indep to partition}
Let $k\ge 2$ be an integer,  $0<\eta<1$, and $0<\varepsilon<1/k$. Then there are some $\delta_1, N_1>0$ depending only on $k, \eta$ and $\varepsilon$ such that the following holds.  For any finite set $Z$ with $|Z|\ge N_1$, and any partition $\cP$ of $Z$,  setting
$$V_{\cP, \varepsilon, \eta}=\left\{\psi\in [k]^Z: \left|\frac{|\psi^{-1}(j)\cap P|}{|P|}-\frac{1}{k}\right|\ge \varepsilon \mbox{ for some } j\in [k] \mbox{ and }  P\in \cP \mbox{ with } \frac{|P|}{|Z|}\ge \eta\right\},$$
we have
$$ |V_{\cP, \varepsilon, \eta}|\le \left|[k]^Z\right|e^{-\delta_1|Z|}.$$
\end{lemma}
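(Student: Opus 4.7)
The plan is to interpret the ratio $|V_{\cP,\varepsilon,\eta}|/|[k]^Z|$ as a probability under the uniform distribution on $[k]^Z$, and then show it is exponentially small via a Chernoff-type large deviation estimate followed by a union bound. Under this uniform distribution, the values $\psi(z)$ for $z\in Z$ are i.i.d.\ uniform on $[k]$, so for each fixed $j\in [k]$ and each $P\in \cP$, the random variable $|\psi^{-1}(j)\cap P|$ is a sum of $|P|$ i.i.d.\ Bernoulli$(1/k)$ random variables with mean $|P|/k$.

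First I would invoke Hoeffding's inequality (or a direct Chernoff bound, or Stirling via Lemma~\ref{L-Stirling} applied to binomial coefficients, which the paper already uses) to produce a constant $c_0=c_0(k,\varepsilon)>0$ such that for every $P\in \cP$ and every $j\in [k]$,
\begin{align*}
\left|\left\{\psi\in [k]^Z:\left|\tfrac{|\psi^{-1}(j)\cap P|}{|P|}-\tfrac{1}{k}\right|\ge \varepsilon\right\}\right|\le 2\,|[k]^Z|\,e^{-c_0|P|}.
\end{align*}
For every $P\in \cP$ with $|P|/|Z|\ge \eta$, this gives the exponential bound $2|[k]^Z|e^{-c_0\eta|Z|}$. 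Since the pieces of $\cP$ are disjoint, the number of such "large" $P$ is at most $\lfloor 1/\eta\rfloor$, so the total number of ``large'' pairs $(j,P)$ is at most $k/\eta$.

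Next I would combine these estimates by a union bound:
\begin{align*}
|V_{\cP,\varepsilon,\eta}|\le \frac{2k}{\eta}\,|[k]^Z|\,e^{-c_0\eta|Z|}.
\end{align*}
Finally I would choose $\delta_1=c_0\eta/2$ and pick $N_1$ large enough so that $(2k/\eta)e^{-c_0\eta N_1/2}\le 1$; then for $|Z|\ge N_1$ we obtain $|V_{\cP,\varepsilon,\eta}|\le |[k]^Z|e^{-\delta_1|Z|}$, as required.

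There is no serious obstacle here; the argument is standard concentration of measure. The only mild bookkeeping is controlling the number of ``large'' pieces by $1/\eta$ (which uses only that the pieces of $\cP$ are disjoint and each occupies a fraction $\ge\eta$ of $Z$) and ensuring the constants $\delta_1,N_1$ depend only on $k,\eta,\varepsilon$, which is immediate from the explicit form of $c_0$ coming from Hoeffding's inequality.
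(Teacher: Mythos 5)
Your proposal is correct and follows essentially the same route as the paper: an exponential bound for each fixed pair $(j,P)$ with $|P|\ge\eta|Z|$, a union bound over the at most $k/\eta$ such pairs, and absorption of the prefactor by choosing $N_1$ large. The only difference is that you invoke Hoeffding's inequality for the Binomial$(|P|,1/k)$ count, whereas the paper derives the same per-block tail estimate by hand, counting maps in $[k]^P$ via Stirling's formula and the entropy function $h(t)=g(t)+g(1-t)+(1-t)\log(k-1)$; this is an equivalent computation, so your constants $\delta_1,N_1$ indeed depend only on $k,\eta,\varepsilon$.
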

\begin{proof}
Recall the function $g: [0, 1]\rightarrow \Rb$ defined in \eqref{E-Shannon}. We define a 
function $h: [0, 1]\rightarrow \Rb$ by
$$h(t)=g(t)+g(1-t)+(1-t)\log (k-1)$$
for $t\in [0, 1]$.
Note that
$$h'(t)=-\log t+\log (1-t)-\log (k-1)$$
for $t\in (0, 1)$. Clearly $h'$ is strictly decreasing on $(0, 1)$, and $h'(\frac{1}{k})=0$. Thus $h$ is strictly increasing on $[0, \frac{1}{k}]$ and strictly decreasing on $[\frac{1}{k}, 1]$. Note that
$$ h(\frac{1}{k})=\frac{1}{k}\log k+\frac{k-1}{k}\log \frac{k}{k-1}+\frac{k-1}{k}\log (k-1)=\log k,$$
and
$$h(0)=\log (k-1).$$
We set
$$ \delta_1=\frac{\eta}{2}\left(h(\frac{1}{k})-\max_{t\in [0, \frac{1}{k}-\varepsilon]\cup [\frac{1}{k}+\varepsilon, 1]}h(t)\right)>0.$$
Then 
\begin{align} \label{E-indep to partition}
\delta_1\le \frac{\eta}{2}\left(h(\frac{1}{k})-h(0)\right)=\frac{\eta}{2}\left(h(\frac{1}{k})-\log (k-1)\right).
\end{align}
We take a large $N\ge 3$ such that 
$$t(t+1)k/\eta\le e^{t\delta_1/\eta+1}$$ 
for all $t\ge N$. We set $N_1:=N/\eta$. We shall show that $\delta_1$ and $N_1$ have the desired property. 

Let $Z$ be a finite set with $|Z|\ge N_1$ and let $\cP$ be a partition of $Z$.

We denote by $\cP_\eta$ the set of $P\in \cP$ satisfying $\frac{|P|}{|Z|}\ge \eta$. For each $P\in \cP_\eta$ and $j\in [k]$, we set
$$ V_{P, j, \varepsilon}=\left\{\psi\in [k]^Z: \left|\frac{|\psi^{-1}(j)\cap P|}{|P|}-\frac{1}{k}\right|\ge \varepsilon\right\},$$
and
$$ W_{P, j, \varepsilon}=\left\{\psi\in [k]^P: \left|\frac{|\psi^{-1}(j)|}{|P|}-\frac{1}{k}\right|\ge \varepsilon\right\}.$$
We have
\begin{align*}
V_{\cP, \varepsilon, \eta}=\bigcup_{P\in \cP_\eta}\bigcup_{j\in [k]}V_{P, j, \varepsilon},
\end{align*}
whence
\begin{align*}
|V_{\cP, \varepsilon, \eta}|\le \sum_{P\in \cP_\eta, j\in [k]}|V_{P, j, \varepsilon}|=  \sum_{P\in \cP_\eta, j\in [k]}|W_{P, j, \varepsilon}|k^{|Z|-|P|}.
\end{align*}

Let $P\in \cP_\eta$ and $j\in [k]$. We have
$$ |W_{P, j, \varepsilon}|= \sum_{0\le i\le |P|, \left|\frac{i}{|P|}-\frac{1}{k}\right|\ge \varepsilon} (k-1)^{|P|-i}\binom{|P|}{i}.$$
When $\left|\frac{i}{|P|}-\frac{1}{k}\right|\ge \varepsilon$ and $0<i<|P|$, we have
\begin{align*}
(k-1)^{|P|-i}\binom{|P|}{i}&=(k-1)^{|P|-i}\frac{|P|!}{i! (|P|-i)!}\overset{\eqref{E-Stirling}}\le \frac{(k-1)^{|P|-i}e|P|(\frac{|P|}{e})^{|P|}}{e(\frac{i}{e})^i e(\frac{|P|-i}{e})^{|P|-i}}\\
&=(k-1)^{|P|-i}e^{-1}|P|e^{|P|\left(g(\frac{i}{|P|})+g(\frac{|P|-i}{|P|})\right)}\\
&=e^{-1}|P|e^{|P|h(\frac{i}{|P|})}\le e^{-1}|P|e^{|P|\left(h(\frac{1}{k})-2\delta_1/\eta\right)}. 
\end{align*}
Note that $|P|\ge \eta|Z|\ge N\ge 3$. For $i=0$ or $|P|$, clearly
$$ (k-1)^{|P|-i}\binom{|P|}{i}\le (k-1)^{|P|}\le e^{-1}|P|(k-1)^{|P|}\overset{\eqref{E-indep to partition}}\le e^{-1}|P|e^{|P|\left(h(\frac{1}{k})-2\delta_1/\eta\right)}.$$
Thus
$$(k-1)^{|P|-i}\binom{|P|}{i}\le e^{-1}|P|e^{|P|\left(h(\frac{1}{k})-2\delta_1/\eta\right)}$$
for every integer $0\le i\le |P|$ satisfying $\left|\frac{i}{|P|}-\frac{1}{k}\right|\ge \varepsilon$. 
Since $|P|\ge N$, we have
$$ |P|(|P|+1)k/\eta\le e^{|P|\delta_1/\eta+1}.$$
Therefore
$$ |W_{P, j, \varepsilon}|\le (|P|+1)e^{-1}|P|e^{|P|\left(h(\frac{1}{k})-2\delta_1/\eta\right)}\le  \frac{\eta}{k}e^{|P|\left(h(\frac{1}{k})-\delta_1/\eta\right)}=\frac{\eta}{k}e^{|P|(\log k-\delta_1/\eta)}.$$

Note that $|\cP_\eta|\le \frac{1}{\eta}$. 
Thus
\begin{align*}
|V_{\cP, \varepsilon, \eta}|&\le \sum_{P\in \cP_\eta, j\in [k]}|W_{P, j, \varepsilon}|k^{|Z|-|P|}\le  \sum_{P\in \cP_\eta, j\in [k]}\frac{\eta}{k}e^{|P|(\log k-\delta_1/\eta)}k^{|Z|-|P|}\\
&=\sum_{P\in \cP_\eta, j\in [k]}\frac{\eta}{k}e^{|Z|\log k-|P|\delta_1/\eta}\le \sum_{P\in \cP_\eta, j\in [k]}\frac{\eta}{k}e^{|Z|(\log k- \delta_1)}\\
&=|\cP_\eta|\eta e^{|Z|(\log k- \delta_1)}\le e^{|Z|(\log k-\delta_1)}=\left|[k]^Z\right|e^{-\delta_1 |Z|}. \tag*{\qedsymbol}
\end{align*}
     \renewcommand{\qedsymbol}{}
     \vspace{-\baselineskip}
\end{proof}

\begin{notation} \label{N-cover}
Let $k\in \Nb$ and let $Z$ be a finite set. For each  $\phi: Z\rightarrow [k]$, we denote by $U_\phi$ the subset $\prod_{z\in Z}(\{0, 1, \dots, k\}\setminus \{\phi(z)\})$ of $\{0, 1, \dots, k\}^Z$. For any $S\subseteq \{0, 1, \dots, k\}^Z$, we denote by $N_S$ the minimal number of $U_\phi$ needed to cover $S$.
\end{notation}

We need the following lemma of Kerr and Li \cite[Lemma 3.3]{KL07} \cite[Lemma 12.13]{KL16}.

\begin{lemma} \label{L-07}
Let $k\ge 2$ be an integer and $b>0$. There exists a constant $c>0$ depending only on $k$ and $b$ such that for every nonempty finite set $Z$ and $S\subseteq \{0, 1, \dots, k\}^Z$ with $N_S\ge k^{b|Z|}$ there exists a $J\subseteq Z$ with $|J|\ge c|Z|$ and $S|_J\supseteq [k]^J$.
\end{lemma}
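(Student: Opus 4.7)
The plan is to deduce Lemma~\ref{L-07} from the asymptotic Karpovsky--Milman estimate (Lemma~\ref{L-KM}), which is the pure $[k]$-valued, cardinality-based analogue of the assertion we want. Lemma~\ref{L-07} is harder for two reasons: it replaces cardinality $|S|$ by the covering number $N_S$, and the alphabet $\{0,1,\dots,k\}$ contains the ``wildcard'' value $0$ which imposes no constraint on $\phi(z)\in[k]$ in the relation $s\in U_\phi$. My strategy is to extract, by an iterative procedure, a subfamily on which both complications disappear and Lemma~\ref{L-KM} applies directly.

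\textbf{Iterative extraction.} Starting from $(Z_0,S_0,J_0)=(Z,S,\emptyset)$, at step $i$ I pick $z\in Z_i\setminus J_i$ and split $S_i=S_i^+\sqcup S_i^0$ according to whether $s(z)\in[k]$ or $s(z)=0$. A cover of $S_i$ restricts to covers of $S_i^+$ and $S_i^0$ separately, so $N_{S_i}\le N_{S_i^+}+N_{S_i^0}$; thus for a threshold $\alpha\in(0,1)$ either $N_{S_i^+}\ge\alpha N_{S_i}$ (in which case I declare $z$ active: $J_{i+1}=J_i\cup\{z\}$, $S_{i+1}=S_i^+$) or $N_{S_i^0}>(1-\alpha)N_{S_i}$ (in which case I delete $z$: $Z_{i+1}=Z_i\setminus\{z\}$ and $S_{i+1}=\pi(S_i^0)$). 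The key structural feature is that deletion preserves the covering number exactly, $N_{S_{i+1}}=N_{S_i^0}$, because every $s\in S_i^0$ has $s(z)=0$ while $\phi(z)\in[k]$, so the constraint at the deleted coordinate is automatic and the value of $\phi(z)$ is irrelevant; covers of $S_i^0$ in $\{0,\dots,k\}^{Z_i}$ thus correspond bijectively to covers of $\pi(S_i^0)$ in $\{0,\dots,k\}^{Z_{i+1}}$. Iterating until $Z_{i^*}=J_{i^*}$ produces $S_{i^*}\subseteq[k]^{J_{i^*}}$.

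\textbf{Applying Karpovsky--Milman.} For any $R\subseteq[k]^Y$, each $U_\phi\cap[k]^Y$ has exactly $(k-1)^{|Y|}$ elements, so $N_R\le|R|$ trivially and $|R|\le N_R(k-1)^{|Y|}$; in particular $|S_{i^*}|\ge N_{S_{i^*}}$. If we can arrange $N_{S_{i^*}}\ge((k-1)\lambda)^{|J_{i^*}|}$ for some $\lambda>1$ depending on $k,b$, then $|S_{i^*}|$ satisfies the hypothesis of Lemma~\ref{L-KM}, which yields $J\subseteq J_{i^*}$ with $|J|\ge c'|J_{i^*}|$ and $S_{i^*}|_J=[k]^J$. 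Since each surviving element of $S_{i^*}$ pulls back via the composed projections to an element of $S$ that agrees with it on $Z_{i^*}$, the shattering lifts: $S|_J\supseteq[k]^J$. Setting $c=c'\cdot|J_{i^*}|/|Z|$ gives the desired linear size.

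\textbf{Main obstacle.} The quantitative balance is the heart of the proof. Taking $\alpha=1/2$ costs a factor of $1/2$ in covering number per step, so after $|Z|$ total steps the lower bound on $N_{S_{i^*}}$ is only $(k^b/2)^{|Z|}$, comparable to $((k-1)\lambda)^{|J_{i^*}|}$ with $|J_{i^*}|=\beta|Z|$ only when $k^b/2\ge((k-1)\lambda)^\beta$; for a positive $\beta$ this needs $b>\log_k 2$. To handle arbitrary $b>0$, the threshold must be chosen non-uniformly: one should take $\alpha$ very close to $1$, so that deletion steps (where $N_{S_i^0}$ is overwhelmingly dominant) are essentially lossless, and the total multiplicative loss is paid for only by the comparatively few balanced steps. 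Equivalently, one introduces a potential function combining $\log N_{S_i}$, $|J_i|$, and $|Z_i|$ and shows it is almost conserved along the extraction, so that the final $N_{S_{i^*}}$ compares favourably with $((k-1)\lambda)^{|J_{i^*}|}$. Carrying out this calibration rigorously, and verifying it produces a constant $c=c(k,b)>0$ uniformly in $b$, is the technically demanding part of the argument.
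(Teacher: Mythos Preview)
First, note that the paper does not supply its own proof of this lemma; it is quoted from \cite[Lemma~3.3]{KL07} (equivalently \cite[Lemma~12.13]{KL16}), so there is no in-paper argument to compare against. I therefore evaluate your proposal on its own terms.

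Your iterative extraction is a natural Sauer--Shelah-type idea, and the structural claims---the subadditivity $N_{S_i}\le N_{S_i^+}+N_{S_i^0}$, the exact preservation of $N$ under deleting a coordinate on which all surviving functions take the value $0$, the fact that $S_{i^*}\subseteq[k]^{J_{i^*}}$, and the lifting of shattering from $S_{i^*}$ back to $S$---are all correct. The difficulty is entirely in the calibration, and here there is a concrete slip and a deeper obstruction.

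The slip: you write that $\alpha$ should be taken close to $1$ so that deletion is ``essentially lossless.'' But the only guaranteed bound in the deletion branch is $N_{S_i^0}>(1-\alpha)N_{S_i}$; for this to be close to $N_{S_i}$ you need $1-\alpha$ close to $1$, i.e.\ $\alpha$ close to $0$, not $1$.

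The obstruction: even with the direction corrected, a single threshold cannot handle all $b>0$. With $a=|J_{i^*}|$ active steps and $d=|Z|-a$ deletions, your bounds give
\[
(\alpha k^b)^a\bigl((1-\alpha)k^b\bigr)^d \;\le\; N_{S_{i^*}} \;\le\; k^a.
\]
For this combination to force $a\ge c_1|Z|$ one needs $(1-\alpha)k^b>1$, i.e.\ $\alpha<1-k^{-b}$. To invoke Lemma~\ref{L-KM} you need $|S_{i^*}|\ge((k-1)\lambda)^a$ for some $\lambda>1$; via $|S_{i^*}|\ge N_{S_{i^*}}$, the case $d=0$ (which does occur---for instance whenever $S\subseteq[k]^Z$, as then every $S_i^0$ is empty) yields only $(\alpha k^b)^{|Z|}$, forcing $\alpha k^b>k-1$, i.e.\ $\alpha>(k-1)/k^b$. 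These two constraints on $\alpha$ are simultaneously satisfiable only when $k^b-1>k-1$, i.e.\ $b>1$. For $0<b\le1$---already for $k=2$, $b=\tfrac12$---no choice of $\alpha$ makes both steps go through, and your potential-function remark does not supply the missing mechanism. The scheme needs a genuinely different ingredient (as in Kerr--Li's own proof) to close the gap for small $b$.
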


For any nonempty finite set $Z$ and any $f: Z\rightarrow \Rb$, we denote by $\|f\|_p$ the $\ell_p$-norm of $f$ for $1\le p\le \infty$, i.e., 
\begin{align}\label{E-p norm1}
 \|f\|_p=\left(\sum_{z\in Z}|f(z)|^p\right)^{1/p} 
 \end{align}
when $1\le p<\infty$ and
\begin{align} \label{E-p norm2}
 \|f\|_\infty=\max_{z\in Z}|f(z)|.
 \end{align}

\begin{lemma} \label{L-func to indep pre}
Let $k\in \Nb$ and $0<r<R\le C$. Also let $1<p\le \infty$ and $1\le q< \infty$ such that $1/p+1/q=1$. Then there exist $c, N, \delta>0$ and a finite set $T\subseteq \Rb^k$ depending only on $k, r, R, C$ and $p$ such that the following hold:
\begin{enumerate}
\item $\frac{1}{k}\sum_{j\in [k]}t_j\ge r$ for every $(t_1, \dots, t_k)\in T$, and
\item for any finite set $Z$ with $|Z|\ge N$ and any $\sR\subseteq [k]^Z$ with $|\sR|\ge \left| [k]^Z\right| e^{-\delta |Z|}$, if for each $\psi\in \sR$  we take an $f_\psi: Z\rightarrow \Rb$  with  $\|f_\psi\|_p\le C$ and $\frac{1}{|Z|^{1/q}}\sum_{z\in Z}f_\psi(z)\ge R$, then there are some $J\subseteq Z$ with $|J|\ge c|Z|$ and $(t_1, \dots, t_k)\in T$  such that every map $\sigma: J\rightarrow [k]$ extends to a $\psi\in \sR$ so that $f_\psi(z)\ge t_j|Z|^{-1/p}$ for all $j\in [k]$ and $z\in \sigma^{-1}(j)$.
\end{enumerate}
\end{lemma}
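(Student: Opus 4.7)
I would begin by normalizing: set $g_\psi(z) := f_\psi(z)|Z|^{1/p}$, so $\|g_\psi\|_p\le C|Z|^{1/p}$ while $\sum_{z\in Z}g_\psi(z)\ge R|Z|$, and the conclusion becomes the existence of $J$ and $(t_1,\dots,t_k)\in T$ such that each $\sigma:J\to[k]$ lifts to some $\psi\in\sR$ with $g_\psi(z)\ge t_{\sigma(z)}$ on $J$. Discretize the range of $g_\psi$: fix a small $\varepsilon>0$ and a large constant $M=M(r,R,C,p)$, and let $G_\varepsilon:=\varepsilon\Zb\cap[-M,M]$. Define $\tilde g_\psi$ by rounding $g_\psi$ down to $G_\varepsilon$ and truncating at $\pm M$; a H\"older estimate on $\{g_\psi>M\}$ lets us fix $M$ so that $\sum_z\tilde g_\psi(z)\ge(R-\varepsilon)|Z|$ for every $\psi$, keeping the level range a fixed finite grid independent of $|Z|$.

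Next, for each $\psi\in\sR$ consider the joint \emph{type} $\tau_\psi:[k]\times G_\varepsilon\to\Zb_{\ge 0}$ counting $|\{z:\psi(z)=j,\ \tilde g_\psi(z)=t\}|$. There are only polynomially many such types, so pigeonholing $\sR$ by type yields a single $\tau$ with $|\sR_\tau|\ge|\sR|/\mathrm{poly}(|Z|)\ge k^{|Z|}e^{-2\delta|Z|}$ once $|Z|\ge N$ is large enough. Within $\sR_\tau$ the conditional distribution of $\tilde g_\psi$ on each $\psi^{-1}(j)$ depends only on $\tau$, so from $\tau$ I would select thresholds $t_j\in G_\varepsilon$ (as an appropriate quantile of that conditional law) simultaneously satisfying (a) $\tfrac{1}{k}\sum_j t_j\ge r$ and (b) the ``good set'' $B_\psi:=\{z\in Z:\tilde g_\psi(z)\ge t_{\psi(z)}\}$ obeys $|B_\psi|\ge\alpha|Z|$ for some $\alpha=\alpha(k,r,R,C,p)>0$. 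Existence of such a $\vec t$ follows from a quantile/mean averaging argument using $\sum_z\tilde g_\psi\ge(R-\varepsilon)|Z|$, possibly after first pruning via Lemma~\ref{L-indep to partition} the exponentially small subset of $\sR_\tau$ that fails to be equidistributed with respect to the level partition $\{\tilde g_\psi^{-1}(t)\}_{t\in G_\varepsilon}$. Collecting all such $\vec t$ as $\tau$ ranges over its finite set of possibilities defines the finite set $T\subset\Rb^k$.

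Finally, with $\sR_\tau$ and $\vec t$ fixed, I would mimic the Stirling bookkeeping from the proof of Lemma~\ref{L-half to indep}: count pairs $(\psi,W)$ with $\psi\in\sR_\tau$ and $W\subseteq B_\psi$ of size $\lceil\delta_0|Z|\rceil$ for a suitably small $\delta_0$, then pigeonhole first on $W$ and next on $\phi:=\psi|_{Z\setminus W}$. The resulting collection $S:=\{\psi|_W\}\subseteq[k]^W$ (over $\psi$ surviving both pigeonholes) satisfies $|S|\ge((k-1)\lambda)^{|W|}$ for some $\lambda>1$, by a Stirling estimate fueled by $|B_\psi|\ge\alpha|Z|$ and $|\sR_\tau|\ge k^{|Z|}e^{-2\delta|Z|}$; Lemma~\ref{L-KM} then delivers $J\subseteq W$ with $|J|\ge c\delta_0|Z|$ and $S|_J=[k]^J$. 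Every $\sigma:J\to[k]$ consequently lifts to some $\psi\in\sR$ with $\psi|_J=\sigma$ and $J\subseteq W\subseteq B_\psi$, i.e., $f_\psi(z)\ge t_{\sigma(z)}|Z|^{-1/p}$ on $J$, as desired.

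The chief obstacle is the quantile selection in the second paragraph: a constant tuple $\vec t=(r,\dots,r)$ only produces a good set of size roughly $((R-r)/C)^q|Z|$, which (as Example~\ref{E-sum3} illustrates in spirit) can be much smaller than $(k-1)|Z|/k$, so genuine per-$j$ flexibility of the thresholds is required. Correspondingly, the Stirling computation in the third paragraph must be tight enough---leveraging the hypothesis $|\sR|\ge|[k]^Z|e^{-\delta|Z|}$ with $\delta$ chosen small relative to the entropy constants arising from $\alpha$ and $\delta_0$---to certify $\lambda>1$ in the regime where $\alpha$ is potentially far below $(k-1)/k$, which is precisely where Lemma~\ref{L-func to indep pre} improves on Lemma~\ref{L-half to indep}.
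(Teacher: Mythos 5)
Your outline founders at exactly the point you flag as the ``chief obstacle,'' and the difficulty cannot be absorbed into the Stirling bookkeeping by taking $\delta$ small. With a single threshold tuple $\vec t$ fixed for the whole class $\sR_\tau$ and only the guarantee $|B_\psi|\ge \alpha|Z|$, the pair-count/pigeonhole scheme of Lemma~\ref{L-half to indep} gives at best
$|S|\ge |\sR_\tau|\binom{\alpha|Z|}{\delta_0|Z|}\big/\bigl(\binom{|Z|}{\delta_0|Z|}k^{(1-\delta_0)|Z|}\bigr)$,
whose exponential rate is $\delta_0\log k+\alpha h(\delta_0/\alpha)-h(\delta_0)-2\delta$ with $h(x)=-x\log x-(1-x)\log(1-x)$; to invoke Lemma~\ref{L-KM} you need this to exceed $\delta_0\log(k-1)$, and the function $\delta_0\log\frac{k}{k-1}+\alpha h(\delta_0/\alpha)-h(\delta_0)$ is nonpositive for every $\delta_0\in(0,\alpha]$ once $\alpha\le (k-1)/k$ (its derivative is $\log\frac{k}{k-1}+\log\frac{\alpha-\delta_0}{1-\delta_0}$, negative from $\delta_0=0$ on). So no choice of $\delta$, $\delta_0$ certifies $\lambda>1$ unless $\alpha>(k-1)/k$ --- which is precisely the regime of Lemma~\ref{L-half to indep}, shown optimal by Example~\ref{E-half to indep}. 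And $\alpha>(k-1)/k$ is not available: adapting Example~\ref{E-sum3} (take $k=3$, $\psi$ splitting $Z$ into thirds, $f_\psi=\pm a$ on halves of $\psi^{-1}(1)$, $\pm b$ on halves of $\psi^{-1}(2)$, and $c,-c'$ with $c>c'$ on halves of $\psi^{-1}(3)$, $a,b,c,c'\in(1,2)$) one checks that even for a single $\psi$, and even allowing the thresholds to depend on $\psi$, every $\vec t$ with $\frac1k\sum_j t_j\ge r$ yields a good set of density at most $(k-1)/k$. A secondary problem: your pruning step applies Lemma~\ref{L-indep to partition} to the level partition of $\tilde g_\psi$, which varies with $\psi$, whereas that lemma bounds the exceptional set only for a fixed partition of $Z$.

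This is exactly the obstruction the paper's proof is built to avoid, and it does so by never extracting a positive-density good set for a fixed threshold at all. Instead, for each candidate threshold vector $\boldsymbol{i}\in I$ it records $\psi$ together with its good set as a map $\psi_{\boldsymbol{i}}:Z\to\{0,1,\dots,k\}$ (symbol $0$ marking points below threshold), and measures the richness of the family $S=\bigcup_{\boldsymbol{i}}S_{\boldsymbol{i}}$ by the covering number $N_S$ of Notation~\ref{N-cover} rather than by cardinality: a covering family $\{U_\phi:\phi\in W\}$ induces a map $\pi:\sR\to W^I$, and each fiber $\pi^{-1}(w)$ is shown to lie in the exponentially small set $V_{\cP,\varepsilon,\eta}$ of Lemma~\ref{L-indep to partition} for the fixed partition $\cP$ generated by $w$ (this is where the threshold indexed by $\psi(z)$ and the choice of $\boldsymbol{i}\in I$ interact, via the claim \eqref{E-func to indep2}). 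This forces $N_{S_{\boldsymbol{i}}}\ge k^{b|Z|}$ for some $\boldsymbol{i}$, and then the covering-number shattering lemma of Kerr--Li (Lemma~\ref{L-07}), not Lemma~\ref{L-KM}, produces $J$ with $S_{\boldsymbol{i}}|_J\supseteq[k]^J$. The covering-number formulation is what tolerates good sets of density far below $(k-1)/k$; to repair your argument you would need to replace the cardinality count plus Lemma~\ref{L-KM} in your third paragraph by a mechanism of this kind.
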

\begin{proof}
We consider first the case  $k=1$. We may take $N=1$, $T=\{r\}$ and any $c, \delta>0$ such that $R>\max_{t\in [0, c]}\left(Ct^{1/q}+r(1-t)\right)$. Indeed, for any nonempty finite set $Z$, there is only $\psi$ in $[k]^Z$. For any $f_\psi: Z\rightarrow \Rb$ with $\|f_\psi\|_p\le C$ and $\frac{1}{|Z|^{1/q}}\sum_{z\in Z}f_\psi(z)\ge R$, denoting by $J$ the set of $z\in Z$ satisfying $f_\psi(z)\ge r|Z|^{-1/p}$, using Hölder's inequality we have
\begin{align*}
R|Z|^{1/q}&\le \sum_{z\in Z}f_\psi(z)=\sum_{z\in J}f_\psi(z)+\sum_{z\in Z\setminus J}f_\psi(z)\le \|f_\psi\|_p\cdot \|{\bf 1}_J\|_q+\sum_{z\in Z\setminus J}r|Z|^{-1/p}\\
&\le C|J|^{1/q}+r|Z|^{-1/p}|Z\setminus J|,
\end{align*}
whence
$$R\le C\left(\frac{|J|}{|Z|}\right)^{1/q}+r\left(1-\frac{|J|}{|Z|}\right),$$
which implies $|J|>c|Z|$.

Next we consider the case $k\ge 2$.
We take a small $0<\tau<1/2$ such that
$$(k\tau)^{1/q}C\le \frac{R-r}{4}.$$

We take a large $M\in \Nb$ such that
\begin{align} \label{E-func to indep6}
\tau^{-1/p}\frac{C}{M}< \frac{R-r}{2}.
\end{align}
We denote by $I$ the set of $\boldsymbol{i}=(i_1, \dots, i_k)\in [2M]^k$ satisfying
$$\frac{1}{k}\sum_{j\in [k]}\tau^{-1/p}\left(-C+\frac{(i_j-1)C}{M}\right)\ge r.$$
When $i_j=2M$ for all $j\in [k]$, we have
\begin{align*}
 \frac{1}{k}\sum_{j\in [k]}\tau^{-1/p}\left(-C+\frac{(i_j-1)C}{M}\right)&=\tau^{-1/p}\left(-C+\frac{(2M-1)C}{M}\right)\\
 &=\tau^{-1/p}C-\tau^{-1/p}\frac{C}{M}\\
 &> C-\frac{R-r}{2}\ge R-\frac{R-r}{2}>r. 
 \end{align*}
Thus $(2M, \dots, 2M)\in I$, whence $I$ is nonempty. We denote by $T$ the set of
$$\tau^{-1/p}\left(-C+\frac{(i_1-1)C}{M}, \dots, -C+\frac{(i_k-1)C}{M}\right)\in [-\tau^{-1/p}C, \tau^{-1/p}C]^k$$
for $\boldsymbol{i}=(i_1, \dots, i_k)\in I$. Then $\frac{1}{k}\sum_{j\in [k]}t_j\ge r$ for every $(t_1, \dots, t_k)\in T$.

We take $0<\eta<1$ and $0<\varepsilon<1/k$ small enough such that
\begin{align} \label{E-func to indep4}
(k^{|I|}\eta+k\varepsilon+k\tau)^{1/q}C\le \frac{R-r}{2}.
\end{align}
Then we have $\delta_1, N_1>0$ given by Lemma~\ref{L-indep to partition}. We set 
$$\delta=\delta_1/3>0,$$ 
and take a large $N_2>0$ such that
\begin{align} \label{E-func to indep7}
|I|^{|I|}\le e^{\delta N_2}.
\end{align}
We set 
$$N=\max(1/(\varepsilon \eta), N_1, N_2)>0.$$ 
Let $c>0$ be given by Lemma~\ref{L-07} for $k$ and $b=\frac{\delta}{|I|\log k}$. 

We shall show that $c, N, \delta$ and $T$ have the desired property. We have checked that the condition (1) holds. Thus it suffices to check the condition (2). 

Let $Z$ be a finite set with $|Z|\ge N$ and let $\sR\subseteq [k]^Z$ with $|\sR|\ge \left| [k]^Z\right| e^{-\delta |Z|}$.
For each $\psi\in \sR$, we fix an $f_\psi: Z\rightarrow \Rb$ with $\|f_\psi\|_p\le C$ and  $\frac{1}{|Z|^{1/q}}\sum_{z\in Z}f_\psi(z)\ge R$.
For any $\phi, \psi: Z\rightarrow [k]$, we set
$$E_{\phi, \psi}=\{z\in Z: \phi(z)=\psi(z)\}.$$

For each $\psi\in \sR$ and $\boldsymbol{i}=(i_1, \dots, i_k)\in I$, we set
$$Z_{\psi, \boldsymbol{i}}=\bigcup_{j\in [k]}\left(\psi^{-1}(j)\cap f_\psi^{-1}\left((\tau|Z|)^{-1/p}\left[-C+\frac{(i_j-1)C}{M}, C\right]\right)\right)\subseteq Z,$$
and define a map $\psi_{\boldsymbol{i}}: Z\rightarrow \{0, 1, \dots, k\}$ by
\begin{align*}
\psi_{\boldsymbol{i}}(z)=\begin{cases}
\psi(z) & \quad \text{if } z\in Z_{\psi, \boldsymbol{i}}\\
0 & \quad \text{otherwise}.\\
\end{cases}
\end{align*}
We set
$$S_{\boldsymbol{i}}=\{\psi_{\boldsymbol{i}}: \psi\in \sR\}$$
for each $\boldsymbol{i}\in I$, and
$$S=\bigcup_{\boldsymbol{i}\in I}S_{\boldsymbol{i}}.$$

We shall apply Lemma~\ref{L-07} to $S_{\boldsymbol{i}}$ for certain $\boldsymbol{i}\in I$. For this purpose, we shall show that $N_S$ is large, and then conclude that $N_{S_{\boldsymbol{i}}}$ is large for at least one $\boldsymbol{i}\in I$. 
For any $\phi\in [k]^Z$, $\psi\in \sR$ and $\boldsymbol{i}\in I$, note that $\psi_{\boldsymbol{i}}\in U_\phi$ if and only if
$$E_{\phi, \psi}\subseteq  Z\setminus Z_{\psi, \boldsymbol{i}}.$$

Let $W\subseteq [k]^Z$ such that $U_\phi$ for $\phi\in W$ cover $S$. For each $\psi\in \sR$ and $\boldsymbol{i}\in I$, we can find a $\phi_{\psi, \boldsymbol{i}}\in W$ such that $\psi_{\boldsymbol{i}}\in U_{\phi_{\psi, \boldsymbol{i}}}$. This gives us a map $\pi: \sR\rightarrow W^I$ sending $\psi$ to $(\phi_{\psi, \boldsymbol{i}})_{\boldsymbol{i}\in I}$.

Let $w=(w_{\boldsymbol{i}})_{\boldsymbol{i}\in I}\in W^I$. For each $\psi\in \pi^{-1}(w)$, we have $\psi_{\boldsymbol{i}}\in U_{w_{\boldsymbol{i}}}$ and hence
\begin{align} \label{E-func to indep5}
 E_{w_{\boldsymbol{i}}, \psi}\subseteq  Z\setminus Z_{\psi, \boldsymbol{i}}
 \end{align}
for every $\boldsymbol{i}\in I$.
For each $\boldsymbol{i}\in I$, we denote by $\cP_{\boldsymbol{i}}$ the partition of $Z$ consisting of $w_{\boldsymbol{i}}^{-1}(j)$ for $j\in [k]$. Let $\cP$ be the join $\bigvee_{\boldsymbol{i}\in I}\cP_{\boldsymbol{i}}$ of $\cP_{\boldsymbol{i}}$ for $\boldsymbol{i}\in I$, i.e., the partition of $Z$ consisting of sets of the form $\bigcap_{\boldsymbol{i}\in I}P_{\boldsymbol{i}}$ with $P_{\boldsymbol{i}}\in \cP_{\boldsymbol{i}}$ for every $\boldsymbol{i}\in I$. We shall show that
\begin{align} \label{E-func to indep}
\pi^{-1}(w)\subseteq V_{\cP, \varepsilon, \eta},
\end{align}
where $V_{\cP, \varepsilon, \eta}$ is defined in Lemma~\ref{L-indep to partition}.
For each $h\in [k]^I$, let
$$P_h=\bigcap_{\boldsymbol{i}\in I} w_{\boldsymbol{i}}^{-1}(h(\boldsymbol{i}))\subseteq Z.$$
Then $\cP=\{P_h: h\in [k]^I\}$.  We denote by $H_\eta$ the set of $h\in  [k]^I$ satisfying $\frac{|P_h|}{|Z|}\ge \eta$.

We claim that for any $\psi\in \pi^{-1}(w)$ and any $h\in [k]^I$, if $z_j\in \psi^{-1}(j)\cap P_h$ and $|f_\psi(z_j)|\le (\tau|Z|)^{-1/p}C$ for each $j\in [k]$, then
\begin{align} \label{E-func to indep2}
\frac{1}{k}\sum_{j\in [k]}f_\psi(z_j)<\left(r+\tau^{-1/p}\frac{C}{M}\right)|Z|^{-1/p}.
\end{align}
Assume that $\frac{1}{k}\sum_{j\in [k]}f_\psi(z_j)\ge \left(r+\tau^{-1/p}\frac{C}{M}\right)|Z|^{-1/p}$ instead. For each $j\in [k]$, since $|f_\psi(z_j)|\le (\tau|Z|)^{-1/p}C$, 
we can find an $i_j\in [2M]$ such that
\begin{align} \label{E-func to indep3}
|Z|^{1/p}f_\psi(z_j)\in \tau^{-1/p}\left[-C+\frac{(i_j-1)C}{M}, -C+\frac{i_jC}{M}\right].
\end{align}
This gives us an element $\boldsymbol{i}=(i_1, \dots, i_k)$ of $[2M]^k$. Then
$$\frac{1}{k}\sum_{j\in [k]}\tau^{-1/p}\left(-C+\frac{(i_j-1)C}{M}\right)\ge  \frac{1}{k}\sum_{j\in [k]}\left(|Z|^{1/p}f_\psi(z_j)-\tau^{-1/p}\frac{C}{M}\right)\ge r,$$
whence $\boldsymbol{i}\in I$. 
For each $z\in \psi^{-1}(h(\boldsymbol{i}))\cap P_h$, we have 
$$\psi(z)=h(\boldsymbol{i})=w_{\boldsymbol{i}}(z),$$ 
whence $z\in E_{w_{\boldsymbol{i}}, \psi}$. This means
$$\psi^{-1}(h(\boldsymbol{i}))\cap P_h\subseteq E_{w_{\boldsymbol{i}}, \psi}.$$
Thus 
\begin{align*}
z_{h(\boldsymbol{i})}\in \psi^{-1}(h(\boldsymbol{i}))\cap P_h\subseteq E_{w_{\boldsymbol{i}}, \psi}\overset{\eqref{E-func to indep5}}\subseteq Z\setminus Z_{\psi, \boldsymbol{i}},
\end{align*}
and hence $z_{h(\boldsymbol{i})}\not\in \psi^{-1}(h(\boldsymbol{i}))\cap f_\psi^{-1}\left((\tau|Z|)^{-1/p}\left[-C+\frac{(i_{h(\boldsymbol{i})}-1)C}{M}, C\right]\right)$. Since $z_{h(\boldsymbol{i})}\in \psi^{-1}(h(\boldsymbol{i}))$, it follows that
$z_{h(\boldsymbol{i})}\not\in f_\psi^{-1}\left((\tau|Z|)^{-1/p}\left[-C+\frac{(i_{h(\boldsymbol{i})}-1)C}{M}, C\right]\right)$, a contradiction to \eqref{E-func to indep3}. This proves our claim.

To prove \eqref{E-func to indep}, we argue by contradiction. Assume that 
\eqref{E-func to indep} fails.
Then we can find  a $\psi\in \pi^{-1}(w)\setminus V_{\cP, \varepsilon, \eta}$.
We denote by $Z_\psi$ the set of $z\in Z$ such that $|f_\psi(z)|>(\tau|Z|)^{-1/p} C$. When $p=\infty$, $Z_\psi$ is empty. When $p<\infty$, we have
$$ \left((\tau|Z|)^{-1} C^p |Z_\psi|\right)^{1/p} \le \|f_\psi\|_p\le C,$$
whence $|Z_\psi|\le \tau |Z|$. Thus we always have
$$|Z_\psi|\le \tau|Z|.$$
For each $h\in H_\eta$, since $\frac{|P_h|}{|Z|}\ge \eta$ and $\psi\not\in V_{\cP, \varepsilon, \eta}$, we have
 $$ \left|\frac{|\psi^{-1}(j)\cap P_h|}{|P_h|}-\frac{1}{k}\right|< \varepsilon$$
 for each $j\in [k]$, thus we can take a set $Y_h$ with $|Y_h|=\lceil|P_h|(\frac{1}{k}-\varepsilon)\rceil$ and an injective map $g_{h, j}: Y_h \rightarrow \psi^{-1}(j)\cap P_h$ for each $j\in [k]$. For each $h\in H_\eta$, we denote by $Y_h'$ the set of $y\in Y_h$ such that $g_{h, j}(y)\in Z\setminus Z_\psi$ for all $j\in [k]$. The sets $\{g_{h, j}(y): j\in [k]\}$ for $h\in H_\eta$ and $y\in Y_h\setminus Y_h'$ are pairwise disjoint, and each of them intersects with $Z_\psi$. Thus
 \begin{align*}
 \left|\bigcup_{h\in H_\eta}\bigcup_{j\in [k]}g_{h, j}(Y_h\setminus Y_h')\right|=\sum_{h\in H_\eta}\sum_{y\in Y_h\setminus Y_h'}|\{g_{h, j}(y): j\in [k]\}|\le k|Z_\psi|.
 \end{align*}
 Therefore
 \begin{align*}
 &\left|Z\setminus \bigcup_{h\in H_\eta}\bigcup_{j\in [k]}g_{h, j}(Y_h')\right|\\
 &\quad \quad =\left|\bigcup_{h\in [k]^I\setminus H_\eta}P_h\right|+\left|\bigcup_{h\in H_\eta}\left(P_h\setminus \bigcup_{j\in [k]}g_{h, j}(Y_h)\right)\right|+\left|\bigcup_{h\in H_\eta}\bigcup_{j\in [k]}g_{h, j}(Y_h\setminus Y_h')\right|\\
 &\quad \quad \le k^{|I|}\eta |Z|+\sum_{h\in H_\eta}k\varepsilon|P_h|+k|Z_\psi|\\
 &\quad \quad \le k^{|I|}\eta |Z|+k\varepsilon|Z|+k\tau |Z|,
 \end{align*}
 whence by H\"{o}lder's inequality 
 \begin{align*}
 \sum_{z\in Z\setminus \bigcup_{h\in H_\eta}\bigcup_{j\in [k]}g_{h, j}(Y_h')}f_\psi(z)&\le \|f_\psi\|_p\cdot \|{\bf 1}_{Z\setminus \bigcup_{h\in H_\eta}\bigcup_{j\in [k]}g_{h, j}(Y_h')}\|_q\\
 &\le C\left|Z\setminus \bigcup_{h\in H_\eta}\bigcup_{j\in [k]}g_{h, j}(Y_h')\right|^{1/q}\\
 &\le C(k^{|I|}\eta+k\varepsilon+k\tau)^{1/q}|Z|^{1/q}\\
 &\overset{\eqref{E-func to indep4}}\le \frac{R-r}{2}|Z|^{1/q}.
 \end{align*}
 For each $h\in H_\eta$ and $y\in Y'_h$, we have $g_{h, j}(y)\in \psi^{-1}(j)\cap P_h$ and $|f_\psi(g_{h, j}(y))|\le (\tau|Z|)^{-1/p}C$ for every $j\in [k]$, whence
 $$ \frac{1}{k}\sum_{j\in [k]}f_\psi(g_{h, j}(y))<\left(r+\tau^{-1/p}\frac{C}{M}\right)|Z|^{-1/p}$$
 by \eqref{E-func to indep2}. 
 For each $h\in H_\eta$, we have 
 $$|P_h|\varepsilon\ge |Z|\eta\varepsilon\ge N\eta \varepsilon \ge 1,$$  
 whence 
 $$|Y_h|=\lceil|P_h|(\frac{1}{k}-\varepsilon)\rceil< |P_h|(\frac{1}{k}-\varepsilon)+1\le |P_h|/k.$$
 Thus, for each $h\in H_\eta$, we have
 \begin{align*}
 \sum_{y\in Y'_h}\sum_{j\in [k]}f_\psi(g_{h, j}(y))&\le  |Y_h'| k\left(r+\tau^{-1/p}\frac{C}{M}\right)|Z|^{-1/p}\le  |Y_h| k\left(r+\tau^{-1/p}\frac{C}{M}\right)|Z|^{-1/p}\\
 &\le |P_h| \left(r+\tau^{-1/p}\frac{C}{M}\right)|Z|^{-1/p}. 
 \end{align*}
 Therefore
 \begin{align*}
 |Z|^{1/q}R &\le \sum_{z\in Z}f_\psi(z)=\sum_{h\in H_\eta}\sum_{y\in Y'_h}\sum_{j\in [k]}f_\psi(g_{h, j}(y))+\sum_{z\in Z\setminus \bigcup_{h\in H_\eta}\bigcup_{j\in [k]}g_{h, j}(Y_h')}f_\psi(z)\\
 &\le \sum_{h\in H_\eta}|P_h|\left(r+\tau^{-1/p}\frac{C}{M}\right)|Z|^{-1/p}+\frac{R-r}{2}|Z|^{1/q}\\
 &\le |Z|^{1/q}\left(r+\tau^{-1/p}\frac{C}{M}\right)+\frac{R-r}{2}|Z|^{1/q},
 \end{align*}
 a contradiction to \eqref{E-func to indep6}. This proves \eqref{E-func to indep}.

Since $|Z|\ge N\ge N_1$, by our choice of $\delta_1$ and $N_1$
we have
$$|V_{\cP, \varepsilon, \eta}|\le \left|[k]^Z\right|e^{-\delta_1 |Z|}=\left|[k]^Z\right|e^{-3\delta |Z|}.$$
As $|Z|\ge N\ge N_2$, from \eqref{E-func to indep7} we also have
$$ |I|^{|I|}\le e^{\delta N_2}\le e^{\delta |Z|}.$$
We conclude that for each $w\in W^I$ one has 
$$ |\pi^{-1}(w)|\overset{\eqref{E-func to indep}}\le |V_{\cP, \varepsilon, \eta}|\le \left|[k]^Z\right|e^{-3\delta |Z|}\le |\sR|e^{-2\delta |Z|}\le |\sR|e^{-\delta |Z|}|I|^{-|I|}.$$
Thus
$$ |W|^{|I|}=|W^I|\ge e^{\delta |Z|}|I|^{|I|},$$
and hence
$$ |W|\ge e^{\frac{\delta}{|I|}|Z|}|I|.$$
Then
$$ \sum_{\boldsymbol{i}\in I}N_{S_{\boldsymbol{i}}}\ge N_S\ge e^{\frac{\delta}{|I|}|Z|}|I|,$$
thus we can find some $\boldsymbol{i}=(i_1, \dots, i_k)\in I$ so that
$$ N_{S_{\boldsymbol{i}}}\ge e^{\frac{\delta}{|I|}|Z|}=k^{\frac{\delta}{|I|\log k}|Z|}=k^{b|Z|}.$$
We set $t_j=\tau^{-1/p}\left(-C+\frac{(i_j-1)C}{M}\right)$ for each $j\in [k]$. Then $(t_1, \dots, t_k)\in T$.
By our choice of $c$ we can find some $J\subseteq Z$ so that $|J|\ge c|Z|$ and $S_{\boldsymbol{i}}|_J\supseteq [k]^J$.  Then for every map $\sigma: J\rightarrow [k]$,
there is some $\psi\in \sR$ so that $\psi_{\boldsymbol{i}}$ extends  $\sigma$.
In particular, $\psi$ extends $\sigma$ and $f_\psi(z)\ge t_j|Z|^{-1/p}$ for all $j\in [k]$ and $z\in \sigma^{-1}(j)$.
\end{proof}

The following well-known lemma says that the exponential growth rate of  $|\sR(Z, k)|$ is the same as that of $\left|[k]^Z\right|$, so that  $\sR(Z, k)$ satisfies the hypothesis in Lemma~\ref{L-func to indep pre} for $\sR$. For convenience of the reader, we give a proof.

\begin{lemma} \label{L-regular}
Let $k\in \Nb$ and $\delta>0$. Then there is some $N>0$ depending only on $k$ and $\delta$ such that for any finite set $Z$ with $|Z|\ge N$ one has
$$|\sR(Z, k)|\ge \left|[k]^Z\right|e^{-\delta |Z|}.$$
\end{lemma}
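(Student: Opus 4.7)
The plan is to obtain an essentially explicit formula for $|\sR(Z,k)|$ and then to bound it below using the Stirling estimates from Lemma~\ref{L-Stirling}.

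The case $k=1$ is trivial since $\sR(Z,1)=[1]^Z$. For $k\ge 2$, I would first observe that if we write $|Z|=nk+r$ with $n=\lfloor |Z|/k\rfloor$ and $0\le r<k$, then the condition $\big||\psi^{-1}(j)|-|Z|/k\big|<1$ forces exactly $r$ of the fibres of $\psi$ to have cardinality $n+1$ and the remaining $k-r$ fibres to have cardinality $n$. Counting the number of ways to choose which fibres have which size, and then the partition of $Z$ into fibres of those sizes, gives
\[
|\sR(Z,k)|=\binom{k}{r}\,\frac{|Z|!}{((n+1)!)^{r}\,(n!)^{k-r}}.
\]

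Second, I would apply Lemma~\ref{L-Stirling} to the factorials appearing in this expression. From $m!\ge e(m/e)^{m}$ and $m!\le em(m/e)^{m}$, one gets a lower bound of the form
\[
|\sR(Z,k)|\;\ge\;\frac{C_k}{(|Z|+1)^{k}}\cdot\frac{|Z|^{|Z|}}{n^{nk}\,(n+1)^{r}},
\]
for an explicit constant $C_k>0$ depending only on $k$. Since $|Z|/n\to k$ as $|Z|\to\infty$ (with the error $O(1/|Z|)$ controlled uniformly), a short computation shows
\[
\frac{|Z|^{|Z|}}{n^{nk}(n+1)^{r}}=k^{|Z|}\cdot e^{o(|Z|)},
\]
so that $|\sR(Z,k)|\ge k^{|Z|}/q(|Z|)$ for some polynomial $q$ in $|Z|$ whose coefficients depend only on $k$.

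Finally, given $\delta>0$, choose $N$ so large that $q(t)\le e^{\delta t}$ for all $t\ge N$; this is possible because polynomial growth is dominated by exponential growth. Then for $|Z|\ge N$,
\[
|\sR(Z,k)|\;\ge\;\frac{k^{|Z|}}{q(|Z|)}\;\ge\;k^{|Z|}e^{-\delta|Z|}\;=\;|[k]^{Z}|\,e^{-\delta|Z|},
\]
which is the claimed bound. There is no real obstacle here beyond keeping the Stirling bookkeeping clean; the whole argument is elementary once the exact count of $|\sR(Z,k)|$ is written down.
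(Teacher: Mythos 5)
Your proof is correct and follows essentially the same route as the paper: bound the multinomial count of equidistributed maps from below via the Stirling estimates of Lemma~\ref{L-Stirling}, so that the loss against $k^{|Z|}$ is only polynomial in $|Z|$, and then choose $N$ so large that this polynomial factor is dominated by $e^{\delta|Z|}$. The only (harmless) difference is that you work with the exact formula for $|\sR(Z,k)|$ when $k\nmid|Z|$, keeping track of the $r$ fibres of size $n+1$, whereas the paper sidesteps the remainder by passing to a subset $Z'\subseteq Z$ with $|Z'|$ divisible by $k$ and using $|\sR(Z,k)|\ge|\sR(Z',k)|$.
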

\begin{proof}
We take a large $N\ge k$ such that  $e^{k-1}t^k\le e^{\delta t}$ for all $t\ge N$.  Let $Z$ be a finite set with $|Z|\ge N$.
We can find a set $Z'\subseteq Z$ such that $|Z'|$ is divisible by $k$ and $|Z\setminus Z'|<k$. Then
\begin{align*}
|\sR(Z, k)|&\ge |\sR(Z', k)|= \frac{|Z'|!}{\left(\left(\frac{|Z'|}{k}\right)!\right)^k}\overset{\eqref{E-Stirling}}\ge \frac{e\left(\frac{|Z'|}{e}\right)^{|Z'|}}{\left(e\frac{|Z'|}{k}\left(\frac{|Z'|}{ek}\right)^{\frac{|Z'|}{k}}\right)^k}= \frac{k^{|Z'|}}{e^{k-1}\left(\frac{|Z'|}{k}\right)^k}\\
&>\frac{k^{|Z|-k}}{e^{k-1}\left(\frac{|Z'|}{k}\right)^k}
=\frac{ k^{|Z|}}{e^{k-1}|Z'|^k}\ge  \frac{k^{|Z|}}{e^{k-1}|Z|^k}\ge |[k]^Z|e^{-\delta |Z|}. \tag*{\qedsymbol}
\end{align*}
     \renewcommand{\qedsymbol}{}
     \vspace{-\baselineskip}
\end{proof}

Now Lemma~\ref{L-func to indep} follows from Lemmas~\ref{L-func to indep pre} and ~\ref{L-regular}.

Via taking suitable $f_\psi$ in Lemma~\ref{L-func to indep}, one can obtain many consequences. As an example, we use it to obtain the following corollary, which in turn generalizes  Lemma~\ref{L-half to indep}.

\begin{corollary} \label{C-partial indep}
Let $k\in \Nb$, $m\in [k]$ and $\tau\in ((m-1)/k, 1]$. Then there exist $c, N>0$ depending only on $k, m$ and $\tau$ such that the following holds. For any finite set $Z$ with $|Z|\ge N$, if for each $\psi\in \sR(Z, k)$  we take a $Z_\psi\subseteq Z$  with $|Z_\psi|\ge \tau |Z|$, then there are some $J\subseteq Z$ with $|J|\ge c|Z|$ and $A\subseteq [k]$ with $|A|\ge m$ such that  every map $\sigma: J\rightarrow [k]$ extends to a $\psi\in \sR(Z, k)$ with $\sigma^{-1}(A)\subseteq Z_\psi$.
\end{corollary}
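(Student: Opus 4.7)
The plan is to reduce Corollary~\ref{C-partial indep} directly to Lemma~\ref{L-func to indep} by taking $f_\psi$ to be the indicator function of $Z_\psi$ and working in the case $p=\infty$, $q=1$. The idea is that for Boolean-valued $f_\psi$, the threshold inequality $f_\psi(z)\ge t_{\sigma(z)}$ degenerates into either a vacuous condition (when $t_{\sigma(z)}\le 0$) or membership in $Z_\psi$ (when $0<t_{\sigma(z)}\le 1$), so the set $A:=\{j\in[k]:t_j>0\}$ furnished by Lemma~\ref{L-func to indep} will automatically play the role required in the corollary.

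Concretely, first I would fix some $r$ with $(m-1)/k<r<\tau$, which exists by the hypothesis $\tau>(m-1)/k$. Applying Lemma~\ref{L-func to indep} with $p=\infty$, $q=1$, $C=1$, $R=\tau$ and this $r$ yields constants $c,N>0$ and a finite set $T\subseteq\Rb^k$ depending only on $k,m,\tau$; after enlarging $N$ we may assume $cN\ge 1$. Given the data in the corollary, I would set $f_\psi:=\mathbf{1}_{Z_\psi}$ for each $\psi\in\sR(Z,k)$; then $\|f_\psi\|_\infty\le 1=C$ and $\frac{1}{|Z|^{1/q}}\sum_{z\in Z}f_\psi(z)=|Z_\psi|/|Z|\ge\tau=R$, so the hypotheses of Lemma~\ref{L-func to indep} are met. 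Since $|Z|^{-1/p}=1$, the lemma produces $J\subseteq Z$ with $|J|\ge c|Z|$ and $(t_1,\dots,t_k)\in T$ such that every $\sigma:J\to[k]$ extends to some $\psi\in\sR(Z,k)$ with $f_\psi(z)\ge t_{\sigma(z)}$ for all $z\in J$.

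Next I would set $A:=\{j\in[k]:t_j>0\}$ and verify $|A|\ge m$. For each $j\in[k]$, applying the extension property to the constant map $\sigma\equiv j$ (which is valid since $|J|\ge 1$) gives some $\psi$ with $1\ge f_\psi(z)\ge t_j$ on $J$, forcing $t_j\le 1$. Combined with condition~(1) of Lemma~\ref{L-func to indep} and $\sum_{j\notin A}t_j\le 0$, this yields $|A|\ge\sum_{j\in A}t_j\ge\sum_{j\in[k]}t_j\ge rk>m-1$, hence $|A|\ge m$. Finally, for any $\sigma:J\to[k]$ and its extension $\psi$, any $z\in\sigma^{-1}(A)$ satisfies $f_\psi(z)\ge t_{\sigma(z)}>0$, forcing $f_\psi(z)=1$ since $f_\psi$ is Boolean, i.e., $z\in Z_\psi$. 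Thus $\sigma^{-1}(A)\subseteq Z_\psi$, completing the argument.

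I do not anticipate any serious obstacle, as the entire combinatorial heart of the statement is already encapsulated in Lemma~\ref{L-func to indep}. The only subtlety worth flagging is that the hypothesis $\tau>(m-1)/k$ is used precisely to translate the real-valued inequality $\sum_{j\in[k]}t_j\ge rk$ into the integer inequality $|A|\ge m$ via $rk>m-1$; this is also why the choice of $r$ is forced to lie in $((m-1)/k,\tau)$.
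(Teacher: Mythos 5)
Your proposal is correct and follows essentially the same route as the paper: both reduce the statement to Lemma~\ref{L-func to indep} with $p=\infty$ by feeding in a two-valued function $f_\psi$ determined by $Z_\psi$ and then reading off the set $A$ from the thresholds $t_j$ of the tuple produced by that lemma, using $\frac1k\sum_j t_j\ge r>\frac{m-1}{k}$ together with an upper bound on the $t_j$ (obtained, as in the paper, by testing constant maps $\sigma$) to force $|A|\ge m$. The only difference is cosmetic: you take $f_\psi={\bf 1}_{Z_\psi}$ with $C=1$, $R=\tau$ and $A=\{j:t_j>0\}$, whereas the paper uses the values $\frac1\tau-1+\theta$ on $Z_\psi$ and $-1$ off it with $A=\{j:t_j>-1\}$; your normalization is, if anything, slightly cleaner.
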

\begin{proof} Since $k\tau>m-1$, we can find a small $\theta>0$ such that
$$ \frac{k}{\frac{1}{\tau}+\theta}>m-1.$$
Then
$$ \frac{1}{\tau}-1+\theta\ge \theta\ge \theta \tau.$$
We set $C=\max(1, \frac{1}{\tau}-1+\theta)$, $R=\theta \tau$, $r=R/2$, and $p=\infty$. Then $q=1$ and we have $c, N>0$ and $T\subseteq \Rb^k$ given by Lemma~\ref{L-func to indep}.

Let $Z$ be  a finite set with $|Z|\ge N$, and let   $Z_\psi\subseteq Z$  with $|Z_\psi|\ge \tau |Z|$ for each $\psi\in \sR(Z, k)$.
For each $\psi\in \sR(Z, k)$, we define a function $f_\psi: Z\rightarrow [-C, C]$ by
\begin{align*}
f_{\psi}(z)=\begin{cases}
\frac{1}{\tau}-1+\theta & \quad \text{if } z\in Z_\psi\\
-1 & \quad \text{otherwise }.\\
\end{cases}
\end{align*}
Then
\begin{align*}
\frac{1}{|Z|}\sum_{z\in Z}f_\psi(z)=\frac{|Z_\psi|(\frac{1}{\tau}-1+\theta)-|Z\setminus Z_\psi|}{|Z|}= \frac{|Z_\psi|(\frac{1}{\tau}+\theta)}{|Z|}-1\ge R.
\end{align*}
By our choice of $c, N$ and $T$, we can find a $J\subseteq Z$ with $|J|\ge c|Z|$ and a $(t_1, \dots, t_k)\in T$ such that every map $\sigma: J\rightarrow [k]$ extends to some $\psi\in \sR(Z, k)$ so that $f_\psi(z)\ge t_j$ for all $j\in [k]$ and $z\in \sigma^{-1}(j)$. For each $j\in [k]$, taking a map $\sigma: J\rightarrow [k]$ with $j\in \sigma(J)$, we have
$$ t_j\le f_\psi(z)\le \frac{1}{\tau}-1+\theta$$
for all $z\in \sigma^{-1}(j)$.
We denote by $A$ the set of $j\in [k]$ satisfying $t_j>-1$.
Then by our choice of $T$ we have
\begin{align*}
0<r\le \frac{1}{k}\sum_{j\in [k]}t_j\le \frac{1}{k}\left(|A|(\frac{1}{\tau}-1+\theta)-\left|[k]\setminus A\right|\right)=\frac{1}{k}|A|(\frac{1}{\tau}+\theta)-1.
\end{align*}
Therefore
$$ |A|>\frac{k}{\frac{1}{\tau}+\theta}>m-1,$$
whence $|A|\ge m$.
Given any $\sigma$ and $\psi$ as above, for each $j\in A$, since $f_\psi(z)\ge t_j>-1$ for all $z\in \sigma^{-1}(j)$,
we have $\sigma^{-1}(j)\subseteq Z_\psi$.
\end{proof}

\subsection{General case of Theorem~\ref{T-IE main}} \label{SS-general}

In this subsection we  prove the general case of Theorem~\ref{T-IE main}.

\begin{lemma} \label{L-func to func}
Let $k\in \Nb$, $\omu=(\mu_1, \dots, \mu_k)\in \cM(X)^k$ and $\of=(f_1, \dots, f_k)\in C(X)^{\oplus k}$. Let $R=\frac{1}{k}\sum_{j\in [k]}\mu_j(f_j)$ and let $r<R$. Then there is some product neighborhood $U_1\times \cdots \times U_k$ of $\omu$ in $\cM(X)^k$ such that the following holds. For any independence set $F\in \cF(\Gamma)$ of $(U_1, \dots, U_k)$ and any surjective map $\psi: F\rightarrow [k]$,  there is
some $x\in X$ such that
\begin{align} \label{E-func to func}
\frac{1}{k}\sum_{j\in [k]}\frac{1}{|\psi^{-1}(j)|}\sum_{s\in \psi^{-1}(j)}f_j(sx)\ge r.
\end{align}
\end{lemma}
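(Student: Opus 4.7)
The proof is essentially a weak-$^*$ continuity argument followed by averaging. The plan is to choose neighborhoods $U_j$ of $\mu_j$ that pin down the values $\nu(f_j)$ for $\nu \in U_j$, so that any $\nu$ realizing the independence condition must have $(s\nu)(f_{\psi(s)})$ close to $\mu_{\psi(s)}(f_{\psi(s)})$ for each $s \in F$. Then the weighted average of $f_j(sx)$ against $\nu$ will be close to $R$, and we extract a point $x$ pointwise.

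More concretely, I would first set $\varepsilon = (R-r)/2 > 0$ and define, for each $j \in [k]$, the weak-$^*$ open neighborhood
$$U_j = \{\nu \in \cM(X) : |\nu(f_j) - \mu_j(f_j)| < \varepsilon\}$$
of $\mu_j$. This is a product neighborhood of $\omu$. Now let $F \in \cF(\Gamma)$ be an independence set for $(U_1, \dots, U_k)$ and let $\psi : F \to [k]$ be surjective (so $|\psi^{-1}(j)| \ge 1$ for all $j$, making the expression in \eqref{E-func to func} well-defined). By the definition of independence set, there exists $\nu \in \bigcap_{s \in F} s^{-1} U_{\psi(s)}$, i.e.\ $s\nu \in U_{\psi(s)}$ for every $s \in F$, which means $|(s\nu)(f_{\psi(s)}) - \mu_{\psi(s)}(f_{\psi(s)})| < \varepsilon$ for every $s \in F$.

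Using $(s\nu)(f_j) = \int_X f_j(sx)\,d\nu(x)$ and interchanging integral and finite sum, I compute
$$\int_X \frac{1}{k}\sum_{j \in [k]} \frac{1}{|\psi^{-1}(j)|} \sum_{s \in \psi^{-1}(j)} f_j(sx)\,d\nu(x) = \frac{1}{k}\sum_{j \in [k]} \frac{1}{|\psi^{-1}(j)|} \sum_{s \in \psi^{-1}(j)} (s\nu)(f_j).$$
Since each $(s\nu)(f_j) \ge \mu_j(f_j) - \varepsilon$ for $s \in \psi^{-1}(j)$, the inner average over $\psi^{-1}(j)$ is at least $\mu_j(f_j) - \varepsilon$, so the whole integral is at least
$$\frac{1}{k}\sum_{j \in [k]} (\mu_j(f_j) - \varepsilon) = R - \varepsilon = \frac{R+r}{2} > r.$$
Since the integrand is a continuous $\Rb$-valued function on $X$ whose $\nu$-average exceeds $r$, there must exist some $x \in X$ (in fact in $\supp(\nu)$) where the pointwise value is at least $r$, which is \eqref{E-func to func}.

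There is no real obstacle here: the argument is a standard averaging trick, and the only role of surjectivity of $\psi$ is to ensure the denominators $|\psi^{-1}(j)|$ are nonzero. The slightly subtle point is choosing $U_j$ via the single functional $f_j$ rather than a generic weak-$^*$ neighborhood, which lets the estimate pass directly through $(s\nu)(f_{\psi(s)})$ without any loss other than the $\varepsilon$ slack.
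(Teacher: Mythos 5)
Your proposal is correct and follows essentially the same route as the paper's proof: define weak$^*$ neighborhoods of the $\mu_j$ via the single functionals $f_j$, pull a measure $\nu$ from the independence condition, average the double sum against $\nu$, and extract a point where the integrand meets the bound. The only cosmetic difference is your two-sided $\varepsilon=(R-r)/2$ neighborhoods versus the paper's one-sided loss of $R-r$, which changes nothing.
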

\begin{proof} For each $j\in [k]$, we denote by $U_j$ the set of $\nu\in \cM(X)$ satisfying
$$\nu(f_j)\ge \mu_j(f_j)-(R-r).$$
This is a neighborhood of $\mu_j$ in $\cM(X)$.

Let $F\in \cF(\Gamma)$ be an independence set for $(U_1, \dots, U_k)$ and let  $\psi: F\rightarrow [k]$ be surjective.
Then $\bigcap_{s\in F}s^{-1}U_{\psi(s)}$ is nonempty, whence we can find  a $\nu\in \bigcap_{s\in F}s^{-1}U_{\psi(s)}$. We have
\begin{align*}
\int_X\left(\frac{1}{k}\sum_{j\in [k]}\frac{1}{|\psi^{-1}(j)|}\sum_{s\in \psi^{-1}(j)}f_j(sx)\right)\, d\nu(x)&=\frac{1}{k}\sum_{j\in [k]}\frac{1}{|\psi^{-1}(j)|}\sum_{s\in \psi^{-1}(j)}(s\nu)(f_j)\\
&\ge \frac{1}{k}\sum_{j\in [k]}\frac{1}{|\psi^{-1}(j)|}\sum_{s\in \psi^{-1}(j)}(\mu_j(f_j)-(R-r))\\
&=\frac{1}{k}\sum_{j\in [k]}(\mu_j(f_j)-(R-r))=r.
\end{align*}
Thus we can find some $x\in X$ such that \eqref{E-func to func} holds.
\end{proof}

\begin{notation} \label{N-tuple}
Let $k\in \Nb$, $\of=(f_1, \dots, f_k)\in C(X)^{\oplus k}$ and $\ot=(t_1, \dots, t_k)\in \Rb^k$. 
We denote by $\oA_{\of, \ot}$ the $k$-tuple
$$\left(f_1^{-1}\left([t_1, \infty)\right), \dots, f_k^{-1}\left([t_k, \infty)\right)\right)$$
of closed subsets of $X$.
\end{notation}

\begin{lemma} \label{L-func to density}
Let $k\in \Nb$, $\omu=(\mu_1, \dots, \mu_k)\in \cM(X)^k$ and $\of=(f_1, \dots, f_k)\in C(X)^{\oplus k}$ such that $R:=\frac{1}{k}\sum_{j\in [k]}\mu_j(f_j)>0$.  Let $0<r<R$ and $C\ge \|\of\|$. Then there are some $c, N>0$ and some finite set $T\subseteq \Rb^k$ depending only on $k, r, R$ and $C$,  and some product neighborhood $U_1\times \cdots \times U_k$ of $\omu$ in $\cM(X)^k$ such that the following hold:
\begin{enumerate}
\item $\frac{1}{k}\sum_{j\in [k]}t_j\ge r$ for every $(t_1, \dots, t_k)\in T$, and
\item for any independence set $F\in \cF(\Gamma)$ of $(U_1, \dots, U_k)$ with $|F|\ge N$, there are some $J\subseteq F$ with $|J|\ge c|F|$ and some $\ot\in T$ so that $J$ is an independence set for $\oA_{\of, \ot}$.
    \end{enumerate}
\end{lemma}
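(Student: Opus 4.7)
The plan is to combine Lemma~\ref{L-func to func}, which produces a point $x \in X$ realizing a large average of translates of the $f_j$'s, with the case $p = \infty$, $q = 1$ of the combinatorial Lemma~\ref{L-func to indep}, and then translate combinatorial independence back into dynamical independence for $\oA_{\of, \ot}$. As preparation I will choose auxiliary constants $R', r'$ with $r < R' < r' < R$; observe that $R = \frac{1}{k}\sum_j \mu_j(f_j) \le \|\of\| \le C$, so the hypothesis $0 < r < R' \le C$ of Lemma~\ref{L-func to indep} is satisfied.

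Applying Lemma~\ref{L-func to indep} with parameters $(k, r, R', C)$ and $p = \infty$ will yield constants $c, N_0 > 0$ and a finite set $T \subseteq \Rb^k$ for which condition~(1) of the present lemma holds. Applying Lemma~\ref{L-func to func} with $r'$ in place of $r$ will yield the product neighborhood $U_1 \times \cdots \times U_k$ of $\omu$. I will set $N := \max(N_0, k, \lceil kC/(r' - R') \rceil)$, so that $N$ depends only on $k, r, R, C$ (through $R', r'$, which are themselves chosen from $r$ and $R$).

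Given an independence set $F \in \cF(\Gamma)$ for $(U_1, \dots, U_k)$ with $|F| \ge N$, each $\psi \in \sR(F, k)$ is automatically surjective, because $|F| \ge k$ forces $|\psi^{-1}(j)| \ge 1$ for all $j$. Lemma~\ref{L-func to func} then furnishes $x_\psi \in X$ satisfying $\frac{1}{k}\sum_{j \in [k]} \frac{1}{|\psi^{-1}(j)|} \sum_{s \in \psi^{-1}(j)} f_j(s x_\psi) \ge r'$. I will define $f_\psi : F \to \Rb$ by $f_\psi(s) := f_{\psi(s)}(s x_\psi)$; clearly $\|f_\psi\|_\infty \le C$. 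A routine computation using $\bigl||\psi^{-1}(j)| - |F|/k\bigr| < 1$ shows
$$\Bigl|\tfrac{1}{|F|} \sum_{s \in F} f_\psi(s) - \tfrac{1}{k}\sum_{j \in [k]} \tfrac{1}{|\psi^{-1}(j)|} \sum_{s \in \psi^{-1}(j)} f_j(s x_\psi)\Bigr| < \tfrac{kC}{|F|},$$
so the first average is at least $r' - kC/|F| \ge R'$, and the hypotheses of Lemma~\ref{L-func to indep} are met for the family $\{f_\psi\}_{\psi \in \sR(F, k)}$.

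Lemma~\ref{L-func to indep}, applied with $Z = F$, will then produce $J \subseteq F$ with $|J| \ge c|F|$ and $\ot = (t_1, \dots, t_k) \in T$ such that every $\sigma : J \to [k]$ extends to some $\psi \in \sR(F, k)$ with $f_\psi(s) \ge t_j$ whenever $s \in \sigma^{-1}(j)$. Unpacking the definition of $f_\psi$, one gets $f_j(s x_\psi) \ge t_j$ for $s \in \sigma^{-1}(j)$, so $x_\psi \in \bigcap_{s \in J} s^{-1} f_{\sigma(s)}^{-1}([t_{\sigma(s)}, \infty))$, witnessing that $J$ is an independence set for $\oA_{\of, \ot}$. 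I expect the only subtle step to be the above discrepancy estimate between the two averages, which is precisely why the intermediate parameters $R' < r'$ and the lower bound $N \ge kC/(r' - R')$ have been inserted.
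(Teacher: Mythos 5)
Your proof is correct and follows essentially the same route as the paper: combine Lemma~\ref{L-func to func} with the $p=\infty$ case of Lemma~\ref{L-func to indep}, then unpack the extension $\psi$ of each $\sigma$ to witness independence for $\oA_{\of,\ot}$. The only difference is cosmetic bookkeeping: the paper first trims $F$ to a subset $F'$ with $|F'|$ divisible by $k$ so that the unweighted and weighted averages of $f_\psi$ coincide exactly (at the cost of halving the density constant), whereas you keep $F$ itself and absorb the discrepancy of at most $kC/|F|$ by inserting the intermediate thresholds $R'<r'$ and requiring $|F|\ge kC/(r'-R')$ --- both devices resolve the same issue and your estimate is accurate.
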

\begin{proof} Let $c,N>0$ and $T\subseteq \Rb^k$ be given by Lemma~\ref{L-func to indep} for $k, r, \frac{R+r}{2}, C$ and $p=\infty$.
Then $\frac{1}{k}\sum_{j\in [k]}t_j\ge r$ for every $(t_1, \dots, t_k)\in T$.
By Lemma~\ref{L-func to func} we can find a product neighborhood $U_1\times \cdots \times U_k$ of $\omu$ in $\cM(X)^k$ such that the following holds. For any independence set $F'\in \cF(\Gamma)$ of $\oU=(U_1, \dots, U_k)$ and any surjective map $\psi: F'\rightarrow [k]$,  there is
some $x\in X$ such that
\begin{align*} 
\frac{1}{k}\sum_{j\in [k]}\frac{1}{|\psi^{-1}(j)|}\sum_{s\in \psi^{-1}(j)}f_j(sx)\ge \frac{R+r}{2}.
\end{align*}

Let $F\in \cF(\Gamma)$ be an independence set of $\oU$ with $|F|\ge (k-1)+\max(N, k-1)$. We can find a set $F'\subseteq F$ such that $|F'|$ is divisible by $k$ and $|F\setminus F'|\le k-1$. Then $F'$ is also an independence set of $\oU$, $|F'|\ge N$ and $|F'|\ge |F|/2$. Let $\psi\in \sR(F', k)$. Then $\psi$ is surjective, whence we can find  an $x_\psi\in X$ such that
\begin{align}\label{E-func to density}
 \frac{1}{k}\sum_{j\in [k]}\frac{1}{|\psi^{-1}(j)|}\sum_{s\in \psi^{-1}(j)}f_j(sx_\psi)\ge \frac{R+r}{2}.
 \end{align}
We define a function $f_\psi: F'\rightarrow [-C, C]$ by
$$ f_\psi(s)=f_{\psi(s)}(sx_\psi)$$
for all $s\in F'$. Since $\psi\in \sR(F', k)$ and $|F'|$ is divisible by $k$, we have
$$|\psi^{-1}(j)|=\frac{1}{k}|F'|$$
for each $j\in [k]$.
Thus
 \begin{align*}
 \frac{1}{|F'|}\sum_{s\in F'}f_\psi(s)=\frac{1}{|F'|}\sum_{s\in F'}f_{\psi(s)}(sx_\psi)=\frac{1}{k}\sum_{j\in [k]}\frac{1}{|\psi^{-1}(j)|}\sum_{s\in \psi^{-1}(j)}f_j(sx_\psi)\overset{\eqref{E-func to density}}\ge \frac{R+r}{2}.
 \end{align*}
By our choice of $c, N$ and $T$ we can find some $J\subseteq F'$ with $|J|\ge c|F'|\ge \frac{c}{2}|F|$ and some $\ot=(t_1, \dots, t_k)\in T$  such that every map $\sigma: J\rightarrow [k]$ extends to a $\psi\in \sR(F', k)$ so that $f_\psi(s)\ge t_j$ for all $j\in [k]$ and $s\in \sigma^{-1}(j)$. 
Then
$$ f_{\sigma(s)}(sx_\psi)=f_{\psi(s)}(sx_\psi)=f_\psi(s)\ge t_{\sigma(s)}$$
for every $s\in J$, whence
$$ x_\psi\in \bigcap_{s\in J}s^{-1} f_{\sigma(s)}^{-1}([t_{\sigma(s)}, \infty)).$$
Thus $J$ is an independence set for $\oA_{\of, \ot}$.
\end{proof}

We are ready to prove Theorem~\ref{T-IE main}.

\begin{proof}[Proof of Theorem~\ref{T-IE main}]
In view of Lemma~\ref{L-Hahn Banach IE}, it suffices to show that given any $\omu=(\mu_1, \dots, \mu_k)\in \IE_k(\cM(X))$ and $\of=(f_1, \dots, f_k)\in C(X)^{\oplus k}$ satisfying $\sum_{j\in [k]}\mu_j(f_j)>0$, there is an $(x_1, \dots, x_k)\in \IE_k(X)$ such that $\sum_{j\in [k]}f_j(x_j)>0$.

Let $C=\|\of\|>0$, $R=\frac{1}{k}\sum_{j\in [k]} \mu_j(f_j)>0$ and $r=R/2$. Then we have $c,N>0$, $T\subseteq \Rb^k$ and $U_1\times \cdots \times U_k$ given by Lemma~\ref{L-func to density}. Since $\omu\in \IE_k(\cM(X))$, the tuple $\oU=(U_1, \dots, U_k)$ has positive independence density $\oI(\oU)$ (see Section~\ref{SS-IE}).

We claim that $\oI(\oA_{\of, \ot})\ge c\oI(\oU)$ for some $\ot\in T$. Assume that
$\oI(\oA_{\of, \ot})< c\oI(\oU)$ for every $\ot\in T$ instead.  Then for each $\ot\in T$ we can find some $K_\ot\in \cF(\Gamma)$ and some $\varepsilon_\ot>0$ such that
\begin{align} \label{E-hull for IE}
\varphi_{\oA_{\of, \ot}}(F)< c\oI(\oU)|F|
\end{align}
for every $F\in \cF(\Gamma)$ satisfying $|K_\ot F\Delta F|<\varepsilon_\ot |F|$. We can find an $F\in \cF(\Gamma)$ such that $|F|\ge N/\oI(\oU)$ and $|K_\ot F\Delta F|<\varepsilon_\ot |F|$ for all $\ot\in T$. Then \eqref{E-hull for IE}  holds for every $\ot\in T$. From the definition of $\varphi_{\oU}(F)$ we can find a set $F'\subseteq F$ such that $F'$ is an independence set for $\oU$ and $|F'|\ge \varphi_{\oU}(F)$. Then
$$ |F'|\ge \varphi_{\oU}(F)\ge \oI(\oU)|F|\ge N.$$
By our choice of $c, N, T$ and $U_1\times \cdots \times U_k$, there are some $J\subseteq F'$ with $|J|\ge c|F'|$ and some $\ot\in T$ such that $J$ is an independence set for $\oA_{\of, \ot}$. Then
$$ \varphi_{\oA_{\of, \ot}}(F)\ge |J|\ge c|F'|\ge c\oI(\oU)|F|,$$
a contradiction. This proves our claim.

Let $\ot=(t_1, \dots, t_k)\in T$ such that $\oI(\oA_{\of, \ot})\ge c\oI(\oU)>0$.
By Theorem~\ref{T-IE basic}.\eqref{i-IE tuple} there is some $(x_1, \dots, x_k)\in \IE_k(X)$ such that $x_j\in f_j^{-1}([t_j, \infty])$ for all $j\in [k]$. Then
$$ \sum_{j\in [k]}f_j(x_j)\ge \sum_{j\in [k]}t_j\ge kr>0$$
as desired.
\end{proof}

\subsection{Consequences of Theorem~\ref{T-IE main}} \label{SS-IE consequence}

As mentioned in Section~\ref{S-introduction}, 
in view of Theorem~\ref{T-IE basic}.\eqref{i-IE pair}, from the case $k=2$ of Theorem~\ref{T-IE main} we immediately obtain the following result of Glasner and Weiss \cite[Theorem A]{GW95}.

\begin{corollary} \label{C-IE for prob}
If $h_{\rm top}(X)=0$, then $h_{\rm top}(\cM(X))=0$.
\end{corollary}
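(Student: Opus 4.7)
The plan is to combine the $k=2$ case of Theorem~\ref{T-IE main} with Theorem~\ref{T-IE basic}.\eqref{i-IE pair}, applied in turn to the two actions $\Gamma\curvearrowright X$ and $\Gamma\curvearrowright \cM(X)$.

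First, I would invoke the hypothesis $h_{\rm top}(X)=0$ together with Theorem~\ref{T-IE basic}.\eqref{i-IE pair} applied to $\Gamma\curvearrowright X$ to conclude that $\IE_2(X)\subseteq \Delta_2(X)$. Under the identification $x\leftrightarrow \delta_x$, this exhibits $\IE_2(X)$ as a subset of $\Delta_2(\cM(X)):=\{(\nu,\nu):\nu\in\cM(X)\}\subseteq \cM(X)^2$.

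Next, I would observe that $\Delta_2(\cM(X))$ is a closed convex subset of $\cM(X)^2$: convexity is immediate, and closedness follows from the weak$^*$-continuity of the map $(\mu,\nu)\mapsto \mu-\nu$. Conversely, every finitely supported probability measure is a convex combination of Dirac measures and every $\nu\in\cM(X)$ is a weak$^*$-limit of finitely supported ones, so $\overline{\rm co}(\Delta_2(X))$, taken inside $\cM(X)^2$, equals $\Delta_2(\cM(X))$. Applying the $k=2$ case of Theorem~\ref{T-IE main}(1), I then obtain
$$\IE_2(\cM(X)) = \overline{\rm co}(\IE_2(X))\subseteq \overline{\rm co}(\Delta_2(X)) = \Delta_2(\cM(X)),$$
so that $\IE_2(\cM(X))\setminus \Delta_2(\cM(X))=\emptyset$.

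Finally, applying Theorem~\ref{T-IE basic}.\eqref{i-IE pair} in the other direction, this time to the action $\Gamma\curvearrowright \cM(X)$, forces $h_{\rm top}(\cM(X))=0$. I do not expect any real obstacle: the argument is purely formal once the $k=2$ case of Theorem~\ref{T-IE main} is available, and the only step requiring a short verification is the weak$^*$-approximation identification $\overline{\rm co}(\Delta_2(X))=\Delta_2(\cM(X))$.
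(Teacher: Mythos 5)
Your proposal is correct and follows essentially the same route as the paper, which deduces the corollary from Theorem~\ref{T-IE basic}.\eqref{i-IE pair} together with the case $k=2$ of Theorem~\ref{T-IE main}; you merely spell out the (easy) verification that the closed convex hull of the diagonal $\Delta_2(X)$ inside $\cM(X)^2$ lies in the diagonal of $\cM(X)^2$, and in fact only that inclusion (not the equality you state) is needed.
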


\begin{lemma} \label{L-finite support}
Let $k\in \Nb$. If $\omu=(\mu_1, \dots, \mu_k)\in \IE_k(\cM(X))$ and $\supp(\mu_j)$ is finite for every $j\in [k]$, then $\omu$ is a convex combination of elements in $(\supp(\mu_1)\times \cdots \times \supp(\mu_k))\cap \IE_k(X)$.
\end{lemma}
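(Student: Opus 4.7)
The plan is to deduce this lemma almost immediately from part (3) of Theorem~\ref{T-IE main}, which has already been established in the general case. Given $\omu=(\mu_1,\dots,\mu_k)\in \IE_k(\cM(X))$, that theorem provides a $\mu\in\cM(\IE_k(X))$ such that $\mu^{(j)}=\mu_j$ for every $j\in [k]$. The whole argument then consists of controlling the support of $\mu$ and decomposing it into Dirac masses.

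First, I would observe that since the push-forward of $\mu$ under the $j$-th coordinate projection is $\mu_j$, we have
\[
\mu\bigl(X^{j-1}\times \supp(\mu_j)\times X^{k-j}\bigr)=\mu_j(\supp(\mu_j))=1
\]
for each $j\in [k]$. Intersecting over $j$ shows that $\mu$ is supported on $\supp(\mu_1)\times\cdots\times\supp(\mu_k)$. As each $\supp(\mu_j)$ is finite by hypothesis, $\mu$ is supported on a finite subset of $\IE_k(X)$, namely $(\supp(\mu_1)\times\cdots\times\supp(\mu_k))\cap \IE_k(X)$.

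Next, being a probability measure on a finite set, $\mu$ can be written as $\mu=\sum_{i=1}^n \lambda_i\delta_{\ox_i}$, with $\lambda_i\ge 0$, $\sum_i\lambda_i=1$, and each $\ox_i=(x_{i,1},\dots,x_{i,k})$ in $(\supp(\mu_1)\times\cdots\times\supp(\mu_k))\cap \IE_k(X)$. Taking the $j$-th marginal yields $\mu_j=\sum_{i=1}^n\lambda_i\delta_{x_{i,j}}$, so under the identification $X\hookrightarrow\cM(X)$ via Dirac measures,
\[
\omu=(\mu_1,\dots,\mu_k)=\sum_{i=1}^n\lambda_i(\delta_{x_{i,1}},\dots,\delta_{x_{i,k}})=\sum_{i=1}^n\lambda_i\ox_i
\]
in $\cM(X)^k$. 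This exhibits $\omu$ as the required convex combination. Since every step is just bookkeeping once Theorem~\ref{T-IE main}.(3) is in hand, there is no real obstacle — the only subtlety is to notice that the support condition on the marginals forces $\mu$ itself to live on the product of the supports, which is what upgrades the abstract decomposition of Theorem~\ref{T-IE main}.(3) into a genuinely finite convex combination.
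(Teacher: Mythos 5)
Your proposal is correct and follows essentially the same route as the paper: invoke Theorem~\ref{T-IE main}.(3) to get $\mu\in\cM(\IE_k(X))$ with marginals $\mu_j$, note that the marginal condition forces $\mu$ to be supported on the finite set $(\supp(\mu_1)\times\cdots\times\supp(\mu_k))\cap\IE_k(X)$, and then read off $\omu$ as the corresponding finite convex combination. The explicit marginal computation in your last step is just the finitely supported case of the barycenter identity the paper uses implicitly, so there is no substantive difference.
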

\begin{proof} By Theorem~\ref{T-IE main}.(3) there exists a $\mu\in \cM(\IE_k(X))$ such that 
$\mu_j=\mu^{(j)}$ 
for all $j\in [k]$. Then $\mu(\supp(\mu_1)\times \cdots \times \supp(\mu_k))=1$. Thus $\supp(\mu)$ is finite and
$$\supp(\mu)\subseteq (\supp(\mu_1)\times \cdots \times \supp(\mu_k))\cap \IE_k(X).$$
Then $\omu$ is a convex combination of elements in $\supp(\mu)$, whence a convex combination of elements in $(\supp(\mu_1)\times \cdots \times \supp(\mu_k))\cap \IE_k(X)$.
\end{proof}

\begin{lemma} \label{L-support to support}
Let $k\in \Nb$ and let $\mu\in \cM(\IE_k(X))$ with finite support. Then
$$\sum_{\onu\in \supp(\mu)}\mu(\{\onu\})\onu\in \IE_k(\cM_{|\supp(\mu)|}(X)).$$
\end{lemma}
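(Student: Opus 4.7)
The plan is a straightforward induction on $N=|\supp(\mu)|$, exploiting the additive behavior of the support-size bound in Lemma~\ref{L-IE convex}.(1). Enumerate $\supp(\mu)=\{\onu^{(1)},\dots,\onu^{(N)}\}$ and set $\lambda_i=\mu(\{\onu^{(i)}\})$; because $\supp(\mu)$ is finite, each $\lambda_i>0$ and $\sum_i\lambda_i=1$. The target tuple is then $\omu:=\sum_{i=1}^N\lambda_i\,\onu^{(i)}$.

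For the base case $N=1$, the Dirac embedding $x\mapsto\delta_x$ identifies $X$ with the closed $\Gamma$-invariant subset $\cM_1(X)\subseteq\cM(X)$, so under this identification $\IE_k(X)=\IE_k(\cM_1(X))$ (which also follows from Theorem~\ref{T-IE basic}.\eqref{i-IE subset} together with the fact that $X$ and $\cM_1(X)$ coincide). Since $\onu^{(1)}\in\supp(\mu)\subseteq\IE_k(X)$, we get $\omu=\onu^{(1)}\in\IE_k(\cM_1(X))$, as required.

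For the inductive step, assume the statement for $N-1$ (with $N\ge 2$). Since $\lambda_N<1$, the convex combination
\[
\omu':=\sum_{i=1}^{N-1}\frac{\lambda_i}{1-\lambda_N}\onu^{(i)}
\]
is a convex combination of the first $N-1$ tuples weighted by a probability vector supported on a set of size $N-1$, hence by the inductive hypothesis applied to the associated measure on $\IE_k(X)$ we obtain $\omu'\in\IE_k(\cM_{N-1}(X))$. The base case also gives $\onu^{(N)}\in\IE_k(\cM_1(X))$. Since
\[
\omu=(1-\lambda_N)\omu'+\lambda_N\onu^{(N)},
\]
Lemma~\ref{L-IE convex}.(1) with $N_1=N-1$ and $N_2=1$ yields $\omu\in\IE_k(\cM_N(X))$, completing the induction.

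There is no real obstacle: the content is entirely contained in Lemma~\ref{L-IE convex}.(1), whose additivity in the $\cM_N(X)$-indexing is precisely tailored to this conclusion. The only subtlety worth a sentence in the writeup is justifying the identification $\IE_k(X)=\IE_k(\cM_1(X))$ that underlies the base case.
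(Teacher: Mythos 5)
Your proof is correct and follows exactly the paper's route: the paper also disposes of the case $|\supp(\mu)|=1$ as trivial and then inducts on $|\supp(\mu)|$ using Lemma~\ref{L-IE convex}.(1). Your writeup just makes explicit the identification $\IE_k(X)=\IE_k(\cM_1(X))$ and the splitting $N=(N-1)+1$, which the paper leaves implicit.
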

\begin{proof} The case $|\supp(\mu)|=1$ is trivial. The general case follows by induction on $|\supp(\mu)|$ using Lemma~\ref{L-IE convex}.(1).
\end{proof}

Now Corollary~\ref{C-IE finite support} follows from Lemmas~\ref{L-finite support} and \ref{L-support to support}.

We remark that even when $N_1, \dots, N_k$ are all equal, one cannot strengthen  Corollary~\ref{C-IE finite support} to claim
$$ \cM_N(X)^k\cap \IE_k(\cM(X))\subseteq \IE_k(\cM_N(X)).$$
To illustrate this, we give an example in Example~\ref{E-not IE}. For this we need the following lemma.

\begin{lemma} \label{L-not IE}
Let $x_1, x_2, y_1, y_2\in X$ such that $(x_1, x_2), (y_1, x_2), (x_1, y_2)\in \IE_2(X)$ but $(y_1, y_2)\not\in \IE_2(X)$. Let $1/2<\lambda<1$, and set
$$\omu=\lambda(\delta_{x_1}, \delta_{x_2})+(1-\lambda)(\delta_{y_1}, \delta_{y_2})\in \cM_2(X)^2.$$
Then
$\omu\in \IE_2(\cM_3(X))$ but $\omu\not\in \IE_2(\cM_2(X))$.
\end{lemma}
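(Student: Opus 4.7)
The plan is to verify the two claims separately, each by direct application of results already proved.

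For the inclusion $\omu \in \IE_2(\cM_3(X))$, I will construct a three-point probability measure on $\IE_2(X)$ whose marginals give $(\mu_1, \mu_2)$ and then invoke Lemma~\ref{L-support to support}. Specifically, I will set
$$\mu := (2\lambda-1)\delta_{(x_1, x_2)} + (1-\lambda)\delta_{(x_1, y_2)} + (1-\lambda)\delta_{(y_1, x_2)} \in \cM(X^2).$$
Since $\lambda \in (1/2, 1)$, the three coefficients are positive and sum to $1$, and by hypothesis $\supp(\mu) \subseteq \IE_2(X)$. A direct computation yields $\mu^{(1)} = \lambda\delta_{x_1} + (1-\lambda)\delta_{y_1}$ and $\mu^{(2)} = \lambda\delta_{x_2} + (1-\lambda)\delta_{y_2}$. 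Noting that $x_j \ne y_j$ for $j = 1, 2$ (for instance, $x_1 = y_1$ together with $(x_1, y_2) \in \IE_2(X)$ would force $(y_1, y_2) \in \IE_2(X)$), we have $|\supp(\mu)| = 3$, and Lemma~\ref{L-support to support} then yields $\omu = \sum_{\onu \in \supp(\mu)} \mu(\{\onu\})\onu \in \IE_2(\cM_3(X))$.

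For the non-membership $\omu \notin \IE_2(\cM_2(X))$, I plan to argue by contradiction using a factor-map presentation of $\cM_2(X)$. Equip $Y := X \times X \times [0, 1]$ with the $\Gamma$-action that is diagonal on $X \times X$ and trivial on $[0, 1]$; the map $\pi: Y \to \cM_2(X)$, $(a, b, \alpha) \mapsto \alpha\delta_a + (1-\alpha)\delta_b$, is then a continuous surjective $\Gamma$-equivariant map, hence a factor map. Theorem~\ref{T-IE basic}.\eqref{i-IE factor} gives $\IE_2(\cM_2(X)) = (\pi \times \pi)(\IE_2(Y))$, and iterating Theorem~\ref{T-IE basic}.\eqref{i-IE product} together with the elementary observation that $\IE_2([0, 1]) = \Delta_2([0, 1])$ under the trivial action shows that every element of $\IE_2(Y)$ has the form $((a_1, b_1, \alpha), (a_2, b_2, \alpha))$ with $(a_1, a_2), (b_1, b_2) \in \IE_2(X)$ and a single shared $\alpha \in [0, 1]$.

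If $\omu$ belonged to $\IE_2(\cM_2(X))$, such a tuple would exist and would satisfy $\alpha\delta_{a_j} + (1-\alpha)\delta_{b_j} = \lambda\delta_{x_j} + (1-\lambda)\delta_{y_j}$ for $j = 1, 2$. Since the right-hand side has two distinct atoms of unequal weights $\lambda$ and $1-\lambda$, matching atoms by weight forces $a_j \ne b_j$ and $\alpha \in \{\lambda, 1-\lambda\}$ for each $j$. The key subtlety, and the main obstacle I anticipate, is that $\alpha$ is the \emph{same} across $j$: this prevents interleaving of the two cases and forces parallel atom identifications. Concretely, if $\alpha = \lambda$ then $a_j = x_j$ and $b_j = y_j$ for both $j$, giving $(b_1, b_2) = (y_1, y_2)$; if $\alpha = 1-\lambda$ then $(a_1, a_2) = (y_1, y_2)$. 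Either case contradicts $(y_1, y_2) \notin \IE_2(X)$, completing the argument.
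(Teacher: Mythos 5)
Your proof of the positive half ($\omu\in\IE_2(\cM_3(X))$) is essentially identical to the paper's: the same convex decomposition $\omu=(2\lambda-1)(\delta_{x_1},\delta_{x_2})+(1-\lambda)(\delta_{y_1},\delta_{x_2})+(1-\lambda)(\delta_{x_1},\delta_{y_2})$ followed by Lemma~\ref{L-support to support}, with the (correct) extra remark that $x_j\neq y_j$.

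For the negative half you take a genuinely different route, and it works. The paper argues directly from the definition of independence density: it fixes disjoint open sets $V_j\ni x_j$, $W_j\ni y_j$, forms the weak$^*$ neighborhoods $U_j=\{\mu:\mu(V_j)>\lambda-\varepsilon,\ \mu(W_j)>1-\lambda-\varepsilon\}$ with $\varepsilon<\min(\lambda-1/2,1-\lambda)$, and uses $\lambda-\varepsilon>1/2$ to locate the heavy atom of a two-point measure, concluding that every independence set for $(U_1\cap\cM_2(X),U_2\cap\cM_2(X))$ is an independence set for $(W_1,W_2)$, which has zero independence density for small $W_1,W_2$ because $(y_1,y_2)\notin\IE_2(X)$. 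You instead realize $\cM_2(X)$ as a factor of $Y=X\times X\times[0,1]$ (diagonal on $X\times X$, trivial on $[0,1]$) and combine the lifting direction of Theorem~\ref{T-IE basic}.\eqref{i-IE factor} with two applications of Theorem~\ref{T-IE basic}.\eqref{i-IE product} and the fact $\IE_2([0,1])\subseteq\Delta_2([0,1])$ (immediate from Theorem~\ref{T-IE basic}.\eqref{i-IE pair}, the trivial action having zero entropy) to get the complete description $\IE_2(\cM_2(X))=\{(\alpha\delta_{a_1}+(1-\alpha)\delta_{b_1},\alpha\delta_{a_2}+(1-\alpha)\delta_{b_2}):(a_1,a_2),(b_1,b_2)\in\IE_2(X),\ \alpha\in[0,1]\}$; the contradiction then comes from atom matching, where $\lambda\neq 1/2$ together with the shared $\alpha$ forces the parallel identification (the same role $\lambda>1/2$ plays in the paper). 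Your approach relies on heavier quoted machinery (factor lifting and the product formula) but buys a clean structural parametrization of $\IE_2(\cM_2(X))$ in the spirit of Theorem~\ref{T-IE main}.(3), obtained independently of it; the paper's argument is more elementary and self-contained, comparing independence sets directly. The only housekeeping your write-up should include is the (routine) check that $\pi(a,b,\alpha)=\alpha\delta_a+(1-\alpha)\delta_b$ is continuous, surjective and $\Gamma$-equivariant, and that $\alpha\notin\{0,1\}$ and $a_j\neq b_j$ are forced before matching atoms; neither point is an obstacle.
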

\begin{proof} Note that $\omu$ is a convex combination of $(\delta_{x_1}, \delta_{x_2})$, $(\delta_{y_1}, \delta_{x_2})$ and $(\delta_{x_1}, \delta_{y_2})$:
$$ \omu=(2\lambda-1)(\delta_{x_1}, \delta_{x_2})+(1-\lambda)(\delta_{y_1}, \delta_{x_2})+(1-\lambda)(\delta_{x_1}, \delta_{y_2}).$$
Thus by Lemma~\ref{L-support to support} we have $\omu\in \IE_2(\cM_3(X))$.

Since $(y_1, x_2)\in \IE_2(X)$ and $(y_1, y_2)\not\in \IE_2(X)$, we have $x_2\neq y_2$. Similarly, $x_1\neq y_1$.  Let $V_1$ and $W_1$ be open neighborhoods of $x_1$ and $y_1$ in $X$ respectively such that $V_1\cap W_1=\emptyset$. Also let $V_2$ and $W_2$ be open neighborhoods of $x_2$ and $y_2$ in $X$ respectively such that $V_2\cap W_2=\emptyset$. Let $0<\varepsilon<\min(\lambda-1/2, 1-\lambda)$. Then
$$U_1=\{\mu\in \cM(X): \mu(V_1)>\lambda-\varepsilon, \mu(W_1)>1-\lambda-\varepsilon\}$$
and
$$U_2=\{\mu\in \cM(X): \mu(V_2)>\lambda-\varepsilon, \mu(W_2)>1-\lambda-\varepsilon\}$$
are open neighborhoods of $\lambda\delta_{x_1}+(1-\lambda)\delta_{y_1}$ and $\lambda\delta_{x_2}+(1-\lambda)\delta_{y_2}$ in $\cM(X)$ respectively.

Let $F\in \cF(\Gamma)$ be an independence set of the pair $(U_1\cap \cM_2(X), U_2\cap \cM_2(X))$. For each map $\sigma: F\rightarrow \{1, 2\}$, we can find a $\nu_\sigma\in \cM_2(X)\cap \bigcap_{s\in F}s^{-1}U_{\sigma(s)}$. Say, $\nu_\sigma=t_\sigma \delta_{z_\sigma}+(1-t_\sigma)\delta_{w_\sigma}$ for some $z_\sigma, w_\sigma\in X$ and $t_\sigma\in [1/2, 1]$. For each $s\in F$, we have
$$t_\sigma \delta_{sz_\sigma}+(1-t_\sigma)\delta_{sw_\sigma}=s\nu_\sigma\in U_{\sigma(s)}.$$
Note that $\lambda-\varepsilon>1/2$ and $1-\lambda-\varepsilon>0$. 
If $\sigma(s)=1$, then $sz_\sigma\in V_1$ and $sw_\sigma\in W_1$. If $\sigma(s)=2$, then $sz_\sigma\in V_2$ and $sw_\sigma\in W_2$. Thus $w_\sigma\in \bigcap_{s\in F}s^{-1}W_{\sigma(s)}$.  This shows that $F$ is an independence set of the pair $(W_1, W_2)$.
Thus the independence density of the pair $(W_1, W_2)$ is no less than that of the pair $(U_1\cap \cM_2(X), U_2\cap \cM_2(X))$.

Since $(y_1, y_2)\not\in \IE_2(X)$, if we take $W_1$ and $W_2$ small enough, then the pair $(W_1, W_2)$ has zero independence density, and hence the pair $(U_1\cap \cM_2(X), U_2\cap \cM_2(X))$ has zero independence density. Therefore $\omu\not\in \IE_2(\cM_2(X))$.
\end{proof}

\begin{example} \label{E-not IE}
We consider the right shift action of $\Gamma$ on $\{0, 1,2 \}^\Gamma$:
$$ (sx)_\gamma=x_{\gamma s}$$
for all $x\in \{0, 1, 2\}^\Gamma$ and $s, \gamma\in \Gamma$. 
Let 
$$X=\{0, 1\}^\Gamma\cup \{1, 2\}^\Gamma\subseteq \{0, 1, 2\}^\Gamma.$$ 
Then $X$ is a closed $\Gamma$-invariant subset of $\{0, 1, 2\}^\Gamma$, thus we may consider the restriction of  the shift action to $X$. It is easily checked that
$$\IE_2(X)=(\{0, 1\}^\Gamma\times \{0, 1\}^\Gamma)\cup (\{1, 2\}^\Gamma\times \{1, 2\}^\Gamma).$$
Let $z$ be the constant function $\Gamma\rightarrow \{1\}$ in $X$.
We take $x_2=z$ and any $x_1\in \{1, 2\}^\Gamma$,  $y_1\in \{0, 1\}^\Gamma\setminus \{z\}$, and $y_2\in \{1, 2\}^\Gamma\setminus \{z\}$. Then $x_1, x_2, y_1, y_2$ satisfy the conditions in Lemma~\ref{L-not IE}. Therefore, for any $1/2<\lambda<1$, we have
$$ \lambda(\delta_{x_1}, \delta_{x_2})+(1-\lambda)(\delta_{y_1}, \delta_{y_2})\in \left(\cM_2(X)^2\cap \IE_2(\cM_3(X))\right)\setminus\IE_2(\cM_2(X)).$$
\end{example}

One may decompose the space $\cM_N(X)$ according to the weights at the points, as in the following notation.

\begin{notation} \label{N-finite support2}
Let $N\in \Nb$. We denote by $\orP_N$ the set of probability vectors of length $N$, i.e., 
$$ \orP_N=\{\olambda=(\lambda_1, \dots, \lambda_N)\in [0, 1]^N: \sum_{i\in [N]}\lambda_i=1\}.$$
For each $\olambda=(\lambda_1, \dots, \lambda_N)\in \orP_N$, we denote by $\cM_\olambda(X)$ the set of $\mu$ in $\cM(X)$ of the form $\sum_{i\in [N]}\lambda_i \delta_{x_i}$ for $(x_1, \dots, x_N)\in X^N$. This is a $\Gamma$-invariant closed subset of $\cM_N(X)$. We denote by $\pi_\olambda$ the factor map $X^N\rightarrow \cM_\olambda(X)$ sending $(x_1, \dots, x_N)$ to $\sum_{i\in [N]}\lambda_i \delta_{x_i}$.
\end{notation}

For $k\in \Nb$, we say $\Gamma\curvearrowright X$ has {\it  Uniform Positive Entropy (UPE) of order $k$} if $\IE_k(X)=X^k$. The notion of UPE of order $2$ was introduced by Blanchard in \cite[Definition 1]{Blanchard92} in terms of positivity for topological entropy of standard open covers, as an analogue of the K-property for topological dynamical systems. It was extended to the notion of UPE of order $k$ for $k\ge 2$ by Glasner and Weiss in \cite[page 306]{GW952} in terms of entropy tuples of length $k$. When $k\ge 2$, the definition we use here is equivalent to theirs, as one can see from the coincidence of entropy tuples  and non-diagonal IE-tuples \cite[Theorem 12.20]{KL16}. 

\begin{corollary} \label{C-UPE}
Let $k, N\in \Nb$ and $\olambda\in \orP_N$.
The following are equivalent:
\begin{enumerate}
\item $\Gamma\curvearrowright X$ has UPE of order $k$.
\item $\Gamma\curvearrowright \cM_\olambda(X)$ has UPE of order $k$.
\item $\Gamma\curvearrowright \cM(X)$ has UPE of order $k$.
\end{enumerate}
\end{corollary}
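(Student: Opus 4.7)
The plan is to prove the three equivalences by combining Theorem~\ref{T-IE main} with the product and factor formulas in Theorem~\ref{T-IE basic}. The arguments are essentially formal once those structural theorems are available, so I will give $(1) \Leftrightarrow (3)$ first (a direct repackaging of Theorem~\ref{T-IE main}) and then link $(1)$ to $(2)$ through the factor map $\pi_\olambda$ and the inclusion $\cM_\olambda(X) \subseteq \cM(X)$.

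For $(1)\Rightarrow (3)$, I would invoke Theorem~\ref{T-IE main}.(1): if $\IE_k(X)=X^k$, then $\IE_k(\cM(X))$ equals the closed convex hull of $X^k$ in $\cM(X)^k$. Since any $(\mu_1,\dots,\mu_k)\in\cM(X)^k$ can be approximated by tuples of finitely supported measures of the form $\mu_j=\sum_{i}\lambda_{j,i}\delta_{x_{j,i}}$, and such a tuple expands as a convex combination $\sum \lambda_{1,i_1}\cdots \lambda_{k,i_k}(\delta_{x_{1,i_1}},\dots,\delta_{x_{k,i_k}})$ of elements of $X^k$, that closed convex hull is all of $\cM(X)^k$. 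For $(3)\Rightarrow (1)$, Theorem~\ref{T-IE main}.(2) gives $\IE_k(X)=\IE_k(\cM(X))\cap X^k=\cM(X)^k\cap X^k=X^k$.

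For $(1)\Rightarrow (2)$, I would iterate Theorem~\ref{T-IE basic}.\eqref{i-IE product} to obtain $\IE_k(X^N)=(X^N)^k$ under the natural identification; then, applying Theorem~\ref{T-IE basic}.\eqref{i-IE factor} to the factor map $\pi_\olambda\colon X^N\to\cM_\olambda(X)$ of Notation~\ref{N-finite support2}, and using surjectivity of $\pi_\olambda$, yields $\IE_k(\cM_\olambda(X))=(\pi_\olambda\times\cdots\times\pi_\olambda)((X^N)^k)=\cM_\olambda(X)^k$. For $(2)\Rightarrow (1)$, I would first observe that $\delta_x=\sum_{i\in[N]}\lambda_i\delta_x$ lies in $\cM_\olambda(X)$, so $X\subseteq\cM_\olambda(X)$ as a closed $\Gamma$-invariant subset. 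Hence $X^k\subseteq\cM_\olambda(X)^k=\IE_k(\cM_\olambda(X))$, and Theorem~\ref{T-IE basic}.\eqref{i-IE subset} applied to $\cM_\olambda(X)\subseteq\cM(X)$ gives $X^k\subseteq\IE_k(\cM(X))$; then Theorem~\ref{T-IE main}.(2) concludes $\IE_k(X)=\IE_k(\cM(X))\cap X^k=X^k$.

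There is no real obstacle; the only point to watch is that the Dirac embedding $X\hookrightarrow\cM(X)$ actually factors through $\cM_\olambda(X)$ (so that $X^k\subseteq\cM_\olambda(X)^k$), which follows from $\sum_i\lambda_i=1$. Everything else is a direct application of the already-established machinery, so the corollary will read as a short deduction rather than a new argument.
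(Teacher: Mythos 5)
Your proposal is correct and follows essentially the same route as the paper: $(1)\Rightarrow(3)$ via Theorem~\ref{T-IE main}.(1), $(1)\Rightarrow(2)$ via the product formula for $\IE_k(X^N)$ followed by the factor map $\pi_\olambda$, and $(2)\Rightarrow(1)$, $(3)\Rightarrow(1)$ via Theorem~\ref{T-IE main}.(2). The extra details you supply (that the closed convex hull of $X^k$ is all of $\cM(X)^k$, and that $X\subseteq\cM_\olambda(X)$ since $\delta_x=\sum_i\lambda_i\delta_x$, combined with Theorem~\ref{T-IE basic}.\eqref{i-IE subset}) are exactly what the paper leaves implicit.
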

\begin{proof} (1)$\Longrightarrow$(3) follows from Theorem~\ref{T-IE main}.(1).

(1)$\Longrightarrow$(2). Assume that (1) holds. From Theorem~\ref{T-IE basic}.\eqref{i-IE product} we know that $\Gamma\curvearrowright X^N$ has UPE of order $k$. Since $\Gamma \curvearrowright \cM_\olambda(X)$ is a factor of $\Gamma\curvearrowright X^N$, from Theorem~\ref{T-IE basic}.\eqref{i-IE factor} we conclude that $\Gamma\curvearrowright \cM_\olambda(X)$ has UPE of order $k$.

Both (2)$\Longrightarrow$(1) and (3)$\Longrightarrow(1)$ follow from Theorem~\ref{T-IE main}.(2).
\end{proof}

The case $k=2$ and $\olambda=(\underbrace{1/N, \dots, 1/N}_{N})$ of Corollary~\ref{C-UPE} was established by Bernardes, Darji and Vermersch in \cite[Theorem 4]{BDV22} for $\Gamma=\Zb$ and by Liu and Wei in \cite[Theorem 1.1]{LW} for all amenable groups $\Gamma$.

\section{Measure $\IE$-tuples of $\cM(X)$} \label{S-measure IE for prob}

In this section we study measure IE-tuples and prove Theorem~\ref{T-measure IE main}. Throughout this section, we consider a continuous action of a countably infinite amenable group $\Gamma$ on a compact metrizable space $X$.

The following fact is well known. For convenience of the reader, we give a proof.

\begin{lemma} \label{L-barycenter}
Let $\rmm\in \cM(\cM(X))$ and let $\mu\in \cM(X)$ be the barycenter of $\rmm$. For each Borel subset $D$ of $X$, the function on $\cM(X)$ sending $\nu$ to $\nu(D)$ is Borel, and
\begin{align} \label{E-barycenter}
 \mu(D)=\int_{\cM(X)}\nu(D)\, d\rmm(\nu).
 \end{align}
\end{lemma}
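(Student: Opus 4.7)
The plan is to use a standard measure-theoretic approximation argument, starting from continuous functions where the identity holds by definition of the barycenter, and extending step by step to indicator functions of Borel sets.

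First, for each $f \in C(X)$ the map $\Phi_f : \cM(X) \to \Rb$ sending $\nu$ to $\nu(f)$ is continuous by the very definition of the weak$^*$-topology, hence Borel. By the definition of the barycenter recalled in Section~\ref{SS-barycenter}, applied to the continuous linear functional $\omega \mapsto \omega(f)$ on $C(X)^*$, one has
$$\mu(f) = \int_{\cM(X)} \nu(f) \, d\rmm(\nu).$$

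Next I would treat an arbitrary open set $U \subseteq X$. Since $X$ is compact metrizable, Urysohn's lemma yields an increasing sequence $(f_n)$ in $C(X)$ with $0 \le f_n \le 1$ and $f_n \nearrow {\bf 1}_U$ pointwise. Then $\nu(U) = \lim_n \nu(f_n)$ exhibits $\nu \mapsto \nu(U)$ as a monotone pointwise limit of continuous functions on $\cM(X)$, so it is Borel. The monotone convergence theorem, applied once with respect to $\mu$ and once with respect to $\rmm$, then gives
$$\mu(U) = \lim_n \mu(f_n) = \lim_n \int_{\cM(X)} \nu(f_n) \, d\rmm(\nu) = \int_{\cM(X)} \nu(U) \, d\rmm(\nu),$$
so that \eqref{E-barycenter} holds whenever $D = U$ is open.

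Finally, let $\cD$ denote the collection of Borel subsets $D \subseteq X$ such that $\nu \mapsto \nu(D)$ is Borel on $\cM(X)$ and \eqref{E-barycenter} holds. I would verify directly that $\cD$ is a Dynkin system: it contains $X$ since both $\mu$ and every $\nu$ are probability measures; it is closed under complements because $\nu(X \setminus D) = 1 - \nu(D)$; and it is closed under countable increasing unions by another application of the monotone convergence theorem on both sides of \eqref{E-barycenter}. Since $\cD$ contains the $\pi$-system of open subsets of $X$, Dynkin's $\pi$-$\lambda$ theorem yields that $\cD$ coincides with the full Borel $\sigma$-algebra, completing the proof. There is no serious obstacle: the only point requiring minor care is the verification that $\cD$ is closed under complements and increasing unions, which is routine.
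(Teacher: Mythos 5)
Your proof is correct and follows essentially the same route as the paper's: first verify \eqref{E-barycenter} for open sets via an increasing sequence of continuous functions (the paper uses bounded convergence, you use monotone convergence — both fine), then run a monotone-class argument to reach the full Borel $\sigma$-algebra (the paper closes under complements and countable \emph{disjoint} unions, citing Kechris, rather than increasing unions). One small caution: ``contains $X$, closed under complements, closed under countable increasing unions'' is not literally either standard form of the $\lambda$-system axioms, so before invoking the $\pi$-$\lambda$ theorem you should also record closure under countable disjoint unions (or under proper differences $B\setminus A$ for $A\subseteq B$), which for your family $\cD$ is equally routine from countable additivity of $\nu$, $\mu$ and monotone convergence with respect to $\rmm$.
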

\begin{proof} Consider first the case $D$ is open. We can find an increasing sequence $\{f_n\}_{n\in \Nb}$ in $C(X)$ such that $0\le f_n\le 1$ for every $n\in \Nb$ and $f_n\rightarrow {\bf 1}_D$ pointwisely as $n\to \infty$. Then
$$\nu(D)=\lim_{n\to \infty}\nu(f_n)$$
for every $\nu\in \cM(X)$. Thus the function on $\cM(X)$ sending $\nu$ to $\nu(D)$ is the pointwise limit of a sequence of continuous functions, and hence is Borel. Furthermore, by the bounded convergence theorem we have
$$ \mu(D)=\lim_{n\to \infty}\mu(f_n)=\lim_{n\to \infty}\int_{\cM(X)}\nu(f_n)\, d\rmm(\nu)=\int_{\cM(X)}\nu(D)\, d\rmm(\nu), $$
establishing \eqref{E-barycenter} for $D$.

We denote by $\sB$ the collection of all Borel subsets $D$ of $X$ such that the function on $\cM(X)$ sending $\nu$ to $\nu(D)$ is Borel and that \eqref{E-barycenter} holds for $D$. Then $\sB$ contains the collection of all open subsets of $X$, and is closed under taking complements and countable disjoint unions. For any collection $\sU$ of subsets of $X$ closed under taking finite intersections, the $\sigma$-algebra generated by $\sU$ is the smallest collection of subsets of $X$ which contains $\sU$ and is closed under taking complements and countable disjoint unions \cite[Theorem 10.1.(iii)]{Kechris}. Thus $\sB$ is the Borel $\sigma$-algebra of $X$.
\end{proof}

The following is an analogue of Lemma~\ref{L-func to func}. 

\begin{lemma} \label{L-measure func to func}
Let $\rmm\in \cM_\Gamma(\cM(X))$ and let $\mu\in \cM_\Gamma(X)$ be the barycenter of $\rmm$. Let $k\in \Nb$, $\onu=(\nu_1, \dots, \nu_k)\in \cM(X)^k$ and $\of=(f_1, \dots, f_k)\in C(X)^{\oplus k}$.  Let $R=\frac{1}{k}\sum_{j\in [k]}\nu_j(f_j)$ and let $r<R$.
Then there are some $0<\delta<1$ and some product neighborhood $U_1\times \cdots \times U_k$ of $\onu$ in $\cM(X)^k$ such that the following holds. For any $0<\eta\le 1$ and any $D\in \sB(\mu, \eta \delta)$ (see Section~\ref{SS-measure IE}) there is some $D'\in \sB(\rmm, \eta)$ so that for any independence set $F\in \cF(\Gamma)$ of $(U_1, \dots, U_k)$ relative to $D'$ and any surjective map $\psi: F\rightarrow [k]$,  there is
some $x\in D$ such that
\begin{align} \label{E-measure func to func}
\frac{1}{k}\sum_{j\in [k]}\frac{1}{|\psi^{-1}(j)|}\sum_{s\in \psi^{-1}(j)}f_j(sx)\ge r.
\end{align}
\end{lemma}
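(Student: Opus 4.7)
The plan is to follow the proof of Lemma~\ref{L-func to func} above, with the only new ingredient being that the point $x$ must be found inside a prescribed set $D\subseteq X$ of $\mu$-measure close to $1$. Set $C=\max_{j\in[k]}\|f_j\|$, and fix $\delta\in(0,1)$ and $\varepsilon>0$ so small that $\varepsilon+C\delta<R-r$. For $j\in[k]$ define $U_j=\{\nu\in\cM(X):\nu(f_j)>\nu_j(f_j)-\varepsilon\}$, which is an open neighborhood of $\nu_j$ in $\cM(X)$.

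Given $0<\eta\le 1$ and $D\in\sB(\mu,\eta\delta)$, put $D'=\{\nu\in\cM(X):\nu(X\setminus D)<\delta\}$. By Lemma~\ref{L-barycenter} the map $\nu\mapsto\nu(X\setminus D)$ is Borel, so $D'$ is Borel, and
\[\int_{\cM(X)}\nu(X\setminus D)\,d\rmm(\nu)=\mu(X\setminus D)\le\eta\delta.\]
Markov's inequality then yields $\rmm(\cM(X)\setminus D')=\rmm\{\nu:\nu(X\setminus D)\ge\delta\}\le\eta$, i.e., $D'\in\sB(\rmm,\eta)$.

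Now suppose $F\in\cF(\Gamma)$ is an independence set for $(U_1,\dots,U_k)$ relative to $D'$ and $\psi:F\to[k]$ is surjective. Pick $\nu\in D'\cap\bigcap_{s\in F}s^{-1}U_{\psi(s)}$ and set $h(x)=\frac{1}{k}\sum_{j\in[k]}\frac{1}{|\psi^{-1}(j)|}\sum_{s\in\psi^{-1}(j)}f_j(sx)$. Exactly as in the proof of Lemma~\ref{L-func to func}, the defining inequalities of the $U_{\psi(s)}$ give
\[\int_X h\,d\nu=\frac{1}{k}\sum_{j\in[k]}\frac{1}{|\psi^{-1}(j)|}\sum_{s\in\psi^{-1}(j)}(s\nu)(f_j)\ge R-\varepsilon.\]
Since $|h|\le C$ and $\nu(X\setminus D)<\delta$, the contribution to this integral from $X\setminus D$ has absolute value at most $C\delta$, so $\int_D h\,d\nu\ge R-\varepsilon-C\delta>r$. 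If no $x\in D$ satisfied $h(x)\ge r$, we would have $\int_D h\,d\nu\le r\cdot\nu(D)\le r$, a contradiction; hence the required $x\in D$ exists.

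The only genuinely new step is the measure-theoretic passage from $D$ to $D'$ in the second paragraph, which rests entirely on the barycenter identity of Lemma~\ref{L-barycenter} combined with Markov's inequality. The rest is a routine adaptation of Lemma~\ref{L-func to func}; the key point is that the slack $R-r>0$ comfortably absorbs both the approximation error $\varepsilon$ built into the $U_j$'s and the truncation error $C\delta$ coming from restricting the integration to $D$.
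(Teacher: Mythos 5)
Your construction coincides with the paper's proof: the same neighborhoods $U_j=\{\nu\in\cM(X):\nu(f_j)>\nu_j(f_j)-\varepsilon\}$, the same set $D'=\{\nu\in\cM(X):\nu(X\setminus D)<\delta\}$ justified by Lemma~\ref{L-barycenter} together with a Markov/Chebyshev estimate (the paper writes this estimate out directly), and the same computation giving $\int_X h\,d\nu\ge R-\varepsilon$ and hence $\int_D h\,d\nu\ge R-\varepsilon-C\delta$, where $h(x)=\frac1k\sum_{j\in[k]}\frac{1}{|\psi^{-1}(j)|}\sum_{s\in\psi^{-1}(j)}f_j(sx)$. The one step that does not hold in the stated generality is the concluding contradiction: from $h<r$ on $D$ you infer $\int_D h\,d\nu\le r\cdot\nu(D)\le r$, but the second inequality needs $r\ge 0$, whereas the lemma imposes no sign condition on $R$ or $r$ (only $r<R$; note $R$ itself may be negative). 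If $r<0$ and $\nu(D)<1$, then $r\,\nu(D)>r$, and your smallness condition $\varepsilon+C\delta<R-r$ need not force a contradiction: e.g.\ with $C=1$, $R=-0.5$, $r=-0.6$, $\varepsilon=0.05$, $\delta=0.04$ and $\nu(D)$ close to $1-\delta$, the lower bound $R-\varepsilon-C\delta=-0.59$ is below $r\,\nu(D)\approx-0.576$.

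The repair is immediate and is exactly what the paper does: compare the \emph{normalized} average over $D$ with $r$ instead of the unnormalized integral. Since $R\le C$, one has $R-\varepsilon-C<0$, so from $\int_D h\,d\nu\ge R-\varepsilon-C+C\nu(D)$ and $\nu(D)\ge 1-\delta$ one gets $\frac{1}{\nu(D)}\int_D h\,d\nu\ge \frac{R-\varepsilon-C}{1-\delta}+C=\frac{R-\varepsilon-C\delta}{1-\delta}$, and it suffices to choose $\varepsilon,\delta$ in advance so that $\frac{R-\varepsilon-C\delta}{1-\delta}\ge r$ (possible since this quantity tends to $R>r$ as $\varepsilon,\delta\to 0^+$); an average of $h$ over $D$ at least $r$ yields the desired $x\in D$ regardless of the sign of $r$. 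Alternatively, keep your contradiction argument but strengthen the smallness condition to $\varepsilon+(C+|r|)\delta<R-r$, after disposing of the trivial case $r\le -C$, where any point of the nonempty set $D$ works because $h\ge -C$ everywhere.
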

\begin{proof} Let $C>0$ with $C\ge \|\of\|$. Then $R\le C$. We take $0<\theta, \delta<1$ small such that
$$\frac{R-\theta-C\delta}{1-\delta}\ge r. $$
For each $j\in [k]$, we denote by $U_j$ the set of $\omega\in \cM(X)$ satisfying $\omega(f_j)\ge \nu_j(f_j)-\theta$. This is a neighborhood of $\nu_j$ in $\cM(X)$. We shall show that $\delta$ and $U_1\times \cdots \times U_k$ have the desired property. 

Let $0<\eta\le 1$
and $D\in \sB(\mu, \eta \delta)$. We denote by $D'$ the set of $\omega\in \cM(X)$ satisfying $\omega(D)\ge 1-\delta$. From Lemma~\ref{L-barycenter} we know that $D'$ is Borel and
\begin{align*}
\eta \delta &\ge \mu(X\setminus D)=\int_{\cM(X)}\omega(X\setminus D)\, d\rmm(\omega)\ge \int_{\cM(X)\setminus D'}\omega(X\setminus D)\, d\rmm(\omega)\\
&\ge \delta\cdot \rmm(\cM(X)\setminus D')=\delta(1-\rmm(D')),
\end{align*}
whence
$$\rmm(D')\ge 1-\eta.$$
That is, $D'\in \sB(\rmm, \eta)$.

Let $F\in \cF(\Gamma)$ be an independence set for $(U_1, \dots, U_k)$ relative to $D'$ and let $\psi: F\rightarrow [k]$ be surjective. Then $D'\cap \bigcap_{s\in F}s^{-1}U_{\psi(s)}$ is nonempty, whence we can find some $\omega\in D'\cap \bigcap_{s\in F}s^{-1}U_{\psi(s)}$.
We have
\begin{align*}
\int_X \left(\frac{1}{k}\sum_{j\in [k]}\frac{1}{|\psi^{-1}(j)|}\sum_{s\in \psi^{-1}(j)}f_j(sx)\right)\, d\omega(x)&=\frac{1}{k}\sum_{j\in [k]}\frac{1}{|\psi^{-1}(j)|}\sum_{s\in \psi^{-1}(j)}(s\omega)(f_j)\\
&\ge \frac{1}{k}\sum_{j\in [k]}\frac{1}{|\psi^{-1}(j)|}\sum_{s\in \psi^{-1}(j)}(\nu_j(f_j)-\theta)\\
&=\frac{1}{k}\sum_{j\in [k]}(\nu_j(f_j)-\theta)=R-\theta,
\end{align*}
and hence
\begin{align*}
\frac{1}{\omega(D)}&\int_D \left(\frac{1}{k}\sum_{j\in [k]}\frac{1}{|\psi^{-1}(j)|}\sum_{s\in \psi^{-1}(j)}f_j(sx)\right)\, d\omega(x)\\
&\ge \frac{1}{\omega(D)}\left(\int_X \left(\frac{1}{k}\sum_{j\in [k]}\frac{1}{|\psi^{-1}(j)|}\sum_{s\in \psi^{-1}(j)}f_j(sx)\right)\, d\omega(x)-C\omega(X\setminus D)\right)\\
&\ge \frac{R-\theta-C(1-\omega(D))}{\omega(D)}= \frac{R-\theta-C}{\omega(D)}+C\\
&\ge  \frac{R-\theta-C}{1-\delta}+C=\frac{R-\theta-C\delta}{1-\delta}\ge r,
\end{align*}
where in the 3rd inequality we use that $R-\theta-C<0$ and $\omega(D)\ge 1-\delta$.
Thus we can find some $x\in D$ such that \eqref{E-measure func to func} holds.
\end{proof}

The following lemma is an analogue of Lemma~\ref{L-Hahn Banach IE}, and can be proved in the same way.

\begin{lemma} \label{L-Hahn Banach measure IE}
Let $\rmm\in \cM_\Gamma(\cM(X))$ and let $\mu\in \cM_\Gamma(X)$ be the barycenter of $\rmm$.
If $ \IE^\rmm_k(\cM(X))\nsubseteq \overline{\rm co}(\IE^\mu_k(X))$ for some $k\in \Nb$, then there are some $(\nu_1, \dots, \nu_k)\in \IE^\rmm_k(\cM(X))$ and some $(f_1, \dots, f_k)\in C(X)^{\oplus k}$ such that $\sum_{j\in [k]}f_j(x_j)\le 0$
for all $(x_1, \dots, x_k)\in \IE^\mu_k(X)$ and
$\sum_{j\in [k]}\nu_j(f_j)>0$.
\end{lemma}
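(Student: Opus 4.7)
The plan is to imitate the proof of Lemma~\ref{L-Hahn Banach IE} essentially verbatim, with $\IE_k(X)$ replaced by $\IE^\mu_k(X)$ and $\IE_k(\cM(X))$ by $\IE^\rmm_k(\cM(X))$. The hypothesis directly supplies a point $\onu=(\nu_1,\dots,\nu_k)\in \IE^\rmm_k(\cM(X))\setminus \overline{\rm co}(\IE^\mu_k(X))$, so no preliminary inclusion argument (of the sort used in the topological case) is even required. By Theorem~\ref{T-measure IE basic}.\eqref{i-measure IE invariant}, $\IE^\mu_k(X)$ is a closed subset of $X^k$, hence of the compact space $\cM(X)^k$, so $\overline{\rm co}(\IE^\mu_k(X))$ is a compact convex subset of $\cM(X)^k\subseteq (C(X)^{\oplus k})^*$, disjoint from the compact convex singleton $\{\onu\}$.

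Next I would apply the geometric form of the Hahn-Banach theorem in the locally convex space $(C(X)^{\oplus k})^*$ equipped with the weak$^*$-topology, whose continuous dual can be identified with $C(X)^{\oplus k}$. This yields an $\of=(f_1,\dots,f_k)\in C(X)^{\oplus k}$ strictly separating the two sets, so
\[\sum_{j\in [k]}\omega_j(f_j)\le 0 \quad \text{for every } (\omega_1,\dots,\omega_k)\in \overline{\rm co}(\IE^\mu_k(X))\]
and $\sum_{j\in [k]}\nu_j(f_j)>0$. Specializing the first inequality to Dirac measures $(\delta_{x_1},\dots,\delta_{x_k})$ with $(x_1,\dots,x_k)\in \IE^\mu_k(X)$ produces $\sum_{j\in[k]}f_j(x_j)\le 0$, which is exactly the desired conclusion.

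There is no real obstacle in this argument: the measure $\rmm$ and its barycenter $\mu$ enter only through the definitions of $\IE^\rmm_k(\cM(X))$ and $\IE^\mu_k(X)$, while the separation step relies only on standard locally convex geometry together with the identification of the weak$^*$-continuous dual of $(C(X)^{\oplus k})^*$ with $C(X)^{\oplus k}$. The real work in comparing $\IE^\rmm_k(\cM(X))$ and $\IE^\mu_k(X)$ will come later, when one has to rule out the dichotomy supplied by this lemma using the combinatorial Lemma~\ref{L-func to indep}.
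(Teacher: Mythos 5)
Your proof is correct and is essentially the paper's argument: the paper simply says this lemma "can be proved in the same way" as Lemma~\ref{L-Hahn Banach IE}, i.e., by Hahn--Banach separation in $(C(X)^{\oplus k})^*$ with its weak$^*$-topology, whose dual is $C(X)^{\oplus k}$, followed by evaluating on Dirac measures. Your observation that no preliminary inclusion step is needed (since the hypothesis already hands you a point of $\IE^\rmm_k(\cM(X))$ outside $\overline{\rm co}(\IE^\mu_k(X))$) is accurate and consistent with how the lemma is stated.
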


We are ready to prove Theorem~\ref{T-measure IE main}. 

\begin{proof}[Proof of Theorem~\ref{T-measure IE main}]
(1). In view of Lemma~\ref{L-Hahn Banach measure IE}, it suffices to show that given any $\onu=(\nu_1, \dots, \nu_k)\in \IE^\rmm_k(\cM(X))$ and  $\of=(f_1, \dots, f_k)\in C(X)^{\oplus k}$ satisfying $\sum_{j\in [k]}\nu_j(f_j)>0$, there is an $(x_1, \dots, x_k)\in \IE^\mu_k(X)$ such that $\sum_{j\in [k]}f_j(x_j)>0$.

Let $C>0$ with $C\ge \|\of\|$. Let $R=\frac{1}{k}\sum_{j\in [k]} \nu_j(f_j)>0$ and $r=R/4$. Then we have $c, N>0$ and $T\subseteq \Rb^k$ given by Lemma~\ref{L-func to indep} for $k, r, \frac{R}{2}, C$ and $p=\infty$.
By Lemma~\ref{L-measure func to func} we can find a
$0<\delta<1$ and a product neighborhood $U_1\times \cdots \times U_k$ of $\onu$ in $\cM(X)^k$ such that the following holds. For any $0<\eta\le 1$ and any $D\in \sB(\mu, \eta \delta)$ there is some $D'\in \sB(\rmm, \eta)$ so that for any  independence set $F\in \cF(\Gamma)$ of $\oU=(U_1, \dots, U_k)$ relative to $D'$ and any surjective map $\psi: F\rightarrow [k]$,  there is
some $x\in D$ such that
$$\frac{1}{k}\sum_{j\in [k]}\frac{1}{|\psi^{-1}(j)|}\sum_{s\in \psi^{-1}(j)}f_j(sx)\ge \frac{R}{2}.$$

Since $\onu\in \IE^\rmm_k(\cM(X))$, we have $\upind_\rmm(\oU)>0$. Then we can find a $0<\eta\le 1$ such that $\upind_\rmm(\oU, \eta)>0$.

We claim that
\begin{align} \label{E-measure IE for prob}
 \max_{\ot\in T}\upind_\mu(\oA_{\of, \ot})\ge \frac{c}{3}\upind_\rmm(\oU, \eta),
 \end{align}
where $\oA_{\of, \ot}$ is given in Notation~\ref{N-tuple}.
Let $\varepsilon>0$ and $K\in \cF(\Gamma)$. We can find an $F'\in \cF(\Gamma)$ with $|KF'\Delta F'|<\varepsilon |F'|$ and $\frac{1}{|F'|}\varphi_{\oU, \rmm, \eta}(F')>\frac{1}{2}\upind_\rmm(\oU, \eta)$.
Let $D\in \sB(\mu, \eta \delta)$. By our choice of $\delta$ and $U_1\times \cdots \times U_k$, we can find some $D'\in \sB(\rmm, \eta)$ so that
for any independence set $F\in \cF(\Gamma)$ of $\oU$ relative to $D'$ and any surjective map $\psi: F\rightarrow [k]$,  there is
some $x\in D$ such that
$$\frac{1}{k}\sum_{j\in [k]}\frac{1}{|\psi^{-1}(j)|}\sum_{s\in \psi^{-1}(j)}f_j(sx)\ge \frac{R}{2}.$$
When $\varepsilon$ is small enough and $|K|$ is large enough, we have $\frac{1}{6}\upind_\rmm(\oU, \eta)|F'|\ge \max(k-1, N/2)$, whence we can find an independence set $F\subseteq F'$ for $\oU$ relative to $D'$ such that $|F|$ is divisible by $k$ and
$$|F|\ge \varphi_{\oU, \rmm, \eta}(F')-(k-1)>\frac{1}{2}\upind_\rmm(\oU, \eta)|F'|-(k-1)\ge \frac{1}{3}\upind_\rmm(\oU, \eta)|F'|\ge N.$$
For each $\psi\in \sR(F, k)$, since $|F|$ is divisible by $k$, we have $|\psi^{-1}(j)|=|F|/k$ for every $j\in [k]$.  In particular, $\psi$ is surjective. Thus we can find an $x_\psi\in D$ such that 
$$\frac{1}{k}\sum_{j\in [k]}\frac{1}{|\psi^{-1}(j)|}\sum_{s\in \psi^{-1}(j)}f_j(sx_\psi)\ge \frac{R}{2}.$$
We define a function $f_\psi: F\rightarrow [-C, C]$ by
$$ f_\psi(s)=f_{\psi(s)}(sx_\psi).$$
Then
$$ \frac{1}{|F|}\sum_{s\in F}f_\psi(s)=\frac{1}{|F|}\sum_{s\in F}f_{\psi(s)}(sx_\psi)=\frac{1}{k}\sum_{j\in [k]}\frac{1}{|\psi^{-1}(j)|}\sum_{s\in \psi^{-1}(j)}f_j(sx_\psi)\ge \frac{R}{2}.$$
By our choice of $c, N$ and $T$ we can find some $J\subseteq F$ with $|J|\ge c|F|$ and some $\ot=(t_1, \dots, t_k)\in T$  such that every map $\sigma: J\rightarrow [k]$ extends to a $\psi\in \sR(F, k)$ 
so that $f_\psi(s)\ge t_j$ for all $j\in [k]$ and $s\in \sigma^{-1}(j)$. 
Then
$$f_{\sigma(s)}(sx_\psi)=f_{\psi(s)}(sx_\psi)=f_\psi(s)\ge t_{\sigma(s)}$$
for every $s\in J$, whence
$$x_\psi\in D\cap \bigcap_{s\in J}s^{-1}f_{\sigma(s)}^{-1}([t_{\sigma(s)}, \infty)).$$
Thus $J$ is an independence set for  $\oA_{\of, \ot}$ relative to $D$.
Note that
$$|J|\ge c|F|\ge \frac{c}{3}\upind_\rmm(\oU, \eta)|F'|.$$
If for each $\ot\in T$ there is some $D_\ot\in \sB(\mu, \eta\delta/|T|)$ such that $\oA_{\of, \ot}$ has no independence set in $F'$ relative to $D_\ot$ with cardinality at least $  \frac{c}{3}\upind_\rmm(\oU, \eta)|F'|$, then setting $D=\bigcap_{\ot\in T}D_\ot\in \sB(\mu, \eta\delta)$, from the above we know that $\oA_{\of, \ot}$ for some $\ot\in T$ has an independence set $J$ in $F'$ relative to $D$ with cardinality at least $\frac{c}{3}\upind_\rmm(\oU, \eta)|F'|$, but then $J$ is also an independence set for $\oA_{\of, \ot}$ in $F'$ relative to $D_\ot$, a contradiction. This means that
$$\max_{\ot \in T}\varphi_{\oA_{\of, \ot}, \mu, \eta \delta/|T|}(F')\ge \frac{c}{3}\upind_\rmm(\oU, \eta)|F'|.$$
It follows that
$$ \max_{\ot\in T}\upind_\mu(\oA_{\of, \ot})\ge \max_{\ot \in T}\upind_\mu(\oA_{\of, \ot}, \eta \delta/|T|)\ge \frac{c}{3}\upind_\rmm(\oU, \eta),$$
establishing \eqref{E-measure IE for prob}.

Now we can take a $\ot=(t_1, \dots, t_k)\in T$ such that $\upind_\mu(\oA_{\of, \ot})\ge \frac{c}{3}\upind_\rmm(\oU, \eta)>0$. By Theorem~\ref{T-measure IE basic}.\eqref{i-measure IE tuple} there is an $(x_1, \dots, x_k)\in \IE^\mu_k(X)$ with $x_j\in f_j^{-1}\left([t_j, \infty)\right)$ for every $j\in [k]$.
Then
$$ \sum_{j\in [k]}f_j(x_j)\ge \sum_{j\in [k]}t_j\ge kr>0$$
as desired.

(2). Note that $\ext\, \cM(X)=X$, whence $\ext(\cM(X)^k)=(\ext\, \cM(X))^k=X^k$. By Theorem~\ref{T-measure IE basic}.\eqref{i-measure IE invariant} we know that $\IE^\mu_k(X)$ is closed in $X^k$. From part (1) and Lemma~\ref{L-ext} we have
\begin{align*}
\IE^\rmm_k(\cM(X))\cap X^k &\subseteq \overline{\rm co}(\IE^\mu_k(X))\cap X^k=\overline{\rm co}(\IE^\mu_k(X))\cap \ext(\cM(X)^k)\\
&\subseteq \ext\, \overline{\rm co}(\IE^\mu_k(X))\subseteq \IE^\mu_k(X). \tag*{\qedsymbol}
\end{align*}
     \renewcommand{\qedsymbol}{}
     \vspace{-\baselineskip}
\end{proof}

\begin{remark} \label{R-measure IE not convex}
Despite the product formula for measure IE-tuples in Theorem~\ref{T-measure IE basic}.\eqref{i-measure IE product}, $\IE_k^\rmm(\cM(X))$ may fail to be convex for $\rmm\in \cM_\Gamma(\cM(X))$. For example, let $\mu_1, \mu_2\in \cM_\Gamma(X)$ be distinct, and let $0<\lambda<1$. Then $\rmm=\lambda \delta_{\mu_1}+(1-\lambda)\delta_{\mu_2}$ is in $\cM_\Gamma(\cM(X))$. It is easily checked that
$$\IE^\rmm_2(\cM(X))=\{(\mu_1, \mu_1), (\mu_2, \mu_2)\}$$
is not convex.
\end{remark}

Next we give a few consequences of Theorem~\ref{T-measure IE main}. As mentioned in Section~\ref{S-introduction},
in view of Theorem~\ref{T-measure IE basic}.\eqref{i-measure IE pair}, from the case $k=2$ of Theorem~\ref{T-measure IE main}.(1) we immediately obtain the following result of Glasner, Thouvenot and Weiss \cite[Theorem 1.1]{GW95} \cite[Theorem 7]{GTW}.

\begin{corollary} \label{C-measure IE for prob}
Let $\rmm\in \cM_\Gamma(\cM(X))$ and let $\mu\in \cM_\Gamma(X)$ be the barycenter of $\rmm$. If $h_\mu(X)=0$, then $h_\rmm(\cM(X))=0$.
\end{corollary}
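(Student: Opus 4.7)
The plan is to argue by contrapositive, so suppose $h_\rmm(\cM(X))>0$; we will deduce $h_\mu(X)>0$. By Theorem~\ref{T-measure IE basic}.\eqref{i-measure IE pair} applied to the measure-preserving action $\Gamma\curvearrowright(\cM(X),\rmm)$, positivity of the measure entropy yields a non-diagonal $\rmm$-IE-pair: pick $(\nu_1,\nu_2)\in\IE_2^\rmm(\cM(X))$ with $\nu_1\neq\nu_2$.

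Now I would invoke the case $k=2$ of Theorem~\ref{T-measure IE main}.(1), which says that $\IE_2^\rmm(\cM(X))$ is contained in the closed convex hull $\overline{\rm co}(\IE_2^\mu(X))$ taken inside $\cM(X)^2$. Suppose toward a contradiction that $h_\mu(X)=0$. Then Theorem~\ref{T-measure IE basic}.\eqref{i-measure IE pair} in the reverse direction forces $\IE_2^\mu(X)\setminus\Delta_2(X)=\emptyset$, i.e.\ $\IE_2^\mu(X)\subseteq\Delta_2(X)$. Viewed inside $\cM(X)^2$ via the embedding $x\mapsto\delta_x$, this means
\[
\IE_2^\mu(X)\subseteq\{(\delta_x,\delta_x):x\in X\}.
\]

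The remaining step is to identify the closed convex hull of the right-hand side in $\cM(X)^2$. Since the map $X\to\cM(X)^2$, $x\mapsto(\delta_x,\delta_x)$, is an affine-linear-image-compatible embedding into the diagonal copy of $\cM(X)$ inside $\cM(X)^2$, the barycenter calculation (exactly as in Lemma~\ref{L-barycenter identity k}, or directly from the fact that the affine continuous map $\omega\mapsto(\omega,\omega)$ from $\cM(X)$ to $\cM(X)^2$ sends $\cM(X)$ onto a compact convex set containing all $(\delta_x,\delta_x)$) shows that
\[
\overline{\rm co}\{(\delta_x,\delta_x):x\in X\}=\{(\omega,\omega):\omega\in\cM(X)\}.
\]
Hence $(\nu_1,\nu_2)\in\overline{\rm co}(\IE_2^\mu(X))$ lies on the diagonal of $\cM(X)^2$, so $\nu_1=\nu_2$, contradicting our choice of a non-diagonal $\rmm$-IE-pair. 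Therefore $h_\mu(X)>0$, completing the proof by contrapositive.

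There is no serious obstacle: the argument is the straightforward composition of the combinatorial/analytic content already packaged in Theorem~\ref{T-measure IE main}.(1) with the dynamical characterization of positive measure entropy in terms of non-diagonal $\mu$-IE-pairs. The only mildly substantive point is the convex-hull identification in the last paragraph, which is immediate from the affineness and continuity of the diagonal embedding $\cM(X)\hookrightarrow\cM(X)^2$.
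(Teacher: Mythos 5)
Your argument is correct and is exactly the route the paper intends: the paper deduces the corollary "immediately" from Theorem~\ref{T-measure IE basic}.\eqref{i-measure IE pair} together with the case $k=2$ of Theorem~\ref{T-measure IE main}.(1), using precisely the observation that $\IE_2^\mu(X)\subseteq\Delta_2(X)$ forces its closed convex hull, and hence $\IE_2^\rmm(\cM(X))$, into the diagonal of $\cM(X)^2$ (for which only the inclusion $\overline{\rm co}\{(\delta_x,\delta_x):x\in X\}\subseteq\{(\omega,\omega):\omega\in\cM(X)\}$ is needed, the diagonal being closed and convex).
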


For $\mu\in \cM_\Gamma(X)$ and $k\in \Nb$, following Vermersch \cite{Vermersch},
we say that $\Gamma\curvearrowright X$ has {\it $\mu$-Uniform Positive Entropy ($\mu$-UPE) of order $k$} if $\IE^\mu_k(X)=X^k$. From Theorem~\ref{T-measure IE main}.(2) we have the following corollary.

\begin{corollary} \label{C-measure UPE}
Let $\rmm\in \cM_\Gamma(\cM(X))$ and let $\mu\in \cM_\Gamma(X)$ be the barycenter of $\rmm$. Let $k\in \Nb$. If $\Gamma\curvearrowright \cM(X)$ has $\rmm$-UPE of order $k$, then $\Gamma\curvearrowright X$ has $\mu$-UPE of order $k$.
\end{corollary}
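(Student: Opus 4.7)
The plan is to derive this immediately from part (2) of Theorem~\ref{T-measure IE main}, which gives the inclusion $\IE^\rmm_k(\cM(X))\cap X^k\subseteq \IE^\mu_k(X)$. The argument is essentially a one-line set chase, so there is no substantive obstacle.

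First, I would unfold the definition of $\rmm$-UPE of order $k$: the hypothesis says that $\IE^\rmm_k(\cM(X))=\cM(X)^k$. Intersecting both sides with $X^k$, which is a subset of $\cM(X)^k$ under our identification $x\mapsto\delta_x$, yields
\[
 X^k=\cM(X)^k\cap X^k=\IE^\rmm_k(\cM(X))\cap X^k.
\]
Then Theorem~\ref{T-measure IE main}.(2) gives $\IE^\rmm_k(\cM(X))\cap X^k\subseteq \IE^\mu_k(X)$, hence $X^k\subseteq \IE^\mu_k(X)$.

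Since the reverse inclusion $\IE^\mu_k(X)\subseteq X^k$ is part of the definition of $\mu$-IE-tuples, we conclude $\IE^\mu_k(X)=X^k$, which by definition means $\Gamma\curvearrowright X$ has $\mu$-UPE of order $k$. No further combinatorial input beyond Theorem~\ref{T-measure IE main}.(2) is required.
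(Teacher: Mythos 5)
Your argument is correct and is exactly how the paper obtains this corollary: the paper cites Theorem~\ref{T-measure IE main}.(2) and leaves the same one-line set chase implicit. Nothing further is needed.
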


The case $k=2$ of Corollary~\ref{C-measure UPE} was established by Vermersch for $\Gamma=\Zb$ and ergodic $\rmm$ in \cite[Theorem 6]{Vermersch}.

\begin{corollary} \label{C-average}
Let $N\in \Nb$ and $\olambda\in \orP_N$. Let $\mu\in \cM_\Gamma(X)$, and set $\rmm=(\pi_\olambda)_*\mu^N\in \cM_\Gamma(\cM_\olambda(X))$ (see Notation~\ref{N-finite support2}).
For any $k\in \Nb$, $\Gamma\curvearrowright X$ has $\mu$-UPE of order $k$ if and only if $\Gamma\curvearrowright \cM_\olambda(X)$ has $\rmm$-UPE of order $k$.
\end{corollary}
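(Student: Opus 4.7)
My plan is to relate both sides of the equivalence to the product action $\Gamma\curvearrowright(X^N,\mu^N)$ via the factor map $\pi_\olambda\colon X^N\to\cM_\olambda(X)$.  A preliminary computation I would record is that $\mu$ is the barycenter of $\rmm=(\pi_\olambda)_*\mu^N$ once $\rmm$ is regarded as an element of $\cM_\Gamma(\cM(X))$ via $\cM_\olambda(X)\hookrightarrow\cM(X)$: for every $f\in C(X)$,
$$\int_{\cM(X)}\omega(f)\,d\rmm(\omega)=\int_{X^N}\Bigl(\sum_{i\in[N]}\lambda_i f(x_i)\Bigr)\,d\mu^N(x_1,\dots,x_N)=\mu(f).$$

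For the forward direction, assuming $\IE_k^\mu(X)=X^k$, I would iterate the product formula in Theorem~\ref{T-measure IE basic}.\eqref{i-measure IE product} $N$ times to obtain
$$\IE_k^{\mu^N}(X^N)=\IE_k^\mu(X)\times\cdots\times\IE_k^\mu(X),$$
which under the canonical identification $(X^N)^k\cong(X^k)^N$ equals $(X^N)^k$.  Since $\pi_\olambda$ is a factor map with $(\pi_\olambda)_*\mu^N=\rmm$, the factor formula in Theorem~\ref{T-measure IE basic}.\eqref{i-measure IE factor}, together with the surjectivity of $\pi_\olambda\times\cdots\times\pi_\olambda$, then delivers $\IE_k^\rmm(\cM_\olambda(X))=\cM_\olambda(X)^k$.

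For the backward direction, assuming $\IE_k^\rmm(\cM_\olambda(X))=\cM_\olambda(X)^k$, I would first record the routine auxiliary fact that whenever $Y$ is a closed $\Gamma$-invariant subset of a compact metrizable $\Gamma$-space $Z$ and $\rho\in\cM_\Gamma(Z)$ is supported on $Y$, one has $\IE_k^\rho(Y)\subseteq\IE_k^\rho(Z)$.  This is a direct unwinding of the definition in Section~\ref{SS-measure IE}: for $D\in\sB(\rho,\delta)$ computed in $Z$, the set $D\cap Y$ lies in $\sB(\rho,\delta)$ computed in $Y$, and any independence set in $Y$ relative to $D\cap Y$ is automatically an independence set in $Z$ relative to $D$.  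Applied with $Y=\cM_\olambda(X)\subseteq\cM(X)=Z$ and $\rho=\rmm$, this gives $\cM_\olambda(X)^k\subseteq\IE_k^\rmm(\cM(X))$; in particular $X^k\subseteq\IE_k^\rmm(\cM(X))$ via the embedding $x\mapsto\delta_x=\pi_\olambda(x,\dots,x)$.  Theorem~\ref{T-measure IE main}.(2) then yields $X^k\subseteq\IE_k^\rmm(\cM(X))\cap X^k\subseteq\IE_k^\mu(X)$, and hence $\IE_k^\mu(X)=X^k$.

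The main obstacle is bookkeeping rather than depth: one must juggle the three ambient spaces $X^N$, $\cM_\olambda(X)$, and $\cM(X)$ along with their Borel structures, and isolate the small (not explicitly stated) lemma that measure IE-tuples persist under passage to a larger closed invariant space when the measure remains supported on the subspace.  With that lemma in hand, both directions reduce to direct applications of the product formula, the factor formula, and Theorem~\ref{T-measure IE main}.(2).
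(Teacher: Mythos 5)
Your proposal is correct and follows essentially the same route as the paper: the forward direction via the product formula for measure IE-tuples followed by the factor formula along $\pi_\olambda$, and the backward direction via the barycenter computation, the inclusion $\IE^\rmm_k(\cM_\olambda(X))\subseteq\IE^\rmm_k(\cM(X))$ (which the paper states as "easily checked" and you justify correctly), and Theorem~\ref{T-measure IE main}.(2) together with the embedding $x\mapsto\delta_x=\pi_\olambda(x,\dots,x)$. No gaps.
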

\begin{proof} Assume that $\Gamma\curvearrowright X$ has $\mu$-UPE of order $k$. From Theorem~\ref{T-measure IE basic}.\eqref{i-measure IE product} we know that $\Gamma\curvearrowright X^N$ has $\mu^N$-UPE of order $k$,
then from Theorem~\ref{T-measure IE basic}.\eqref{i-measure IE factor} we get that $\Gamma\curvearrowright \cM_\olambda(X)$ has $\rmm$-UPE of order $k$. This proves the
``only if'' part.

We may think of $\rmm$ as an element of $\cM_\Gamma(\cM(X))$. It is easily checked that $\IE^\rmm_k(\cM_\olambda(X))\subseteq \IE^\rmm_k(\cM(X))$. Say, $\olambda=(\lambda_1, \dots, \lambda_N)$. For any $f\in C(X)$, we have
$$ \int_{\cM(X)}\nu(f)\, d\rmm(\nu)=\int_{\cM_\olambda(X)}\nu(f)\, d\rmm(\nu)=\int_{X^N}\sum_{j\in [N]}\lambda_j f(x_j)\, d\mu^N(x_1, \dots, x_N)=\mu(f).$$
Thus the barycenter of $\rmm$ is $\mu$.

Assume that $\Gamma\curvearrowright \cM_\olambda(X)$ has $\rmm$-UPE of order $k$. By Theorem~\ref{T-measure IE main}.(2)  we have
$$ X^k=\cM_\olambda(X)^k\cap X^k=\IE^\rmm_k(\cM_\olambda(X))\cap X^k\subseteq \IE^\rmm_k(\cM(X))\cap X^k\subseteq \IE^\mu_k(X)\subseteq X^k,$$
whence $X^k=\IE^\mu_k(X)$. This proves the ``if'' part.
\end{proof}

The case $k=2$ and $\olambda=(\underbrace{1/N, \dots, 1/N}_{N})$ of Corollary~\ref{C-average} was established by Vermersch for $\Gamma=\Zb$ and ergodic $\mu$ in \cite[Theorem 3]{Vermersch}.

\section{$\IN$-tuples of $\cM(X)$} \label{S-IN for prob}

In this section we prove Theorem~\ref{T-IN main}. Throughout this section, we consider a continuous action of a countably infinite (not necessarily amenable) group $\Gamma$ on a compact metrizable space $X$.

As we shall see later in Section~\ref{S-example}, the set $\IN_k(\cM(X))$ may fail to be convex. In order to obtain an analogue of Theorem~\ref{T-IE main}.(3) for IN-tuples, we need to replace $\cM(\IN_k(X))$ by a suitable subset. 
The following definition provides such a subset, and it involves not only $\IN_k(X)$ but also $\IN_k(X^n)$ for all $n\in \Nb$. 

\begin{definition} \label{D-IN measure}
Let $k\in \Nb$. We say $\mu\in \cM(X^k)$ is an {\it $\IN$-measure} if for any $n\in \Nb$ and   any $(x_1^{(i)}, \dots, x_k^{(i)})\in \supp(\mu)$ for $i\in [n]$, one has
$$\left((x_1^{(1)}, \dots, x_1^{(n)}), (x_2^{(1)}, \dots, x_2^{(n)}), \dots, (x_k^{(1)}, \dots, x_k^{(n)})\right)\in \IN_k(X^n).$$
We denote by $\cM_\IN(X^k)$ the set of all $\IN$-measures in  $\cM(X^k)$.
\end{definition}

\begin{remark} \label{R-IN measure}
For each $n\in \Nb$, we have the $\Gamma$-equivariant embedding $X^n\rightarrow X^{n+1}$ sending $(x_1, \dots, x_n)$ to $(x_1, \dots, x_n, x_n)$ repeating the last coordinate. From Theorem~\ref{T-IN basic}.\eqref{i-IN subset} we see that for any $k, n\in \Nb$ and 
$$\left((x_1^{(1)}, \dots, x_1^{(n)}), (x_2^{(1)}, \dots, x_2^{(n)}), \dots, (x_k^{(1)}, \dots, x_k^{(n)})\right)\in \IN_k(X^n),$$
one has 
$$\left((x_1^{(1)}, \dots, x_1^{(n)}, x_1^{(n)}), (x_2^{(1)}, \dots, x_2^{(n)}, x_2^{(n)}), \dots, (x_k^{(1)}, \dots, x_k^{(n)}, x_k^{(n)})\right)\in \IN_k(X^{n+1}).$$
It follows that in Definition~\ref{D-IN measure} we obtain the same definition no matter we require the points $(x_1^{(i)}, \dots, x_k^{(i)})\in \supp(\mu)$ for $i\in [n]$ to be distinct or not. 
\end{remark}

\begin{proposition} \label{P-IN measure basic}
For each $k\in \Nb$, $\cM_\IN(X^k)$ is a $\Gamma$-invariant closed subset of $\cM(\IN_k(X))$.
\end{proposition}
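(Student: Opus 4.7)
The plan is to verify three things in turn: that every $\mu\in\cM_\IN(X^k)$ has $\supp(\mu)\subseteq \IN_k(X)$, that $\cM_\IN(X^k)$ is $\Gamma$-invariant, and that it is closed in $\cM(X^k)$ (hence closed in $\cM(\IN_k(X))$).

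First I would dispose of the containment $\cM_\IN(X^k)\subseteq \cM(\IN_k(X))$ by specializing the defining condition to $n=1$: for each $(x_1,\dots,x_k)\in\supp(\mu)$, the hypothesis gives $(x_1,\dots,x_k)\in \IN_k(X^1)=\IN_k(X)$, so $\mu(\IN_k(X))=1$. For $\Gamma$-invariance, I would use that $\supp(s\mu)=s\cdot\supp(\mu)$, so for $(x_1^{(i)},\dots,x_k^{(i)})\in\supp(s\mu)$, $i\in[n]$, writing $x_j^{(i)}=sy_j^{(i)}$ with $(y_1^{(i)},\dots,y_k^{(i)})\in\supp(\mu)$, the $n$-fold tuple built from the $y$'s lies in $\IN_k(X^n)$, and then Theorem~\ref{T-IN basic}.\eqref{i-IN invariant} applied to the diagonal action on $(X^n)^k$ transfers it to the one built from the $x$'s.

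The substantive part is closedness. Given a sequence (using metrizability of $\cM(X^k)$) $\mu_m\to\mu$ weakly with $\mu_m\in\cM_\IN(X^k)$, and given $(x_1^{(i)},\dots,x_k^{(i)})\in\supp(\mu)$ for $i\in[n]$, I would use the standard portmanteau-type fact that for each open neighborhood $U$ of a point $x\in\supp(\mu)$ one has $\liminf_m \mu_m(U)\ge \mu(U)>0$, so eventually $\supp(\mu_m)\cap U\ne\emptyset$. Fixing a compatible metric on $X^k$, this lets me choose, for each $i\in[n]$ and each sufficiently large $m$, a point $(y_1^{(i,m)},\dots,y_k^{(i,m)})\in\supp(\mu_m)$ converging as $m\to\infty$ to $(x_1^{(i)},\dots,x_k^{(i)})$. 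By the IN-measure hypothesis on $\mu_m$, the tuple
\[
\bigl((y_1^{(1,m)},\dots,y_1^{(n,m)}),\dots,(y_k^{(1,m)},\dots,y_k^{(n,m)})\bigr)
\]
lies in $\IN_k(X^n)$, and since $\IN_k(X^n)$ is closed by Theorem~\ref{T-IN basic}.\eqref{i-IN invariant}, passing to the limit gives the analogous tuple in the $x$'s in $\IN_k(X^n)$, as required.

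The main (very mild) obstacle is the closedness step, specifically the fact that $\supp$ is not continuous as a set-valued map but is only \emph{lower semicontinuous} — points of $\supp(\mu)$ are approximable from $\supp(\mu_m)$, whereas $\supp(\mu_m)$ may contain additional accumulation outside $\supp(\mu)$. Fortunately the defining property of $\cM_\IN(X^k)$ is a closed condition tested only on points of $\supp(\mu)$, so the direction of semicontinuity we have is exactly the one needed. I do not foresee any other difficulty; Remark~\ref{R-IN measure} ensures there is no issue about distinctness of the chosen points.
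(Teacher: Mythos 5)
Your proposal is correct and follows essentially the same route as the paper: the $n=1$ specialization for the containment, $\Gamma$-invariance of $\IN_k(X^n)$ for invariance, and for closedness the lower semicontinuity of $\nu\mapsto\nu(U)$ on open sets combined with the closedness of $\IN_k(X^n)$ from Theorem~\ref{T-IN basic}. The only difference is cosmetic: you phrase the closedness step sequentially (using metrizability) while the paper argues with neighborhoods, but the underlying semicontinuity-of-support argument is identical.
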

\begin{proof} For each $\mu\in \cM_\IN(X^k)$, taking $n=1$ in Definition~\ref{D-IN measure} we see that $\supp(\mu)\subseteq \IN_k(X)$, whence $\mu$ is actually a measure on $\IN_k(X)$. 

From Theorem~\ref{T-IN basic}.\eqref{i-IN invariant} we know that  $\IN_k(X^n)$ is $\Gamma$-invariant for each $n\in \Nb$. It follows that $\cM_\IN(X^k)$ is $\Gamma$-invariant.

Let $\mu$ be in the closure of $\cM_\IN(X^k)$. Let $n\in \Nb$ and $\ox^{(i)}=(x_1^{(i)}, \dots, x_k^{(i)})\in \supp(\mu)$ for each $i\in  [n]$.
We have the natural homeomorphism $\Phi: (X^k)^n\rightarrow (X^n)^k$ sending
$$\left((z_1^{(1)}, \dots, z_k^{(1)}), (z_1^{(2)}, \dots, z_k^{(2)}), \dots, (z_1^{(n)}, \dots, z_k^{(n)})\right)$$
to
$$\left((z_1^{(1)}, \dots, z_1^{(n)}), (z_2^{(1)}, \dots, z_2^{(n)}), \dots, (z_k^{(1)}, \dots, z_k^{(n)})\right).$$
Let $V$ be a neighborhood of $\Phi(\ox^{(1)}, \dots, \ox^{(n)})$ in $(X^n)^k$.
Then we can find an open neighborhood $U_i$ of $\ox^{(i)}$ in $X^k$ for each $i\in [n]$ such that $\Phi(U_1\times \cdots \times U_n)\subseteq V$. Note that $\mu(U_i)>0$ for each $i\in [n]$. Since the function $\cM(X^k)\rightarrow \Rb$ sending $\nu$ to $\nu(U)$ is lower semicontinuous for each open subset $U$ of $X^k$, we can find a $\nu\in \cM_\IN(X^k)$ close enough to $\mu$ such that $\nu(U_i)>0$ for all $i\in [n]$. Then there exists $\oy^{(i)}=(y_1^{(i)}, \dots, y_k^{(i)})\in \supp(\nu)\cap U_i$ for each $i\in [n]$. Since $\nu\in \cM_\IN(X^k)$, we have
$$\left((y_1^{(1)}, \dots, y_1^{(n)}), (y_2^{(1)}, \dots, y_2^{(n)}), \dots, (y_k^{(1)}, \dots, y_k^{(n)})\right)\in \IN_k(X^n).$$
This point is $\Phi(\oy^{(1)}, \cdots, \oy^{(n)})\in \Phi(U_1\times \cdots \times U_n)\subseteq V$. Thus $V\cap \IN_k(X^n)\neq \emptyset$. From Theorem~\ref{T-IN basic}.(1) we know that $\IN_k(X^n)$ is a closed subset of $(X^n)^k$. We conclude that $\Phi(\ox^{(1)}, \dots, \ox^{(n)})\in \IN_k(X^n)$. Thus $\mu\in \cM_\IN(X^k)$. Therefore $\cM_\IN(X^k)$ is closed. 
\end{proof}

For $k, n\in \Nb$,
we set 
$$\cM_{\IN, n}(X^k)=\cM_\IN(X^k)\cap \cM_n(X^k)$$ 
to be the set of $\mu\in \cM_\IN(X^k)$ with $|\supp(\mu)|\le n$, and set (see Notation~\ref{N-finite support2})
$$\cA_{k, n}=\bigcup_{\olambda\in \orP_n}(\pi_\olambda\times \cdots \times \pi_\olambda)(\IN_k(X^n))\subseteq  \cM_n(X)^k.$$
In order to prove part (3) of Theorem~\ref{T-IN main}, we shall show that both $\{(\mu^{(1)}, \dots, \mu^{(k)}): \mu\in \cM_\IN(X^k)\}$ and $\IN_k(\cM(X))$ are equal to 
$\overline{\bigcup_{n\in \Nb}\cA_{k, n}}$ in Lemmas~\ref{L-IN closure vs push} and \ref{L-IN dense} respectively. 

\begin{lemma} \label{L-IN finite support}
For any $k, n\in \Nb$, we have
$$\cA_{k, n}=\{(\mu^{(1)}, \dots, \mu^{(k)})\in \cM(X)^k: \mu\in \cM_{\IN, n}(X^k)\}.$$
\end{lemma}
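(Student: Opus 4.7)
The plan is to realise each finitely supported probability measure on $X^k$ as an encoding of the data $(\olambda, \ox^{(1)}, \dots, \ox^{(k)})$, where $\olambda = (\lambda_1, \dots, \lambda_n) \in \orP_n$ is the weight vector listing (possibly with repetition) the masses and $\ox^{(j)} = (x_j^{(1)}, \dots, x_j^{(n)}) \in X^n$ records the $j$-th coordinates of the atoms. Under this dictionary, $\mu = \sum_{i=1}^{n} \lambda_i \delta_{(x_1^{(i)}, \dots, x_k^{(i)})}$ has $\mu^{(j)} = \pi_\olambda(\ox^{(j)})$ by inspection, and both sides of the claimed equality correspond to exactly the same data subject to the condition $(\ox^{(1)}, \dots, \ox^{(k)}) \in \IN_k(X^n)$.

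For the containment $\cA_{k,n} \subseteq \{(\mu^{(1)}, \dots, \mu^{(k)}) : \mu \in \cM_{\IN, n}(X^k)\}$, I would start with $\olambda \in \orP_n$ and $(\ox^{(1)}, \dots, \ox^{(k)}) \in \IN_k(X^n)$ and form the above $\mu$. Then $|\supp(\mu)| \le n$ automatically and $\mu^{(j)} = \pi_\olambda(\ox^{(j)})$. To verify $\mu \in \cM_\IN(X^k)$, given $n'' \in \Nb$ and atoms $(y_1^{(\ell)}, \dots, y_k^{(\ell)}) \in \supp(\mu)$ for $\ell \in [n'']$ of the form $(x_1^{(i_\ell)}, \dots, x_k^{(i_\ell)})$, I would apply the continuous $\Gamma$-equivariant coordinate-selection map $X^n \to X^{n''}$ sending $(a_1, \dots, a_n)$ to $(a_{i_1}, \dots, a_{i_{n''}})$, factor it through its closed $\Gamma$-invariant image $Y \subseteq X^{n''}$, and then use Theorem~\ref{T-IN basic}.\eqref{i-IN factor} together with \eqref{i-IN subset} componentwise on the product map $Y^k \hookrightarrow (X^{n''})^k$ to transport $(\ox^{(1)}, \dots, \ox^{(k)})$ into $\IN_k(X^{n''})$, which is precisely the required tuple.

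For the reverse inclusion, I would take $\mu \in \cM_{\IN, n}(X^k)$, enumerate $\supp(\mu) = \{\oz^{(1)}, \dots, \oz^{(n')}\}$ with $n' \le n$ and set $\lambda_i = \mu(\{\oz^{(i)}\}) > 0$, then pad the list to length $n$ by repeating $\oz^{(1)}$ with weight $\lambda_i := 0$ for $n' < i \le n$. Writing $\oz^{(i)} = (z_1^{(i)}, \dots, z_k^{(i)})$ and setting $\ox^{(j)} = (z_j^{(1)}, \dots, z_j^{(n)}) \in X^n$, one has $\pi_\olambda(\ox^{(j)}) = \mu^{(j)}$, and every entry of the padded list still lies in $\supp(\mu)$. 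Definition~\ref{D-IN measure}, applied with this $n$-tuple of (not necessarily distinct) support points as permitted by Remark~\ref{R-IN measure}, then yields $(\ox^{(1)}, \dots, \ox^{(k)}) \in \IN_k(X^n)$, so $(\mu^{(1)}, \dots, \mu^{(k)}) \in \cA_{k,n}$.

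No step is genuinely difficult: the only point requiring a small argument is the coordinate-selection projection in the forward direction, where one must be careful that the map $X^n \to X^{n''}$ need not be surjective, so one passes to its image as a closed $\Gamma$-invariant subsystem. Both the factor clause and the subsystem clause of Theorem~\ref{T-IN basic} then do the work, and the rest is purely bookkeeping with atoms and weights.
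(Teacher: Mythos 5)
Your proof is correct and follows essentially the same route as the paper: identify each $\mu\in\cM_{\IN,n}(X^k)$ with data $(\olambda,\ox)\in\orP_n\times\IN_k(X^n)$ via transposition of atoms, so that $(\mu^{(1)},\dots,\mu^{(k)})=(\pi_\olambda\times\cdots\times\pi_\olambda)(\ox)$. The paper compresses the verification that such a $\mu$ is an $\IN$-measure into a citation of Remark~\ref{R-IN measure}, whereas you spell it out via the coordinate-selection map, its closed invariant image, and parts \eqref{i-IN factor} and \eqref{i-IN subset} of Theorem~\ref{T-IN basic} -- a harmless (indeed slightly more complete) elaboration of the same argument.
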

\begin{proof} From Remark~\ref{R-IN measure} we see that the set $\cM_{\IN, n}(X^k)$ consists of 
$$ \mu=\sum_{i\in [n]}\lambda_i \delta_{(x_1^{(i)}, \dots, x_k^{(i)})}$$
for $\olambda=(\lambda_1, \dots, \lambda_n)\in \orP_n$ and 
$$\ox=\left((x_1^{(1)}, \dots, x_1^{(n)}), (x_2^{(1)}, \dots, x_2^{(n)}), \dots, (x_k^{(1)}, \dots, x_k^{(n)})\right)\in \IN_k(X^n).$$
Then
$$(\mu^{(1)}, \dots, \mu^{(k)})=\left(\sum_{i\in [n]}\lambda_i \delta_{x_1^{(i)}}, \dots, \sum_{i\in [n]}\lambda_i \delta_{x_k^{(i)}}\right)$$
is exactly $(\pi_\olambda\times \cdots \times \pi_\olambda)(\ox)$.
\end{proof}

\begin{lemma} \label{L-IN convex}
We have $\cA_{k, n}\subseteq  \IN_k(\cM_n(X))$ for all $k, n\in \Nb$.
\end{lemma}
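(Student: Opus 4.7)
The plan is to reduce the claim to the definition of IN-tuple by pulling back neighborhoods through the affine factor map $\pi_\olambda$. Fix $\olambda=(\lambda_1,\dots,\lambda_n)\in\orP_n$ and
$$\ox=\left((x_1^{(1)},\dots,x_1^{(n)}),\dots,(x_k^{(1)},\dots,x_k^{(n)})\right)\in\IN_k(X^n),$$
and set $\mu_j=\pi_\olambda(x_j^{(1)},\dots,x_j^{(n)})=\sum_{i\in[n]}\lambda_i\delta_{x_j^{(i)}}\in\cM_n(X)$ for each $j\in[k]$. The goal is to show $(\mu_1,\dots,\mu_k)\in\IN_k(\cM_n(X))$.

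First, I would pick an arbitrary product neighborhood $U_1\times\cdots\times U_k$ of $(\mu_1,\dots,\mu_k)$ in $\cM_n(X)^k$. By continuity of $\pi_\olambda\colon X^n\to\cM_n(X)$, for each $j\in[k]$ there exists a product neighborhood $W^{(j)}=V_j^{(1)}\times\cdots\times V_j^{(n)}$ of $(x_j^{(1)},\dots,x_j^{(n)})$ in $X^n$ with $\pi_\olambda(W^{(j)})\subseteq U_j$. Then $W^{(1)}\times\cdots\times W^{(k)}$ is a product neighborhood of $\ox$ in $(X^n)^k$.

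Next, since $\ox\in\IN_k(X^n)$, the $k$-tuple $(W^{(1)},\dots,W^{(k)})$ admits arbitrarily large finite independence sets for the diagonal action $\Gamma\curvearrowright X^n$. I would fix such an independence set $F\in\cF(\Gamma)$ and check that $F$ is also an independence set for $(U_1,\dots,U_k)$. Given any map $\sigma\colon F\to[k]$, there is some $\boldsymbol{z}=(z^{(1)},\dots,z^{(n)})\in\bigcap_{s\in F}s^{-1}W^{(\sigma(s))}$, i.e.\ $sz^{(i)}\in V^{(i)}_{\sigma(s)}$ for every $s\in F$ and $i\in[n]$. Using that $\pi_\olambda$ is $\Gamma$-equivariant (the diagonal action on $X^n$ corresponds under $\pi_\olambda$ to the induced action on $\cM_n(X)$) and affine in its defining sum, for $\nu:=\pi_\olambda(\boldsymbol{z})\in\cM_n(X)$ one computes
$$s\nu=\sum_{i\in[n]}\lambda_i\delta_{sz^{(i)}}=\pi_\olambda(s\boldsymbol{z})\in\pi_\olambda(W^{(\sigma(s))})\subseteq U_{\sigma(s)}$$
for every $s\in F$. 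Hence $\nu\in\bigcap_{s\in F}s^{-1}U_{\sigma(s)}$, which is therefore nonempty.

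Since $\sigma$ was arbitrary, $F$ is an independence set for $(U_1,\dots,U_k)$, and since the sizes of such $F$ are unbounded, the tuple $(U_1,\dots,U_k)$ has arbitrarily large finite independence sets. As the product neighborhood of $(\mu_1,\dots,\mu_k)$ was arbitrary, this gives $(\mu_1,\dots,\mu_k)\in\IN_k(\cM_n(X))$. There is no real obstacle here beyond tracking the equivariance and continuity of $\pi_\olambda$; the argument is a direct diagram-chase, and the same template (with the appropriate definition of independence set) will be reused for $\IE$- and $\IT$-tuples.
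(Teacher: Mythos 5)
Your proof is correct and is essentially the paper's argument: the paper disposes of the lemma in one line by citing Theorem~\ref{T-IN basic}.\eqref{i-IN factor} (images of IN-tuples under the factor map $\pi_\olambda\colon X^n\to\cM_\olambda(X)$ are IN-tuples) together with Theorem~\ref{T-IN basic}.\eqref{i-IN subset} ($\IN_k(\cM_\olambda(X))\subseteq\IN_k(\cM_n(X))$), whereas you verify the needed pushforward direction directly from the definition by pulling back neighborhoods through $\pi_\olambda$ and using its continuity and $\Gamma$-equivariance. There is no gap; you have simply unpacked the cited black-box facts.
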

\begin{proof}
From parts \eqref{i-IN factor} and \eqref{i-IN subset} of Theorem~\ref{T-IN basic} we have
\begin{displaymath}
 \cA_{k, n}=\bigcup_{\olambda\in \orP_n}\IN_k(\cM_\olambda(X))\subseteq \IN_k(\cM_n(X)). \tag*{\qedsymbol}
\end{displaymath}
     \renewcommand{\qedsymbol}{}
     \vspace{-\baselineskip}
\end{proof}

\begin{lemma} \label{L-IN closure vs push}
Let $k\in \Nb$. Then
$$\overline{\bigcup_{n\in \Nb}\cA_{k, n}}=\{(\mu^{(1)}, \dots, \mu^{(k)})\in \cM(X)^k: \mu\in \cM_\IN(X^k)\}.$$
\end{lemma}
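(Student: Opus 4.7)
The plan is to show both sides equal $\Phi(\cM_\IN(X^k))$, where $\Phi\colon \cM(X^k)\to \cM(X)^k$ denotes the continuous affine map $\mu\mapsto (\mu^{(1)},\dots,\mu^{(k)})$. By Lemma~\ref{L-IN finite support}, $\cA_{k,n}=\Phi(\cM_{\IN,n}(X^k))$, so $\bigcup_{n\in\Nb}\cA_{k,n}=\Phi\bigl(\bigcup_{n\in\Nb}\cM_{\IN,n}(X^k)\bigr)$.

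For the inclusion $\overline{\bigcup_{n\in\Nb}\cA_{k,n}}\subseteq \Phi(\cM_\IN(X^k))$, note that $\cM_\IN(X^k)$ is closed in the compact metric space $\cM(X^k)$ by Proposition~\ref{P-IN measure basic}, so $\Phi(\cM_\IN(X^k))$ is compact, hence closed, in $\cM(X)^k$. Since $\bigcup_n\cM_{\IN,n}(X^k)\subseteq \cM_\IN(X^k)$, this closed set contains $\bigcup_n\cA_{k,n}$, and therefore contains its closure.

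For the reverse inclusion, I fix $\mu\in \cM_\IN(X^k)$ and approximate it weak-$*$ by finitely supported measures whose atoms lie inside $\supp(\mu)$. Given any weak-$*$ neighborhood of $\mu$ specified by finitely many test functions $g_1,\dots,g_L\in C(X^k)$ and a tolerance $\varepsilon>0$, choose a finite Borel partition $\{B_1,\dots,B_N\}$ of $X^k$ with each piece of diameter small enough that the oscillation of every $g_\ell$ on each $B_i$ is at most $\varepsilon$. For each $B_i$ with $\mu(B_i)>0$ the set $B_i\cap\supp(\mu)$ is nonempty (otherwise $B_i\subseteq X^k\setminus\supp(\mu)$ would force $\mu(B_i)=0$), so I pick $\ox^{(i)}\in B_i\cap\supp(\mu)$ and set
$$\mu_N=\sum_{i:\ \mu(B_i)>0}\mu(B_i)\,\delta_{\ox^{(i)}}.$$
A direct estimate yields $|\mu_N(g_\ell)-\mu(g_\ell)|\le\varepsilon$ for every $\ell$, so by refining the partition I obtain a sequence $\mu_m\to \mu$ in $\cM(X^k)$ with each $\mu_m$ finitely supported inside $\supp(\mu)$.

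Since every atom of $\mu_m$ lies in $\supp(\mu)$, Definition~\ref{D-IN measure} together with Remark~\ref{R-IN measure} (which frees us from having to take distinct atoms) gives $\mu_m\in \bigcup_n\cM_{\IN,n}(X^k)$, whence $\Phi(\mu_m)\in \bigcup_n\cA_{k,n}$. By continuity of $\Phi$, $\Phi(\mu_m)\to \Phi(\mu)$, placing $\Phi(\mu)$ in $\overline{\bigcup_n\cA_{k,n}}$. No step involves real difficulty; the one point to keep track of is that the atoms of $\mu_m$ must be drawn from $\supp(\mu)$, which is precisely what lets the $\IN$-measure property of $\mu$ propagate to each $\mu_m$.
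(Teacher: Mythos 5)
Your proof is correct and follows essentially the same route as the paper: both arguments reduce to pushing forward under the continuous map $\mu\mapsto(\mu^{(1)},\dots,\mu^{(k)})$, using compactness of $\cM_\IN(X^k)$ (Proposition~\ref{P-IN measure basic}) to get closedness of the image, and the density of finitely supported $\IN$-measures in $\cM_\IN(X^k)$ together with Lemma~\ref{L-IN finite support}. The only difference is that you spell out the weak$^*$ approximation by finitely supported measures with atoms in $\supp(\mu)$, a density fact the paper merely asserts, and this is a correct and welcome elaboration.
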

\begin{proof} The map $\pi: \cM(X^k)\rightarrow \cM(X)^k$ sending $\nu$ to $(\nu^{(1)}, \dots, \nu^{(k)})$ is continuous. 
Note that $\cM_\IN(X^k)':=\bigcup_{n\in \Nb}\cM_{\IN, n}(X^k)$ is dense in $\cM_\IN(X^k)$. 
We have
$$ \pi(\cM_\IN(X^k)')\subseteq \pi(\cM_\IN(X^k))=\pi(\overline{\cM_\IN(X^k)'})\subseteq \overline{\pi(\cM_\IN(X^k)')}.$$
By Proposition~\ref{P-IN measure basic} the space $\cM_\IN(X^k)$ is compact. Thus $\pi(\cM_\IN(X^k))$ is compact, and hence closed in $\cM(X)^k$.
Then from the first inclusion above we have 
$$\overline{\pi(\cM_\IN(X^k)')}\subseteq \pi(\cM_\IN(X^k)).$$
It follows that 
\begin{align*}
\pi(\cM_\IN(X^k))=\overline{\pi(\cM_\IN(X^k)')}=\overline{\bigcup_{n\in \Nb}\cA_{k, n}},
\end{align*}
where the last equality is from Lemma~\ref{L-IN finite support}. 
\end{proof}

Let $m\in \Nb$. For any $\ot=(t_1, \dots, t_m)\in \Rb^m$, we set
$$ \|\ot\|_\infty=\max_{j\in [m]}|t_j|,  \mbox{ and } \|\ot\|_1=\sum_{j\in [m]}|t_j|.$$
We also set
$$D_m=\{(t_1, \dots, t_m)\in \Rb^m: t_j\ge 0 \mbox{ for all } j\in [m]\}.$$
For $\boldsymbol{w}, \boldsymbol{t}\in \Rb^m$, we write $\left< \boldsymbol{w}, \boldsymbol{t}\right>$ for their inner product.

\begin{lemma} \label{L-convex intersection}
Let $m\in \Nb$ and $C>1$. There are an $\varepsilon\in (0, 1)$ and a finite set $\Lambda\subseteq \orP_m$ depending only on $m$ and $C$ such that the following holds. If we take a point $\ow^{(\olambda)}\in \Rb^m$ satisfying $\|\ow^{(\olambda)}\|_\infty\le C$ and $\left<\ow^{(\olambda)}, \olambda\right>\ge 1-\varepsilon$ for each $\olambda\in \Lambda$, then the convex hull of $\{\ow^{(\olambda)}: \olambda\in \Lambda\}$ intersects $\frac{1}{2}\ov+D_m$, where $\ov=(\underbrace{1, \dots, 1}_{m})\in \Rb^m$.
\end{lemma}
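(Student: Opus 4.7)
The plan is to proceed by Hahn--Banach separation and a simple compactness argument in the simplex $\orP_m$. First I would choose $\Lambda$ to be any finite $\delta$-net of the compact set $\orP_m$ in the $\ell_1$-metric, for some $\delta>0$ to be fixed shortly, and set $\varepsilon$ and $\delta$ small enough that $\varepsilon+C\delta<1/2$ (for instance $\varepsilon=1/4$ and $\delta=1/(4C)$). Both $\Lambda$ and $\varepsilon$ then depend only on $m$ and $C$, as required.

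Next I would argue by contradiction. Suppose the convex hull $K$ of $\{\ow^{(\olambda)}:\olambda\in\Lambda\}$ is disjoint from the closed convex set $\frac{1}{2}\ov+D_m$. Since $K\subseteq [-C,C]^m$ is compact, the geometric Hahn--Banach theorem yields some $\oa\in\Rb^m\setminus\{0\}$ with $\sup_{\ow\in K}\langle \oa,\ow\rangle<\inf_{\ow\in\frac{1}{2}\ov+D_m}\langle \oa,\ow\rangle$. For the infimum on the right to be finite, $\oa$ must lie in $D_m$, and then the infimum equals $\tfrac12\|\oa\|_1$, attained at $\tfrac12\ov$. After rescaling I may assume $\|\oa\|_1=1$, so $\oa\in\orP_m$, and the separating inequality becomes $\langle \oa,\ow^{(\olambda)}\rangle<\tfrac12$ for every $\olambda\in\Lambda$.

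Now I would use the net property: pick $\olambda_0\in\Lambda$ with $\|\olambda_0-\oa\|_1\le\delta$. Since $\|\ow^{(\olambda_0)}\|_\infty\le C$, Hölder gives $|\langle \olambda_0-\oa,\ow^{(\olambda_0)}\rangle|\le C\delta$, so
\[
\langle \olambda_0,\ow^{(\olambda_0)}\rangle \le \langle \oa,\ow^{(\olambda_0)}\rangle+C\delta < \tfrac12+C\delta.
\]
On the other hand, by hypothesis $\langle \olambda_0,\ow^{(\olambda_0)}\rangle\ge 1-\varepsilon$, hence $1-\varepsilon<\tfrac12+C\delta$, i.e.\ $\varepsilon+C\delta>\tfrac12$, contradicting our choice of $\varepsilon$ and $\delta$. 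Therefore $K$ must intersect $\frac{1}{2}\ov+D_m$, proving the lemma.

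There is no real obstacle here beyond bookkeeping: the only point to check carefully is that the separating functional arising from Hahn--Banach has nonnegative coefficients (forced by the presence of the recession cone $D_m$) and can thus be normalized to lie in $\orP_m$, so that the assumed lower bound $\langle \ow^{(\olambda)},\olambda\rangle\ge 1-\varepsilon$ can actually be applied to it.
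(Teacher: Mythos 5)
Your argument is correct and is essentially the paper's own proof: the same choice $\varepsilon=1/4$ and a $1/(4C)$-net $\Lambda$ of $\orP_m$ in the $\ell_1$-metric, contradiction via Hahn--Banach separation of the compact convex hull from $\frac12\ov+D_m$, the observation that finiteness of the infimum over the recession cone forces the separating functional into $D_m$ so it can be normalized to lie in $\orP_m$, and the same H\"older estimate against the nearest net point. (Minor remark: your sample values give $\varepsilon+C\delta=\tfrac12$ rather than the stated $<\tfrac12$, but since the inequality you derive from the separation is strict, this still produces the contradiction.)
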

\begin{proof} Let $\varepsilon=1/4$. Since $\orP_m$ is compact, we can find a finite set $\Lambda\subseteq \orP_m$ such that for each $\olambda\in \orP_m$ there is some $\olambda'\in \Lambda$ so that $\|\olambda-\olambda'\|_1<1/(4C)$. We shall show that $\varepsilon$ and $\Lambda$ have the desired property.

Let $\ow^{(\olambda)}\in \Rb^m$ for each $\olambda\in \Lambda$ such that $\|\ow^{(\olambda)}\|_\infty\le C$ and $\left<\ow^{(\olambda)}, \olambda\right>\ge 1-\varepsilon$.
We denote by $\cK$ the convex hull of $\{\ow^{(\olambda)}: \olambda\in \Lambda\}$. We argue by contradiction.  Assume that $\cK\cap (\frac{1}{2}\ov+D_m)=\emptyset$.
Since $\Lambda$ is finite, $\cK$ is compact.
By the geometric consequence of the Hahn-Banach theorem \cite[Proposition IV.3.6.(b) and Theorem IV.3.9]{Conway}, there is some $\boldsymbol{\lambda}\in \Rb^m$ such that
\begin{align} \label{E-convex intersection}
\max_{\ot\in \cK}\left< \ot, \olambda\right>< \inf_{\ot\in \frac{1}{2}\ov+D_m}\left< \ot, \olambda\right>.
\end{align}
Then
$$\frac{1}{2}\left< \ov, \olambda\right>+\inf_{\ot\in D_m}\left< \ot, \olambda\right>=\inf_{\ot\in \frac{1}{2}\ov+D_m}\left< \ot, \olambda\right>$$
is finite, and hence $\olambda\in D_m$. Multiplying $\olambda$ by a scalar if necessary, we may assume that $\|\olambda\|_1=1$. Then $\olambda\in \orP_m$. By our choice of $\Lambda$ we can find a $\olambda'\in \Lambda$ such that $\|\olambda-\olambda'\|_1<1/(4C)$. We have
\begin{align*}
\max_{\ot\in \cK}\left< \ot, \olambda\right>&\ge
\left<\ow^{(\olambda')}, \olambda\right>\ge
\left<\ow^{(\olambda')}, \olambda'\right>-\|\ow^{(\olambda')}\|_\infty\cdot \|\olambda-\olambda'\|_1\\
&> \left<\ow^{(\olambda')}, \olambda'\right>-1/4\ge 1-\varepsilon-1/4=1/2,
\end{align*}
and
\begin{align*}
\inf_{\ot\in \frac{1}{2}\ov+D_m}\left< \ot, \olambda\right>
=\left< \frac{1}{2}\ov, \olambda\right>=1/2,
\end{align*}
a contradiction to \eqref{E-convex intersection}. Therefore $\cK\cap (\frac{1}{2}\ov+D_m)\neq \emptyset$.
\end{proof}

\begin{lemma} \label{L-IN simul}
Let $k, n\in \Nb$, $\omu=(\mu_1, \dots, \mu_k)\in \cM(X)^k$  and $\of^{(l)}=(f_1^{(l)}, \dots, f_k^{(l)})\in C(X)^{\oplus k}$ for $l\in [n]$ such that $R^{(l)}:=\frac{1}{k}\sum_{j\in [k]}\mu_j(f_j^{(l)})>0$ for each $l\in [n]$. 
Let $0<r^{(l)}<R^{(l)}$ for each $l\in [n]$.  Then there are some finite set $T^{(l)}\subseteq \Rb^k$ for each $l\in [n]$,  some $c, N>0$, 
 and some product neighborhood $U_1\times \cdots \times U_k$ of $\omu$ in $\cM(X)^k$ such that the following hold:
 \begin{enumerate}
 \item $\frac{1}{k}\sum_{j\in [k]}t_j\ge r^{(l)}$ for every $l\in [n]$ and $(t_1, \dots, t_k)\in T^{(l)}$, and
 \item for any independence set $F\in \cF(\Gamma)$ of $(U_1, \dots, U_k)$ with $|F|\ge N$,
 there are  some $J\subseteq F$ with $|J|\ge c|F|$ and some $\ot^{(l)}=(t_1^{(l)}, \dots, t_k^{(l)})\in T^{(l)}$ for each $l\in [n]$ so that
 $J$ is an independence set for the $k$-tuple
 \begin{align} \label{E-IN simul}
 \Big(\prod_{l\in [n]} (f_1^{(l)})^{-1}([t_1^{(l)}, \infty)), \dots, \prod_{l\in [n]} (f_k^{(l)})^{-1}([t_k^{(l)}, \infty))\Big)
  \end{align}
  of subsets of $X^n$.
  \end{enumerate}
 \end{lemma}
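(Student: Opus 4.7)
The plan is to reduce Lemma~\ref{L-IN simul} to an $n$-fold simultaneous version of Lemma~\ref{L-func to indep} by adapting the proof of Lemma~\ref{L-func to density}. For each $l\in[n]$, Lemma~\ref{L-func to func} supplies per-$l$ witness points; the combinatorial step then has to handle $n$ scalar functions at once, and Lemma~\ref{L-convex intersection} is the tool that synchronizes the per-$l$ thresholds.

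For each $l\in[n]$ I would apply Lemma~\ref{L-func to func} with $\of^{(l)}$ and some $r'\in(r^{(l)},R^{(l)})$ to obtain a product neighborhood $U^{(l)}_1\times\cdots\times U^{(l)}_k$ of $\omu$. Setting $U_j:=\bigcap_{l\in[n]}U^{(l)}_j$ gives a common product neighborhood such that for any independence set $F\in\cF(\Gamma)$ of $(U_1,\dots,U_k)$ and any surjective $\psi:F\to[k]$ there exist $x^{(l)}_\psi\in X$ for every $l$ with
\begin{align*}
\frac{1}{k}\sum_{j\in[k]}\frac{1}{|\psi^{-1}(j)|}\sum_{s\in\psi^{-1}(j)}f^{(l)}_j(sx^{(l)}_\psi)\ge r'.
\end{align*}
After passing to a subset of $F$ of size at least $|F|/2$ divisible by $k$, every $\psi\in\sR(F,k)$ is surjective with $|\psi^{-1}(j)|=|F|/k$, and the scalar function $g^{(l)}_\psi(s):=f^{(l)}_{\psi(s)}(sx^{(l)}_\psi)$ has $\ell_\infty$-norm at most $\|\of^{(l)}\|$ and average over $s\in F$ at least $r'$.

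The main step is to produce a single $J\subseteq F$ with $|J|\ge c|F|$ and threshold vectors $\ot^{(l)}\in T^{(l)}\subseteq\Rb^k$ so that every $\sigma:J\to[k]$ extends to some $\psi\in\sR(F,k)$ with $g^{(l)}_\psi(s)\ge t^{(l)}_{\sigma(s)}$ for \emph{all} $l\in[n]$ and $s\in J$; given this, the diagonal point $(x^{(l)}_\psi)_{l\in[n]}\in X^n$ witnesses that $J$ is an independence set for the $k$-tuple in~\eqref{E-IN simul}. I would adapt the proof of Lemma~\ref{L-func to indep pre}: enlarge the index set from $I$ to $\prod_{l\in[n]}I^{(l)}$, where $I^{(l)}\subseteq[2M]^k$ collects those $\boldsymbol{i}^{(l)}$ with $\frac{1}{k}\sum_{j\in[k]}\tau^{-1/p}(-C+(i^{(l)}_j-1)C/M)\ge r^{(l)}$, and define $\psi_{(\boldsymbol{i}^{(l)})_l}(s)=\psi(s)$ precisely when $g^{(l)}_\psi(s)$ exceeds the level determined by $\boldsymbol{i}^{(l)}_{\psi(s)}$ for every $l\in[n]$, and $0$ otherwise. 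The counting argument of Lemma~\ref{L-func to indep pre}, combined with Lemma~\ref{L-indep to partition} applied to the join of the partitions of $F$ coming from the different $l$, and with Lemma~\ref{L-regular} to lower bound $|\sR(F,k)|$, then produces an index tuple for which $N_{S_{(\boldsymbol{i}^{(l)})_l}}$ is exponentially large, so Lemma~\ref{L-07} yields the desired $J$ with $S_{(\boldsymbol{i}^{(l)})_l}|_J\supseteq[k]^J$.

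The hard part is guaranteeing that the selected index tuple gives thresholds satisfying $\frac{1}{k}\sum_{j\in[k]}t^{(l)}_j\ge r^{(l)}$ for \emph{each} $l\in[n]$ separately, rather than only in some pooled average over $l$. This is where Lemma~\ref{L-convex intersection} is needed: applied with $m=n$, it furnishes a finite family $\Lambda\subseteq\orP_n$ of test directions together with $\varepsilon\in(0,1)$ so that, if for each $\olambda\in\Lambda$ the $\olambda$-weighted threshold sum is at least $1-\varepsilon$, then a convex combination of the resulting threshold vectors in $\Rb^n$ lies in $\tfrac12\ov+D_n$. Running a pigeonhole over $\Lambda$ inside the counting step then allows me to pick a single $(\boldsymbol{i}^{(l)})_l$ whose thresholds validate all $n$ per-$l$ inequalities simultaneously.
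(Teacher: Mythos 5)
There is a genuine gap, and it sits exactly where you flagged ``the hard part.'' Your plan hinges on a simultaneous strengthening of Lemma~\ref{L-func to indep}: a single $J$ and a single index tuple such that every $\sigma:J\to[k]$ extends to one $\psi\in\sR(F,k)$ satisfying the level condition for \emph{all} $l\in[n]$ at once. The counting argument of Lemma~\ref{L-func to indep pre} does not extend to this setting as you describe: restricting the index set to $\prod_{l}I^{(l)}$ does force per-$l$ thresholds \emph{if} an index tuple is ultimately selected, but the contradiction step breaks. In the original proof, the key claim is that on each cell $P_h$ any admissible point-tuple has $\frac1k\sum_j f_\psi(z_j)<r+\tau^{-1/p}C/M$, and summing over cells contradicts $\frac{1}{|Z|^{1/q}}\sum_z f_\psi(z)\ge R$. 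In the simultaneous version the negation only tells you that on each cell \emph{some} coordinate $l$ has a small average, and this failing $l$ can vary from cell to cell; for no single $l$ do you get the total average of $g^{(l)}_\psi$ below $R^{(l)}$, so no contradiction. Your proposed repair via Lemma~\ref{L-convex intersection} does not fit: that lemma produces a \emph{convex combination} of threshold vectors lying in $\tfrac12\ov+D_n$, which is the right tool when one is free to mix finitely many witness points into a measure (as in Lemma~\ref{L-IN dense}), but here you need one fixed index tuple whose thresholds satisfy all $n$ inequalities, and there is no pigeonhole step inside the covering-number count that invokes it.

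The deeper issue is that the simultaneous strengthening is unnecessary, and avoiding it gives the short proof the paper uses. The sets in \eqref{E-IN simul} are product sets in $X^n$, so for any $\sigma:J\to[k]$ the intersection $\bigcap_{s\in J}s^{-1}\prod_{l\in[n]}(f^{(l)}_{\sigma(s)})^{-1}([t^{(l)}_{\sigma(s)},\infty))$ equals $\prod_{l\in[n]}\bigcap_{s\in J}s^{-1}(f^{(l)}_{\sigma(s)})^{-1}([t^{(l)}_{\sigma(s)},\infty))$; hence $J$ is an independence set for the tuple in \eqref{E-IN simul} if and only if it is an independence set for $\oA_{\of^{(l)},\ot^{(l)}}$ for each $l$ \emph{separately}, with possibly different witness points and different extensions of $\sigma$ in each coordinate. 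So one simply applies Lemma~\ref{L-func to density} once per $l$ to get $c^{(l)},N^{(l)},T^{(l)}$ and neighborhoods $U^{(l)}_1\times\cdots\times U^{(l)}_k$, sets $U_j=\bigcap_l U^{(l)}_j$, $c=\prod_l c^{(l)}$, and $N$ large enough that $Nc^{(1)}\cdots c^{(l-1)}\ge N^{(l)}$ for all $l$, and then refines inductively: $J_0=F$, and $J_l\subseteq J_{l-1}$ with $|J_l|\ge c^{(l)}|J_{l-1}|$ an independence set for $\oA_{\of^{(l)},\ot^{(l)}}$ (each $J_{l-1}$ is still an independence set for $(U_1,\dots,U_k)$, so Lemma~\ref{L-func to density} applies). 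Then $J_n$ works. Your first two paragraphs (Lemma~\ref{L-func to func} plus intersecting the neighborhoods) are fine, but the machinery you build after that addresses a harder problem than the lemma requires and is not carried out correctly.
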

\begin{proof} We take a $C\ge \max_{l\in [n]}\|\of^{(l)}\|$. For each $l\in [n]$, we have $c^{(l)}, N^{(l)}$, $T^{(l)}$ and $U_1^{(l)}\times \cdots \times U_k^{(l)}$ given by Lemma~\ref{L-func to density} for $k, r^{(l)}, R^{(l)}, C, \omu$ and $\of^{(l)}$. We set $c=\prod_{l\in [n]}c^{(l)}>0$. We take a large $N>0$ such that $Nc^{(1)}\cdots c^{(l-1)}\ge N^{(l)}$ for all $l\in [n]$. We also set $U_j=\bigcap_{l\in [n]}U_j^{(l)}$ for each $j\in [k]$. Then $\frac{1}{k}\sum_{j\in [k]}t_j\ge r^{(l)}$ for every $l\in [n]$ and $(t_1, \dots, t_k)\in T^{(l)}$, and $U_1\times \cdots \times U_k$ is a product neighborhood  of $\omu$ in $\cM(X)^k$.

Let $F\in \cF(\Gamma)$ be  an  independence set of $(U_1, \dots, U_k)$ with $|F|\ge N$. We set $J_0=F$.
Via induction on $l=1, \dots, n$, from our choice of $c^{(l)}, N^{(l)}$, $T^{(l)}$ and $U_1^{(l)}\times \cdots \times U_k^{(l)}$ we can find some $J_l\subseteq J_{l-1}$ and some $\ot^{(l)}\in T^{(l)}$ such that $|J_l|\ge c^{(l)}|J_{l-1}|$ and $J_l$ is an independence set for $\oA_{\of^{(l)}, \ot^{(l)}}$ (see Notation~\ref{N-tuple}). Then $|J_n|\ge c|F|$ and $J_n$ is an independence set for $\oA_{\of^{(l)}, \ot^{(l)}}$ for every $l\in [n]$. It follows that $J_n$ is an independence set for
the $k$-tuple in \eqref{E-IN simul}.
\end{proof}

\begin{lemma} \label{L-IN dense}
Let $k\in \Nb$. Then $\IN_k(\cM(X))=\overline{\bigcup_{n\in \Nb}\cA_{k, n}}$.
\end{lemma}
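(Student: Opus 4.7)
The plan is to prove the two inclusions separately. The easy direction $\overline{\bigcup_n \cA_{k,n}} \subseteq \IN_k(\cM(X))$ is immediate: by Lemma~\ref{L-IN convex}, $\cA_{k,n} \subseteq \IN_k(\cM_n(X))$, and since $\cM_n(X)$ is a closed $\Gamma$-invariant subset of $\cM(X)$, Theorem~\ref{T-IN basic}.\eqref{i-IN subset} gives $\IN_k(\cM_n(X)) \subseteq \IN_k(\cM(X))$; as $\IN_k(\cM(X))$ is closed by Theorem~\ref{T-IN basic}.\eqref{i-IN invariant}, it contains the closure of the union.

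For the reverse direction I fix $\omu = (\mu_1, \dots, \mu_k) \in \IN_k(\cM(X))$ and show every basic weak${}^*$ neighborhood of $\omu$ meets $\bigcup_N \cA_{k,N}$. Such a neighborhood may be written as
$$W = \left\{\onu \in \cM(X)^k : \sum_{j \in [k]} \nu_j(f_j^{(l)}) > \sum_{j \in [k]} \mu_j(f_j^{(l)}) - \delta,\ l \in [n]\right\}$$
for some $\of^{(l)} = (f_j^{(l)})_j \in C(X)^{\oplus k}$ and $\delta > 0$ (two-sided neighborhoods are handled by doubling the list to include $-\of^{(l)}$). Adding a large positive constant to each $f_j^{(l)}$ preserves $W$ and forces $R^{(l)} := \frac{1}{k}\sum_j \mu_j(f_j^{(l)}) > 0$ for every $l$.

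Taking $r^{(l)} = R^{(l)}/2$, I would apply Lemma~\ref{L-IN simul} to obtain finite sets $T^{(l)} \subseteq \Rb^k$, constants $c, N_0 > 0$, and a product neighborhood $U_1 \times \cdots \times U_k$ of $\omu$. Because $\omu$ is an IN-tuple, $(U_1, \dots, U_k)$ admits arbitrarily large independence sets $F$; Lemma~\ref{L-IN simul} then produces subsets $J \subseteq F$ with $|J| \ge c|F|$ that are independence sets in $X^n$ for the $k$-tuple $\big(\prod_l (f_j^{(l)})^{-1}([t_j^{(l)}, \infty))\big)_{j \in [k]}$, for some $(\ot^{(l)})_l \in \prod_l T^{(l)}$. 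Since $\prod_l T^{(l)}$ is finite, a pigeonhole argument fixes a single $(\ot^{(l)})_l$ for which arbitrarily large such $J$ occur; Theorem~\ref{T-IN basic}.\eqref{i-IN tuple} then delivers an IN-tuple $\ox = ((x_j^{(l)})_l)_j \in \IN_k(X^n)$ with $f_j^{(l)}(x_j^{(l)}) \ge t_j^{(l)}$ and $\frac{1}{k}\sum_j t_j^{(l)} \ge r^{(l)}$ for every $l, j$.

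It remains to select weights $\olambda^* \in \orP_N$ so that $(\pi_{\olambda^*} \times \cdots \times \pi_{\olambda^*})(\ox) \in W$. The principal obstacle is that the previous step only bounds $f_j^{(l)}(x_j^{(l)})$ on the diagonal $(l,l)$, whereas membership in $W$ requires control over every off-diagonal value $f_j^{(l')}(x_j^{(l)})$ with $l' \ne l$. I plan to address this by enlarging the index set: apply Lemma~\ref{L-convex intersection} with $m = n$ and $C$ a uniform bound on $\|\of^{(l)}\|$ to obtain $\varepsilon \in (0,1)$ and a finite set $\Lambda \subseteq \orP_n$, then rerun the preceding step with $N = n \cdot |\Lambda|$ and tuples $\of^{(\olambda, l)}$ indexed by pairs $(\olambda, l) \in \Lambda \times [n]$, so that the resulting IN-tuple in $X^N$ carries diagonal bounds covering every $\olambda \in \Lambda$ simultaneously. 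The conclusion of Lemma~\ref{L-convex intersection}---that the convex hull of $\{\ow^{(\olambda)} : \olambda \in \Lambda\}$ meets $\tfrac{1}{2}\ov + D_m$---then extracts a single $\olambda^* \in \orP_N$ realizing simultaneous lower bounds $\sum_l \lambda^*_l f_j^{(l')}(x_j^{(l)}) > \mu_j(f_j^{(l')}) - \delta/k$ for every $l' \in [n]$ and $j \in [k]$, giving $(\pi_{\olambda^*} \times \cdots \times \pi_{\olambda^*})(\ox) \in W$. The main technical hurdle, which I expect to require the most care, is constructing the vectors $\ow^{(\olambda)}$ from these diagonal bounds so that the inner-product hypothesis $\langle \ow^{(\olambda)}, \olambda\rangle \ge 1 - \varepsilon$ of Lemma~\ref{L-convex intersection} holds uniformly in $\olambda \in \Lambda$.
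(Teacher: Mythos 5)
Your easy inclusion and your overall scaffolding (Lemma~\ref{L-IN convex}, Lemma~\ref{L-IN simul}, Lemma~\ref{L-convex intersection}, Theorem~\ref{T-IN basic}) match the paper, and you correctly identify the off-diagonal terms $f_j^{(l')}(x_j^{(l)})$ as the obstruction. But the step you defer as ``the main technical hurdle'' --- producing vectors $\ow^{(\olambda)}$ with $\langle \ow^{(\olambda)},\olambda\rangle\ge 1-\varepsilon$ --- is precisely the crux, and your plan as stated does not supply it. Applying Lemma~\ref{L-IN simul} to the original tuples $\of^{(l)}$ (or to an undefined doubly-indexed family $\of^{(\olambda,l)}$, which you never specify) only yields diagonal bounds, and no amount of re-indexing by $\Lambda\times[n]$ converts diagonal bounds on the $\of^{(l)}$ into the weighted-average hypothesis of Lemma~\ref{L-convex intersection}. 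The missing idea is to feed the \emph{convex combinations themselves} into Lemma~\ref{L-IN simul}: after normalizing so that $\sum_{j}\mu_j(f_j^{(i)})=1$ for every $i\in[m]$, set $\of^{(\olambda)}=\sum_{i}\lambda_i\of^{(i)}$ for each $\olambda$ in the finite net $\Lambda$, and apply Lemma~\ref{L-IN simul} to the family $\{\of^{(\olambda)}:\olambda\in\Lambda\}$ with the common values $R=1/k$ and $r=(1-\varepsilon)/k$. The resulting IN-tuple lives in $(X^\Lambda)^k$, with components $x_j^{(\olambda)}$ indexed by $\olambda$, and the vector $\ow^{(\olambda)}$ whose $i$-th coordinate is $\sum_{j\in[k]}f_j^{(i)}(x_j^{(\olambda)})$ then satisfies the hypothesis automatically, since $\langle\ow^{(\olambda)},\olambda\rangle=\sum_{j}f_j^{(\olambda)}(x_j^{(\olambda)})\ge\sum_j t_j^{(\olambda)}\ge kr=1-\varepsilon$; no lower bound on any individual off-diagonal value is ever needed or obtained.

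Relatedly, your last step misreads what Lemma~\ref{L-convex intersection} delivers. Its conclusion is not a weight vector $\olambda^*\in\orP_N$ on the coordinates of $X^N$ giving per-$j$ bounds of the form $\sum_l\lambda^*_l f_j^{(l')}(x_j^{(l)})>\mu_j(f_j^{(l')})-\delta/k$ (a statement stronger than required and not provable from the available information); it produces convex coefficients $(t_\olambda)_{\olambda\in\Lambda}$ with $\sum_{\olambda}t_\olambda\ow^{(\olambda)}\in\frac12\ov+D_m$. These coefficients are used directly as the weights of the atomic measures $\nu_j=\sum_{\olambda\in\Lambda}t_\olambda\delta_{x_j^{(\olambda)}}$, and membership of $(\nu_1,\dots,\nu_k)$ in the given neighborhood only requires the bound on the sum over $j$ in each coordinate $i$, namely $\sum_{j}\nu_j(f_j^{(i)})\ge 1/2>0$, which is exactly what the coordinatewise statement $\sum_\olambda t_\olambda\ow^{(\olambda)}\in\frac12\ov+D_m$ says. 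So your first application of Lemma~\ref{L-IN simul} to the $\of^{(l)}$ is a detour, and the argument as proposed has a genuine gap at the point you flagged; it closes only once the convex-combination family $\{\of^{(\olambda)}\}$ replaces the original family as the input to Lemma~\ref{L-IN simul}.
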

\begin{proof} By Theorem~\ref{T-IN basic}.\eqref{i-IN subset} we have $\IN_k(\cM(X))\supseteq \IN_k(\cM_n(X))$ for every $n\in \Nb$. 
Thus from Lemma~\ref{L-IN convex} we have $\IN_k(\cM(X))\supseteq \bigcup_{n\in \Nb}\cA_{k, n}$. From Theorem~\ref{T-IN basic}.\eqref{i-IN invariant} we know that $\IN_k(\cM(X))$ is a closed subset of $\cM(X)^k$. Therefore $\IN_k(\cM(X))\supseteq \overline{\bigcup_{n\in \Nb}\cA_{k, n}}$.

Let $\omu=(\mu_1, \dots, \mu_k)\in \IN_k(\cM(X))$. We shall show that $\omu\in \overline{\bigcup_{n\in \Nb}\cA_{k, n}}$.

Let $U$ be a neighborhood of $\omu$ in $\cM(X)^k$. It suffices to show $U\cap \bigcup_{n\in \Nb}\cA_{k, n}\neq \emptyset$. Shrinking $U$ if necessary, we may assume that there are $m\in \Nb$ and
$$\of^{(i)}=(f^{(i)}_1, \dots, f^{(i)}_k)\in C(X)^{\oplus k}$$
for $i\in [m]$ such that
$$\sum_{j\in [k]}\mu_j(f^{(i)}_j)=1$$
for each $i\in [m]$ and
$$ U=\{(\nu_1, \dots, \nu_k)\in \cM(X)^k: \sum_{j\in [k]}\nu_j(f^{(i)}_j)\ge 0 \mbox{ for all } i\in  [m]\}.$$

We set $C=2+k\max_{i\in  [m]}\|\of^{(i)}\|>1$.  Then we have $\varepsilon>0$ and a finite set $\Lambda\subseteq \orP_m$ given by Lemma~\ref{L-convex intersection}.  For each $\olambda=(\lambda_1, \dots, \lambda_m)\in \Lambda$, we set
$$f^{(\olambda)}_j=\sum_{i\in [m]} \lambda_i f^{(i)}_j\in C(X)$$
for $j\in [k]$, and
$$\of^{(\olambda)}=\sum_{i\in [m]} \lambda_i\of^{(i)}=\left(\sum_{i\in [m]} \lambda_i f^{(i)}_1, \dots, \sum_{i\in [m]} \lambda_i f^{(i)}_k\right)=(f^{(\olambda)}_1, \dots, f^{(\olambda)}_k)\in C(X)^{\oplus k}.$$
Then 
$$\|\of^{(\olambda)}\|=\max_{j\in [k]}\|f^{(\olambda)}_j\|\le \max_{j\in [k]}\max_{i\in [m]}\|f^{(i)}_j\|=\max_{i\in [m]}\|\of^{(i)}\|\le C,$$ 
and
$$\sum_{j\in [k]}\mu_j(f^{(\olambda)}_j)=\sum_{j\in [k]}\sum_{i\in [m]} \lambda_i\mu_j(f^{(i)}_j)=\sum_{i\in [m]}\sum_{j\in [k]} \lambda_i\mu_j(f^{(i)}_j)=\sum_{i\in [m]}\lambda_i=1$$
for every $\olambda\in \Lambda$.

We set $R=1/k$ and $r=(1-\varepsilon)/k$. By Lemma~\ref{L-IN simul}  there are some finite set $T^{(\olambda)}\subseteq \Rb^k$ for each $\olambda\in \Lambda$, some $c, N>0$,
 and some product neighborhood $U_1\times \cdots \times U_k$ of $\omu$ in $\cM(X)^k$ such that the following hold:
  \begin{enumerate}
  \item $\frac{1}{k}\sum_{j\in [k]}t_j^{(\olambda)}\ge r$ for all $\olambda\in \Lambda$ and $(t_1^{(\olambda)}, \dots, t_k^{(\olambda)})\in T^{(\olambda)}$, and
  \item for any independence set $F\in \cF(\Gamma)$ of $\oU=(U_1, \dots, U_k)$ with $|F|\ge N$,
 there are  some $J\subseteq F$ with $|J|\ge c|F|$ and some $\ot^{(\olambda)}=(t_1^{(\olambda)}, \dots, t_k^{(\olambda)})\in T^{(\olambda)}$ for each $\olambda\in \Lambda$ so that
 $J$ is an independence set for the $k$-tuple
 \begin{align} \label{E-IN dense}
 \left(\prod_{\olambda\in \Lambda} (f_1^{(\olambda)})^{-1}([t_1^{(\olambda)}, \infty)), \dots, \prod_{\olambda\in \Lambda} (f_k^{(\olambda)})^{-1}([t_k^{(\olambda)}, \infty))\right)
 \end{align}
 of closed subsets of $X^\Lambda$.
 \end{enumerate}

We claim that there is some $\ot^{(\olambda)}=(t_1^{(\olambda)}, \dots, t_k^{(\olambda)})\in T^{(\olambda)}$ for each $\olambda\in \Lambda$ so that the tuple \eqref{E-IN dense} has arbitrarily large finite independence sets. We argue by contradiction. Assume that the tuple \eqref{E-IN dense} does not have arbitrarily large finite independence sets
for any choice of $\ot^{(\olambda)}=(t_1^{(\olambda)}, \dots, t_k^{(\olambda)})\in T^{(\olambda)}$ for each $\olambda\in \Lambda$. Since $\Lambda$ is finite and $T^{(\olambda)}$ for each $\olambda\in \Lambda$ is finite, it follows that there is some $L\in \Nb$ such that for any choice of $\ot^{(\olambda)}=(t_1^{(\olambda)}, \dots, t_k^{(\olambda)})\in T^{(\olambda)}$ for each $\olambda\in \Lambda$, every independence set for the tuple \eqref{E-IN dense} has cardinality $<L$.
Since $\omu\in \IN_k(\cM(X))$, we can find an independence set $F\in \cF(\Gamma)$ for $\oU$ with $|F|\ge \max(N, L/c)$.
Then there are  some $J\subseteq F$ with $|J|\ge c|F|\ge L$ and some $\ot^{(\olambda)}=(t_1^{(\olambda)}, \dots, t_k^{(\olambda)})\in T^{(\olambda)}$ for each $\olambda\in \Lambda$ so that
 $J$ is an independence set for the tuple \eqref{E-IN dense}, a contradiction. This proves our claim.

 Let $\ot^{(\olambda)}=(t_1^{(\olambda)}, \dots, t_k^{(\olambda)})\in T^{(\olambda)}$ for each $\olambda\in \Lambda$ so that the tuple \eqref{E-IN dense} has arbitrarily large finite independence sets. By Theorem~\ref{T-IN basic}.\eqref{i-IN tuple} there is some
 $$ ((x^{(\olambda)}_1)_{\olambda\in \Lambda}, \dots, (x^{(\olambda)}_k)_{\olambda\in \Lambda})$$
in the intersection of $\big(\prod_{\olambda\in \Lambda} (f_1^{(\olambda)})^{-1}([t_1^{(\olambda)}, \infty))\big)\times \cdots \times \big(\prod_{\olambda\in \Lambda}(f_k^{(\olambda)})^{-1}([t_k^{(\olambda)}, \infty))\big)$ and
$\IN_k(X^\Lambda)$.

Let $\olambda=(\lambda_1, \dots, \lambda_m)\in \Lambda$. We have
$$(x^{(\olambda)}_1, \dots, x^{(\olambda)}_k)\in (f_1^{(\olambda)})^{-1}([t_1^{(\olambda)}, \infty))\times \cdots \times (f_k^{(\olambda)})^{-1}( [t_k^{(\olambda)}, \infty)). $$
Since  $\ot^{(\olambda)}=(t_1^{(\olambda)}, \dots, t_k^{(\olambda)})\in T^{(\olambda)}$ ,  we have
$$ \frac{1}{k}\sum_{j\in [k]} f^{(\olambda)}_j(x^{(\olambda)}_j)\ge \frac{1}{k}\sum_{j\in [k]} t_j^{(\olambda)}\ge r=\frac{1}{k}(1-\varepsilon). $$
 We set
$$ \ow^{(\olambda)}=\left(\sum_{j\in [k]} f^{(1)}_j(x^{(\olambda)}_j), \dots, \sum_{j\in [k]} f^{(m)}_j(x^{(\olambda)}_j)\right)\in \Rb^m.$$
Then  $\|\ow^{(\olambda)}\|_\infty\le k\max_{i\in [m]}\|\of^{(i)}\|\le C$, and
$$\left<\ow^{(\olambda)}, \olambda\right>=\sum_{i\in [m]} \lambda_i\sum_{j\in [k]} f^{(i)}_j(x^{(\olambda)}_j)=\sum_{j\in [k]}\sum_{i\in [m]} \lambda_i f^{(i)}_j(x^{(\olambda)}_j)=\sum_{j\in [k]} f^{(\olambda)}_j(x^{(\olambda)}_j)\ge 1-\varepsilon.$$

By our choice of $\varepsilon$ and $\Lambda$, the convex hull of $\{\ow^{(\olambda)}: \olambda\in \Lambda\}$ intersects $\frac{1}{2}\ov+D_m$, where $\ov=(\underbrace{1, \dots, 1}_{m})\in \Rb^m$. Thus we can find $t_{\olambda}\in [0, 1]$ for each $\olambda\in \Lambda$ such that $\sum_{\olambda\in \Lambda}t_{\olambda}=1$ and $\sum_{\olambda\in \Lambda}t_{\olambda}\ow^{(\olambda)}\in \frac{1}{2}\ov+D_m$. Now we have
$$ \onu:=\left(\sum_{\olambda\in \Lambda}t_{\olambda}\delta_{x_1^{(\olambda)}}, \dots, \sum_{\olambda\in \Lambda}t_{\olambda}\delta_{x_k^{(\olambda)}}\right)\in \cA_{k, |\Lambda|}.$$
For each $i\in [m]$,
\begin{align*}
\sum_{j\in [k]}\left(\sum_{\olambda\in \Lambda}t_{\olambda}\delta_{x_j^{(\olambda)}}\right)(f_j^{(i)})
= \sum_{\olambda\in \Lambda}\sum_{j\in [k]} t_{\olambda}f_j^{(i)}(x_j^{(\olambda)})
\end{align*}
is the $i$-th coordinate of $\sum_{\olambda\in \Lambda}t_{\olambda}\ow^{(\olambda)}$, and hence is $\ge 1/2$. Thus $\onu\in U\cap \cA_{k, |\Lambda|}$. This finishes the proof.
\end{proof}

We are ready to prove Theorem~\ref{T-IN main}.

\begin{proof}[Proof of Theorem~\ref{T-IN main}] 
For each $n\in \Nb$ and 
$$\left((x_1^{(1)}, \dots, x_1^{(n)}), (x_2^{(1)}, \dots, x_2^{(n)}), \dots, (x_k^{(1)}, \dots, x_k^{(n)})\right)\in \IN_k(X^n),$$
from Theorem~\ref{T-IN basic}.\eqref{i-IN factor} we know that 
$$ (x_1^{(i)}, \dots, x_k^{(i)})\in \IN_k(X)$$
for every $i\in [n]$. 
Thus $\cA_{k, n}$ is included in the convex hull of $\IN_k(X)$ in $\cM(X)^k$ for every $n\in \Nb$. Then part (1) follows from Lemma~\ref{L-IN dense}.

Part (3) follows from Lemmas~\ref{L-IN closure vs push} and \ref{L-IN dense}. 

From Theorem~\ref{T-IN basic}.\eqref{i-IN subset} we have $\IN_k(X)\subseteq \IN_k(\cM(X))\cap X^k$. From part (3) we have
$\IN_k(\cM(X))\cap X^k\subseteq \IN_k(X)$. This proves part (2). 
\end{proof}

Next we prove Corollary~\ref{C-IN finite support}. 

\begin{proof}[Proof of Corollary~\ref{C-IN finite support}] Let $\omu\in (\cM_{N_1}(X)\times \cdots \times \cM_{N_k}(X))\cap \IN_k(\cM(X))$. By Theorem~\ref{T-IN main}.(3) we have $\omu=(\mu^{(1)}, \dots, \mu^{(k)})$ for some $\mu\in \cM_\IN(X^k)$. 
Then $$\mu\left(\supp(\mu^{(1)})\times \cdots \times \supp(\mu^{(k)})\right)=1,$$
and $|\supp(\mu^{(j)})|\le N_j$ for every $j\in [k]$. 
Thus $|\supp(\mu)|\le N$, and hence $\mu\in \cM_{\IN, N}(X^k)$. By Lemmas~\ref{L-IN finite support} and \ref{L-IN convex}
we have $\omu\in \cA_{k, N}\subseteq \IN_k(\cM_N(X))$. 
\end{proof}

In view of Theorem~\ref{T-IN basic}.\eqref{i-IN pair}, from the case $k=2$  of parts (1) and (2) of Theorem~\ref{T-IN main} we obtain immediately the following result of
Kerr and Li \cite[Theorem 5.10]{KL05}.

\begin{corollary} \label{C-IN for prob}
$\Gamma\curvearrowright X$ is null if and only if $\Gamma\curvearrowright \cM(X)$ is null.
\end{corollary}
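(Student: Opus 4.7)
The plan is to reduce the corollary directly to Theorem~\ref{T-IN basic}.\eqref{i-IN pair}, which says that nullness of a continuous action is equivalent to the vanishing (in the off-diagonal sense) of $\IN_2$. Thus the goal becomes showing
$$\IN_2(X)\subseteq \Delta_2(X) \iff \IN_2(\cM(X))\subseteq \Delta_2(\cM(X)).$$
Each direction will be handled by one of the two parts of the case $k=2$ of Theorem~\ref{T-IN main}, so no new combinatorial work is needed.

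For the forward direction, I would assume $\Gamma\curvearrowright X$ is null, so that $\IN_2(X)\subseteq \Delta_2(X)$. The key observation is that $\Delta_2(\cM(X))=\{(\mu,\mu):\mu\in \cM(X)\}$ is a closed convex subset of $\cM(X)^2$, and it contains $\Delta_2(X)$. Hence the closed convex hull of $\IN_2(X)$ in $\cM(X)^2$ is also contained in $\Delta_2(\cM(X))$. Theorem~\ref{T-IN main}.(1) then gives $\IN_2(\cM(X))\subseteq \Delta_2(\cM(X))$, so $\Gamma\curvearrowright \cM(X)$ is null by Theorem~\ref{T-IN basic}.\eqref{i-IN pair}.

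For the reverse direction, I would assume $\Gamma\curvearrowright\cM(X)$ is null, so $\IN_2(\cM(X))\subseteq \Delta_2(\cM(X))$. Intersecting with $X^2$ and invoking Theorem~\ref{T-IN main}.(2), which identifies $\IN_2(\cM(X))\cap X^2$ with $\IN_2(X)$, yields
$$\IN_2(X)=\IN_2(\cM(X))\cap X^2\subseteq \Delta_2(\cM(X))\cap X^2=\Delta_2(X),$$
so $\Gamma\curvearrowright X$ is null by Theorem~\ref{T-IN basic}.\eqref{i-IN pair}.

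Since all the heavy lifting is absorbed into Theorem~\ref{T-IN main}, there is essentially no obstacle at this step; the only thing worth double-checking is the elementary set-theoretic/convexity observation that the diagonal $\Delta_2(\cM(X))$ is closed and convex in $\cM(X)^2$, and that $X\subseteq\cM(X)$ is embedded so that $\Delta_2(X)=\Delta_2(\cM(X))\cap X^2$, both of which are immediate from the Dirac embedding.
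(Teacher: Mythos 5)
Your proposal is correct and follows exactly the paper's route: the paper deduces the corollary immediately from Theorem~\ref{T-IN basic}.\eqref{i-IN pair} together with the case $k=2$ of parts (1) and (2) of Theorem~\ref{T-IN main}, which is precisely the two-direction argument you spell out (the convexity of the diagonal $\Delta_2(\cM(X))$ handling the forward direction and the intersection with $X^2$ handling the converse).
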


In \cite[Theorem 1]{QZ} Qiao and Zhou strengthened Corollary~\ref{C-IN for prob}. They showed that for any fixed sequence $\mathfrak{s}$ in $\Gamma$, $h_{\rm top}(X; \mathfrak{s})=0$ if and only if $h_{\rm top}(\cM(X); \mathfrak{s})=0$.

For $k\in \Nb$, we say $\Gamma\curvearrowright X$ is {\it uniformly nonnull of order $k$} if $\IN_k(X)=X^k$ \cite[page 894]{KL07}. In view of Theorem~\ref{T-IN basic}.\eqref{i-IN subset}, from Theorem~\ref{T-IN main}.(2) we have the following corollary.

\begin{corollary} \label{C-IN order k}
Let  $k, N\in \Nb$ and $\olambda\in \orP_N$. If $\Gamma\curvearrowright \cM_\olambda(X)$ or $\Gamma\curvearrowright \cM(X)$ is uniformly nonnull of order $k$, then so is $\Gamma\curvearrowright X$.
\end{corollary}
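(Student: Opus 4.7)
The plan is to deduce Corollary~\ref{C-IN order k} as a direct consequence of Theorem~\ref{T-IN main}.(2) together with Theorem~\ref{T-IN basic}.\eqref{i-IN subset}. The two hypotheses -- uniform nonnullness of order $k$ for $\Gamma\curvearrowright \cM_\olambda(X)$ and for $\Gamma\curvearrowright \cM(X)$ -- will be handled in parallel, with the $\cM_\olambda(X)$ case reduced to the $\cM(X)$ case in a single line.

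First I would set up the chain of closed $\Gamma$-invariant inclusions. Under the identification $x\mapsto \delta_x$, we have $X\subseteq \cM_\olambda(X)$: indeed, writing $\olambda=(\lambda_1,\dots,\lambda_N)$ and applying $\pi_\olambda$ to the diagonal point $(x,\dots,x)\in X^N$ gives $\delta_x=\sum_{i\in [N]}\lambda_i\delta_x\in \cM_\olambda(X)$. Thus $X\subseteq \cM_\olambda(X)\subseteq \cM(X)$ as closed $\Gamma$-invariant subspaces.

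Now suppose first that $\Gamma\curvearrowright \cM_\olambda(X)$ is uniformly nonnull of order $k$, i.e., $\IN_k(\cM_\olambda(X))=\cM_\olambda(X)^k$. Applying Theorem~\ref{T-IN basic}.\eqref{i-IN subset} to the inclusion $\cM_\olambda(X)\subseteq \cM(X)$ yields
$$\cM_\olambda(X)^k=\IN_k(\cM_\olambda(X))\subseteq \IN_k(\cM(X)),$$
and since $X^k\subseteq \cM_\olambda(X)^k$ this gives $X^k\subseteq \IN_k(\cM(X))$. The assumption that $\Gamma\curvearrowright \cM(X)$ is uniformly nonnull of order $k$ also trivially yields $X^k\subseteq \cM(X)^k=\IN_k(\cM(X))$. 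In either case, intersecting with $X^k$ and invoking Theorem~\ref{T-IN main}.(2) gives
$$X^k=X^k\cap \IN_k(\cM(X))=\IN_k(X),$$
so $\Gamma\curvearrowright X$ is uniformly nonnull of order $k$.

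There is no real obstacle here, as all the genuine work has been absorbed into Theorem~\ref{T-IN main}.(2); the corollary is purely a packaging step, and essentially the entire proof consists of noting that $X\hookrightarrow \cM_\olambda(X)\hookrightarrow \cM(X)$ are closed $\Gamma$-invariant inclusions and then quoting the two cited results.
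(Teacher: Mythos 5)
Your proof is correct and follows exactly the route the paper intends: the inclusion $X\subseteq \cM_\olambda(X)\subseteq \cM(X)$ as closed $\Gamma$-invariant subsets, Theorem~\ref{T-IN basic}.\eqref{i-IN subset} to pass the hypothesis on $\cM_\olambda(X)$ up to $\cM(X)$, and Theorem~\ref{T-IN main}.(2) to conclude $\IN_k(X)=X^k$. Nothing to add.
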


\section{Measure $\IN$-tuples of $\cM(X)$} \label{S-measure IN for prob}

In this section we prove Theorem~\ref{T-measure IN main}. Throughout this section we consider a continuous action of a countably infinite group $\Gamma$ on a compact metrizable space $X$.

We start with the following  generalization of  Lemma~\ref{L-measure func to func}.

\begin{lemma} \label{L-measure IN func to func}
Let $\rmm\in \cM_\Gamma(\cM(X))$ and let $\mu\in \cM_\Gamma(X)$ be the barycenter of $\rmm$. Let $k\in \Nb$, $\onu=(\nu_1, \dots, \nu_k)\in \cM(X)^k$ and $\of=(f_1, \dots, f_k)\in C(X)^{\oplus k}$.  Let $R=\frac{1}{k}\sum_{j\in [k]}\nu_j(f_j)$ and let $r<R$.
Let $C\in \Rb$.
Then there are some $0<\delta<1$ and some product neighborhood $U_1\times \cdots \times U_k$ of $\onu$ in $\cM(X)^k$ such that the following holds. For any $0<\eta\le 1$ and any $D\in \sB'(\mu, \eta \delta)$ (see Section~\ref{SS-measure IN}) there is some $D'\in \sB'(\rmm, \eta)$ so that for any independence set $F\in \cF(\Gamma)$ of $(U_1, \dots, U_k)$ relative to $D'$ and any surjective map $\psi: F\rightarrow [k]$,  there is
some $x\in X$ such that
\begin{align} \label{E-measure IN func to func}
\frac{1}{k}\sum_{j\in [k]}\frac{1}{|\psi^{-1}(j)|}\sum_{s\in \psi^{-1}(j)}\left(f_j(sx){\bf 1}_{D_s}(x)-(k-1)C{\bf 1}_{X\setminus D_s}(x)\right)\ge r.
\end{align}
\end{lemma}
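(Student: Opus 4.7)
The plan is to mimic the proof of Lemma~\ref{L-measure func to func}, adapted to the sequential setting in which the single Borel set $D$ is replaced by the family $D:\Gamma\to\sB(X)$. The normalization trick used there (conditioning on $D$ by dividing the integral by $\omega(D)$) is no longer available because $\omega(D_s)$ varies with $s$; instead, its role will be played by the penalty term $-(k-1)C\,{\bf 1}_{X\setminus D_s}(x)$ already built into \eqref{E-measure IN func to func}, which supplies an additive correction that is uniform in~$s$.

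First I would pick $\theta,\delta\in(0,1)$ small enough that $R-\theta-(\|\of\|+(k-1)|C|)\delta\ge r$, and set $U_j=\{\omega\in\cM(X):\omega(f_j)\ge\nu_j(f_j)-\theta\}$ for $j\in[k]$. For each $s\in\Gamma$ I would then define
$$D_s'=\{\omega\in\cM(X):\omega(D_s)\ge 1-\delta\}.$$
By Lemma~\ref{L-barycenter}, $\omega\mapsto\omega(D_s)$ is Borel, so each $D_s'$ is Borel in $\cM(X)$. A Markov-type estimate against the barycenter identity,
$$\eta\delta\ge\mu(X\setminus D_s)=\int_{\cM(X)}\omega(X\setminus D_s)\,d\rmm(\omega)\ge\delta\cdot\rmm(\cM(X)\setminus D_s'),$$
then gives $\rmm(D_s')\ge 1-\eta$ for every $s\in\Gamma$, i.e.\ $D'\in\sB'(\rmm,\eta)$.

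Given an independence set $F\in\cF(\Gamma)$ of $(U_1,\dots,U_k)$ relative to $D'$ and a surjection $\psi:F\to[k]$, I would next pick
$$\omega\in\bigcap_{s\in F}\bigl(D_s'\cap s^{-1}U_{\psi(s)}\bigr),$$
so that for every $s\in F$ both $(s\omega)(f_{\psi(s)})\ge\nu_{\psi(s)}(f_{\psi(s)})-\theta$ and $\omega(X\setminus D_s)\le\delta$. Writing $g(x)$ for the left-hand side of \eqref{E-measure IN func to func} and using the pointwise bound $\int_X f_j(sx){\bf 1}_{D_s}(x)\,d\omega(x)\ge(s\omega)(f_j)-\|f_j\|\,\omega(X\setminus D_s)$ together with $\omega(X\setminus D_s)\le\delta$, integration of $g$ against $\omega$ would give
$$\int_X g(x)\,d\omega(x)\ge\frac1k\sum_{j\in[k]}(\nu_j(f_j)-\theta)-(\|\of\|+(k-1)|C|)\delta=R-\theta-(\|\of\|+(k-1)|C|)\delta\ge r,$$
so that some $x\in X$ satisfies \eqref{E-measure IN func to func}, as required.

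There is no genuine new obstacle beyond Lemma~\ref{L-measure func to func}; the only point requiring care is that the uniform estimate $\omega(X\setminus D_s)\le\delta$ for every $s\in F$ (which comes from $\omega\in D_s'$ for every such~$s$) is exactly what lets the additive penalty $-(k-1)C\,{\bf 1}_{X\setminus D_s}$ substitute for the normalization by $1/\omega(D)$ used in the non-sequential case. This is presumably the reason such a penalty was built into the statement of \eqref{E-measure IN func to func} in the first place.
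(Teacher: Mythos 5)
Your proposal is correct and follows essentially the same route as the paper's proof: the same choice of $\theta,\delta$ with $R-\theta-(\|\of\|+(k-1)|C|)\delta\ge r$, the same neighborhoods $U_j$, the same sets $D_s'=\{\omega:\omega(D_s)\ge 1-\delta\}$ justified via Lemma~\ref{L-barycenter} and a Markov estimate, and the same averaging of the penalized integrand against a measure $\omega\in\bigcap_{s\in F}(D_s'\cap s^{-1}U_{\psi(s)})$. The only cosmetic difference is how the error terms are grouped (you bound $f_j(sx){\bf 1}_{D_s}$ and the penalty separately, the paper bounds $(f_j(sx)+(k-1)C){\bf 1}_{X\setminus D_s}$ at once), which yields the identical estimate.
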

\begin{proof} We take a $C_1>0$ such that $C_1\ge \|\of\|$. We also take $0<\theta, \delta<1$ small such that
$$R-\theta-(C_1+(k-1)|C|)\delta  \ge r. $$
For each $j\in [k]$, we denote by $U_j$ the set of $\omega\in \cM(X)$ satisfying $\omega(f_j)\ge \nu_j(f_j)-\theta$. This is a neighborhood of $\nu_j$ in $\cM(X)$. We shall show that $\delta$ and $U_1\times \cdots \times U_k$  have the desired property. 

Let $0<\eta\le 1$
and $D\in \sB'(\mu, \eta \delta)$. For each $s\in \Gamma$, we denote by $D'_s$ the set of $\omega\in \cM(X)$ satisfying $\omega(D_s)\ge 1-\delta$. By Lemma~\ref{L-barycenter} we know that $D_s'$ is Borel and
\begin{align*}
\eta \delta &\ge \mu(X\setminus D_s)=\int_{\cM(X)}\omega(X\setminus D_s)\, d\rmm(\omega)\ge \int_{\cM(X)\setminus D'_s}\omega(X\setminus D_s)\, d\rmm(\omega)\\
&\ge \delta\cdot \rmm(\cM(X)\setminus D'_s)=\delta(1-\rmm(D'_s)),
\end{align*}
whence
$$\rmm(D'_s)\ge 1-\eta.$$
That is, $D'\in \sB'(\rmm, \eta)$.

Let $F\in \cF(\Gamma)$ be an independence set for $(U_1, \dots, U_k)$ relative to $D'$ and let $\psi: F\rightarrow [k]$ be surjective. Then $\bigcap_{s\in F}(D'_s\cap s^{-1}U_{\psi(s)})$ is nonempty, whence 
we can find some $\omega\in \bigcap_{s\in F}(D'_s\cap s^{-1}U_{\psi(s)})$.
We have
\begin{align*}
\int_X \left(\frac{1}{k}\sum_{j\in [k]}\frac{1}{|\psi^{-1}(j)|}\sum_{s\in \psi^{-1}(j)}f_j(sx)\right)\, d\omega(x)&=\frac{1}{k}\sum_{j\in [k]}\frac{1}{|\psi^{-1}(j)|}\sum_{s\in \psi^{-1}(j)}(s\omega)(f_j)\\
&\ge \frac{1}{k}\sum_{j\in [k]}\frac{1}{|\psi^{-1}(j)|}\sum_{s\in \psi^{-1}(j)}(\nu_j(f_j)-\theta)\\
&=\frac{1}{k}\sum_{j\in [k]}(\nu_j(f_j)-\theta)=R-\theta,
\end{align*}
where in the inequality we use $s\omega\in U_{\psi(s)}$ for all $s\in F$, and hence
\begin{align*}
& \int_X \left(\frac{1}{k}\sum_{j\in [k]}\frac{1}{|\psi^{-1}(j)|}\sum_{s\in \psi^{-1}(j)}\left(f_j(sx){\bf 1}_{D_s}(x)-(k-1)C {\bf 1}_{X\setminus D_s}(x)\right)\right)\, d\omega(x)\\
&\quad \quad = \int_X \left(\frac{1}{k}\sum_{j\in [k]}\frac{1}{|\psi^{-1}(j)|}\sum_{s\in \psi^{-1}(j)}f_j(sx)\right)\, d\omega(x)\\
&\quad \quad \quad \quad -\int_X \left(\frac{1}{k}\sum_{j\in [k]}\frac{1}{|\psi^{-1}(j)|}\sum_{s\in \psi^{-1}(j)}(f_j(sx)+(k-1)C){\bf 1}_{X\setminus D_s}(x)\right)\, d\omega(x)\\
&\quad \quad\ge R-\theta-\frac{1}{k}\sum_{j\in [k]}\frac{1}{|\psi^{-1}(j)|}\sum_{s\in \psi^{-1}(j)}(C_1+(k-1)|C|)\omega(X\setminus D_s)\\
&\quad \quad\ge R-\theta-(C_1+(k-1)|C|)\delta \ge r,
\end{align*}
where in the second inequality we use $\omega\in D_s'$ for all $s\in F$.
Thus we can find some $x\in X$ such that \eqref{E-measure IN func to func} holds.
\end{proof}

The following lemma is an analogue of Lemmas~\ref{L-Hahn Banach IE} and \ref{L-Hahn Banach measure IE}, and can be proved in the same way.

\begin{lemma} \label{L-Hahn Banach measure IN}
Let $\rmm\in \cM_\Gamma(\cM(X))$ and let $\mu\in \cM_\Gamma(X)$ be the barycenter of $\rmm$.
If $ \IN^\rmm_k(\cM(X))\nsubseteq \overline{\rm co}(\IN^\mu_k(X))$ for some $k\in \Nb$, then there are some $(\nu_1, \dots, \nu_k)\in \IN^\rmm_k(\cM(X))$ and some $(f_1, \dots, f_k)\in C(X)^{\oplus k}$ such that $\sum_{j\in [k]}f_j(x_j)\le 0$
for all $(x_1, \dots, x_k)\in \IN^\mu_k(X)$ and
$\sum_{j\in [k]}\nu_j(f_j)>0$.
\end{lemma}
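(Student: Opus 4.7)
The plan is to mimic the proofs of Lemmas~\ref{L-Hahn Banach IE} and \ref{L-Hahn Banach measure IE} verbatim, using the geometric form of the Hahn--Banach theorem together with the fact that $\IN^\mu_k(X)$ is already closed in $X^k$.

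First I would note that by Theorem~\ref{T-measure IN basic}.\eqref{i-measure IN invariant} the set $\IN^\mu_k(X)$ is a closed subset of $X^k$, and hence (since $X \hookrightarrow \cM(X)$ is a closed embedding and $X^k$ sits inside $\cM(X)^k$) it is a compact subset of $\cM(X)^k \subseteq (C(X)^{\oplus k})^*$. Therefore $\overline{\rm co}(\IN^\mu_k(X))$ is a compact convex subset of $(C(X)^{\oplus k})^*$ equipped with the weak$^*$-topology.

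Next, assuming $\IN^\rmm_k(\cM(X)) \nsubseteq \overline{\rm co}(\IN^\mu_k(X))$, I would pick a point $\omu = (\nu_1, \dots, \nu_k) \in \IN^\rmm_k(\cM(X)) \setminus \overline{\rm co}(\IN^\mu_k(X))$. The two sets $\{\omu\}$ and $\overline{\rm co}(\IN^\mu_k(X))$ are disjoint, compact, and convex in $(C(X)^{\oplus k})^*$. Since the dual of $(C(X)^{\oplus k})^*$ under the weak$^*$-topology is $C(X)^{\oplus k}$ (as in the proof of Lemma~\ref{L-Hahn Banach IE}, citing \cite[Theorem V.1.3]{Conway}), the geometric Hahn--Banach theorem \cite[Proposition IV.3.6.(b) and Theorem IV.3.9]{Conway} produces an $\of = (f_1, \dots, f_k) \in C(X)^{\oplus k}$ and a strict separation
\[
\sum_{j \in [k]} \eta_j(f_j) \le 0 < \sum_{j \in [k]} \nu_j(f_j)
\]
for all $(\eta_1, \dots, \eta_k) \in \overline{\rm co}(\IN^\mu_k(X))$.

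Finally, specializing the inequality to $(x_1, \dots, x_k) \in \IN^\mu_k(X)$ with $\eta_j = \delta_{x_j}$ yields $\sum_{j \in [k]} f_j(x_j) \le 0$, which together with $\sum_{j \in [k]} \nu_j(f_j) > 0$ gives exactly the conclusion of the lemma. There is no real obstacle here: the argument is a formal application of Hahn--Banach and does not require any of the combinatorial machinery, since that machinery is only needed later to rule out the pathological case and actually prove Theorem~\ref{T-measure IN main}.(1).
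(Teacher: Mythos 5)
Your proposal is correct and follows exactly the route the paper intends: the paper proves this lemma by declaring it "an analogue of Lemmas~\ref{L-Hahn Banach IE} and \ref{L-Hahn Banach measure IE}, proved in the same way," i.e.\ by the Hahn--Banach separation argument you reproduce, using that $\IN^\mu_k(X)$ is closed (Theorem~\ref{T-measure IN basic}) so that its closed convex hull and the chosen point are disjoint compact convex sets in $(C(X)^{\oplus k})^*$. The only detail left implicit (by you and by the paper alike) is that the separating threshold can be normalized to $0$ by subtracting a constant from one $f_j$, which is harmless since every element of the hull is a $k$-tuple of probability measures.
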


We are ready to prove Theorem~\ref{T-measure IN main}.

\begin{proof}[Proof of Theorem~\ref{T-measure IN main}]
(1). In view of Lemma~\ref{L-Hahn Banach measure IN}, it suffices to show that given any $\onu=(\nu_1, \dots, \nu_k)\in \IN^\rmm_k(\cM(X))$ and  $\of=(f_1, \dots, f_k)\in C(X)^{\oplus k}$ satisfying $\sum_{j\in [k]}\nu_j(f_j)>0$, there is an $(x_1, \dots, x_k)\in \IN^\mu_k(X)$ such that $\sum_{j\in [k]}f_j(x_j)>0$.

We take a  $C>0$ with $C\ge \|\of\|$. We set $R=\frac{1}{k}\sum_{j\in [k]} \nu_j(f_j)>0$ and $r=R/4$. Then we have $c, N>0$ and $T\subseteq \Rb^k$ given by Lemma~\ref{L-func to indep} for $k, r, \frac{R}{2}, kC$ and $p=\infty$.
By Lemma~\ref{L-measure IN func to func} we can find a
$0<\delta<1$ and a product neighborhood $U_1\times \cdots \times U_k$ of $\onu$ in $\cM(X)^k$ such that the following holds. For any $0<\eta\le 1$ and any $D\in \sB'(\mu, \eta \delta)$ there is some $D'\in \sB'(\rmm, \eta)$ so that for any independence set $F\in \cF(\Gamma)$ of $\oU=(U_1, \dots, U_k)$ relative to $D'$ and any surjective map $\psi: F\rightarrow [k]$,  there is
some $x\in X$ such that
$$\frac{1}{k}\sum_{j\in [k]}\frac{1}{|\psi^{-1}(j)|}\sum_{s\in \psi^{-1}(j)}\left(f_j(sx){\bf 1}_{D_s}(x)-(k-1)C{\bf 1}_{X\setminus D_s}(x)\right)\ge \frac{R}{2}.$$

Since $\onu\in \IN^\rmm_k(\cM(X))$, $\oU$ has positive sequential $\rmm$-independence density. Then there are $0< \eta, c_1<1$ such that for every $M>0$ there is an $F'\in \cF(\Gamma)$ with $|F'|\ge M$ so that for every $D'\in \sB'(\rmm, \eta)$ there is an independence set $F\subseteq F'$ for $\oU$ relative to $D'$ with $|F|\ge c_1|F'|$.

Let $D\in \sB'(\mu, \eta \delta)$ and let $F'\in \cF(\Gamma)$ with $|F'|\ge \frac{2\max(N, k)}{c_1}$ such that for every $D'\in \sB'(\rmm, \eta)$ there is an independence set $F\subseteq F'$ for $\oU$ relative to $D'$ with $|F|\ge c_1|F'|$. We claim that there are some $J\subseteq F'$ with $|J|\ge \frac{cc_1}{2}|F'|$ and  some $\ot\in T$ such that $J$ is an independence set for $\oA_{\of, \ot}$ (see Notation~\ref{N-tuple}) relative to $D$.
By our choice of $\delta$ and $U_1\times \cdots \times U_k$, we can find some $D'\in \sB'(\rmm, \eta)$ such that
for any independence set $F\in \cF(\Gamma)$ of $\oU$ relative to $D'$ and any surjective map $\psi: F\rightarrow [k]$,  there is
some $x\in X$ such that
$$\frac{1}{k}\sum_{j\in [k]}\frac{1}{|\psi^{-1}(j)|}\sum_{s\in \psi^{-1}(j)}\left(f_j(sx){\bf 1}_{D_s}(x)-(k-1)C{\bf 1}_{X\setminus D_s}(x)\right)\ge \frac{R}{2}.$$
By our assumption on $F'$ we can find an independence set $F\subseteq F'$ for $\oU$ relative to $D'$ such that $|F|$ is divisible by $k$ and
$$|F|\ge c_1|F'|-(k-1)\ge \frac{c_1}{2}|F'|\ge N.$$
For each $\psi\in \sR(F, k)$, since $|F|$ is divisible by $k$, we have $|\psi^{-1}(j)|=|F|/k$ for every $j\in [k]$. In particular, $\psi$ is surjective. Thus, 
by our choice of $D'$ we can find an $x_\psi\in X$ so that
\begin{align*}
 \frac{1}{k}\sum_{j\in [k]}\frac{1}{|\psi^{-1}(j)|}\sum_{s\in \psi^{-1}(j)}\left(f_j(sx_\psi){\bf 1}_{D_s}(x_\psi)-(k-1)C{\bf 1}_{X\setminus D_s}(x_\psi)\right)\ge \frac{R}{2}.
 \end{align*}
We define a function $f_\psi: F\rightarrow [-kC, kC]$ by
$$ f_\psi(s)=f_{\psi(s)}(sx_\psi){\bf 1}_{D_s}(x_\psi)-(k-1)C{\bf 1}_{X\setminus D_s}(x_\psi).$$
Then
\begin{align*}
 \frac{1}{|F|}\sum_{s\in F}f_\psi(s)&=\frac{1}{|F|}\sum_{s\in F}\left(f_{\psi(s)}(sx_\psi){\bf 1}_{D_s}(x_\psi)-(k-1)C{\bf 1}_{X\setminus D_s}(x_\psi)\right)\\
 &=\frac{1}{k}\sum_{j\in [k]}\frac{1}{|\psi^{-1}(j)|}\sum_{s\in \psi^{-1}(j)}\left(f_j(sx_\psi){\bf 1}_{D_s}(x_\psi)-(k-1)C{\bf 1}_{X\setminus D_s}(x_\psi)\right)\ge \frac{R}{2}.
 \end{align*}
By our choice of $c, N$ and $T$ we can find some $J\subseteq F$ with $|J|\ge c|F|$ and some $\ot=(t_1, \dots, t_k)\in T$  such that every map $\sigma: J\rightarrow [k]$ extends to a $\psi\in \sR(F, k)$ so that $f_\psi(s)\ge t_j$ for all $j\in [k]$ and $s\in \sigma^{-1}(j)$.
For each $j\in [k]$, taking a $\sigma: J\rightarrow [k]$ with $j\in \sigma(J)$, we have
$$t_j\le f_\psi(s)\le C$$
for all $s\in \sigma^{-1}(j)$. Let $\sigma$ and $\psi$ be as above.
If $x_\psi\in X\setminus D_\gamma$ for some $\gamma\in J$, then
\begin{align*}
r\le \frac{1}{k}\sum_{j\in [k]}t_j=\frac{1}{k}\left(t_{\sigma(\gamma)}+\sum_{j\in [k]\setminus \{\sigma(\gamma)\}}t_j\right)\le
\frac{1}{k}(f_\psi(\gamma)+(k-1)C)=0,
\end{align*}
which is impossible. Thus, $x_\psi\in \bigcap_{s\in J}D_s$. Then
$$ f_{\sigma(s)}(sx_\psi)=f_{\psi(s)}(sx_\psi)=f_\psi(s)\ge t_{\sigma(s)}$$
for all $s\in J$, and whence
$$x_\psi\in \bigcap_{s\in J}(D_s\cap s^{-1}f_{\sigma(s)}^{-1}([t_{\sigma(s)}, \infty))).$$
Therefore $J$ is an independence set for  $\oA_{\of, \ot}$ relative to $D$.
Note that
$$|J|\ge c|F|\ge \frac{cc_1}{2}|F'|.$$
This proves our claim.

Next we claim that there is some $\ot\in T$ such that $\oA_{\of, \ot}$ has positive sequential $\mu$-independence density.
We argue by contradiction. Assume that for each $\ot\in T$ the tuple $\oA_{\of, \ot}$ does not have positive sequential $\mu$-independence density.
Then there is some $M_\ot>0$ such that for any $F'\in \cF(\Gamma)$ with $|F'|\ge M_\ot$, there is some $D\in \sB'(\mu, \eta \delta/|T|)$ so that $\oA_{\of, \ot}$ has no independence sets $J\subseteq F'$ relative to $D$ with $|J|\ge \frac{cc_1}{2}|F'|$. 
We set
$$M=\max\left(\max_{\ot\in T}M_\ot, \frac{2\max(N, k)}{c_1}\right)>0.$$
By our choice of $\eta$ and $c_1$ we can find an $F'\in \cF(\Gamma)$ with $|F'|\ge M$ such that for every $D'\in \sB'(\rmm, \eta)$ there is an independence set $F\subseteq F'$ for $\oU$ relative to $D'$ with $|F|\ge c_1|F'|$. For each $\ot\in T$, since $|F'|\ge M\ge M_\ot$, there is some $D_\ot\in \sB'(\mu, \eta \delta/|T|)$ so that $\oA_{\of, \ot}$ has no independence sets $J\subseteq F'$ relative to $D_\ot$ with $|J|\ge \frac{cc_1}{2}|F'|$.
We set
$$D_s=\bigcap_{\ot\in T}(D_\ot)_s\in \sB(\mu, \eta\delta)$$
for each $s\in \Gamma$. Then $D\in \sB'(\mu, \eta\delta)$. From the above paragraph we can find some $J\subseteq F'$ with $|J|\ge \frac{cc_1}{2}|F'|$ and  some $\ot\in T$ such that $J$ is an independence set for $\oA_{\of, \ot}$ relative to $D$. Then $J$ is also an independence set for $\oA_{\of, \ot}$ relative to $D_\ot$, a contradiction to our choice of $D_\ot$. This proves our claim.

Now we can take a $\ot=(t_1, \dots, t_k)\in T$ such that $\oA_{\of, \ot}$ has positive sequential $\mu$-independence density.
By Theorem~\ref{T-measure IN basic}.\eqref{i-measure IN tuple} there is an $(x_1, \dots, x_k)\in \IN^\mu_k(X)$ with $x_j\in f_j^{-1}([t_j, \infty))$ for every $j\in [k]$.
Then
$$ \sum_{j\in [k]}f_j(x_j)\ge \sum_{j\in [k]}t_j\ge kr>0$$
as desired.

(2) is proved in the same way as Theorem~\ref{T-measure IE main}.(2), using part (1), Theorem~\ref{T-measure IN basic} and Lemma~\ref{L-ext}.
\end{proof}

\begin{remark} \label{R-measure IN not convex}
Despite the product formula for measure IN-tuples in Theorem~\ref{T-measure IN basic}.\eqref{i-measure IN product}, $\IN_k^\rmm(\cM(X))$ may fail to be convex for $\rmm\in \cM_\Gamma(\cM(X))$. Similar to the example in Remark~\ref{R-measure IE not convex}, for distinct $\mu_1, \mu_2\in \cM_\Gamma(X)$ and $0<\lambda<1$, if we set $\rmm=\lambda \delta_{\mu_1}+(1-\lambda)\delta_{\mu_2}\in \cM_\Gamma(\cM(X))$, then  it is easily checked that
$$\IN^\rmm_2(\cM(X))=\{(\mu_1, \mu_1), (\mu_2, \mu_2)\}$$
is not convex.
\end{remark}

The action $\Gamma\curvearrowright (X, \mu)$ is null if and only if 
it has discrete spectrum, i.e., every $f\in L^2(X, \mu)$ has compact orbit closure \cite[Theorem 5.5]{KL09} \cite{Kushnirenko}. In view of Theorem~\ref{T-measure IN basic}.\eqref{i-IN pair}, from Theorem~\ref{T-measure IN main}.(1) we have the following consequence.

\begin{corollary}  \label{C-measure IN for prob}
Let $\rmm\in \cM_\Gamma(\cM(X))$ and let $\mu\in \cM_\Gamma(X)$ be the barycenter of $\rmm$. If $\Gamma\curvearrowright (X, \mu)$ has discrete spectrum, then so does $\Gamma\curvearrowright (\cM(X), \rmm)$.
\end{corollary}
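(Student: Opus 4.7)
The plan is to derive the corollary by chaining three ingredients: the equivalence (null $\Leftrightarrow$ discrete spectrum) recalled just before the statement, the $k=2$ case of Theorem~\ref{T-measure IN basic}.\eqref{i-measure IN pair} characterizing nullness in terms of non-diagonal $\mu$-IN-pairs, and the convex-hull containment from Theorem~\ref{T-measure IN main}.(1). The strategy is to show that the absence of non-diagonal $\mu$-IN-pairs for $\Gamma\curvearrowright (X,\mu)$ forces the absence of non-diagonal $\rmm$-IN-pairs for $\Gamma\curvearrowright (\cM(X),\rmm)$.

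First I would translate the hypothesis: since $\Gamma\curvearrowright (X,\mu)$ has discrete spectrum, it is null, whence Theorem~\ref{T-measure IN basic}.\eqref{i-measure IN pair} gives $\IN_2^\mu(X)\subseteq \Delta_2(X)$. Applying Theorem~\ref{T-measure IN main}.(1) with $k=2$, I conclude that $\IN_2^\rmm(\cM(X))$ is contained in the closed convex hull of $\IN_2^\mu(X)$ inside $\cM(X)^2$, and therefore in the closed convex hull of $\Delta_2(X)$ inside $\cM(X)^2$.

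The essential observation I would then verify is the convex-hull identity $\overline{\rm co}(\Delta_2(X))=\Delta_2(\cM(X))$ under the embedding $x\mapsto \delta_x$. Indeed, any finite convex combination $\sum_i \lambda_i(\delta_{x_i},\delta_{x_i})$ equals $(\sum_i \lambda_i\delta_{x_i},\sum_i \lambda_i\delta_{x_i})\in \Delta_2(\cM(X))$, and $\Delta_2(\cM(X))$ is a closed convex subset of $\cM(X)^2$ whose members are limits of such combinations. Hence $\IN_2^\rmm(\cM(X))\subseteq \Delta_2(\cM(X))$, so $\Gamma\curvearrowright (\cM(X),\rmm)$ has no non-diagonal $\rmm$-IN-pairs. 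By the reverse direction of Theorem~\ref{T-measure IN basic}.\eqref{i-measure IN pair}, this says $\Gamma\curvearrowright (\cM(X),\rmm)$ is null, which by the equivalence recalled in the text is the same as having discrete spectrum. I do not anticipate any serious obstacle; the argument is a direct chaining of prior results together with the elementary convex-hull identity above.
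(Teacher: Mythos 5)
Your proposal is correct and is exactly the argument the paper intends (the paper merely says ``in view of Theorem~\ref{T-measure IN basic} ... from Theorem~\ref{T-measure IN main}.(1)''): discrete spectrum $\Leftrightarrow$ null $\Leftrightarrow$ $\IN_2^\mu(X)\subseteq\Delta_2(X)$, then the convex-hull containment gives $\IN_2^\rmm(\cM(X))\subseteq\overline{\rm co}(\Delta_2(X))\subseteq\Delta_2(\cM(X))$, and the same equivalences applied to $(\cM(X),\rmm)$ finish the proof. Your convex-hull observation (of which only the inclusion $\overline{\rm co}(\Delta_2(X))\subseteq\Delta_2(\cM(X))$ is actually needed) is the elementary step the paper leaves implicit, and it is verified correctly.
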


From Theorem~\ref{T-measure IN main}.(2) we have the following corollary.

\begin{corollary} \label{C-measure uniform IN}
Let $\rmm\in \cM_\Gamma(\cM(X))$ and let $\mu\in \cM_\Gamma(X)$ be the barycenter of $\rmm$. Let $k\in \Nb$. If $\IN^\rmm_k(\cM(X))=\cM(X)^k$, then $\IN^\mu_k(X)=X^k$.
\end{corollary}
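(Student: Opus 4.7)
The plan is to reduce the corollary directly to part (2) of Theorem~\ref{T-measure IN main}. First I would recall the convention, used throughout the paper, that $X$ is identified with the closed $\Gamma$-invariant subset $\{\delta_x : x\in X\}$ of $\cM(X)$, so under this identification $X^k\subseteq \cM(X)^k$. The hypothesis $\IN^\rmm_k(\cM(X))=\cM(X)^k$ then contains $X^k$ in particular, which gives
$$\IN^\rmm_k(\cM(X))\cap X^k = X^k.$$

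Next I would invoke Theorem~\ref{T-measure IN main}.(2), which states $\IN^\rmm_k(\cM(X))\cap X^k \subseteq \IN^\mu_k(X)$. Combining this with the previous equality yields $X^k\subseteq \IN^\mu_k(X)$, and since the reverse inclusion $\IN^\mu_k(X)\subseteq X^k$ is trivial by definition, we conclude $\IN^\mu_k(X)=X^k$, as desired. There is no real obstacle at this level: the entire substance of the corollary is already packaged inside part (2) of Theorem~\ref{T-measure IN main}, whose proof in turn rests on the key combinatorial Lemma~\ref{L-func to indep} together with the barycenter/measurable-set comparison of Lemma~\ref{L-measure IN func to func}.
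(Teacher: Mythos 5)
Your proof is correct and is exactly the paper's argument: the corollary is stated there as an immediate consequence of Theorem~\ref{T-measure IN main}.(2), obtained precisely by intersecting the hypothesis $\IN^\rmm_k(\cM(X))=\cM(X)^k$ with $X^k$ (under the Dirac-measure embedding) and using the trivial inclusion $\IN^\mu_k(X)\subseteq X^k$.
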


The following is an analogue of Corollary~\ref{C-average}, and can be proved in the same way, using Theorems~\ref{T-measure IN main} and 
\ref{T-measure IN basic}.

\begin{corollary} \label{C-average IN}
Let $N\in \Nb$ and $\olambda\in \orP_N$. Let $\mu\in \cM_\Gamma(X)$, and set $\rmm=(\pi_\olambda)_*\mu^N\in \cM_\Gamma(\cM_\olambda(X))$ (see Notation~\ref{N-finite support2}).
For any $k\in \Nb$, $\IN^\mu_k(X)=X^k$ if and only if $\IN^\rmm_k(\cM_\olambda(X))=\cM_\olambda(X)^k$.
\end{corollary}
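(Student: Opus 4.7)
The plan is to mirror the proof of Corollary~\ref{C-average} line by line, substituting each appeal to Theorems~\ref{T-measure IE basic} and \ref{T-measure IE main} by the corresponding IN-version from Theorems~\ref{T-measure IN basic} and \ref{T-measure IN main}. The two implications use essentially disjoint ingredients, so I would handle them separately.

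For the forward direction, assume $\IN^\mu_k(X) = X^k$. First I would apply the product formula Theorem~\ref{T-measure IN basic}.\eqref{i-measure IN product} inductively to obtain $\IN^{\mu^N}_k(X^N) = (X^N)^k$. Next, $\pi_\olambda\colon X^N \to \cM_\olambda(X)$ is a factor map of $\Gamma$-systems (with $(\pi_\olambda)_*\mu^N = \rmm$ by definition of $\rmm$), so the factor formula Theorem~\ref{T-measure IN basic}.\eqref{i-measure IN factor} immediately yields $\IN^\rmm_k(\cM_\olambda(X)) = \cM_\olambda(X)^k$.

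For the reverse direction, I would view $\rmm$ as a member of $\cM_\Gamma(\cM(X))$ through the natural inclusion $\cM_\olambda(X) \hookrightarrow \cM(X)$. A short computation identical to the one in Corollary~\ref{C-average} identifies the barycenter of $\rmm$ in $\cM(X)$ as $\mu$: for $f \in C(X)$,
$$\int_{\cM(X)} \nu(f)\, d\rmm(\nu) = \int_{X^N} \sum_{j\in [N]} \lambda_j f(x_j)\, d\mu^N(x_1,\dots,x_N) = \mu(f).$$
Using the diagonal embedding $x \mapsto \sum_i \lambda_i \delta_x = \delta_x$ we have $X \subseteq \cM_\olambda(X)$, and the inclusion $\IN^\rmm_k(\cM_\olambda(X)) \subseteq \IN^\rmm_k(\cM(X))$ holds because neighborhoods in $\cM_\olambda(X)$ are restrictions of neighborhoods in $\cM(X)$, so any independence set witnessing positive sequential $\rmm$-independence density inside $\cM_\olambda(X)^k$ also witnesses it inside $\cM(X)^k$. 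Combining these with Theorem~\ref{T-measure IN main}.(2) gives the chain
$$X^k = \cM_\olambda(X)^k \cap X^k = \IN^\rmm_k(\cM_\olambda(X)) \cap X^k \subseteq \IN^\rmm_k(\cM(X)) \cap X^k \subseteq \IN^\mu_k(X) \subseteq X^k,$$
forcing equality throughout.

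I do not anticipate a genuine obstacle: the template of Corollary~\ref{C-average} is perfectly transferable since every ingredient invoked there (product formula, factor formula, and the ``$\cap X^k$'' direction of the main theorem) has an exact IN analogue that has already been proved in this paper. The only minor point worth verifying carefully is the inclusion $\IN^\rmm_k(\cM_\olambda(X)) \subseteq \IN^\rmm_k(\cM(X))$, but this is a direct unwinding of the definition of sequential $\rmm$-independence density in terms of relative independence sets $F \subseteq \Gamma$ for tuples of Borel sets $D \subseteq \cM(X)$.
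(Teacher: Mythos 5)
Your proposal is correct and follows exactly the route the paper intends: the paper proves this corollary by stating it "can be proved in the same way" as Corollary~\ref{C-average}, i.e., product formula plus factor formula from Theorem~\ref{T-measure IN basic} for the "only if" direction, and the barycenter computation, the inclusion $\IN^\rmm_k(\cM_\olambda(X))\subseteq \IN^\rmm_k(\cM(X))$, and Theorem~\ref{T-measure IN main}.(2) for the "if" direction. Your verification of the inclusion via restricting the sets $D_s$ to $\cM_\olambda(X)$ is exactly the "easily checked" step the paper leaves implicit.
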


\section{$\IT$-tuples of $\cM(X)$} \label{S-IT for prob}

In this section we prove Theorem~\ref{T-IT main}. Throughout this section we consider a continuous action of a countably infinite group $\Gamma$ on a compact metrizable space $X$.

Similar to the case of IN-tuples, $\IT_k(\cM(X))$ may fail to be convex (see Section~\ref{S-example} for an example). In order to obtain an explicit description of elements of $\IT_k(\cM(X))$ like that for $\IE_k(\cM(X))$ in Theorem~\ref{T-IE main}.(3), we need to find a suitable subset of $\cM(\IT_k(X))$.  
The following is the IT version of Definition~\ref{D-IN measure}.

\begin{definition} \label{D-IT measure}
Let $k\in \Nb$. We say $\mu\in \cM(X^k)$ is an {\it $\IT$-measure} if for any $n\in \Nb$ and   any $(x_1^{(i)}, \dots, x_k^{(i)})\in \supp(\mu)$ for $i\in [n]$, one has
$$\left((x_1^{(1)}, \dots, x_1^{(n)}), (x_2^{(1)}, \dots, x_2^{(n)}), \dots, (x_k^{(1)}, \dots, x_k^{(n)})\right)\in \IT_k(X^n).$$
We denote by $\cM_\IT(X^k)$ the set of all $\IT$-measures in  $\cM(X^k)$.
\end{definition}

\begin{remark} \label{R-IT measure}
Similar to Remark~\ref{R-IN measure}, using Theorem~\ref{T-IT basic}.\eqref{i-IT subset} one sees that in Definition~\ref{D-IT measure} we obtain the same definition no matter we require the points $(x_1^{(i)}, \dots, x_k^{(i)})\in \supp(\mu)$ for $i\in [n]$ to be distinct or not. 
\end{remark}

The following is an analogue of Proposition~\ref{P-IN measure basic}, and can be proved in the same way using Theorem~\ref{T-IT basic}.\eqref{i-IT invariant}.

\begin{proposition} \label{P-IT measure basic}
For each $k\in \Nb$, $\cM_\IT(X^k)$ is a $\Gamma$-invariant closed subset of $\cM(\IT_k(X))$.
\end{proposition}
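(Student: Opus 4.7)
The plan is to follow the proof of Proposition~\ref{P-IN measure basic} almost verbatim, replacing each appeal to Theorem~\ref{T-IN basic} by its IT-analogue Theorem~\ref{T-IT basic}. The structure has three parts: showing $\cM_\IT(X^k)\subseteq \cM(\IT_k(X))$, then $\Gamma$-invariance, then closedness.

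First, I would take $n=1$ in Definition~\ref{D-IT measure}. This forces $\supp(\mu)\subseteq \IT_k(X)$ for every $\mu\in \cM_\IT(X^k)$, so each such $\mu$ is actually a Borel probability measure on $\IT_k(X)$. For $\Gamma$-invariance, I would invoke Theorem~\ref{T-IT basic}.\eqref{i-IT invariant}, which says $\IT_k(X^n)$ is $\Gamma$-invariant for every $n\in \Nb$; since the condition defining an IT-measure is the membership of certain tuples in these $\Gamma$-invariant sets, it is preserved by the action.

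The main step, closedness, goes as follows. Let $\mu$ lie in the closure of $\cM_\IT(X^k)$ in $\cM(X^k)$, fix $n\in \Nb$, and pick points $\ox^{(i)}=(x_1^{(i)}, \dots, x_k^{(i)})\in \supp(\mu)$ for $i\in [n]$. Consider the natural homeomorphism
\[
\Phi: (X^k)^n\rightarrow (X^n)^k,\quad ((z_1^{(i)}, \dots, z_k^{(i)}))_{i\in [n]}\mapsto \left((z_1^{(1)}, \dots, z_1^{(n)}), \dots, (z_k^{(1)}, \dots, z_k^{(n)})\right).
\]
Given a neighborhood $V$ of $\Phi(\ox^{(1)}, \dots, \ox^{(n)})$ in $(X^n)^k$, choose open neighborhoods $U_i$ of $\ox^{(i)}$ in $X^k$ with $\Phi(U_1\times \cdots \times U_n)\subseteq V$. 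Since $\mu(U_i)>0$ for each $i$, and the functionals $\nu\mapsto \nu(U_i)$ are lower semicontinuous on $\cM(X^k)$, there exists $\nu\in \cM_\IT(X^k)$ close enough to $\mu$ with $\nu(U_i)>0$ for all $i\in [n]$. Picking $\oy^{(i)}\in \supp(\nu)\cap U_i$ and applying the defining property of $\cM_\IT(X^k)$ to $\nu$ gives $\Phi(\oy^{(1)}, \dots, \oy^{(n)})\in V\cap \IT_k(X^n)$. Since $\IT_k(X^n)$ is closed in $(X^n)^k$ by Theorem~\ref{T-IT basic}.\eqref{i-IT invariant}, letting $V$ shrink yields $\Phi(\ox^{(1)}, \dots, \ox^{(n)})\in \IT_k(X^n)$, so $\mu\in \cM_\IT(X^k)$.

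There is no real obstacle here — the proof is a direct transcription of the IN argument, since all of Theorem~\ref{T-IN basic}.\eqref{i-IN invariant} and \eqref{i-IN subset} have exact IT-analogues in Theorem~\ref{T-IT basic}, and Remark~\ref{R-IT measure} already handles the minor subtlety about whether the points $\ox^{(i)}$ need to be distinct. The only thing worth double-checking is that lower semicontinuity of $\nu\mapsto \nu(U)$ on open sets $U$ is used in the same way, which is a standard fact about the weak$^*$-topology.
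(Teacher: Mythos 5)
Your proof is correct and is exactly the paper's approach: the paper proves this proposition by noting it can be established verbatim as Proposition~\ref{P-IN measure basic}, replacing the appeals to Theorem~\ref{T-IN basic} with Theorem~\ref{T-IT basic}.\eqref{i-IT invariant}, which is precisely the transcription you carried out.
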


For $k, n\in \Nb$, we set
$$\cM_{\IT, n}(X^k)=\cM_\IT(X^k)\cap \cM_n(X^k)$$
to be the set of $\mu\in \cM_\IN(X^k)$ with $|\supp(\mu)|\le n$, and  set (see Notation~\ref{N-finite support2})
$$\cB_{k, n}=\bigcup_{\olambda\in \orP_n}(\pi_\olambda\times \cdots \times \pi_\olambda)(\IT_k(X^n))\subseteq  \cM_n(X)^k.$$
In order to prove part (3) of Theorem~\ref{T-IT main}, as in Section~\ref{S-IN for prob}, we shall show that both $\{(\mu^{(1)}, \dots, \mu^{(k)}): \mu\in \cM_\IT(X^k)\}$
and $\IT_k(\cM(X))$ are equal to $\overline{\bigcup_{n\in \Nb}\cB_{k, n}}$ in Lemmas~\ref{L-IT closure vs push} and \ref{L-IT dense} respectively. 

The following is an analogue of Lemma~\ref{L-IN finite support}, and can be proved in the same way using 
Remark~\ref{R-IT measure}.

\begin{lemma} \label{L-IT finite support}
For any $k, n\in \Nb$, we have
$$\cB_{k, n}=\{(\mu^{(1)}, \dots, \mu^{(k)})\in \cM(X)^k: \mu\in \cM_{\IT, n}(X^k)\}.$$
\end{lemma}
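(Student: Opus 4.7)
The plan is to imitate the proof of Lemma~\ref{L-IN finite support} verbatim, since the only ingredient used there beyond elementary manipulation is the ``allow repeats'' remark (Remark~\ref{R-IN measure}), whose IT analogue is Remark~\ref{R-IT measure}. Thus no new combinatorial or dynamical input is needed.

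First I would parametrize $\cM_{\IT, n}(X^k)$. An element $\mu \in \cM_n(X^k)$ can always be written in the form
\[
\mu = \sum_{i\in[n]} \lambda_i \, \delta_{(x_1^{(i)},\dots,x_k^{(i)})}
\]
for some $\olambda = (\lambda_1,\dots,\lambda_n) \in \orP_n$ and some points $(x_1^{(i)},\dots,x_k^{(i)}) \in X^k$, where points corresponding to zero weights are arbitrary and repeats are allowed. By Remark~\ref{R-IT measure}, the IT-measure condition on $\mu$ is equivalent to saying that, regardless of repetition,
\[
\ox := \bigl((x_1^{(1)},\dots,x_1^{(n)}),\,(x_2^{(1)},\dots,x_2^{(n)}),\,\dots,\,(x_k^{(1)},\dots,x_k^{(n)})\bigr) \in \IT_k(X^n).
\]

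Next I would compute the push-forwards: a direct calculation gives
\[
\mu^{(j)} = \sum_{i\in[n]} \lambda_i \, \delta_{x_j^{(i)}} = \pi_\olambda(x_j^{(1)}, \dots, x_j^{(n)})
\]
for each $j \in [k]$, so that $(\mu^{(1)},\dots,\mu^{(k)}) = (\pi_\olambda \times \cdots \times \pi_\olambda)(\ox)$. This shows the inclusion $\{(\mu^{(1)},\dots,\mu^{(k)}) : \mu \in \cM_{\IT,n}(X^k)\} \subseteq \cB_{k,n}$.

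For the reverse inclusion, given any $\olambda \in \orP_n$ and any $\ox \in \IT_k(X^n)$ written as above, I would define $\mu$ by the formula displayed in the first paragraph; by Remark~\ref{R-IT measure} (which handles the possibility that support points coincide when two indices $i, i'$ give the same $X^k$ point) this $\mu$ lies in $\cM_{\IT,n}(X^k)$, and its push-forwards reconstruct $(\pi_\olambda \times \cdots \times \pi_\olambda)(\ox)$. Since the argument is purely bookkeeping once Remark~\ref{R-IT measure} is in hand, there is no real obstacle — the only subtlety to double-check is that the ``allow repeats'' convention in Remark~\ref{R-IT measure} really covers the case where some $\lambda_i = 0$ or where two of the $k$-tuples coincide, which is immediate from Theorem~\ref{T-IT basic}.\eqref{i-IT subset}.
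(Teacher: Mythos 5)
Your proposal is correct and takes essentially the same route as the paper: the paper proves Lemma~\ref{L-IT finite support} by repeating the proof of Lemma~\ref{L-IN finite support} verbatim with Remark~\ref{R-IT measure} in place of Remark~\ref{R-IN measure}, i.e., the same parametrization of $\cM_{\IT, n}(X^k)$ via $\olambda\in\orP_n$ and $\ox\in\IT_k(X^n)$ and the same push-forward computation identifying $(\mu^{(1)},\dots,\mu^{(k)})$ with $(\pi_\olambda\times\cdots\times\pi_\olambda)(\ox)$. Your attention to repeated support points and zero weights (via Remark~\ref{R-IT measure} and Theorem~\ref{T-IT basic}) is exactly the point the paper delegates to that remark.
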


The following is an analogue of Lemma~\ref{L-IN convex}, and can be proved in the same way using 
Theorem~\ref{T-IT basic}.

\begin{lemma} \label{L-IT convex}
We have $\cB_{k, n}\subseteq \IT_k(\cM_n(X))$ for all $k, n\in \Nb$.
\end{lemma}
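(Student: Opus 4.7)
The plan is to mirror verbatim the argument used for Lemma~\ref{L-IN convex}, with IN replaced by IT throughout. Concretely, I would first rewrite the union
\[
\cB_{k,n} = \bigcup_{\olambda\in \orP_n} (\pi_\olambda\times\cdots\times \pi_\olambda)(\IT_k(X^n))
\]
by applying the factor map identity for IT-tuples (Theorem~\ref{T-IT basic}.\eqref{i-IT factor}) to the factor map $\pi_\olambda: X^n\to \cM_\olambda(X)$, which yields $(\pi_\olambda\times\cdots\times\pi_\olambda)(\IT_k(X^n)) = \IT_k(\cM_\olambda(X))$ for every $\olambda\in\orP_n$. The only thing to check here is that $\pi_\olambda$ really is a factor map, i.e.\ continuous, surjective onto $\cM_\olambda(X)$, and $\Gamma$-equivariant; all three follow immediately from Notation~\ref{N-finite support2} and the linearity of the $\Gamma$-action on $\cM(X)$.

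Next, observe that $\cM_\olambda(X)$ is by construction a closed $\Gamma$-invariant subset of $\cM_n(X)$ for each $\olambda\in\orP_n$. Applying the closed invariant subset monotonicity (Theorem~\ref{T-IT basic}.\eqref{i-IT subset}) then gives $\IT_k(\cM_\olambda(X))\subseteq \IT_k(\cM_n(X))$ for every $\olambda\in\orP_n$. Taking the union over $\olambda\in\orP_n$ yields the desired inclusion
\[
\cB_{k,n} = \bigcup_{\olambda\in\orP_n}\IT_k(\cM_\olambda(X)) \subseteq \IT_k(\cM_n(X)).
\]

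There is no real obstacle here: the statement is a direct translation of Lemma~\ref{L-IN convex}, and the only inputs are the two purely formal properties of $\IT_k$ (behavior under factor maps and under closed invariant subsets) which are recorded in Theorem~\ref{T-IT basic}. In particular, no new combinatorial input (of the flavor of Lemma~\ref{L-func to indep}) is needed at this step; the harder content about IT-tuples of $\cM(X)$ will only be used later, when one shows the reverse direction $\IT_k(\cM(X))\subseteq \overline{\bigcup_{n\in\Nb}\cB_{k,n}}$ in Lemma~\ref{L-IT dense}.
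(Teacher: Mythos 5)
Your proof is correct and is exactly the paper's argument: the paper proves Lemma~\ref{L-IT convex} by repeating the proof of Lemma~\ref{L-IN convex}, i.e.\ rewriting $(\pi_\olambda\times\cdots\times\pi_\olambda)(\IT_k(X^n))=\IT_k(\cM_\olambda(X))$ via Theorem~\ref{T-IT basic}.\eqref{i-IT factor} and then using Theorem~\ref{T-IT basic}.\eqref{i-IT subset} to include each $\IT_k(\cM_\olambda(X))$ into $\IT_k(\cM_n(X))$. No further comments are needed.
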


The following is an analogue of Lemma~\ref{L-IN closure vs push}, and can be proved in the same way using Proposition~\ref{P-IT measure basic} and Lemma~\ref{L-IT finite support}. 

\begin{lemma} \label{L-IT closure vs push}
Let $k\in \Nb$. Then
$$\overline{\bigcup_{n\in \Nb}\cB_{k, n}}=\{(\mu^{(1)}, \dots, \mu^{(k)})\in \cM(X)^k: \mu\in \cM_\IT(X^k)\}.$$
\end{lemma}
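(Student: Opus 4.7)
The plan is to mimic the proof of Lemma~\ref{L-IN closure vs push} verbatim, replacing $\IN$ by $\IT$ throughout, since the only ingredients used there are (i) continuity of the coordinate push-forward map, (ii) compactness of $\cM_\IN(X^k)$ from Proposition~\ref{P-IN measure basic}, (iii) density of finitely supported measures in a closed subset of $\cM(X^k)$, and (iv) Lemma~\ref{L-IN finite support}. For the IT case, the analogues of (ii) and (iv) are supplied by Proposition~\ref{P-IT measure basic} and Lemma~\ref{L-IT finite support}, so each step transfers without modification.

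Concretely, I will consider the continuous affine map $\pi: \cM(X^k) \to \cM(X)^k$ defined by $\pi(\nu) = (\nu^{(1)}, \dots, \nu^{(k)})$. Setting $\cM_\IT(X^k)' := \bigcup_{n\in \Nb} \cM_{\IT, n}(X^k)$, I will observe that finitely supported probability measures on $X^k$ are weak$^*$-dense in $\cM(X^k)$, and since membership in $\cM_\IT(X^k)$ only constrains the support (by Remark~\ref{R-IT measure}, equivalently by Definition~\ref{D-IT measure}), one checks that $\cM_\IT(X^k)'$ is dense in $\cM_\IT(X^k)$: any $\mu \in \cM_\IT(X^k)$ can be approximated by convex combinations $\sum_i \lambda_i \delta_{\ox^{(i)}}$ with $\ox^{(i)} \in \supp(\mu)$, and such combinations lie in $\cM_\IT(X^k)'$ because the relevant tuples automatically belong to $\IT_k(X^n)$ by the defining property of $\cM_\IT(X^k)$.

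Next, Proposition~\ref{P-IT measure basic} tells us $\cM_\IT(X^k)$ is a closed subset of the compact space $\cM(X^k)$, hence compact; therefore $\pi(\cM_\IT(X^k))$ is compact, and in particular closed in $\cM(X)^k$. Combining density and continuity I get the chain
\[
\pi(\cM_\IT(X^k)') \subseteq \pi(\cM_\IT(X^k)) = \pi\bigl(\overline{\cM_\IT(X^k)'}\bigr) \subseteq \overline{\pi(\cM_\IT(X^k)')},
\]
and since the middle set is closed, the two outer sets both equal $\pi(\cM_\IT(X^k))$. Finally, Lemma~\ref{L-IT finite support} identifies $\pi(\cM_{\IT, n}(X^k))$ with $\cB_{k, n}$, giving
\[
\pi(\cM_\IT(X^k)') = \bigcup_{n \in \Nb}\cB_{k, n},
\]
and the stated equality follows.

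There is essentially no obstacle here; the only point needing mild care is the density of $\cM_\IT(X^k)'$ in $\cM_\IT(X^k)$, which is where Remark~\ref{R-IT measure} enters (one must know that arbitrary finite subcollections of $\supp(\mu)$, with repetitions or not, give tuples in $\IT_k(X^n)$ so that the approximating atomic measures indeed lie in $\cM_{\IT, n}(X^k)$). Once this is noted, the argument is a direct copy of the IN case.
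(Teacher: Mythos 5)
Your proposal is correct and follows exactly the route the paper intends: the paper's proof of this lemma is simply the statement that the IN argument carries over verbatim using Proposition~\ref{P-IT measure basic} and Lemma~\ref{L-IT finite support}, which is what you have written out, including the correct justification of the density of $\bigcup_{n\in\Nb}\cM_{\IT,n}(X^k)$ in $\cM_\IT(X^k)$ via the support-only nature of Definition~\ref{D-IT measure}.
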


To prove Lemma~\ref{L-IT dense}, we need Lemma~\ref{L-func to infinite}, which is an analogue of Lemma~\ref{L-func to density}. While Lemma~\ref{L-func to density} is based on Lemma~\ref{L-func to indep}, the combinatorial fact behind Lemma~\ref{L-func to infinite} is the following 
dichotomy of Rosenthal \cite[Theorem 8.4]{KL16}, which is the  $k$-tuple version  of   \cite[Theorem 2]{Rosenthal74}.
For $k\in \Nb$ and a set $Y$, we say that a family $\{(A_{i, 1}, \dots, A_{i, k}): i\in F\}$ of $k$-tuples of subsets of $Y$ is {\it independent} if 
$$\bigcap_{i\in J} A_{i, \sigma(i)}\neq \emptyset$$
for every nonempty finite set $J\subseteq F$ and every map $\sigma: J\rightarrow [k]$. 

\begin{lemma} \label{L-IT combinatorial}
Let $k\in \Nb$ and let $\{(A_{n, 1}, \dots, A_{n, k})\}_{n\in \Nb}$ be a sequence of $k$-tuples of subsets of a set $Y$. Then there is an infinite set $F\subseteq \Nb$ such that either $\{(A_{n, 1}, \dots, A_{n, k}): n\in F\}$ is independent  or for every $y\in Y$ there exists a $j\in [k]$ such that the set $\{n\in F: y\in A_{n, j}\}$ is finite.
\end{lemma}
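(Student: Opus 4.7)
The plan is to prove this Rosenthal-style dichotomy by contradiction via a Ramsey-type iterated selection; this is the $k$-tuple generalization of Rosenthal's classical combinatorial dichotomy from \cite{Rosenthal74}, stated as \cite[Theorem 8.4]{KL16}.

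Suppose for contradiction that no infinite $F \subseteq \Nb$ satisfies either alternative. Then on every infinite $F$ two things hold: (i) the family $\{(A_{n,1}, \dots, A_{n,k}) : n \in F\}$ is not independent, so some nonempty finite $J \subseteq F$ and $\sigma\colon J \to [k]$ give $\bigcap_{i \in J} A_{i,\sigma(i)} = \emptyset$; and (ii) there exists some $y_F \in Y$ with $\{n \in F : y_F \in A_{n,j}\}$ infinite for every $j \in [k]$. The goal is to produce an infinite independent $F$, contradicting (i).

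I would build $F = \{h_1 < h_2 < \cdots\}$ inductively, maintaining a decreasing chain of infinite tails $H_0 \supseteq H_1 \supseteq \cdots$ with $h_m = \min H_m$. At stage $m+1$, color each $n \in H_m \setminus \{h_{m+1}\}$ by the finite datum $\Sigma_n := \{\sigma \in [k]^{m+1} : \bigcap_{i \le m} A_{h_i,\sigma(i)} \cap A_{n,\sigma(m+1)} \neq \emptyset\}$. Since the codomain has at most $2^{k^{m+1}}$ values, pigeonhole yields an infinite $H_{m+1} \subseteq H_m$ on which this color is constant, equal to some $\Sigma_{m+1} \subseteq [k]^{m+1}$. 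The resulting $F$ is independent precisely when $\Sigma_{m+1} = [k]^{m+1}$ for every $m$.

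The main obstacle is forcing $\Sigma_{m+1} = [k]^{m+1}$ at every stage. If some $\sigma^* \in [k]^{m+1} \setminus \Sigma_{m+1}$ exists, then $\bigcap_{i \le m} A_{h_i,\sigma^*(i)} \cap A_{n,\sigma^*(m+1)} = \emptyset$ uniformly for $n \in H_{m+1}$. Converting such uniform emptiness data into a violation of (ii) is the crux: one iterates the construction, accumulating emptiness witnesses across all stages and offending colorings, and packages the combined data into a point $y \in Y$ showing the second alternative holds on some final infinite subset. A cleaner alternative is to apply the Galvin-Prikry theorem to the closed set $\mathcal{I} = \{F \in [\Nb]^\omega : \text{the family is independent}\}$: this yields an infinite $H$ on which either every infinite subset lies in $\mathcal{I}$ (giving the first alternative at once), or none does, in which case a Bourgain-Fremlin-Talagrand-style pointwise convergence argument on the characteristic functions of the $A_{n,j}$'s, restricted to a suitable separable subset of $Y$, produces an infinite $F \subseteq H$ satisfying the second alternative. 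Either route closes the argument.
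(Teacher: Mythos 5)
First, note that the paper does not prove this lemma at all: it quotes it as Rosenthal's dichotomy, citing \cite[Theorem 8.4]{KL16} (the $k$-tuple version of \cite[Theorem 2]{Rosenthal74}), so there is no in-paper argument to compare with; your proposal has to stand on its own, and as written it does not. Both of your routes stall at exactly the same point and then defer it. In the first route, the pigeonhole construction stabilizing the colours $\Sigma_{n}$ is fine, but it never uses the assumed failure of the second alternative (the points $y_F$), and the sentence about ``accumulating emptiness witnesses \dots and packaging the combined data into a point $y\in Y$ showing the second alternative holds'' is not an argument: the second alternative is universally quantified over $y\in Y$, so exhibiting one point shows nothing, and since $Y$ may carry uncountably many ``bad'' points while your construction has only countably many stages, the accumulation you gesture at is not a routine diagonalization. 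Turning the uniform emptiness of a missing pattern $\sigma^*$ into information about every $y\in Y$ is precisely the content of the lemma, and it is the step you skip.

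The second route has the same gap in a different guise. Applying Galvin--Prikry to the closed set $\mathcal{I}$ of infinite independent index sets correctly settles the positive case, but in the negative case ($[H]^{\omega}\cap\mathcal{I}=\emptyset$) the set $H$ itself can genuinely fail the second alternative: take $k=2$, $Y=\{y\}$, $A_{n,1}=\{y\}$ for $n$ even and $\emptyset$ otherwise, $A_{n,2}=\{y\}$ for $n$ odd and $\emptyset$ otherwise; no two-element subfamily is independent, yet $y$ lies in both colours infinitely often along $\Nb$. So a further nontrivial extraction inside $H$ is unavoidable, and invoking a ``Bourgain--Fremlin--Talagrand-style pointwise convergence argument on a suitable separable subset of $Y$'' cannot supply it: $Y$ is an arbitrary set with no topology, the conclusion must hold for every $y\in Y$ rather than on a countable subset, and BFT-type sequential compactness results are themselves proved from Rosenthal-type extraction lemmas of exactly this kind, so the appeal is circular. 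The standard way to close the gap (essentially Rosenthal's argument) is to apply Galvin--Prikry/Nash--Williams not to independence itself but to a property of an infinite set $M$ in which the colour pattern imposed on, say, the even-indexed elements of $M$ is encoded by the interleaved odd-indexed elements: homogeneity in the positive direction then decodes to full independence of a subfamily, while in the negative direction any single $y$ lying in every colour infinitely often along $H$ would let you build a subset of $H$ with the encoded property, a contradiction, so $H$ itself satisfies the second alternative. Without some device of this kind, your proposal leaves the crux unproved.
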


We also need the following result of Bergelson \cite[Theorem 1.1]{Bergelson85}. In \cite[Theorem 1.1]{Bergelson85} it is assumed that $\nu(A_n)=\delta$ for all $n\in \Nb$, but the proof works when $\nu(A_n)\ge \delta$ for all $n\in \Nb$.

\begin{lemma} \label{L-finite intersection}
Let $(Y, \sB, \nu)$ be a probability measure space, $\delta>0$, and $A_n\in \sB$ with $\nu(A_n)\ge \delta$ for each $n\in \Nb$. Then there is an infinite set $F\subseteq \Nb$ such that $\nu\left(\bigcap_{n\in J}A_n\right)>0$ for every nonempty finite set $J\subseteq F$.
\end{lemma}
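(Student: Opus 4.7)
My plan is to prove the lemma via a weak compactness argument in $L^2(Y,\nu)$, followed by an inductive construction of the desired subsequence along a set of positive measure on which a weak limit is bounded below.

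First, I would set $f_n = {\bf 1}_{A_n} \in L^2(Y,\nu)$. Since $\|f_n\|_2 \le 1$, by the Banach-Alaoglu theorem (or just weak sequential compactness of the unit ball in the reflexive space $L^2$) one may pass to a subsequence, still denoted $\{f_n\}$, such that $f_n \to f$ weakly in $L^2$ for some $f \in L^2$. Since $0 \le f_n \le 1$ pointwise and weak limits preserve a.e. pointwise bounds on convex closed sets, we get $0 \le f \le 1$ a.e. Testing weak convergence against the constant function $1$ gives $\int f\,d\nu = \lim_n \nu(A_n) \ge \delta$.

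Next I would extract from $f$ a set $E := \{y : f(y) \ge \delta/2\}$ on which $f$ is quantitatively bounded below. Because $f \le 1$,
\begin{equation*}
\delta \le \int f\,d\nu \le \nu(E) + \tfrac{\delta}{2}\nu(Y \setminus E) \le \nu(E) + \tfrac{\delta}{2},
\end{equation*}
so $\nu(E) \ge \delta/2 > 0$. The key observation is then that for \emph{any} Borel $B \subseteq E$ of positive measure, weak convergence tested against $g = {\bf 1}_B$ yields
\begin{equation*}
\nu(A_n \cap B) = \int f_n \cdot {\bf 1}_B \, d\nu \longrightarrow \int_B f\,d\nu \ \ge\ \tfrac{\delta}{2}\nu(B) \ >\ 0,
\end{equation*}
so for all sufficiently large $n$ one has $\nu(A_n \cap B) \ge \tfrac{\delta}{3}\nu(B) > 0$, and in particular one can pick such an $n$ exceeding any prescribed threshold.

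With this in hand, I would build $F$ recursively. Set $B_0 = E$. Having chosen $n_1 < \dots < n_m$ and a set $B_m \subseteq E$ of positive measure satisfying $B_m \subseteq A_{n_1} \cap \dots \cap A_{n_m}$, the observation above (applied to $B = B_m \subseteq E$) produces $n_{m+1} > n_m$ with $B_{m+1} := B_m \cap A_{n_{m+1}}$ of positive measure; clearly $B_{m+1} \subseteq E$ as well. Setting $F = \{n_1 < n_2 < \cdots\}$, for any nonempty finite $J \subseteq F$ with largest element $n_{k}$, we have $\bigcap_{n \in J} A_n \supseteq A_{n_1} \cap \dots \cap A_{n_k} \supseteq B_k$, so $\nu(\bigcap_{n \in J}A_n) \ge \nu(B_k) > 0$, as required.

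The only non-routine step is the second one: one must convert the qualitative statement "$f \not\equiv 0$" (which already follows from $\int f \ge \delta$) into the quantitative lower bound $f \ge \delta/2$ on a definite-measure set, so that the recursion can be carried out against subsets of $E$ rather than against $Y$ itself. Without this uniform lower bound for $f$ on $E$, successive restrictions $B_m$ might shrink to a set on which $f$ has no quantitative positivity, and one could not guarantee that any $A_n$ still meets $B_m$ in positive measure.
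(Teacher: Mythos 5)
Your proof is correct. The paper does not actually prove this lemma: it simply cites it as Bergelson's theorem (\cite[Theorem 1.1]{Bergelson85}), remarking that the hypothesis $\nu(A_n)=\delta$ there can be relaxed to $\nu(A_n)\ge\delta$. Your argument --- pass to a weakly convergent subsequence of the indicators in $L^2$, note the weak limit $f$ satisfies $0\le f\le 1$ and $\int f\,d\nu\ge\delta$, restrict to $E=\{f\ge\delta/2\}$, and recursively intersect using $\nu(A_n\cap B)\to\int_B f\ge\tfrac{\delta}{2}\nu(B)>0$ for $B\subseteq E$ --- is the standard weak-compactness proof of Bergelson's intersectivity lemma, so you have essentially reproduced the cited proof and made the paper self-contained at this point; all the individual steps (weak closedness of $\{0\le g\le 1\}$, the measure estimate for $E$, and the recursion yielding $F$) check out.
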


\begin{lemma} \label{L-func to infinite}
Let $k\in \Nb$, $\omu=(\mu_1, \dots, \mu_k)\in \cM(X)^k$ and $\of=(f_1, \dots, f_k)\in C(X)^{\oplus k}$ such that $R:=\frac{1}{k}\sum_{j\in [k]}\mu_j(f_j)>0$.  Let $0<r<R$ and $C\ge \|\of\|$. Then there are some  finite set $T\subseteq \Rb^k$ depending only on $k, r, R$ and $C$,  and some product neighborhood $U_1\times \cdots \times U_k$ of $\omu$ in $\cM(X)^k$ such that the following hold:
\begin{enumerate}
\item $\frac{1}{k}\sum_{j\in [k]}t_j\ge r$ for every $(t_1, \dots, t_k)\in T$, and
\item for any infinite independence set $F\subseteq \Gamma$ of $(U_1, \dots, U_k)$, there are some infinite set $J\subseteq F$ and some $\ot\in T$ so that $J$ is an independence set for $\oA_{\of, \ot}$ (see Notation~\ref{N-tuple}).
    \end{enumerate}
\end{lemma}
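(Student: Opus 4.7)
The plan is to reduce the problem to Rosenthal's dichotomy (Lemma~\ref{L-IT combinatorial}). Fix $\theta, \delta > 0$ small enough that $R - \theta > r + \delta$, let $G \subseteq [-C-\delta, C]$ be a finite $\delta$-grid with the property that for every $a \in [-C, C]$ some $g \in G$ satisfies $a - \delta \le g < a$, set $T = \{\ot \in G^k : \frac{1}{k}\sum_{j \in [k]} t_j \ge r\}$, and define $U_j = \{\nu \in \cM(X) : \nu(f_j) > \mu_j(f_j) - \theta\}$ for $j \in [k]$. Then $T$ is finite, depends only on $k, r, R$ and $C$, and $\frac{1}{k}\sum_j t_j \ge r$ for every $\ot \in T$, so conclusion (1) is immediate and $U_1 \times \cdots \times U_k$ is the candidate neighborhood of $\omu$ in $\cM(X)^k$.

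For (2), given an infinite independence set $F \subseteq \Gamma$ for $(U_1, \dots, U_k)$, I would apply Lemma~\ref{L-IT combinatorial} successively (once for each $\ot \in T$) to the family of $k$-tuples $\{(s^{-1} f_1^{-1}([t_1, \infty)), \dots, s^{-1} f_k^{-1}([t_k, \infty))) : s \in F\}$ of subsets of $X$, each time passing to an infinite subset. Since both alternatives of Lemma~\ref{L-IT combinatorial} are inherited by infinite subsets, this produces an infinite $F^* \subseteq F$ such that for each $\ot \in T$ either (i) $F^*$ is itself an independence set for $\oA_{\of, \ot}$, in which case we are done with $J = F^*$, or (ii) for every $x \in X$ there is some $j \in [k]$ with $\{s \in F^* : f_j(sx) \ge t_j\}$ finite.

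Assume alternative (ii) holds for every $\ot \in T$. Enumerate $F^* = \{s_n\}_{n \in \Nb}$ and set $\beta_j(x) = \limsup_{n \to \infty} f_j(s_n x)$; each $\beta_j$ is Borel with $|\beta_j| \le C$. Alternative (ii) says that for every $x$ and every $\ot \in T$ there is $j$ with $\beta_j(x) \le t_j$, since flooring $\boldsymbol{\beta}(x)$ coordinatewise onto $G$ would otherwise yield a $\ot \in T$ strictly dominated by $\boldsymbol{\beta}(x)$; by the defining property of $G$ this forces $\frac{1}{k}\sum_j \beta_j(x) < r + \delta$ for every $x \in X$. To derive a contradiction, choose any $\sigma : F^* \to [k]$ hitting every $j \in [k]$ infinitely often. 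Since $F^*$ is an independence set for $(U_1, \dots, U_k)$, the sets $\bigcap_{s \in J} s^{-1}\overline{U_{\sigma(s)}}$ over finite $J \subseteq F^*$ are nonempty closed subsets of $\cM(X)$ with the finite intersection property, so compactness of $\cM(X)$ yields a $\nu_\sigma \in \cM(X)$ with $s\nu_\sigma \in \overline{U_{\sigma(s)}}$ for every $s \in F^*$, and in particular $(s\nu_\sigma)(f_j) \ge \mu_j(f_j) - \theta$ whenever $\sigma(s) = j$. For each $j$, enumerating $\sigma^{-1}(j)$ as $\{s'_m\}_{m \in \Nb}$ and applying the reverse Fatou lemma (valid since $\|f_j\| \le C$),
\begin{align*}
\int_X \beta_j(x)\, d\nu_\sigma(x) \ge \int_X \limsup_{m \to \infty} f_j(s'_m x)\, d\nu_\sigma(x) \ge \limsup_{m \to \infty} (s'_m\nu_\sigma)(f_j) \ge \mu_j(f_j) - \theta.
\end{align*}
Summing over $j$ and dividing by $k$ gives $\int_X \frac{1}{k}\sum_j \beta_j(x)\, d\nu_\sigma(x) \ge R - \theta$, so some $x \in X$ satisfies $\frac{1}{k}\sum_j \beta_j(x) \ge R - \theta > r + \delta$, contradicting the bound derived from (ii).

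The main obstacle is bridging the measure-level independence of $(U_1, \dots, U_k)$ along $F$ (which lives in $\cM(X)$) with the pointwise statement in $X$ needed to feed into Rosenthal's dichotomy applied to the sets $s^{-1} f_j^{-1}([t_j, \infty))$. The reverse Fatou step is precisely that bridge: the independence in $\cM(X)$ produces a single measure $\nu_\sigma$ carrying integral lower bounds $(s\nu_\sigma)(f_j) \ge \mu_j(f_j) - \theta$ for each $s$ in $\sigma^{-1}(j)$, and these bounds transfer to pointwise $\limsup$ bounds, pinning down an $x \in X$ whose orbit simultaneously exceeds the grid thresholds in all $k$ coordinates.
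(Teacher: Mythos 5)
Your proof is correct, and it follows the paper's skeleton up to a point: a finite threshold grid $T$ depending only on $k,r,R,C$, neighborhoods $U_j$ cut out by $\nu(f_j)>\mu_j(f_j)-\theta$, an iterated application of Rosenthal's dichotomy (Lemma~\ref{L-IT combinatorial}), one pass per element of $T$, to reach an infinite $F^*\subseteq F$ on which either some $\oA_{\of,\ot}$ already has $F^*$ as an independence set or the second alternative holds for every $\ot\in T$, and then a compactness argument in $\cM(X)$ producing a single measure $\nu_\sigma$ whose translates obey the integral bounds $(s\nu_\sigma)(f_j)\ge \mu_j(f_j)-\theta$. Where you genuinely diverge is in the concluding contradiction. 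The paper fixes disjoint infinite subsequences $\psi_j(\Nb)$, defines the sets $X_{n,\ot}=\bigcap_j\psi_j(n)^{-1}f_j^{-1}([t_j,\infty))$, shows each has $\nu$-measure bounded below for some $\ot$, pigeonholes a single $\ot^{(l)}$ along infinitely many $n$, and then invokes Bergelson's intersection lemma (Lemma~\ref{L-finite intersection}) together with compactness of the sets $X_{n,\ot^{(l)}}$ to produce one point lying in infinitely many of them, contradicting the dichotomy for that single $\ot^{(l)}$. You instead encode ``infinitely many good translates'' in the Borel functions $\beta_j=\limsup_n f_j(s_n\cdot)$, observe that the second alternative for all $\ot\in T$ forces the pointwise bound $\frac1k\sum_j\beta_j<r+\delta$ via the grid-flooring argument, and contradict it with $\int\frac1k\sum_j\beta_j\,d\nu_\sigma\ge R-\theta$ obtained from reverse Fatou (legitimate here since the $f_j$ are uniformly bounded by $C$). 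This buys a somewhat leaner argument: it dispenses with Bergelson's lemma and with any use of closedness of the level sets $f_j^{-1}([t_j,\infty))$ in the final intersection step, at the cost of introducing the (Borel, bounded, hence harmless) limsup functions; the grid idea, the role of the dichotomy, and the compactness-extracted measure are the same in both proofs. The only cosmetic caveat is that your $U_j$ are open, so the FIP/compactness step should be (as you do) run with $\overline{U_j}$, which still satisfies the needed inequality $\nu(f_j)\ge\mu_j(f_j)-\theta$; the paper sidesteps this by taking the $U_j$ closed from the start.
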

\begin{proof} We take an $M\in \Nb$ such that
$$\frac{C}{M}< \frac{R-r}{2}.$$
We denote by $I$ the set of $\boldsymbol{i}=(i_1, \dots, i_k)\in [2M]^k$ satisfying
$$\frac{1}{k}\sum_{j\in [k]}\left(-C+\frac{(i_j-1)C}{M}\right)\ge r.$$
When $i_j=2M$ for all $j\in [k]$, we have
$$ \frac{1}{k}\sum_{j\in [k]}\left(-C+\frac{(i_j-1)C}{M}\right)=-C+\frac{(2M-1)C}{M}=C-\frac{C}{M}>R-\frac{R-r}{2}>r.$$
Thus $(2M, \dots, 2M)\in I$, whence $I$ is nonempty. We denote by $T$ the set of
$$\left(-C+\frac{(i_1-1)C}{M}, \dots, -C+\frac{(i_k-1)C}{M}\right)\in [-C, C]^k$$
for $\boldsymbol{i}=(i_1, \dots, i_k)\in I$. Then $\frac{1}{k}\sum_{j\in [k]}t_j\ge r$ for every $(t_1, \dots, t_k)\in T$.

For each $j\in [k]$, we denote by $U_j$ the set of $\nu\in \cM(X)$ satisfying
$$ \nu(f_j)\ge \mu_j(f_j)-(R-r)/4.$$
This is a closed neighborhood of $\mu_j$ in $\cM(X)$.

We shall show that $T$ and $U_1\times \cdots \times U_k$ have the desired property. We have checked that the condition (1) holds. 
Thus it suffices to check the condition (2). 

Let $F\subseteq \Gamma$ be an infinite independence set for $\oU=(U_1, \dots, U_k)$. We argue by contradiction. Assume that for each $\ot\in T$ the tuple $\oA_{\of, \ot}$ has no infinite independence set $J\subseteq F$.
We list the elements of $T$ as
$$\ot^{(1)}=(t_1^{(1)}, \dots, t_k^{(1)}), \dots, \ot^{(|T|)}=(t_1^{(|T|)}, \dots, t_k^{(|T|)}).$$
We set $F_0=F$. Via induction on $l=1, \dots, |T|$,  by Lemma~\ref{L-IT combinatorial} we can find an infinite set $F_l\subseteq F_{l-1}$ such that for every $x\in X$ there exists a $j_{l, x}\in [k]$ such that the set $\{s\in F_l: x\in s^{-1}f_{j_{l, x}}^{-1}([t_{j_{l, x}}^{(l)}, \infty))\}$ is finite.

Since $F_{|T|}$ is infinite, we can find an injective map $\psi_j: \Nb\rightarrow F_{|T|}$ for each $j\in [k]$ such that the sets $\psi_j(\Nb)$ for $j\in [k]$ are pairwise disjoint. Since $F$ is an independence set for $\oU$ and $\oU$ is a tuple of compact sets, we can find a
$$\nu\in \bigcap_{n\in \Nb}\bigcap_{j\in [k]}\psi_j(n)^{-1}U_j.$$

Let $n\in \Nb$. We set
$$ X_{n, \ot}=\bigcap_{j\in [k]}\psi_j(n)^{-1}f_j^{-1}([t_j, \infty))$$
for every $\ot=(t_1, \dots, t_k)\in T$, and set
$$ X_n=\bigcup_{\ot\in T}X_{n, \ot}, \mbox{ and }  X_n'=\left\{x\in X: \frac{1}{k}\sum_{j\in [k]}f_j(\psi_j(n)x)\ge \frac{R+r}{2}\right\}.$$
Note that
\begin{align*}
\int_X \frac{1}{k}\sum_{j\in [k]}f_j(\psi_j(n)x)\, d\nu(x)&=\frac{1}{k}\sum_{j\in [k]} (\psi_j(n) \nu)(f_j)\\
&\ge \frac{1}{k}\sum_{j\in [k]}(\mu_j(f_j)-(R-r)/4)=\frac{3R+r}{4},
\end{align*}
and
\begin{align*}
\int_X \frac{1}{k}\sum_{j\in [k]}f_j(\psi_j(n)x)\, d\nu(x)&\le \nu(X_n')C+\nu(X\setminus X_n')\frac{R+r}{2}\\
&=\nu(X_n')(C-\frac{R+r}{2})+\frac{R+r}{2}.
\end{align*}
Thus
$$\frac{3R+r}{4}\le \nu(X_n')(C-\frac{R+r}{2})+\frac{R+r}{2},$$
whence
$$ \nu(X_n')\ge \frac{R-r}{4C-2R-2r}.$$
Let $x\in X_n'$. For each $j\in [k]$
we can find an $i_j\in [2M]$ so that
$$-C+\frac{(i_j-1)C}{M}\le f_j(\psi_j(n)x)\le -C+\frac{i_j C}{M}.$$
We set $t_j=-C+\frac{(i_j-1)C}{M}$ for each $j\in [k]$.
Note that
\begin{align*}
\frac{R+r}{2}\le \frac{1}{k}\sum_{j\in [k]}f_j(\psi_j(n)x)\le \frac{1}{k}\sum_{j\in [k]}\left(t_j+\frac{C}{M}\right),
\end{align*}
whence
$$ \frac{1}{k}\sum_{j\in [k]}t_j\ge \frac{R+r}{2}-\frac{C}{M}\ge r,$$
which means that $(i_1, \dots, i_j)$ lies in $I$ and $\ot:=(t_1, \dots, t_k)$ lies in $T$. We have $x\in X_{n, \ot}\subseteq X_n$, and hence $X_n'\subseteq X_n$.  Thus
$$\nu(X_n)\ge \nu(X_n')\ge \frac{R-r}{4C-2R-2r}.$$
Then there is some $\ot^{\{n\}}\in T$ such that
$$ \nu(X_{n, \ot^{\{n\}}})\ge \frac{R-r}{(4C-2R-2r)|T|}=:\delta.$$

We can find some $l\in [|T|]$ and some infinite set $W\subseteq \Nb$ such that $\ot^{\{n\}}=\ot^{(l)}$ for all $n\in W$. By Lemma~\ref{L-finite intersection} there is some infinite set $W'\subseteq W$ such that 
$$\nu\left(\bigcap_{n\in W''}X_{n, \ot^{(l)}}\right)=\nu\left(\bigcap_{n\in W''}X_{n, \ot^{\{n\}}}\right)>0$$ 
for every nonempty finite set $W''\subseteq W'$. In particular, $\bigcap_{n\in W''}X_{n, \ot^{(l)}}\neq \emptyset$ for every nonempty finite set $W''\subseteq W'$. Since $X_{n, \ot^{(l)}}$ is compact for each $n\in \Nb$, it follows that $\bigcap_{n\in W'}X_{n, \ot^{(l)}}\neq \emptyset$. Then we can find an $x\in \bigcap_{n\in W'}X_{n, \ot^{(l)}}$. For each $j\in [k]$,
$$\{s\in F_l: x\in s^{-1}f_j^{-1}([t_j^{(l)}, \infty))\}\supseteq \psi_j(W')$$
is infinite, a contradiction to our choice of $F_l$. Therefore, $\oA_{\of, \ot}$ for some $\ot\in T$ has an infinite independence set $J\subseteq F$.
\end{proof}

The following is an analogue of Lemma~\ref{L-IN simul}, and can be proved in the same way using Lemma~\ref{L-func to infinite}.

\begin{lemma} \label{L-IT simul}
Let $k, n\in \Nb$, $\omu=(\mu_1, \dots, \mu_k)\in \cM(X)^k$  and $\of^{(l)}=(f_1^{(l)}, \dots, f_k^{(l)})\in C(X)^{\oplus k}$ for $l\in [n]$ such that $R^{(l)}:=\frac{1}{k}\sum_{j\in [k]}\mu_j(f_j^{(l)})>0$ for each $l\in [n]$. 
Let $0<r^{(l)}<R^{(l)}$ for each $l\in [n]$.  Then there are some finite set $T^{(l)}\subseteq \Rb^k$ for each $l\in [n]$,
 and some product neighborhood $U_1\times \cdots \times U_k$ of $\omu$ in $\cM(X)^k$ such that the following hold:
  \begin{enumerate}
  \item $\frac{1}{k}\sum_{j\in [k]}t_j\ge r^{(l)}$ for every $l\in [n]$ and $(t_1, \dots, t_k)\in T^{(l)}$, and
  \item for any infinite independence set $F\subseteq \Gamma$ of $(U_1, \dots, U_k)$,
 there are  some infinite set $J\subseteq F$ and some $\ot^{(l)}=(t_1^{(l)}, \dots, t_k^{(l)})\in T^{(l)}$ for each $l\in [n]$ so that
 $J$ is an independence set for the $k$-tuple
 \begin{align*}
 \Big(\prod_{l\in [n]} (f_1^{(l)})^{-1}([t_1^{(l)}, \infty)), \dots, \prod_{l\in [n]} (f_k^{(l)})^{-1}([t_k^{(l)}, \infty))\Big)
  \end{align*}
  of subsets of $X^n$.
  \end{enumerate}
 \end{lemma}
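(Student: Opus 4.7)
The plan is to imitate the proof of Lemma~\ref{L-IN simul}, but with Lemma~\ref{L-func to infinite} playing the role of Lemma~\ref{L-func to density}. Since we are dealing with infinite independence sets rather than sets of positive density, there is no need to keep track of multiplicative constants or cardinality thresholds $N^{(l)}$; passing to an infinite subset at each step suffices. The statement is only slightly weaker than Lemma~\ref{L-IN simul}: we drop the constants $c, N$ and replace the finiteness hypothesis on $F$ by infiniteness.

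First I fix $C \ge \max_{l \in [n]} \|\of^{(l)}\|$ and, for each $l \in [n]$, apply Lemma~\ref{L-func to infinite} with parameters $k, r^{(l)}, R^{(l)}, C, \omu, \of^{(l)}$ to obtain a finite set $T^{(l)} \subseteq \Rb^k$ satisfying $\frac{1}{k}\sum_{j \in [k]} t_j \ge r^{(l)}$ for all $(t_1, \dots, t_k) \in T^{(l)}$, together with a product neighborhood $U_1^{(l)} \times \cdots \times U_k^{(l)}$ of $\omu$. I then set $U_j := \bigcap_{l \in [n]} U_j^{(l)}$, which is still a neighborhood of $\mu_j$, so that $U_1 \times \cdots \times U_k$ refines each $U_1^{(l)} \times \cdots \times U_k^{(l)}$. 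This will verify condition (1) and give the desired product neighborhood.

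To verify condition (2), I let $F \subseteq \Gamma$ be an infinite independence set for $(U_1, \dots, U_k)$. Since $U_j \subseteq U_j^{(l)}$ for every $l, j$, any independence set for $(U_1, \dots, U_k)$ is automatically an independence set for each $(U_1^{(l)}, \dots, U_k^{(l)})$, and the same holds for every subset. I then iterate: setting $J_0 = F$, for $l = 1, \dots, n$ the set $J_{l-1} \subseteq F$ is an infinite independence set for $(U_1^{(l)}, \dots, U_k^{(l)})$, so by the conclusion of Lemma~\ref{L-func to infinite} applied at index $l$ I obtain an infinite subset $J_l \subseteq J_{l-1}$ and some $\ot^{(l)} = (t_1^{(l)}, \dots, t_k^{(l)}) \in T^{(l)}$ such that $J_l$ is an independence set for $\oA_{\of^{(l)}, \ot^{(l)}}$ in $X$. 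Then $J := J_n$ is infinite and is simultaneously an independence set for $\oA_{\of^{(l)}, \ot^{(l)}}$ for every $l \in [n]$.

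The remaining observation, which is the only mildly non-trivial point, is that simultaneous independence across the coordinates is equivalent to independence of the product tuple in $X^n$: since $\Gamma$ acts diagonally on $X^n$, for any nonempty finite $W \subseteq J$ and any map $\sigma: W \rightarrow [k]$ one has
\[
\bigcap_{s \in W} s^{-1}\prod_{l \in [n]} (f_{\sigma(s)}^{(l)})^{-1}([t_{\sigma(s)}^{(l)}, \infty)) = \prod_{l \in [n]} \bigcap_{s \in W} s^{-1} (f_{\sigma(s)}^{(l)})^{-1}([t_{\sigma(s)}^{(l)}, \infty)),
\]
and the product on the right is nonempty precisely when each factor is, which is guaranteed by $J$ being an independence set for $\oA_{\of^{(l)}, \ot^{(l)}}$ for every $l$. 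Hence $J$ is an independence set for the $k$-tuple of subsets of $X^n$ displayed in the statement, completing the proof. There is no real obstacle here; the argument is a routine iteration, with the coordinate-wise factorization of inverse images under the diagonal action being the only step that needs to be made explicit.
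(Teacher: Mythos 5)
Your proposal is correct and is essentially the paper's own argument: the paper proves Lemma~\ref{L-IT simul} exactly by repeating the proof of Lemma~\ref{L-IN simul} with Lemma~\ref{L-func to infinite} in place of Lemma~\ref{L-func to density}, intersecting the neighborhoods and iterating through $l=1,\dots,n$, with the simplification that no constants $c, N$ need to be tracked since one only passes to infinite subsets. Your explicit remark about the coordinate-wise factorization of $s^{-1}$ of product sets under the diagonal action is the same (implicit) final step as in the paper.
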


The following lemma is proved in the same way as Lemma~\ref{L-IN dense}, using Theorem~\ref{T-IT basic} and Lemmas~\ref{L-IT convex},
\ref{L-convex intersection} and \ref{L-IT simul}.

\begin{lemma} \label{L-IT dense}
Let $k\in \Nb$. Then $\IT_k(\cM(X))=\overline{\bigcup_{n\in \Nb}\cB_{k, n}}$.
\end{lemma}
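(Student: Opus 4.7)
The plan is to follow the structure of the proof of Lemma~\ref{L-IN dense}, substituting the IT analogues of the relevant tools. For the ``$\supseteq$'' direction, I would combine Theorem~\ref{T-IT basic}.\eqref{i-IT subset} (giving $\IT_k(\cM(X)) \supseteq \IT_k(\cM_n(X))$ for every $n$) with Lemma~\ref{L-IT convex} to obtain $\IT_k(\cM(X)) \supseteq \bigcup_{n\in \Nb}\cB_{k,n}$, and then invoke Theorem~\ref{T-IT basic}.\eqref{i-IT invariant} to conclude that $\IT_k(\cM(X))$ is closed and hence contains the closure. This direction is routine.

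The substantive direction is ``$\subseteq$''. Given $\omu=(\mu_1,\dots,\mu_k)\in \IT_k(\cM(X))$ and a neighborhood $U$ of $\omu$ in $\cM(X)^k$, I would reduce to the case
$$U=\left\{(\nu_1,\dots,\nu_k)\in \cM(X)^k:\sum_{j\in [k]}\nu_j(f_j^{(i)})\ge 0 \text{ for all } i\in [m]\right\}$$
for some $\of^{(i)}=(f_1^{(i)},\dots,f_k^{(i)})\in C(X)^{\oplus k}$ normalized so that $\sum_{j\in [k]}\mu_j(f_j^{(i)})=1$ for each $i\in [m]$. Set $C=2+k\max_i \|\of^{(i)}\|$ and use Lemma~\ref{L-convex intersection} to fix $\varepsilon\in (0,1)$ and a finite set $\Lambda\subseteq \orP_m$. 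For each $\olambda\in \Lambda$, form the averaged tuple $\of^{(\olambda)}=\sum_{i\in [m]}\lambda_i \of^{(i)}$, which satisfies $\|\of^{(\olambda)}\|\le C$ and $\sum_{j\in [k]}\mu_j(f_j^{(\olambda)})=1$.

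With $R=1/k$ and $r=(1-\varepsilon)/k$, I apply Lemma~\ref{L-IT simul} (with $n=|\Lambda|$) to obtain finite sets $T^{(\olambda)}\subseteq \Rb^k$ with $\frac{1}{k}\sum_j t_j\ge r$ for all $\ot\in T^{(\olambda)}$, together with a product neighborhood $U_1\times\cdots\times U_k$ of $\omu$. Since $\omu\in \IT_k(\cM(X))$, the tuple $(U_1,\dots,U_k)$ admits an infinite independence set $F\subseteq \Gamma$. Lemma~\ref{L-IT simul} then provides an infinite $J\subseteq F$ and a choice of $\ot^{(\olambda)}=(t_1^{(\olambda)},\dots,t_k^{(\olambda)})\in T^{(\olambda)}$ for each $\olambda\in \Lambda$ so that $J$ is an independence set for the $k$-tuple
$$\left(\prod_{\olambda\in \Lambda}(f_1^{(\olambda)})^{-1}([t_1^{(\olambda)},\infty)),\dots,\prod_{\olambda\in \Lambda}(f_k^{(\olambda)})^{-1}([t_k^{(\olambda)},\infty))\right)$$
of closed subsets of $X^\Lambda$. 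By Theorem~\ref{T-IT basic}.\eqref{i-IT tuple}, there is then a point $((x_1^{(\olambda)})_\olambda,\dots,(x_k^{(\olambda)})_\olambda)\in \IT_k(X^\Lambda)$ in the corresponding product.

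Finally, setting $\ow^{(\olambda)}=(\sum_{j\in [k]}f_j^{(1)}(x_j^{(\olambda)}),\dots,\sum_{j\in [k]}f_j^{(m)}(x_j^{(\olambda)}))\in \Rb^m$, one checks $\|\ow^{(\olambda)}\|_\infty\le C$ and $\langle \ow^{(\olambda)},\olambda\rangle\ge 1-\varepsilon$ from the estimate $\sum_{j\in [k]}f_j^{(\olambda)}(x_j^{(\olambda)})\ge \frac{1}{k}\sum_j t_j^{(\olambda)}\cdot k\ge 1-\varepsilon$. Lemma~\ref{L-convex intersection} then produces weights $t_\olambda\ge 0$ summing to $1$ with $\sum_\olambda t_\olambda \ow^{(\olambda)}\in \frac{1}{2}\ov+D_m$, and the element $\onu=(\sum_\olambda t_\olambda \delta_{x_1^{(\olambda)}},\dots,\sum_\olambda t_\olambda \delta_{x_k^{(\olambda)}})$ lies in $U\cap \cB_{k,|\Lambda|}$. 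Since $U$ was arbitrary, $\omu\in \overline{\bigcup_n \cB_{k,n}}$.

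Because Lemmas~\ref{L-IT convex} and \ref{L-IT simul} are already in place as direct IT-analogues of Lemmas~\ref{L-IN convex} and \ref{L-IN simul}, no new combinatorial input is required here; the proof is essentially a mechanical translation of the proof of Lemma~\ref{L-IN dense}. The main conceptual point is simply that infinite independence sets survive passage to infinite subsets (used when extracting $J\subseteq F$ via Lemma~\ref{L-IT simul}), which is exactly the feature that makes the IT theory behave in parallel with the IN theory at this level.
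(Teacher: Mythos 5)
Your proposal is correct and is essentially the proof the paper intends: the paper simply states that Lemma~\ref{L-IT dense} is proved in the same way as Lemma~\ref{L-IN dense}, using Theorem~\ref{T-IT basic} and Lemmas~\ref{L-IT convex}, \ref{L-convex intersection} and \ref{L-IT simul}, which is exactly the translation you carry out. You also correctly note the one place where the IT argument is slightly smoother: since Lemma~\ref{L-IT simul} already returns an infinite independence set $J$ with a single choice of $\ot^{(\olambda)}$ for each $\olambda$, the pigeonhole/contradiction claim needed in the IN proof to fix one tuple of $\ot^{(\olambda)}$'s for arbitrarily large finite sets is not required here.
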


Now Theorem~\ref{T-IT main} can be proved in the same way as Theorem~\ref{T-IN main}, using Theorem~\ref{T-IT basic} and Lemmas~\ref{L-IT dense} and \ref{L-IT closure vs push}.

The following is an analogue of Corollary~\ref{C-IN finite support} with essentially the same proof, using Theorem~\ref{T-IT main} and Lemmas~\ref{L-IT finite support} and \ref{L-IT convex}.

\begin{corollary} \label{C-IT finite support}
Let $k\in \Nb$.
Then for any $N_1, \dots, N_k\in \Nb$, setting $N=\prod_{j\in [k]}N_j$, one has
$$ (\cM_{N_1}(X)\times \cdots \times \cM_{N_k}(X))\cap \IT_k(\cM(X))\subseteq \IT_k(\cM_N(X)).$$
\end{corollary}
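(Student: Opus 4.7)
The plan is to mimic the proof of Corollary~\ref{C-IN finite support} verbatim, substituting IT for IN throughout, since the structural ingredients have been set up symmetrically.

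First I would take an arbitrary tuple $\omu=(\mu_1,\dots,\mu_k)\in(\cM_{N_1}(X)\times\cdots\times\cM_{N_k}(X))\cap\IT_k(\cM(X))$. By part (3) of Theorem~\ref{T-IT main} (already proved for all $k\in\Nb$), there exists $\mu\in\cM_\IT(X^k)$ with $\mu^{(j)}=\mu_j$ for each $j\in[k]$. The key observation is the support bound: since $\mu_j=\mu^{(j)}$ is the push-forward of $\mu$ under the $j$-th coordinate map, one has
$$\mu\bigl(\supp(\mu^{(1)})\times\cdots\times\supp(\mu^{(k)})\bigr)=1.$$
Because $|\supp(\mu^{(j)})|\le N_j$ for each $j\in[k]$, the product set above has at most $N=\prod_{j\in[k]}N_j$ points, and consequently $|\supp(\mu)|\le N$. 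Thus $\mu\in\cM_{\IT,N}(X^k)$.

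Finally, applying Lemma~\ref{L-IT finite support} converts the measure-theoretic statement into a geometric one: $\omu=(\mu^{(1)},\dots,\mu^{(k)})\in\cB_{k,N}$. Then Lemma~\ref{L-IT convex} yields $\cB_{k,N}\subseteq\IT_k(\cM_N(X))$, completing the inclusion.

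There is no real obstacle here, since all the heavy lifting has been done: Theorem~\ref{T-IT main}.(3) provides the explicit description of $\IT_k(\cM(X))$ via $\cM_\IT(X^k)$, and Lemmas~\ref{L-IT finite support} and \ref{L-IT convex} give the exact analogues of what was needed in the IN case. The only point worth double-checking is that the cardinality bookkeeping $|\supp(\mu)|\le N$ genuinely places $\mu$ into $\cM_{\IT,N}(X^k)=\cM_\IT(X^k)\cap\cM_N(X^k)$, which is immediate from the definition of $\cM_{\IT,n}(X^k)$ given just before Lemma~\ref{L-IT finite support}.
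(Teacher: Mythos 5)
Your proposal is correct and matches the paper's proof exactly: the paper simply notes that Corollary~\ref{C-IT finite support} follows by the same argument as Corollary~\ref{C-IN finite support}, using Theorem~\ref{T-IT main}.(3), the support bound $|\supp(\mu)|\le N$, and Lemmas~\ref{L-IT finite support} and \ref{L-IT convex}, which is precisely the chain of steps you give.
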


In view of parts (3) and (5) of Theorem~\ref{T-IT basic}, from the case $k=2$ of parts (1) and (2) of Theorem~\ref{T-IT main} we obtain immediately the following result, which also follows from \cite[Proposition 6.6]{KL07}.

\begin{corollary} \label{C-IT for prob}
$\Gamma\curvearrowright X$ is tame if and only if $\Gamma\curvearrowright \cM(X)$ is tame.
\end{corollary}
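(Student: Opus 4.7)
The plan is to deduce both directions directly from Theorem~\ref{T-IT main} and Theorem~\ref{T-IT basic}, using the characterization of tameness by IT-pairs: by Theorem~\ref{T-IT basic}.(3), a system is tame if and only if every IT-pair is diagonal. So the task reduces to showing that $\IT_2(X)=\Delta_2(X)$ if and only if $\IT_2(\cM(X))=\Delta_2(\cM(X))$.

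For the ``$\cM(X)$ tame $\Longrightarrow$ $X$ tame'' direction, I would use that $X$ sits inside $\cM(X)$ as a closed $\Gamma$-invariant subset via the Dirac embedding $x\mapsto \delta_x$. Theorem~\ref{T-IT basic}.(5) then gives $\IT_2(X)\subseteq \IT_2(\cM(X))$. Tameness of $\cM(X)$ means $\IT_2(\cM(X))\subseteq \Delta_2(\cM(X))$, and intersecting with $X^2$ yields $\IT_2(X)\subseteq \Delta_2(\cM(X))\cap X^2=\Delta_2(X)$, so $X$ is tame by Theorem~\ref{T-IT basic}.(3). (Alternatively, one could invoke Theorem~\ref{T-IT main}.(2) directly to identify $\IT_2(\cM(X))\cap X^2=\IT_2(X)$, which gives the same conclusion.)

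For the converse, I would invoke Theorem~\ref{T-IT main}.(1). Assuming $X$ is tame, Theorem~\ref{T-IT basic}.(3) gives $\IT_2(X)=\Delta_2(X)=\{(\delta_x,\delta_x):x\in X\}\subseteq \cM(X)^2$. The key observation is that the closed convex hull of this diagonal, taken in $\cM(X)^2$, is exactly $\Delta_2(\cM(X))$: any convex combination $\sum_i \lambda_i(\delta_{x_i},\delta_{x_i})$ has the form $(\mu,\mu)$ with $\mu$ finitely supported, finitely supported measures are weak$^*$-dense in $\cM(X)$, and $\Delta_2(\cM(X))$ is closed. Applying Theorem~\ref{T-IT main}.(1) with $k=2$ yields
\[
\IT_2(\cM(X))\subseteq \overline{\rm co}(\IT_2(X))=\overline{\rm co}(\Delta_2(X))=\Delta_2(\cM(X)),
\]
so $\cM(X)$ is tame by Theorem~\ref{T-IT basic}.(3).

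There is no real obstacle once Theorem~\ref{T-IT main} is in hand; the whole corollary is a two-line consequence. The only substantive step is the elementary identification $\overline{\rm co}(\Delta_2(X))=\Delta_2(\cM(X))$, which is a straightforward weak$^*$-density argument and does not require any new ideas.
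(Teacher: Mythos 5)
Your proof is correct and is essentially the paper's own argument: the corollary is presented there as an immediate consequence of parts (3) and (5) of Theorem~\ref{T-IT basic} together with parts (1) and (2) of the case $k=2$ of Theorem~\ref{T-IT main}, which are exactly the ingredients you use, including the elementary identification $\overline{\rm co}(\Delta_2(X))=\Delta_2(\cM(X))$. One small nitpick: Theorem~\ref{T-IT basic}.(3) gives only $\IT_2(X)\subseteq\Delta_2(X)$ under tameness, not equality, but your argument only needs this inclusion, so nothing is affected.
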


For $k\in \Nb$, we say $\Gamma\curvearrowright X$ is {\it uniformly untame of order $k$} if $\IT_k(X)=X^k$ \cite[page 897]{KL07}. In view of Theorem~\ref{T-IT basic}.\eqref{i-IT subset}, from Theorem~\ref{T-IT main}.(2) we have the following corollary.

\begin{corollary} \label{C-IT order k}
Let  $k, N\in \Nb$ and $\olambda\in \orP_N$. If $\Gamma\curvearrowright \cM_\olambda(X)$ or $\Gamma\curvearrowright \cM(X)$ is uniformly untame of order $k$, then so is $\Gamma\curvearrowright X$.
\end{corollary}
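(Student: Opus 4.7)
The plan is to deduce this directly from Theorem~\ref{T-IT main}.(2), which asserts $\IT_k(\cM(X))\cap X^k=\IT_k(X)$, after reducing the $\cM_\olambda(X)$ case to the $\cM(X)$ case via the subsystem inclusion in Theorem~\ref{T-IT basic}.\eqref{i-IT subset}.

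First I would handle the case where $\Gamma\curvearrowright\cM(X)$ is uniformly untame of order $k$. By hypothesis $\IT_k(\cM(X))=\cM(X)^k$. Since we identify $X$ as a closed $\Gamma$-invariant subset of $\cM(X)$ via $x\mapsto\delta_x$, we have $X^k\subseteq \cM(X)^k=\IT_k(\cM(X))$. Intersecting with $X^k$ and applying Theorem~\ref{T-IT main}.(2) gives $X^k=X^k\cap \IT_k(\cM(X))=\IT_k(X)$, so $\Gamma\curvearrowright X$ is uniformly untame of order $k$.

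Next I would reduce the $\cM_\olambda(X)$ case to the previous one. Observe that the diagonal image $\{\delta_x:x\in X\}\subseteq \cM_\olambda(X)$, since $\delta_x=\pi_\olambda(x,\dots,x)$ (see Notation~\ref{N-finite support2}), so the identification $X\hookrightarrow\cM(X)$ factors through $\cM_\olambda(X)$. Consequently $X^k\subseteq\cM_\olambda(X)^k$. Assuming $\IT_k(\cM_\olambda(X))=\cM_\olambda(X)^k$, we get $X^k\subseteq\IT_k(\cM_\olambda(X))$. Because $\cM_\olambda(X)$ is a closed $\Gamma$-invariant subset of $\cM(X)$, Theorem~\ref{T-IT basic}.\eqref{i-IT subset} yields $\IT_k(\cM_\olambda(X))\subseteq\IT_k(\cM(X))$; therefore $X^k\subseteq\IT_k(\cM(X))\cap X^k$, which equals $\IT_k(X)$ by Theorem~\ref{T-IT main}.(2). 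Hence $\IT_k(X)=X^k$, completing the proof.

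There is no real obstacle here; both implications are one-line consequences of results already established in the paper. The only point that requires a small remark is checking that the canonical embedding $X\hookrightarrow\cM(X)$ indeed lands in $\cM_\olambda(X)$, which is immediate from the definition of $\pi_\olambda$.
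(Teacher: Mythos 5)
Your argument is correct and is exactly the paper's intended proof: the corollary is stated there as an immediate consequence of Theorem~\ref{T-IT basic}.\eqref{i-IT subset} together with Theorem~\ref{T-IT main}.(2), which is precisely the combination you use (including the observation that $\delta_x=\pi_\olambda(x,\dots,x)$ places $X$ inside $\cM_\olambda(X)$). Nothing further is needed.
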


\section{Non-convexity of $\IN$ and $\IT$-tuples} \label{S-example}

In this section we construct a minimal subshift of $\Gamma=\Zb$ such that neither $\IN_2(\cM(X))$ nor $\IT_2(\cM(X))$ is convex.

We first use Theorem~\ref{T-IN main} to deduce a sufficient condition for certain pairs in $\cM_2(X)$ to fail to be in $\IN_2(\cM(X))$.

\begin{lemma} \label{L-not IN}
Let a countably infinite group $\Gamma$ act on a compact metrizable space $X$ continuously. Let $(x_1, x_2)$ and $(x_3, x_4)$ be distinct elements in $X^2$ such that $(x_1, x_4), (x_2, x_3)\not\in \IN_2(X)$ and $((x_1, x_3), (x_2, x_4))\not\in \IN_2(X^2)$. Then for each $0<\lambda<1$,
$$(\lambda \delta_{x_1}+(1-\lambda)\delta_{x_3},  \lambda \delta_{x_2}+(1-\lambda)\delta_{x_4})\not\in \IN_2(\cM(X)).$$
\end{lemma}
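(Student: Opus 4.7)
The plan is to argue by contradiction using the explicit description of $\IN_2(\cM(X))$ provided by Theorem~\ref{T-IN main}.(3). Assuming that the given convex combination lies in $\IN_2(\cM(X))$, we obtain some $\mu\in \cM_\IN(X^2)$ with $\mu^{(1)}=\lambda \delta_{x_1}+(1-\lambda)\delta_{x_3}$ and $\mu^{(2)}=\lambda \delta_{x_2}+(1-\lambda)\delta_{x_4}$. Since every point of $\supp(\mu)$ projects into the supports of the two marginals, we immediately get
$$\supp(\mu)\subseteq \{x_1,x_3\}\times\{x_2,x_4\}.$$

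The next step is to apply Definition~\ref{D-IN measure} with $n=1$: every point of $\supp(\mu)$ must lie in $\IN_2(X)$. The independence-set condition defining IN-tuples is symmetric in the two coordinates (one ranges over \emph{all} maps $\sigma: J\to [2]$), so $(x_2,x_3)\notin \IN_2(X)$ gives $(x_3,x_2)\notin \IN_2(X)$ as well. Combined with $(x_1,x_4)\notin \IN_2(X)$, this eliminates the two ``off-diagonal'' candidates and forces
$$\supp(\mu)\subseteq \{(x_1,x_2),(x_3,x_4)\}.$$
Hence $\mu=a\,\delta_{(x_1,x_2)}+b\,\delta_{(x_3,x_4)}$ for some $a,b\ge 0$ with $a+b=1$. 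In the principal case $x_1\neq x_3$ and $x_2\neq x_4$, comparing marginals with $0<\lambda<1$ forces $a=\lambda$ and $b=1-\lambda$, both strictly positive. The degenerate cases $x_1=x_3$ or $x_2=x_4$ (which cannot occur simultaneously, since $(x_1,x_2)\neq(x_3,x_4)$) are even easier to dispatch: one marginal is then a Dirac mass while the prescribed marginal in that coordinate is a nontrivial convex combination of two distinct points, giving an immediate contradiction.

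Finally I will invoke Definition~\ref{D-IN measure} once more, now with $n=2$, taking the two distinct support points $(x_1,x_2)$ and $(x_3,x_4)$ of $\mu$. The definition yields
$$\bigl((x_1,x_3),(x_2,x_4)\bigr)\in \IN_2(X^2),$$
in direct contradiction with the third hypothesis. The substance of the argument is the interaction between the $n=1$ and $n=2$ clauses built into the IN-measure definition: the $n=1$ clause trims $\supp(\mu)$ down to at most two prescribed points, after which the $n=2$ clause is triggered by the only nontrivial pair available. No new combinatorics is needed here; the lemma is essentially a bookkeeping consequence of Theorem~\ref{T-IN main}.(3) together with the definition of $\cM_\IN(X^2)$, so I do not anticipate any serious obstacle.
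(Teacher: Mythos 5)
Your main line of argument is exactly the paper's: invoke Theorem~\ref{T-IN main}.(3) to obtain $\mu\in\cM_\IN(X^2)$ with the two prescribed marginals, note $\supp(\mu)\subseteq\{x_1,x_3\}\times\{x_2,x_4\}$, use the $n=1$ clause of Definition~\ref{D-IN measure} (together with the symmetry of IN-pairs, which you rightly point out turns $(x_2,x_3)\not\in\IN_2(X)$ into $(x_3,x_2)\not\in\IN_2(X)$) to trim the support to $\{(x_1,x_2),(x_3,x_4)\}$, and then contradict $((x_1,x_3),(x_2,x_4))\not\in\IN_2(X^2)$ via the $n=2$ clause. In the principal case $x_1\neq x_3$ and $x_2\neq x_4$ your bookkeeping is correct and matches the paper.

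The one flaw is your dispatch of the degenerate cases. If, say, $x_1=x_3$ (and then necessarily $x_2\neq x_4$), the prescribed first marginal $\lambda\delta_{x_1}+(1-\lambda)\delta_{x_3}$ is itself the Dirac mass $\delta_{x_1}$, so there is no ``Dirac mass versus nontrivial convex combination'' clash and no immediate contradiction at the level of marginals; the configuration is perfectly consistent until the $n=2$ clause is invoked again. The fix is trivial: the second marginal forces both $(x_1,x_2)$ and $(x_3,x_4)=(x_1,x_4)$, which are distinct pairs by hypothesis, to carry masses $\lambda$ and $1-\lambda$, and then the $n=2$ clause yields $((x_1,x_3),(x_2,x_4))\in\IN_2(X^2)$ exactly as in the principal case --- so the case split is unnecessary. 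The paper sidesteps it by arguing in the other order: the $n=2$ clause rules out $\supp(\mu)=\{(x_1,x_2),(x_3,x_4)\}$ outright, hence $\mu$ would have to be $\delta_{(x_1,x_2)}$ or $\delta_{(x_3,x_4)}$, and comparing with the prescribed marginals (using $0<\lambda<1$) forces $(x_1,x_2)=(x_3,x_4)$, contradicting distinctness.
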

\begin{proof} We argue by contradiction. Assume that $(\lambda \delta_{x_1}+(1-\lambda)\delta_{x_3},  \lambda \delta_{x_2}+(1-\lambda)\delta_{x_4})\in \IN_2(\cM(X))$ for some $0<\lambda<1$. By Theorem~\ref{T-IN main}.(3) there is some $\mu\in \cM_\IN(X^2)$ such that
\begin{align} \label{E-not IN}
 \mu^{(1)}=\lambda \delta_{x_1}+(1-\lambda)\delta_{x_3} \quad \text{ and } \quad \mu^{(2)}=\lambda \delta_{x_2}+(1-\lambda)\delta_{x_4}.
\end{align}
Then
\begin{align*}
\supp(\mu)\subseteq \{x_1, x_3\}\times \{x_2, x_4\}=\{(x_1, x_2), (x_1, x_4), (x_3, x_2), (x_3, x_4)\}.
\end{align*}
Since $\mu\in \cM_\IN(X^2)$ and $(x_1, x_4), (x_3, x_2)\not\in \IN_2(X)$, we must have
$$ \supp(\mu)\subseteq \{(x_1, x_2),  (x_3, x_4)\}. $$
Using $\mu\in \cM_\IN(X^2)$ again and $((x_1, x_3), (x_2, x_4))\not\in \IN_2(X^2)$, we see that $\supp(\mu)$ cannot be equal to $\{(x_1, x_2),  (x_3, x_4)\}$. Therefore $\supp(\mu)$ is equal to either $\{(x_1, x_2)\}$ or $\{(x_3, x_4)\}$, i.e., $\mu$ is equal to either $\delta_{(x_1, x_2)}$ or
$\delta_{(x_3, x_4)}$. From \eqref{E-not IN} we conclude that $(x_1, x_2)=(x_3, x_4)$, a contradiction to the assumption that $(x_1, x_2)$ and $(x_3, x_4)$ are distinct.
\end{proof}

Let $m\in \Nb$. An element $x\in \Omega_m:=\{0, 1, \dots, m-1\}^\Zb$ is called a {\it Toeplitz sequence} if for each $j\in \Zb$ there is an $n\in \Nb$ such that $x(j+tn)=x(j)$ for all $t\in \Zb$. We endow $\Omega_m$ with the shift $\Zb$-action given by $(Tx)(s)=x(s+1)$ for all $s\in \Zb$. The subshift generated by $x$, i.e., the restriction of the shift action to the orbit closure of $x$,  is called a {\it Toeplitz subshift} \cite{JK}.  Note that every Toeplitz subshift is minimal \cite[Theorem 4]{JK}.

We set $m=5$. We choose an increasing sequence $n_1<n_2<\cdots$ in $\Nb$ with $n_p|n_{p+1}$ for each $p\in \Nb$ and $2^{p+2}$ distinct elements $y_{p, 1}, y_{p, 2}, \dots, y_{p, 2^{p+1}}, y'_{p, 1}, \dots, y'_{p, 2^{p+1}}$ in $\Zb/n_p\Zb$ for each $p\in \Nb$ with the following properties.
\begin{enumerate}[label=(\roman*)]
\item $y_{p+1, 2q-1}\equiv y_{p+1, 2q}\equiv y'_{p+1, 2q-1}\equiv y'_{p+1, 2q}\equiv y'_{p, q} \mod n_p$ for all $p\in \Nb$ and all $q\in [2^{p+1}]$.
\item For each $p\in \Nb$ there are $z_p, u_p\in \Zb/n_p\Zb$ such that $y'_{p, q}=y_{p, q}+z_p$ for all $q\in [2^p]$ and $y'_{p, q}=y_{p, q}+u_p$ for all $q\in [2^{p+1}]\setminus [2^p]$.
\item For each $p\in \Nb$ and any $v_1, v_2, v_3\in V_p:=\bigcup_{q\in [2^p]}\{y_{p, q}, y'_{p, q}\}$ and any $w\in W_p:=\bigcup_{q\in [2^{p+1}]\setminus [2^p]}\{y_{p, q}, y'_{p, q}\}$ we have $v_1-v_2\neq v_3-w$.
\item For each $p\in \Nb$ and any distinct $v_1, v_2\in V_p$ and any $w_1, w_2\in W_p$, we have $v_1-v_2\neq w_1-w_2$.
\item There does not exist any $z\in \Zb$ such that $z+n_p\Zb \in \{y'_{p, q}: q\in [2^{p+1}]\}$ for all $p\in \Nb$.
\end{enumerate}
To see that such $n_p$ and $y_{p, q}, y'_{p, q}$ exist, we shall construct them inductively on $p$ to satisfy the above properties (i)-(iv) and (v') below which is stronger than (v):
\begin{enumerate}
\item[(v')] For each $p\in \Nb$, none of $t+n_p\Zb$ for $t=-p, -p+1, \dots, 0, \dots, p-1, p$ lies in $\{y'_{p, q}: q\in [2^{p+1}]\}$.
\end{enumerate}

To start, for $p=1$, we take integers $1<\tilde{y}_{1, 1}<\tilde{y}_{1, 2}$ first, then take an integer $\tilde{z}_1>\tilde{y}_{1, 2}$ and set $\tilde{y}'_{1, q}=\tilde{y}_{1, q}+\tilde{z}_1$ for $q\in [2]$. Next we take integers $2\tilde{y}'_{1, 2}<\tilde{y}_{1, 3}<\tilde{y}_{1, 4}$ such that
$\tilde{y}_{1, 4}-\tilde{y}_{1, 3}>\tilde{y}'_{1, 2}$, and take an integer $\tilde{u}_1>\tilde{y}_{1, 4}$ and set $\tilde{y}'_{1, q}=\tilde{y}_{1, q}+\tilde{u}_1$ for $q\in [4]\setminus [2]$. Finally, we take an integer $n_1>\tilde{y}'_{1, 4}+\tilde{y}'_{1, 2}+1$, and set
$$y_{1, q}=\tilde{y}_{1, q}+n_1\Zb \quad \mbox{ and }  \quad y'_{1, q}=\tilde{y}'_{1, q}+n_1\Zb \quad \mbox{ for all } q\in [2^2],$$
and 
$$z_1=\tilde{z}_1+n_1\Zb \quad \mbox{ and } \quad u_1=\tilde{u}_1+n_1\Zb. $$
Then (ii)-(iv) and (v') hold for $p=1$. 

Assume that we have constructed $n_1<n_2<\dots <n_j$ satisfying $n_p|n_{p+1}$ for all $1\le p<j$ and distinct $y_{p, q}, y'_{p, q}\in \Zb/n_p\Zb$ for $p\in [j]$ and $q\in [2^{p+1}]$ satisfying (i) for $1\le p<j$ and (ii)-(iv) and (v') for $p\in [j]$. We take integers $j+1<\tilde{y}_{j+1, 1}<\dots <\tilde{y}_{j+1, 2^{j+1}}$ first such that
$$\tilde{y}_{j+1, 2q-1}\equiv \tilde{y}_{j+1, 2q}\equiv y'_{j, q} \mod n_j\Zb$$
for all $q\in [2^j]$, then take an integer $\tilde{z}_{j+1}\in n_j\Zb$ satisfying $\tilde{z}_{j+1}>\tilde{y}_{j+1, 2^{j+1}}$ and set $\tilde{y}'_{j+1, q}=\tilde{y}_{j+1, q}+\tilde{z}_{j+1}$ for all $q\in [2^{j+1}]$. Next we take integers $2\tilde{y}'_{j+1, 2^{j+1}}<\tilde{y}_{j+1, 2^{j+1}+1}<\dots<\tilde{y}_{j+1, 2^{j+2}}$ such that
$\tilde{y}_{j+1, q+1}-\tilde{y}_{j+1, q}>\tilde{y}'_{j+1, 2^{j+1}}$ for all $2^{j+1}<q< 2^{j+2}$ and
$$\tilde{y}_{j+1, 2q-1}\equiv \tilde{y}_{j+1, 2q}\equiv y'_{j, q} \mod n_j\Zb$$
for all $q\in [2^{j+1}]\setminus [2^{j}]$, and take an integer $\tilde{u}_{j+1}\in n_j\Zb$ with $\tilde{u}_{j+1}>\tilde{y}_{j+1, 2^{j+2}}$ and set $\tilde{y}'_{j+1, q}=\tilde{y}_{j+1, q}+\tilde{u}_{j+1}$ for $q\in [2^{j+2}]\setminus [2^{j+1}]$. Finally, we take an integer $n_{j+1}\in n_j\Zb$ such that $n_{j+1}>\tilde{y}'_{j+1, 2^{j+2}}+\tilde{y}'_{j+1, 2^{j+1}}+j+1+n_j$, and set
\begin{align*}
y_{j+1, q}=\tilde{y}_{j+1, q}+n_{j+1}\Zb \quad \mbox{ and }  \quad y'_{j+1, q}=\tilde{y}'_{j+1, q}+n_{j+1}\Zb \quad \mbox{ for all } q\in [2^{j+2}],
\end{align*}
and
\begin{align*}
z_{j+1}=\tilde{z}_{j+1}+n_{j+1}\Zb \quad \mbox{ and } \quad u_{j+1}=\tilde{u}_{j+1}+n_{j+1}\Zb.
\end{align*}
Then (i) holds for all $p\in [j]$ and (ii)-(iv) and (v') hold for all $p\in [j+1]$. This finishes the induction step and gives us the $n_p$ and $y_{p, q}, y'_{p, q}$ as desired.

We define a map $h: \bigcup_{p\in \Nb}\{y_{p, q}: q\in [2^{p+1}]\}\rightarrow [4]$ by
\begin{align*}
h(y_{p, q})=\begin{cases}
1, & \quad \text{if } q\le 2^p \text{ and is odd,}\\
2, & \quad \text{if } q\le 2^p \text{ and is even,}\\
3, & \quad \text{if } q> 2^p \text{ and is odd,}\\
4, & \quad \text{if } q> 2^p \text{ and is even.}
\end{cases}
\end{align*}
Now we define an $x\in \Omega_5$ by
\begin{align*}
x(s)=\begin{cases}
h(y_{p, q}),  & \quad \text{if } s\equiv y_{p, q} \mod n_p \text{ for some } p\in \Nb \text{ and some } q\in  [2^{p+1}],\\
0 & \quad \text{ otherwise.}
\end{cases}
\end{align*}

\begin{lemma} \label{L-Toplitz}
The element $x$ is well defined and Toeplitz.
\end{lemma}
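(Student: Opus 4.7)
The plan is to dispatch well-definedness of $x$ first, then verify the Toeplitz property; both parts rest on the same technical fact about reducing the $y_{p,q}$'s through the tower of moduli via property (i).

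For well-definedness, the only possible ambiguity is that some $s\in\Zb$ might satisfy $s\equiv y_{p_1,q_1}\pmod{n_{p_1}}$ and $s\equiv y_{p_2,q_2}\pmod{n_{p_2}}$ for two different pairs. Within a fixed level $p$ the residues $y_{p,q}$ are distinct in $\Zb/n_p\Zb$, so if $p_1=p_2$ then $q_1=q_2$ and there is nothing to check. If $p_1<p_2$, I would use $n_{p_1}\mid n_{p_2}$ to reduce the second congruence modulo $n_{p_1}$ and then iterate property (i) downward from level $p_2$ to level $p_1$: at each descent a $y_{p+1,q}$ (or $y'_{p+1,q}$) reduces to some $y'_{p,\lceil q/2\rceil}$. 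The upshot is that $y_{p_2,q_2}\bmod n_{p_1}$ always lands in $\{y'_{p_1,q^*}:q^*\in[2^{p_1+1}]\}$, so $y_{p_1,q_1}=y'_{p_1,q^*}$ in $\Zb/n_{p_1}\Zb$, contradicting the stipulated distinctness of the $y_{p_1,\cdot}$ and $y'_{p_1,\cdot}$.

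For the Toeplitz property, fix $s\in\Zb$. If $s\equiv y_{p_0,q_0}\pmod{n_{p_0}}$ for some $(p_0,q_0)$, the obvious choice $n=n_{p_0}$ works: for every $t\in\Zb$ one still has $s+tn_{p_0}\equiv y_{p_0,q_0}\pmod{n_{p_0}}$, and by the well-definedness just established $x(s+tn_{p_0})=h(y_{p_0,q_0})=x(s)$. The interesting case is when no such $(p_0,q_0)$ exists, so $x(s)=0$. Here I invoke property (v): since $s$ fails the condition appearing there, there is some $p$ with $s+n_p\Zb\notin\{y'_{p,q}:q\in[2^{p+1}]\}$. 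I claim $n=n_p$ is a period for $x$ at $s$. Suppose instead that $x(s+tn_p)\neq 0$ for some $t$, witnessed by $s+tn_p\equiv y_{p',q'}\pmod{n_{p'}}$. When $p'\le p$ one has $n_{p'}\mid n_p$, so $s\equiv y_{p',q'}\pmod{n_{p'}}$, contradicting the case assumption; when $p'>p$, reducing modulo $n_p$ and running the same iterated (i) descent as in part one forces $y_{p',q'}\equiv y'_{p,q^*}\pmod{n_p}$ for some $q^*$, hence $s\equiv y'_{p,q^*}\pmod{n_p}$, contradicting the choice of $p$.

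The main obstacle is purely bookkeeping: the repeated descent through property (i) that tracks a $y_{p',q'}$ for $p'>p$ down to a $y'_{p,q^*}$. Everything else (the division relations $n_{p_1}\mid n_{p_2}$, distinctness within each level, and property (v) handling the ``zero'' points) is then direct. Properties (ii)--(iv) play no role here; they are reserved for later analysis of the subshift generated by $x$.
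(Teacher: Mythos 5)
Your proof is correct and follows essentially the same route as the paper: well-definedness via the iterated descent through property (i) combined with the distinctness of the $y_{p,\cdot}, y'_{p,\cdot}$, periodicity with period $n_p$ at points where $x(s)\in[4]$, and property (v) together with the same descent to handle the points where $x(s)=0$. The only difference is cosmetic: you spell out the downward iteration of (i) that the paper leaves as "clear", and you correctly observe that (ii)--(iv) are not needed here.
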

\begin{proof} From Property (i) it is clear that if $s\in \Zb$, $p_i\in \Nb$ and $q_i\in [2^{p_i+1}]$ for $i=1, 2$ such that
$s\equiv y_{p_i, q_i} \mod n_{p_i}$ for $i=1, 2$, then $p_1=p_2$ and $q_1=q_2$. Thus $x$ is well defined.

Let $s\in \Zb$ such that $x(s)\in [4]$. Then
$$s \equiv y_{p, q} \mod n_p$$
for some $p\in \Nb$ and $q\in [2^{p+1}]$. For any $t\in \Zb$, we have
$$s+tn_p \equiv s \equiv y_{p, q} \mod n_p,$$
whence $x(s+tn_p)=h(y_{p, q})=x(s)$.

Let $s\in \Zb$ such that $x(s)=0$. By Property (v) we can find some $p\in \Nb$ such that $s+n_p\Zb\not\in \{y'_{p, q}: q\in  [2^{p+1}]\}$. Let $t\in \Zb$. We claim that $s+tn_p \not\equiv y_{p', q'} \mod n_{p'}$ for all $p'\in \Nb$ and $q'\in [2^{p'+1}]$. Indeed, for any $p'>p$ and any $q'\in [2^{p'+1}]$, by Property (i) we have $y_{p', q'}\equiv y'_{p, q} \mod n_p$ for some $q\in [2^{p+1}]$, whence
$$s+tn_p\equiv s\not\equiv y_{p', q'} \mod n_p,$$
which implies that $s+tn_p \not\equiv y_{p', q'} \mod n_{p'}$. If $s+tn_p\equiv y_{p', q'} \mod n_{p'}$ for some $p'\le p$ and $q'\in [2^{p'+1}]$, then
$$ s\equiv s+t n_p\equiv y_{p', q'} \mod n_{p'},$$
and hence $x(s)=h(y_{p', q'})\in [4]$, which is impossible. This proves our claim. Then $x(s+tn_p)=0=x(s)$.

Now we conclude that $x$ is Toeplitz.
\end{proof}

We denote by $X$ the orbit closure of $x$ under the shift action. We shall show that neither $\IN_2(\cM(X))$ nor $\IT_2(\cM(X))$ is convex.

\begin{lemma} \label{L-difference}
For any $p\in \Nb$ and $q_1, q_2\in [2^p]$ or $q_1, q_2\in [2^{p+1}]\setminus [2^p]$, we have
$$y_{p+1, 2q_1-1}-y_{p+1, 2q_2-1}\equiv y_{p, q_1}-y_{p, q_2} \mod n_p.$$
\end{lemma}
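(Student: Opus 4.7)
The plan is to read off the claim directly from properties (i) and (ii) of the construction. First I would apply property (i) at level $p+1$ with $q$ replaced by $q_i$ to obtain
$$y_{p+1,\,2q_i-1}\equiv y'_{p,\,q_i}\pmod{n_p}\qquad (i=1,2).$$
Subtracting the two congruences gives
$$y_{p+1,\,2q_1-1}-y_{p+1,\,2q_2-1}\equiv y'_{p,q_1}-y'_{p,q_2}\pmod{n_p}.$$

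Next I would invoke property (ii). If $q_1,q_2\in[2^p]$, then $y'_{p,q_i}=y_{p,q_i}+z_p$, so the shift $z_p$ cancels upon subtraction and
$y'_{p,q_1}-y'_{p,q_2}=y_{p,q_1}-y_{p,q_2}$ in $\Zb/n_p\Zb$. The same argument with $u_p$ in place of $z_p$ handles the case $q_1,q_2\in[2^{p+1}]\setminus[2^p]$. Combining this with the congruence of the previous paragraph yields the desired identity modulo $n_p$. No obstacle is expected; the lemma is essentially a bookkeeping consequence of the two properties of the construction, and the only subtlety is to make sure the translation constant is the same for both $q_1$ and $q_2$, which is exactly why the hypothesis restricts $q_1,q_2$ to lie on the same side of the threshold $2^p$.
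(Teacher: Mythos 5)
Your proof is correct and is essentially the paper's own argument: apply property (i) at level $p+1$ to reduce to $y'_{p,q_1}-y'_{p,q_2}$, then use property (ii) so that the common shift ($z_p$ or $u_p$, depending on which side of $2^p$ both indices lie) cancels, giving $y_{p,q_1}-y_{p,q_2}$ modulo $n_p$. Nothing further is needed.
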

\begin{proof}  Using Properties (i) and (ii) we have
\begin{displaymath}
y_{p+1, 2q_1-1}-y_{p+1, 2q_2-1}\equiv y'_{p, q_1}-y'_{p, q_2}\equiv y_{p, q_1}-y_{p, q_2} \mod n_p. \tag*{\qedsymbol}
\end{displaymath}
     \renewcommand{\qedsymbol}{}
     \vspace{-\baselineskip}
\end{proof}

\begin{lemma} \label{L-realize map}
The following hold:
\begin{enumerate}
\item For any $p\in \Nb$ and any map $\sigma: [p]\rightarrow [2]$ there are some $q_i\in [2^i]$ for each $i\in [p]$ such that $q_{i+1}\in \{2q_i-1, 2q_i\}$ for each $1\le i<p$ and $h(y_{i, q_i})=\sigma(i)$ for all $i\in [p]$.
\item For any $p\in \Nb$ and any map $\sigma: [p]\rightarrow \{3, 4\}$ there are some $q_i\in [2^{i+1}]\setminus [2^i]$ for each $i\in [p]$ such that $q_{i+1}\in \{2q_i-1, 2q_i\}$ for each $1\le i<p$ and $h(y_{i, q_i})=\sigma(i)$ for all $i\in [p]$.
\end{enumerate}
\end{lemma}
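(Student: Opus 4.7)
My plan is to prove both parts by induction on $p$, exploiting the simple observation that of the two candidates $\{2q_i-1, 2q_i\}$ for $q_{i+1}$, one is odd and one is even, which gives us exactly the freedom needed to match the parity dictated by $\sigma(i+1)$ through the definition of $h$.

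For part (1), recall that for $q \in [2^i]$, $h(y_{i,q}) = 1$ iff $q$ is odd and $h(y_{i,q}) = 2$ iff $q$ is even. I would set the base case $q_1 = 1$ if $\sigma(1)=1$ and $q_1 = 2$ if $\sigma(1)=2$; both lie in $[2^1]$, and $h(y_{1,q_1}) = \sigma(1)$ by definition. For the inductive step, assuming $q_i \in [2^i]$ has been chosen with $h(y_{i,q_i}) = \sigma(i)$, take $q_{i+1} = 2q_i - 1$ if $\sigma(i+1) = 1$ and $q_{i+1} = 2q_i$ if $\sigma(i+1) = 2$. Since $q_i \le 2^i$, both candidates lie in $[2^{i+1}]$, and the parity of $q_{i+1}$ matches $\sigma(i+1)$, so $h(y_{i+1, q_{i+1}}) = \sigma(i+1)$.

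For part (2), the same strategy works verbatim once one tracks the range carefully. Now $h(y_{i,q}) = 3$ iff $q \in [2^{i+1}] \setminus [2^i]$ is odd and $h(y_{i,q}) = 4$ iff it is even. Take $q_1 \in \{3,4\} = [2^2]\setminus[2]$ matching $\sigma(1)$ as before. Inductively, if $q_i \in [2^{i+1}]\setminus[2^i]$, i.e.\ $2^i + 1 \le q_i \le 2^{i+1}$, then $2q_i - 1 \ge 2^{i+1}+1$ and $2q_i \le 2^{i+2}$, so both candidates lie in $[2^{i+2}]\setminus[2^{i+1}]$; choose between them according to the parity required by $\sigma(i+1)$. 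This closes the induction.

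The argument has no genuine obstacle — it is purely combinatorial and relies only on the definition of $h$ and elementary arithmetic. The only thing worth double-checking is the range inclusion in part (2), to confirm that doubling an element strictly above $2^i$ produces an element strictly above $2^{i+1}$, so that $q_{i+1}$ stays in the ``upper half'' block required by the statement.
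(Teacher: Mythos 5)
Your proof is correct and follows essentially the same inductive argument as the paper: at each step choose $q_{i+1}=2q_i-1$ or $2q_i$ according to the parity forced by $\sigma(i+1)$, checking the range stays in $[2^{i+1}]$ (resp.\ $[2^{i+2}]\setminus[2^{i+1}]$). The paper merely abbreviates the base case and part (2) as "trivial" and "similar," which you have spelled out explicitly.
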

\begin{proof} We prove (1) by induction on $p$. The case $p=1$ is trivial. Assume that the assertion holds for $p=l$. Let $\sigma: [l+1]\rightarrow [2]$. By the inductive assumption there are some $q_i\in [2^i]$ for each $i\in [l]$ such that $q_{i+1}\in \{2q_i-1, 2q_i\}$ for each $1\le i<l$ and $h(y_{i, q_i})=\sigma(i)$ for all $i\in [l]$.
If $\sigma(l+1)=1$, setting $q_{l+1}=2q_l-1$ we have $q_{l+1}\in [2^{l+1}]$,
and
$$h(y_{l+1, q_{l+1}})=1=\sigma(l+1).$$
If $\sigma(l+1)=2$, setting $q_{l+1}=2q_l$ we have $q_{l+1}\in [2^{l+1}]$,
and
$$h(y_{l+1, q_{l+1}})=2=\sigma(l+1).$$
This finishes the induction step and proves (1). The proof for (2) is similar.
\end{proof}

For each $j\in [4]$ we denote by $A_j$ the set of elements in $X$ taking value $j$ at $0$. This is a clopen subset of $X$. For each $p\in \Nb$ we take $a_p, b_p\in \Zb$ such that
$$y_{p, 1}=a_p+ n_p\Zb \quad  \mbox{ and } \quad y_{p, 2^p+1}=b_p+n_p\Zb.$$

\begin{lemma} \label{L-example IT}
The set $\{a_p: p\in \Nb\}$ is an independence set for the pair $(A_1, A_2)$, and the set $\{b_p: p\in \Nb\}$ is an independence set for the pair $(A_3, A_4)$.
\end{lemma}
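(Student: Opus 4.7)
My plan is to handle the two claims by the same method, so I will describe the argument for $\{a_p:p\in\Nb\}$ being an independence set for $(A_1,A_2)$; the argument for $\{b_p:p\in\Nb\}$ and $(A_3,A_4)$ is identical after replacing part (1) of Lemma~\ref{L-realize map} by part (2) and replacing $y_{p,1}$ by $y_{p,2^p+1}$. Unpacking the definition of an independence set and using that $X$ is the orbit closure of $x$ and that each $A_j$ is clopen, what must be shown is: for every finite $F\subseteq\Nb$ and every $\sigma:F\to[2]$, there exists $s\in\Zb$ such that $x(s+a_p)=\sigma(p)$ for all $p\in F$. The translate $T^sx\in X$ then lies in $\bigcap_{p\in F}(a_p)^{-1}A_{\sigma(p)}$.

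First I would set $P=\max F$, extend $\sigma$ arbitrarily to a map $[P]\to[2]$, and apply Lemma~\ref{L-realize map}.(1) to obtain $q_i\in[2^i]$ for each $i\in[P]$ such that $q_{i+1}\in\{2q_i-1,2q_i\}$ and $h(y_{i,q_i})=\sigma(i)$ for all $i\in[P]$. Since $a_p\equiv y_{p,1}\bmod n_p$ and $x$ is defined so that $x(t)=h(y_{p,q})$ whenever $t\equiv y_{p,q}\bmod n_p$, it suffices to produce an $s\in\Zb$ with
\[
s\equiv y_{p,q_p}-y_{p,1}\pmod{n_p}\qquad\text{for all }p\in F.
\]
Because $n_p\mid n_P$ for $p\le P$, the existence of such $s$ reduces to the compatibility statement
\[
y_{P,q_P}-y_{P,1}\equiv y_{p,q_p}-y_{p,1}\pmod{n_p}\qquad\text{for every }p\le P,
\]
and then $s$ can be taken to be any integer lift of $y_{P,q_P}-y_{P,1}\bmod n_P$.

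The core of the proof is therefore the inductive claim that for all $j\ge p$,
\[
y_{j,q_j}-y_{j,1}\equiv y_{p,q_p}-y_{p,1}\pmod{n_p}.
\]
For the inductive step from $j$ to $j+1$, Property~(i) gives $y_{j+1,2q_j-1}\equiv y_{j+1,2q_j}\equiv y'_{j,q_j}\pmod{n_j}$, so regardless of which of $\{2q_j-1,2q_j\}$ equals $q_{j+1}$, one has $y_{j+1,q_{j+1}}\equiv y_{j+1,2q_j-1}\pmod{n_j}$, and similarly $y_{j+1,1}\equiv y_{j+1,1}\pmod{n_j}$ (where $1=2\cdot 1-1$). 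Then Lemma~\ref{L-difference}, applied with the indices $q_j,1\in[2^j]$, yields
\[
y_{j+1,2q_j-1}-y_{j+1,1}\equiv y_{j,q_j}-y_{j,1}\pmod{n_j},
\]
which together with $n_p\mid n_j$ completes the induction.

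The whole argument is essentially bookkeeping; I do not expect a serious obstacle. The one place that requires care is the second claim, where the indices $q_p$ range over $[2^{p+1}]\setminus[2^p]$ and the reference point is $y_{p,2^p+1}$ rather than $y_{p,1}$. There Lemma~\ref{L-difference} is invoked in its second regime (with $q_1,q_2\in[2^{p+1}]\setminus[2^p]$), using the identification $2(2^p+1)-1=2^{p+1}+1$ to match indices, but the combinatorial skeleton is exactly the same.
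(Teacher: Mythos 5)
Your proof is correct and follows essentially the same route as the paper's: realize the prescribed symbol pattern via Lemma~\ref{L-realize map}, propagate the congruences through the levels using Property (i) together with Lemma~\ref{L-difference}, and translate $x$ by the resulting integer (your $s\equiv y_{P,q_P}-y_{P,1}$ is just the negative of the paper's shift $a\equiv a_p-y_{p,q_p}$ modulo each $n_p$). The treatment of the second assertion, including the index bookkeeping $2(2^p+1)-1=2^{p+1}+1$ and the second regime of Lemma~\ref{L-difference}, matches the paper's "similar" argument.
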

\begin{proof} We prove the first assertion. The proof for the second assertion is similar.

It suffices to show that $\{a_1, \dots, a_p\}$ is an independence set for the pair $(A_1, A_2)$ for every $p\in \Nb$. Let $\sigma: [p]\rightarrow [2]$. By Lemma~\ref{L-realize map} we can find some $q_i\in [2^i]$ for each $i\in [p]$ such that $q_{i+1}\in \{2q_i-1, 2q_i\}$ for each $1\le i<p$ and $h(y_{i, q_i})=\sigma(i)$ for all $i\in [p]$. We take an $a\in \Zb$ such that 
$$a\equiv a_p-y_{p, q_p}\mod n_p.$$ 
If $1\le i<p$, by Property (i) and Lemma~\ref{L-difference} we have
$$  a_{i+1}-y_{i+1, q_{i+1}}\equiv  y_{i+1, 1}-y_{i+1, 2q_i-1}\equiv y_{i, 1}-y_{i, q_i}\equiv a_i-y_{i, q_i}\mod n_i.$$
Taking $i=p-1$ we get
$$ a\equiv a_{p}-y_{p, q_{p}} \equiv a_{p-1}-y_{p-1, p_{p-1}} \mod n_{p-1}.$$
Taking $i=p-2$ we get
$$ a \equiv a_{p-1}-y_{p-1, q_{p-1}} \equiv a_{p-2}-y_{p-2, p_{p-2}} \mod n_{p-2}.$$
Going on this way, we get
$$ a \equiv a_i-y_{i, q_i} \mod n_i$$
for all $i\in [p]$. Equivalently,
$$ a_i-a \equiv y_{i, q_i} \mod n_i$$
for all $i\in [p]$. 
Then
$$(T^{a_i-a}x)(0)=x(a_i-a)=h(y_{i, q_i})=\sigma(i)$$
for all $i\in [p]$, i.e., $T^{-a} x\in \bigcap_{i\in [p]}T^{-a_i}A_{\sigma(i)}$. Therefore $\{a_1, \dots, a_p\}$ is an independence set for the pair $(A_1, A_2)$.
\end{proof}

For each $s\in \Zb$ with $x(s)\in [4]$, let $P(s)$ and $Q(s)$ denote the unique positive integers such that $1\le Q(s)\le 2^{P(s)+1}$ and $s\equiv y_{P(s), Q(s)} \mod n_{P(s)}$. The uniqueness of such $P(s)$ and $Q(s)$ is shown in the proof of Lemma~\ref{L-Toplitz}. 

\begin{lemma} \label{L-not IN1}
Let $i\in \{1, 2\}$ and $j\in \{3, 4\}$. Then each independence set for the pair $(A_i, A_j)$ has cardinality at most $1$.
\end{lemma}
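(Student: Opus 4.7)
The plan is to argue by contradiction. Suppose $F=\{s_1,s_2\}$ with $s_1\neq s_2$ were an independence set for $(A_i,A_j)$, and set $d=s_2-s_1\neq 0$. The two mixed colorings of $F$ give witnesses in $X$; since the conditions $y(s_1),y(s_2)\in\{1,\dots,4\}$ are clopen and $X$ is the orbit closure of $x$, these witnesses may be pulled back to the orbit of $x$: there are $t_1,t_2\in\Zb$ with $x(s_1+t_1)=i,\,x(s_2+t_1)=j$ and $x(s_1+t_2)=j,\,x(s_2+t_2)=i$. Set $\alpha_1=s_1+t_1,\;\alpha_2=\alpha_1+d,\;\beta_1=s_1+t_2,\;\beta_2=\beta_1+d$, so that $\alpha_1,\beta_2$ are ``$V$-type'' ($x\in\{1,2\}$) while $\alpha_2,\beta_1$ are ``$W$-type'' ($x\in\{3,4\}$).

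The heart of the argument is to reduce modulo $n_p$ for a well-chosen level $p$ and exploit the $V_p/W_p$ structure. By iterated application of Property~(i) (as recorded in Lemma~\ref{L-difference}), together with Property~(ii) and the divisibilities $z_{p+1},u_{p+1}\in n_p\Zb$ built into the construction, one checks that whenever $p\le\min\bigl(P(\alpha_1),P(\alpha_2),P(\beta_1),P(\beta_2)\bigr)$ one has $\bar\alpha_1,\bar\beta_2\in V_p$ and $\bar\alpha_2,\bar\beta_1\in W_p$ (where the bars denote residues modulo $n_p$). The relations $\alpha_2-\alpha_1=d=\beta_2-\beta_1$ then reduce to
\[
\bar\alpha_1+\bar\beta_2\equiv\bar\alpha_2+\bar\beta_1\pmod{n_p},
\]
producing a common element of $V_p+V_p$ and $W_p+W_p$, or equivalently, after rewriting $(\bar\alpha_2-\bar\beta_1)-(\bar\alpha_1-\bar\beta_2)=2d$, a $W_p-W_p$ difference and a $V_p-V_p$ difference that disagree by exactly $2d$. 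Apply Property~(iv) to the identity $\bar\alpha_1-\bar\beta_2\equiv\bar\alpha_2-\bar\beta_1\pmod{n_p}$ (obtained by subtracting the two equalities $\bar\alpha_2-\bar\alpha_1\equiv\bar\beta_2-\bar\beta_1\equiv d$): this forces $\bar\alpha_1=\bar\beta_2$ and $\bar\alpha_2=\bar\beta_1$, whence $\bar\alpha_1+d=\bar\alpha_2=\bar\beta_1=\bar\beta_2-d=\bar\alpha_1-d$, i.e.\ $2d\equiv 0\pmod{n_p}$. Choosing $p$ so that $n_p>2|d|$ yields $d=0$, contradicting $s_1\neq s_2$.

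The main obstacle is the simultaneous satisfaction of the two requirements on $p$: it must be at most $\min\bigl(P(\alpha_1),P(\alpha_2),P(\beta_1),P(\beta_2)\bigr)$ and yet satisfy $n_p>2|d|$. To reach this, one uses that the witness sets $\{t:x(s_k+t)\text{ as prescribed}\}$ are syndetic (by minimality of $X$), and the Toeplitz level-structure: at each level $q$ below the "critical" $p^*$ with $n_{p^*}>2|d|$, the congruences that witnesses must satisfy are constrained to a finite list of residues mod $n_q$, so by pigeonhole and the $V_p$-$W_p$ alternation one can upgrade the witnesses to have all four levels $\ge p^*$. Working out this upgrade, by careful comparison of $d\bmod n_q$ against the set of $W_q$-to-$V_q$ differences at each intermediate $q<p^*$, is expected to be the most delicate part of the proof.
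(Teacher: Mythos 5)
There is a genuine gap here, and in fact two. First, the algebraic step that feeds Property (iv) is incorrect. With your notation, $\alpha_1-\beta_2=(t_1-t_2)-d$ and $\alpha_2-\beta_1=(t_1-t_2)+d$, so the identity $\bar\alpha_1-\bar\beta_2\equiv\bar\alpha_2-\bar\beta_1\pmod{n_p}$ is \emph{not} a consequence of $\alpha_2-\alpha_1=\beta_2-\beta_1=d$; it is equivalent to $2d\equiv 0\pmod{n_p}$, i.e.\ to the very conclusion you are trying to extract from Property (iv), so the argument is circular at that point. (Indeed, just before this you correctly observe that the $V_p$-$V_p$ difference and the $W_p$-$W_p$ difference disagree by exactly $2d$; the next sentence contradicts that.) What the two mixed colorings genuinely give is the sum identity $\bar\alpha_1+\bar\beta_2\equiv\bar\alpha_2+\bar\beta_1\pmod{n_p}$, a coincidence of a $V_p+V_p$ sum with a $W_p+W_p$ sum, and neither Property (iii) nor Property (iv) says anything about sums, so this route has no combinatorial input to run on. Second, even if your scheme did yield $2d\equiv 0\pmod{n_p}$, you would still need $n_p>2|d|$, while $p$ is capped by $\min\bigl(P(\alpha_1),P(\alpha_2),P(\beta_1),P(\beta_2)\bigr)$; the proposed ``upgrade'' of witnesses to high level via syndeticity and pigeonhole is not carried out, and nothing in the construction gives any control over the levels $P(\cdot)$ of witnesses for a prescribed coloring, so this is an unproven (and doubtful) step rather than a routine one.

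The repair is to choose the colorings differently, which is what the paper does: use witnesses for the colorings $(A_i,A_i)$ and $(A_i,A_j)$ instead of the two mixed ones. Pulling back to the orbit of $x$ through the clopen sets, one gets $a,b\in\Zb$ with $x(s_1+a)=x(s_2+a)=x(s_1+b)=i$ and $x(s_2+b)=j$; taking $p$ to be the minimum of the four levels, Property (i) places the residues of $s_1+a$, $s_2+a$, $s_1+b$ in $V_p$ and that of $s_2+b$ in $W_p$ (this part of your argument is fine), and then Property (iii) directly forbids $(s_1+a)-(s_2+a)\equiv(s_1+b)-(s_2+b)\pmod{n_p}$, although both sides equal $s_1-s_2$. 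This gives an immediate contradiction with no constraint relating $n_p$ to $|s_1-s_2|$, so no level upgrade is needed. Property (iv) is the tool designed for Lemma~\ref{L-not IN2}, where each coordinate contributes two positions of the same type; it is not the right tool for the present lemma, whereas Property (iii) (three $V$-positions against one $W$-position) is tailored exactly to it.
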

\begin{proof} We argue by contradiction. Assume that $\{s_1, s_2\}$ is an independence set for $(A_i, A_j)$ for some distinct $s_1, s_2\in \Zb$. Then we can find some $z, z'\in X$ such that
$$z\in T^{-s_1}A_i\cap T^{-s_2}A_i \quad \text{ and } \quad z'\in T^{-s_1}A_i \cap T^{-s_2}A_j.$$ 
We may find $a, b\in \Zb$ so that $T^{a}x$ and $T^{b}x$ are close enough to $z$ and $z'$ respectively such that
$$T^ax\in T^{-s_1}A_i\cap T^{-s_2}A_i \quad \text{ and } \quad T^b x\in T^{-s_1}A_i \cap T^{-s_2}A_j.$$ 
This means
$$x(s_1+a)=x(s_2+a)=x(s_1+b)=i \quad \text{ and } \quad x(s_2+b)=j.$$ 
Let
$$p=\min(P(s_1+a), P(s_2+a), P(s_1+b), P(s_2+b)).$$
By Property (i) we have 
$$s_1+a +n_p \Zb, \, s_2+a+n_p\Zb, \, s_1+b+n_p\Zb \in V_p \quad \text{ and } \quad s_2+b+n_p\Zb \in W_p.$$ 
From Property (iii) we have
$$ (s_1+a)-(s_2+a)\not\equiv (s_1+b)-(s_2+b) \mod n_p,$$
which is impossible. Therefore each independence set for the pair $(A_i, A_j)$ has cardinality at most $1$.
\end{proof}

\begin{lemma} \label{L-not IN2}
Each independence set for the pair $(A_1\times A_3, A_2\times A_4)$ has cardinality at most $1$.
\end{lemma}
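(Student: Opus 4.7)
Assume for contradiction that $\{s_1, s_2\}$ with $s_1 \neq s_2$ is an independence set for $(A_1 \times A_3,\, A_2 \times A_4)$. Since the $\Zb$-action on $X \times X$ is diagonal and the rectangles decouple across coordinates, this is equivalent to $\{s_1, s_2\}$ being an independence set for both $(A_1, A_2)$ and $(A_3, A_4)$ in $X$. The plan is to mimic the proof of Lemma~\ref{L-not IN1} but with Property~(iv) playing the role of Property~(iii); the wrinkle is that Property~(iv) requires the relevant two elements of $V_p$ to be distinct, so we must work at a level $p$ for which $s_1 \not\equiv s_2 \pmod{n_p}$. I therefore fix $p_0 \in \Nb$ large enough that $n_{p_0} \nmid s_1 - s_2$.

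The central step is to produce integers $a, b$ satisfying $x(s_1+a) = 1$, $x(s_2+a) = 2$, $x(s_1+b) = 3$, $x(s_2+b) = 4$, and moreover $P(s_i+a), P(s_i+b) > p_0$ for $i = 1, 2$. Setting $p = \min\{P(s_i+a), P(s_i+b) : i = 1, 2\} > p_0$ and iterating Property~(i) down to level $p$, and noting that the correspondence under $\lceil\cdot/2\rceil$ preserves ``first-half versus second-half'', we deduce from $x(s_i+a) \in \{1, 2\}$ and $x(s_i+b) \in \{3, 4\}$ the existence of $q_1, q_2 \in [2^p]$ and $q_3, q_4 \in [2^{p+1}] \setminus [2^p]$ with $s_i+a \equiv y'_{p,q_i}$ and $s_i+b \equiv y'_{p,q_{i+2}} \pmod{n_p}$. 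Property~(ii) then cancels the common shifts $z_p, u_p$ and yields
\[
 s_1 - s_2 \equiv y_{p,q_1} - y_{p,q_2} \equiv y_{p,q_3} - y_{p,q_4} \pmod{n_p}.
\]
Since $n_{p_0} \mid n_p$ while $n_{p_0} \nmid s_1 - s_2$, this common value is nonzero mod $n_p$, forcing $y_{p,q_1}, y_{p,q_2} \in V_p$ to be distinct; since $y_{p,q_3}, y_{p,q_4} \in W_p$, Property~(iv) now contradicts the displayed equality.

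The hard part will be producing such $a$ and $b$. By the independence hypothesis the clopen set $U := T^{-s_1}A_1 \cap T^{-s_2}A_2 \subseteq X$ is nonempty, and by minimality of $X$ the return-time set $E := \{a \in \Zb : T^a x \in U\}$ is syndetic, hence has positive lower density bounded below by $\mu(U)$, where $\mu$ is the (unique) invariant probability measure of the Toeplitz subshift. On the other hand, the ``bad'' set $G := \{a \in \Zb : \min(P(s_1+a), P(s_2+a)) \le p_0\}$ is a finite union of arithmetic progressions with moduli dividing $n_{p_0}$, and its density is at most $2 \sum_{p \le p_0} 2^{p+1} / n_p$. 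Arranging the $n_p$ in the construction to grow rapidly enough---which the inductive lower bound on $n_{j+1}$ freely permits, since it is only a lower bound---makes the density of $G$ strictly less than $\mu(U)$, so $E \not\subseteq G$ and any element of $E \setminus G$ supplies a valid $a$ (noting that for $T^ax$ close enough in $X$ to a suitable $z \in U$ one has $P(s_i+a) = P_z(s_i)$, since the $P$-value is determined by the image in the equicontinuous factor $\Zb/n_{p_0}\Zb$). The analogous argument with $(A_3, A_4)$ in place of $(A_1, A_2)$ supplies $b$. The main technical obstacle is to make the density comparison hold uniformly over all candidate pairs $(s_1, s_2)$, via a uniform lower bound on $\mu(U)$ extracted from the explicit Toeplitz structure.
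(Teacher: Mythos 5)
Your reduction to simultaneous independence for $(A_1,A_2)$ and $(A_3,A_4)$ and the intended endgame via Property (iv) are fine, but there is a genuine gap exactly where you flag it: producing $a,b$ with all four levels $P(s_1+a),P(s_2+a),P(s_1+b),P(s_2+b)>p_0$. The density argument you sketch is not carried out and, as formulated, is doubtful: the Toeplitz subshift is minimal but you have not shown it is uniquely ergodic, so "density of $E$ equals $\mu(U)$" is unjustified; you give no uniform (or even pairwise sufficient) lower bound on $\mu(U)$ for $U=T^{-s_1}A_1\cap T^{-s_2}A_2$, and none is evident, since for pairs $(s_1,s_2)$ whose joint pattern is realized only through high levels the relevant occurrences have density controlled by $\sum_{p'\ge p}2^{p'+1}/n_{p'}$, which is small; moreover, making the $n_p$ grow faster to shrink the bad set $G$ also makes the symbols $1,\dots,4$ sparser, shrinking $\mu(U)$ along with it, so the comparison chases its own tail — and in any case re-choosing the $n_p$ modifies the construction rather than proving the lemma for the $X$ at hand. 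There is also a slip in your "central step": $p$ is the minimum of the four levels, so at least one of the four residues is of the form $y_{p,q}$ rather than $y'_{p,q}$, and the Property (ii) cancellation as written fails for that coordinate (it is also unnecessary, since $V_p$ and $W_p$ already contain both the $y$'s and the $y'$'s, so Property (iv) can be applied to the residues directly).

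The missing idea, which makes the whole detour unnecessary, is to exploit the minimal level instead of avoiding small levels. Take any $a,b$ with $x(s_1+a)=1$, $x(s_2+a)=2$, $x(s_1+b)=3$, $x(s_2+b)=4$ (these exist by the independence hypothesis together with density of the orbit of $x$ and clopenness of the $A_j$), and set $p=\min\bigl(P(s_1+a),P(s_2+a),P(s_1+b),P(s_2+b)\bigr)$ with no constraint relative to $s_1-s_2$. Property (i) places $s_1+a,\,s_2+a$ in $V_p$ and $s_1+b,\,s_2+b$ in $W_p$ modulo $n_p$, and since $(s_1+a)-(s_2+a)=(s_1+b)-(s_2+b)$, Property (iv) forces $s_1+a\equiv s_2+a$ and $s_1+b\equiv s_2+b \pmod{n_p}$. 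This is precisely the degenerate case you tried to exclude via $p_0$, and it is killed directly by the definition of $x$: whichever of the two blocks achieves the minimal level $p$ has its symbol determined by its residue modulo $n_p$, so $x(s_1+a)=x(s_2+a)$ or $x(s_1+b)=x(s_2+b)$, contradicting $1\neq 2$, respectively $3\neq 4$. With this observation no ergodic-theoretic or density input is needed at all.
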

\begin{proof} We argue by contradiction. Assume that $\{s_1, s_2\}$ is an independence set for $(A_1\times A_3, A_2\times A_4)$ for some distinct $s_1, s_2\in \Zb$. Then 
we can find some $(z, z')\in X^2$ such that
$$(z, z')\in T^{-s_1}(A_1\times A_3)\cap T^{-s_2}(A_2\times A_4).$$ 
We may find $a, b\in \Zb$ so that $(T^{a}x, T^{b}x)$ is close enough to $(z, z')$ such that
$$(T^a x, T^b x)\in T^{-s_1}(A_1\times A_3)\cap T^{-s_2}(A_2\times A_4).$$ 
This means
$$x(s_1+a)=1, \quad x(s_2+a)=2, \quad x(s_1+b)=3 \quad \text{ and } \quad x(s_2+b)=4.$$ 
Let
$$p=\min(P(s_1+a), P(s_2+a), P(s_1+b), P(s_2+b)).$$
By Property (i) we have 
$$s_1+a +n_p \Zb, \, s_2+a+n_p\Zb\in V_p \quad \text{ and } \quad s_1+b+n_p\Zb, \, s_2+b+n_p\Zb \in W_p.$$
Since
$$ (s_1+a)-(s_2+a)\equiv (s_1+b)-(s_2+b) \mod n_p,$$
by Property (iv) we have 
$$ s_1+a\equiv s_2+a \mod n_p \quad \text{ and } \quad  s_1+b\equiv s_2+b\mod n_p.$$
If $p=\min(P(s_1+a), P(s_2+a))$, then $x(s_1+a)=x(s_2+a)$, contrary to the fact that $x(s_1+a)=1$ and $x(s_2+a)=2$.
If $p=\min(P(s_1+b), P(s_2+b))$, then $x(s_1+b)=x(s_2+b)$, contrary to the fact that $x(s_1+b)=3$ and $x(s_2+b)=4$.
Therefore each independence set for the pair $(A_1\times A_3, A_2\times A_4)$ has cardinality at most $1$.
\end{proof}

Since $A_1, A_2, A_3$ and $A_4$ are all closed, by Lemma~\ref{L-example IT} and Theorem~\ref{T-IT basic}.\eqref{i-IT tuple}
we can find $x_j\in A_j$ for $j\in [4]$ such that 
$$(x_1, x_2), (x_3, x_4)\in \IT_2(X).$$ 
For any $i\in \{1, 2\}$ and $j\in\{3, 4\}$, since $A_i\times A_j$ is a product neighborhood of $(x_i, x_j)$ in $X^2$, by Lemma~\ref{L-not IN1} we have $(x_i, x_j)\not\in \IN_2(X)$. Because $A_1\times A_3$ and $A_2\times A_4$ are neighborhoods of $(x_1, x_3)$ and $(x_2, x_4)$ in $X^2$ respectively, from Lemma~\ref{L-not IN2} we know that $((x_1, x_3), (x_2, x_4))\not\in \IN_2(X^2)$. By Lemma~\ref{L-not IN} we have
$$(\lambda \delta_{x_1}+(1-\lambda)\delta_{x_3},  \lambda \delta_{x_2}+(1-\lambda)\delta_{x_4})\not\in \IN_2(\cM(X))$$
for all $0<\lambda<1$. As IT-tuples are always IN-tuples, we also have
$$(x_1, x_2), (x_3, x_4)\in \IN_2(X),$$
and 
$$(\lambda \delta_{x_1}+(1-\lambda)\delta_{x_3},  \lambda \delta_{x_2}+(1-\lambda)\delta_{x_4})\not\in \IT_2(\cM(X))$$
for all $0<\lambda<1$. 
Therefore neither $\IN_2(\cM(X))$ nor $\IT_2(\cM(X))$ is convex.

\section{Application to local theory of Banach spaces} \label{S-Banach}

For a set $E$ in a normed vector space $V$, we denote by $\widetilde{\rm co}(E)$ the convex hull of $E\cup \{0\}$ in $V$. For $r>0$, we denote by $B_r(V)$ the closed $r$-ball of $V$, i.e., 
$$B_r(V)=\{v\in V: \|v\|\le r\}.$$
For $n\in \Nb$ and $1\le p\le \infty$, we write $\ell_p^n$ for $\Rb^n$ equipped with the $\ell_p$-norm given in \eqref{E-p norm1} and \eqref{E-p norm2}. 

\begin{lemma} \label{L-FLM}
Let $0<\delta\le 1$ and $1<p\le \infty$. There exist a finite set $T\subseteq \Rb^2$ and $c'>0$, depending only on $\delta$ and $p$, such that the following hold:
\begin{enumerate}
\item $t_1\ge t_2+\delta$ for every $(t_1, t_2)\in T$, and
\item for any $n\in \Nb$, if $E\subseteq B_1(\ell_p^n)$  and $\widetilde{\rm co}(E)\supseteq B_\delta(\ell_p^n)$, then there are some $J\subseteq [n]$ with $|J|\ge c'n$
and $(t_1, t_2)\in T$ such that for any map $\sigma: J\rightarrow [2]$ there is some $v\in E$ so that $v(j)\ge t_1n^{-1/p}$ for all $j\in \sigma^{-1}(1)$ and $v(j)\le t_2n^{-1/p}$ for all $j\in \sigma^{-1}(2)$.
\end{enumerate}
\end{lemma}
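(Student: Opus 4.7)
The plan is to reduce Lemma~\ref{L-FLM} to the key combinatorial Lemma~\ref{L-func to indep} applied with $k=2$, with the suitable choice of function $f_\psi$ furnished by the hypothesis $\widetilde{\rm co}(E)\supseteq B_\delta(\ell_p^n)$.

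First I would, for each $\psi\in \sR([n],2)$, form the sign vector $\epsilon_\psi\in \{\pm 1\}^n$ defined by $\epsilon_\psi(j)=1$ if $\psi(j)=1$ and $\epsilon_\psi(j)=-1$ if $\psi(j)=2$, and consider $u_\psi:=\delta n^{-1/p}\epsilon_\psi$. A direct computation gives $\|u_\psi\|_p=\delta$, so $u_\psi\in B_\delta(\ell_p^n)\subseteq \widetilde{\rm co}(E)$. Writing $u_\psi=\sum_i\lambda_i v_i$ with $\lambda_i\ge 0$, $\sum_i\lambda_i\le 1$, $v_i\in E$, and taking the Euclidean inner product with $\epsilon_\psi$ (which has $\|\epsilon_\psi\|_2^2=n$) yields
\[
\delta n^{1/q}=\delta n^{1-1/p}=\langle u_\psi,\epsilon_\psi\rangle=\sum_i \lambda_i\langle v_i,\epsilon_\psi\rangle,
\]
so by a pigeonhole-type argument there exists $v_\psi\in E$ with $\langle v_\psi,\epsilon_\psi\rangle\ge \delta n^{1/q}$. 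Define $f_\psi:[n]\rightarrow \Rb$ by $f_\psi(j)=\epsilon_\psi(j)v_\psi(j)$; then $\|f_\psi\|_p=\|v_\psi\|_p\le 1$ and $\frac{1}{n^{1/q}}\sum_{j\in [n]}f_\psi(j)\ge \delta$.

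Next I would apply Lemma~\ref{L-func to indep} with $k=2$, $C=1$, $R=\delta$, $r=\delta/2$, and the given exponent $p$. This produces $c, N>0$ and a finite set $T'\subseteq \Rb^2$ (depending only on $\delta$ and $p$) such that $\tfrac{1}{2}(t_1'+t_2')\ge \delta/2$ for every $(t_1',t_2')\in T'$, and for $n\ge N$ there are $J\subseteq [n]$ with $|J|\ge cn$ and $(t_1',t_2')\in T'$ with the property that every $\sigma: J\rightarrow [2]$ extends to a $\psi\in \sR([n],2)$ satisfying $f_\psi(j)\ge t_i' n^{-1/p}$ for all $i\in [2]$ and $j\in \sigma^{-1}(i)$. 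Unpacking: for $j\in \sigma^{-1}(1)$ we get $v_\psi(j)\ge t_1' n^{-1/p}$, while for $j\in \sigma^{-1}(2)$ we get $-v_\psi(j)\ge t_2' n^{-1/p}$, i.e., $v_\psi(j)\le -t_2' n^{-1/p}$. Setting $T:=\{(t_1',-t_2'):(t_1',t_2')\in T'\}$ gives $t_1-t_2=t_1'+t_2'\ge \delta$ for every $(t_1,t_2)\in T$, and $v=v_\psi\in E$ witnesses the conclusion. Finally I would take $c':=\min(c,1/N)$ so that for $n<N$ one has $c'n<1$ and the conclusion holds vacuously with $J=\emptyset$.

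The only subtle step is the construction of the functions $f_\psi$; everything else is a translation. The trick is to recognize that the hypothesis on $\widetilde{\rm co}(E)$ is precisely what one needs to extract, for each sign pattern $\epsilon_\psi$, a single vector $v_\psi\in E$ realizing a large value of $\langle v_\psi,\epsilon_\psi\rangle$, analogous to the extraction used in the Elton–Pajor style arguments for $\ell_\infty^m$ embeddings. Once $f_\psi$ is in place, Lemma~\ref{L-func to indep} does all the combinatorial work.
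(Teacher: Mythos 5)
Your treatment of the case $n\ge N$ is exactly the paper's argument: the same test vectors $\delta n^{-1/p}\epsilon_\psi\in B_\delta(\ell_p^n)$, the same pigeonhole extraction of $v_\psi\in E$ from a representation in $\widetilde{\rm co}(E)$, the same choice $f_\psi(j)=\epsilon_\psi(j)v_\psi(j)$, and the same application of Lemma~\ref{L-func to indep} with $k=2$, $C=1$, $R=\delta$, $r=\delta/2$, followed by flipping the sign of the second coordinate to produce $T$. All of that is correct.

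The gap is in the case $n<N$. Taking $J=\emptyset$ does not make the conclusion vacuous: the statement demands $|J|\ge c'n$, and since $c'>0$ and $n\ge 1$ we have $c'n>0=|J|$, so the empty set is never admissible, no matter how small you make $c'$. For $n<N$ you must still exhibit a nonempty $J$ (a singleton suffices, since $c'n<1$) together with some $(t_1,t_2)\in T$ for which both maps $\sigma: J\rightarrow [2]$ are realized by elements of $E$, and this needs an argument; you also cannot simply reuse the pairs coming from Lemma~\ref{L-func to indep}, since their entries (of size up to $\tau^{-1/p}$ in that construction) may exceed what a vector in $B_1(\ell_p^n)$ can attain at a single coordinate once $n$ is small. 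The paper repairs exactly this point by enlarging the threshold set with the extra pair $(\delta,-\delta)$ and taking $J=\{1\}$: since $(\pm\delta,0,\dots,0)\in B_\delta(\ell_p^n)\subseteq \widetilde{\rm co}(E)$, there are $v,w\in E$ with $v(1)\ge\delta\ge \delta n^{-1/p}$ and $w(1)\le -\delta\le -\delta n^{-1/p}$, which covers the two maps $\sigma$. With this easy fix your proof coincides with the paper's; as written, the last step fails.
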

\begin{proof} We take $1\le q<\infty$ such that $1/p+1/q=1$. Let $k=2$, $C=1$, $R=\delta$, and $r=\delta/2$. Then we have $c, N$ and $T$ given by Lemma~\ref{L-func to indep}. We may assume that $N\ge 2$. We set
$$T'=\{(t_1, t_2): (t_1, -t_2)\in T\}.$$
We shall show that $T''=T'\cup \{(\delta, -\delta)\}$ and $c'=\min(c, \frac{1}{N})$ have the desired property. 

For each $(t_1, t_2)\in T'$, we have $\frac{1}{2}(t_1-t_2)\ge r=\delta/2$, whence $t_1\ge t_2+\delta$. It follows that (1) holds. 

Let $n\in \Nb$ and $E\subseteq B_1(\ell_p^n)$  such that $\widetilde{\rm co}(E)\supseteq B_\delta(\ell_p^n)$.

 We consider first the case $n\ge N$.
 Let $\psi: [n]\rightarrow [2]$. We denote by $g_\psi$ the element in $B_\delta(\ell_p^n)$ given by
 $$g_\psi(i)=(-1)^{\psi(i)+1}\delta n^{-1/p}$$
 for all $i\in [n]$.
Then $g_\psi \in \widetilde{\rm co}(E)$, and hence we can write $g_\psi$ as $\sum_{v\in E}t_v v$ for some $t_v\in [0, 1]$ for each $v\in E$ such that $\sum_{v\in E}t_v\le 1$. Note that
\begin{align*}
\sum_{v\in E}t_v\frac{1}{n^{1/q}}\sum_{i\in [n]}(-1)^{\psi(i)+1}v(i)&=\frac{1}{n^{1/q}}\sum_{i\in [n]}(-1)^{\psi(i)+1}\sum_{v\in E}t_v v(i)\\
&=\frac{1}{n^{1/q}}\sum_{i\in [n]}(-1)^{\psi(i)+1}g_\psi(i)\\
&=\frac{1}{n^{1/q}}\sum_{i\in [n]}\delta n^{-1/p}=\delta,
\end{align*}
whence we can find some $v_\psi\in E$ such that
$$ \frac{1}{n^{1/q}}\sum_{i\in [n]}(-1)^{\psi(i)+1}v_\psi(i)\ge \delta.$$
We define a function $f_\psi: [n]\rightarrow \Rb$ by
$$ f_\psi(i)=(-1)^{\psi(i)+1}v_\psi(i)$$
for all $i\in [n]$. 
Then
$$ \|f_\psi\|_p=\|v_\psi\|_p\le 1=C,$$
and 
$$ \frac{1}{\left|[n]\right|^{1/q}}\sum_{i\in [n]}f_\psi(i)=\frac{1}{n^{1/q}}\sum_{i\in [n]}(-1)^{\psi(i)+1}v_\psi(i)\ge \delta=R.$$
By our choice of $c, N$ and $T'$, we can find some $J\subseteq [n]$ with $|J|\ge cn\ge c'n$ and $(t_1, t_2)\in T'$ such that  every map $\sigma: J\rightarrow [2]$ extends to a
$\psi\in \sR([n], 2)$
so that $f_\psi(i)\ge t_1\left| [n]\right|^{-1/p}$ for all $i\in \sigma^{-1}(1)$ and $f_\psi(i)\ge (-t_2)\left| [n]\right|^{-1/p}$ for all $i\in \sigma^{-1}(2)$. That is, 
$$v_\psi(i)=(-1)^{\sigma(i)+1}v_\psi(i)=(-1)^{\psi(i)+1}v_\psi(i)=f_\psi(i)\ge t_1\left| [n]\right|^{-1/p}=t_1n^{-1/p}$$
for all $i\in \sigma^{-1}(1)$ and
 $$-v_\psi(i)=(-1)^{\sigma(i)+1}v_\psi(i)=(-1)^{\psi(i)+1}v_\psi(i)=f_\psi(i)\ge (-t_2)\left| [n]\right|^{-1/p}=-t_2n^{-1/p}$$
 for all $i\in \sigma^{-1}(2)$. 

Next we consider the case $n<N$. We set $J=\{1\}$ and $(t_1, t_2)=(\delta, -\delta)\in T''$. Then $|J|\ge n/N\ge c'n$.
Since
$$ (\delta, \underbrace{0, \dots, 0}_{n-1})\in B_\delta(\ell_p^n)\subseteq \widetilde{\rm co}(E),$$
we can find some $v\in E$ such that $v(1)\ge \delta\ge t_1 n^{-1/p}$. Similarly, using 
$$ (-\delta, \underbrace{0, \dots, 0}_{n-1})\in B_\delta(\ell_p^n)\subseteq \widetilde{\rm co}(E),$$
we can find some $w\in E$ such that $w(1)\le -\delta\le t_2n^{-1/p}$. Then for a map $\sigma: J\rightarrow [2]$, we can use $v$ when $\sigma(1)=1$ and $w$ when $\sigma(1)=2$ to fulfill the requirement of (2). 
\end{proof}

We are ready to prove Theorem~\ref{T-FLM}. 

\begin{proof}[Proof of Theorem~\ref{T-FLM}] We set $\delta=1/C\in (0, 1)$ and take $1<p\le \infty$ such that $1/p+1/q=1$. Then we have $T$ and $c'$ given by Lemma~\ref{L-FLM}. Set $c=\frac{2}{c'\log 2}>0$. 

Let $n, m\in \Nb$ with $m\ge 2$ such that $\ell_q^n$ is $C$-isomorphic to a linear subspace of $\ell_\infty^m$. Then we have 
an injective linear map $\Phi: \ell_q^n\rightarrow \ell_\infty^m$ such that $\|\Phi\|\cdot \|\Phi^{-1}\|\le C$, where $\Phi^{-1}$ denotes the inverse map from $\Phi(\ell_q^n)$ to $\ell_q^n$. Multiplying $\Phi$ by a scalar if necessary, we may assume that $\|\Phi\|=1$. Then $\|\Phi^{-1}\|\le C$.

For each Banach space $V$, we denote by $V^*$ the dual space of $V$. For each $\phi\in (\ell_q^n)^*=\ell_p^n$, $\phi\circ \Phi^{-1}$ is in $(\Phi(\ell_q^n))^*$ with
$$\| \phi\circ \Phi^{-1}\|\le \|\phi\|\cdot \|\Phi^{-1}\|\le \|\phi\|C,$$
and hence by the Hahn-Banach theorem $\phi \circ \Phi^{-1}$ extends to a $\omega\in (\ell_\infty^m)^*=\ell_1^m$ with
$$\|\omega\|=\|\phi\circ \Phi^{-1}\|\le \|\phi\|C.$$
Thus, denoting by $\Phi^*$ the dual map $\ell_1^m=(\ell_\infty^m)^*\rightarrow (\ell_q^n)^*=\ell_p^n$ of $\Phi$, we have
$$ \|\Phi^*\|=\|\Phi\|=1 \mbox{ and } \Phi^*(B_1(\ell_1^m))\supseteq B_{1/C}(\ell_p^n)=B_{\delta}(\ell_p^n).$$

We denote by $E$ the standard basis of $\ell_1^m$. Clearly $\widetilde{\rm co}(E\cup (-E))=B_1( \ell_1^m)$. We set $E'=\Phi^*(E\cup (-E))\subseteq B_1(\ell_p^n)$. Then
$$\widetilde{\rm co}(E')=\Phi^*(\widetilde{\rm co}(E\cup (-E)))=\Phi^*(B_1( \ell_1^m))\supseteq B_{\delta}(\ell_p^n).$$
By our choice of $T$ and $c'$, we can find some $J\subseteq [n]$ with $|J|\ge c'n$ and some $(t_1, t_2)\in T$ such that for any map $\sigma: J\rightarrow [2]$ there is some $v\in E'$ so that $v(j)\ge t_1n^{-1/p}$ for all $j\in \sigma^{-1}(1)$ and $v(j)\le t_2n^{-1/p}$ for all $j\in \sigma^{-1}(2)$. Since $t_1\ge t_2+\delta>t_2$, different $\sigma$ must yield different $v$. Thus
$$ 2^{c'n}\le 2^{|J|}\le |E'|\le 2m.$$
Therefore, using $m\ge 2$, we get
\begin{displaymath}
n \le \frac{1}{c'\log 2}\log (2m)\le \frac{2}{c'\log 2}\log m=c\log m. \tag*{\qedsymbol}
\end{displaymath}
     \renewcommand{\qedsymbol}{}
     \vspace{-\baselineskip}
\end{proof}

The bound in Theorem~\ref{T-FLM} is also optimal, as one can see from the following well-known fact (cf. \cite[page 65]{FLM} or \cite[page 56]{Pisier}).

\begin{lemma} \label{L-embedding exist}
Let $C>1$. There is a constant $\lambda>1$ depending only on $C$ such that  every finite-dimensional normed space $V$ is $C$-isomorphic to a linear subspace of $\ell_\infty^m$ for some $m\le \lambda^{\dim V}$.
\end{lemma}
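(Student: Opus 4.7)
The plan is to realize $V$ isometrically inside $C(K)$ for $K = B_1(V^*)$ via the canonical evaluation map and then replace $K$ by a sufficiently fine $\varepsilon$-net, so that projection onto the net coordinates gives an embedding into $\ell_\infty^m$ with small Banach-Mazur distortion. The size of the net is controlled by a standard volume argument in the $\dim V$-dimensional normed space $V^*$.

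Concretely, set $n = \dim V$ and $\varepsilon = 1 - 1/C \in (0,1)$. A volume-packing argument gives an $\varepsilon$-net $\{\phi_1,\dots,\phi_m\}$ of $B_1(V^*)$ with
\[
m \le \left(1 + \frac{2}{\varepsilon}\right)^n = \left(1 + \frac{2C}{C-1}\right)^n =: \lambda^n,
\]
where one obtains this by taking a maximal $\varepsilon$-separated set $\{\phi_1,\dots,\phi_m\}$ in $B_1(V^*)$, noting that the balls $\phi_i + \tfrac{\varepsilon}{2}B_1(V^*)$ are disjoint and contained in $(1+\tfrac{\varepsilon}{2})B_1(V^*)$, and comparing Lebesgue volumes in $V^* \cong \mathbb{R}^n$. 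Define $\Phi : V \to \ell_\infty^m$ by $\Phi(v) = (\phi_1(v),\dots,\phi_m(v))$.

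The upper bound $\|\Phi(v)\|_\infty \le \|v\|$ is immediate since each $\phi_i$ lies in $B_1(V^*)$. For the lower bound, fix $v \in V$ with $\|v\| = 1$; by the Hahn-Banach theorem there is some $\phi \in B_1(V^*)$ with $\phi(v) = 1$, and by the net property some $\phi_i$ satisfies $\|\phi - \phi_i\| \le \varepsilon$, whence
\[
|\phi_i(v)| \ge \phi(v) - |(\phi-\phi_i)(v)| \ge 1 - \varepsilon = \frac{1}{C}.
\]
Thus $\|\Phi\| \le 1$ and $\|\Phi^{-1}\| \le C$, giving $d_{\mathrm{BM}}(V,\Phi(V)) \le C$, which is the claimed $C$-isomorphism into $\ell_\infty^m$ with $m \le \lambda^n$.

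There is no real obstacle here: the only mild subtlety is the volume-covering estimate for the cardinality of the $\varepsilon$-net, but this is a completely standard $n$-dimensional packing argument that depends only on $\varepsilon$ (hence only on $C$) and works in any finite-dimensional normed space through the identification $V^* \cong \mathbb{R}^n$.
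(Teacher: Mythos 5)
Your proof is correct: the evaluation map into the coordinates of a maximal $\varepsilon$-separated (hence $\varepsilon$-dense) subset of $B_1(V^*)$ with $\varepsilon=1-1/C$, together with the volume-packing bound $m\le (1+2/\varepsilon)^{\dim V}$ and the norming-functional lower bound $\|\Phi(v)\|_\infty\ge \|v\|/C$, gives exactly the claimed $C$-isomorphic embedding with $\lambda=1+2C/(C-1)$. The paper does not prove this lemma but only cites Figiel--Lindenstrauss--Milman and Pisier, and your net argument is precisely the standard proof behind those references, so there is nothing to add.
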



\end{document}